\newtheorem{theorem}{Theorem}[section]
\newtheorem{lemma}[theorem]{Lemma}
\newtheorem{corollary}[theorem]{Corollary}
\newtheorem{remark}[theorem]{Remark}
\newtheorem{proposition}[theorem]{Proposition}
\newtheorem{definition}[theorem]{Definition}
\newtheorem{example}[theorem]{Example}
\newtheorem{claim}[theorem]{Claim}
\newtheorem{notation}[theorem]{Notation}
\newtheorem{problem}[theorem]{Problem}
\newproof{proof}{Proof}
\newproof{claimproof}{\it Proof}
\numberwithin{equation}{section}
\newcommand{\e}{\varepsilon}
\newcommand{\w}{\omega}
\newcommand{\NN}{\mathbb{N}}
\newcommand{\IR}{\mathbb{R}}
\newcommand{\II}{\mathbb{I}}
\newcommand{\yyy}{\mathbf{y}}
\newcommand{\aaa}{\mathbf{a}}
\newcommand{\nn}{\mathfrak{n}}
\newcommand{\Pp}{\mathfrak{P}}
\newcommand{\FF}{\mathcal{F}}
\newcommand{\F}{\mathcal{F}}
\newcommand{\V}{\mathcal{V}}
\newcommand{\U}{\mathcal{U}}
\newcommand{\W}{\mathcal{W}}
\newcommand{\SC}{\mathcal{S}}
\newcommand{\KK}{\mathcal{K}}
\newcommand{\Nn}{\mathcal{N}}
\newcommand{\AAA}{\mathcal A}
\newcommand{\I}{\mathcal{I}}
\newcommand{\A}{\mathcal{A}}
\newcommand{\C}{\mathcal{C}}
\newcommand{\cl}{\mathrm{cl}}
\newcommand{\Lin}{\mathrm{Lin}}
\newcommand{\CC}{C_k}
\newcommand{\SM}{{\setminus}}
\begin{document}

\begin{frontmatter}

\title{Topological properties of some function spaces}

%\author{T.~Banakh}%\tnotetext[label1]{The first named author was partially supported by Israel Science Foundation grant 1/12.}
%\ead{t.o.banakh@gmail.com}
%\address{Ivan Franko National University of Lviv (Ukraine) and Jan Kochanowski University in Kielce (Poland)}

\author{Saak Gabriyelyan}%\tnotetext[label1]{The first named author was partially supported by Israel Science Foundation grant 1/12.}
\ead{saak@math.bgu.ac.il}
\address{Department of Mathematics, Ben-Gurion University of the Negev, Beer-Sheva, P.O. 653, Israel}

\author{Alexander V. Osipov}
\ead{OAB@list.ru}
%\tnotetext[label1]{The research has been supported by .}
\address{Krasovskii Institute of Mathematics and Mechanics, Ural Federal  University, \\ Ural State University of Economics, Yekaterinburg, Russia}

\begin{abstract}
Let $Y$ be a metrizable space containing at least two points, and let $X$ be a $Y_\I$-Tychonoff space for some ideal $\I$ of compact sets of $X$. Denote by $C_\I(X,Y)$ the space of continuous functions from $X$ to $Y$ endowed with the $\I$-open topology. We prove that $C_\I(X,Y)$ is Fr\'{e}chet--Urysohn iff $X$ has the property $\gamma_\I$. We characterize  zero-dimensional Tychonoff spaces $X$ for which the space  $C_\I(X,\mathbf{2})$ is sequential.
Extending the classical theorems of Gerlits, Nagy and Pytkeev we show that if $Y$ is not compact, then  $C_p(X,Y)$ is Fr\'{e}chet--Urysohn iff it is sequential iff it is a $k$-space iff $X$ has the property $\gamma$.
An analogous result is obtained for the space of bounded continuous functions taking values in a metrizable locally convex space.
Denote by $B_1(X,Y)$ and $B(X,Y)$ the space of Baire one functions and the space of all Baire functions from $X$ to $Y$, respectively. If $H$ is a subspace of  $B(X,Y)$ containing $B_1(X,Y)$, then $H$ is metrizable iff it is a $\sigma$-space iff it has countable $cs^\ast$-character iff $X$ is countable. If additionally $Y$ is not compact, then $H$ is Fr\'{e}chet--Urysohn iff it is sequential iff it is a $k$-space iff it has countable tightness iff $X_{\aleph_0}$ has the property $\gamma$, where $X_{\aleph_0}$ is the space $X$ with the Baire topology. We show that if $X$ is a Polish space, then the space $B_1(X,\IR)$ is normal iff $X$ is countable.
%Extentions of the classical theorems of Gerlits, Nagy and Pytkeev are given. We show that if $Y$ is not compact, then  $C_p(X,Y)$ is Fr\'{e}chet--Urysohn iff it is sequential iff it is a $k$-space iff $X$ has the property $\gamma$. If additionally $Y=L$ is a locally convex space, we denote by $C_\I^b(X,L)$ and $C_\I^{rc}(X,L)$ the subspaces of $C_\I(X,L)$ containing those functions $f\in C(X,L)$ for which  $f(X)$ is bounded or relatively compact in $L$, respectively. If $E=C_\I(X,L)$, $E=C^b_\I(X,L)$ or $E=C^{rc}_\I(X,L)$ we extend a result of Pytkeev by showing that $E$ is Fr\'{e}chet--Urysohn iff it is sequential iff it is a $k$-space iff $X$ has the property $\gamma_\I$. Denote by $B_1(X,Y)$ and $B(X,Y)$ the space of Baire one functions and the space of all Baire functions from $X$ to $Y$, respectively. If $H$ is a subspace of  $B(X,Y)$ containing $B_1(X,Y)$, then $H$ is metrizable iff it is a $\sigma$-space iff it has countable $cs^\ast$-character iff $X$ is countable. If additionally $Y$ is not compact, then $H$ is Fr\'{e}chet--Urysohn iff it is sequential iff it is a $k$-space iff it has countable tightness iff $X_{\aleph_0}$ has the property $\gamma$, where $X_{\aleph_0}$ is the space $X$ with the Baire topology. The last  result generalizes the results obtained independently by Pestryakov and the first author. We show that if $X$ is a Polish space, then the space $B_1(X,\IR)$ is normal iff $X$ is countable.
\end{abstract}

\begin{keyword}
function space\sep $C_p(X,Y)$ \sep Baire function \sep metric space \sep Fr\'{e}chet--Urysohn \sep sequential \sep $k$-space \sep normal \sep $cs^\ast$-character \sep $\sigma$-space \sep ideal of compact sets

\MSC[2010]  46A03 \sep   46A08 \sep   54C35

\end{keyword}

\end{frontmatter}

%%%%%%%%%%%%%%%%%%%%%%%%%%%
%%%%%%%%%%%%%%%%%%%%%%%%%%%
%%%%%%%%%%%%%%%%%%%%%%%%%%%
%%%%%%%%%%%%%%%%%%%%%%%%%%%

%%%%%%%%%%%%%%%%%%%%%%%%%%%
%%%%%%%%%%%%%%%%%%%%%%%%%%%
%%%%%%%%%%%%%%%%%%%%%%%%%%%
%%%%%%%%%%%%%%%%%%%%%%%%%%%

\section{Introduction}

%%%%%%%%%%%%%%%%%%%%%%%%%%%
%%%%%%%%%%%%%%%%%%%%%%%%%%%
%%%%%%%%%%%%%%%%%%%%%%%%%%%
%%%%%%%%%%%%%%%%%%%%%%%%%%%

For Tychonoff spaces $X$ and $Y$, we denote by $C_p(X,Y)$ and $\CC(X,Y)$ the family $C(X,Y)$ of all continuous functions from $X$ to $Y$ endowed with the topology of pointwise convergence or the compact-open topology, respectively. If $Y=\IR$, we shall write $C_p(X)$ and $\CC(X)$.

The study of topological properties of spaces of continuous functions is quite an active area of research attracting specialists both from General Topology and Functional Analysis and has a long history. Moreover, the study of topological properties of the function spaces $C_p(X)$ and $\CC(X)$ is one of the main topics in General Topology. For numerous results and historical remarks we refer the reader to  the classical texts \cite{Arhangel,mcoy} or to the recent monograph \cite{Tkachuk-Book-2010} and references therein. Let us recall some of the most famous results (all relevant definitions are given below or can be found for example in the classical book of Engelking \cite{Eng}). % concerning $C_p(X)$.
We start from the following two remarkable theorems proved by Pol in his seminal paper \cite{Pol-1974}, where $\II=[0,1]$.
\begin{theorem}[Pol] \label{t:Pol-p-normal}
If $X$ is a metric space, then the following assertions are equivalent: (a) $C_p(X,\II)$ is normal, (b) $C_p(X,\II)$ is Lindel\"{o}f, (c) the set of all non-isolated points of $X$ is separable.
\end{theorem}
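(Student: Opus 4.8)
The plan is to prove the cycle of implications $(c)\Rightarrow(b)\Rightarrow(a)\Rightarrow(c)$. The implication $(b)\Rightarrow(a)$ is immediate: $C_p(X,\II)$ is a subspace of the Tychonoff cube $\II^X$, hence regular, and every regular Lindel\"{o}f space is normal. So the whole content lies in $(c)\Rightarrow(b)$ and in the contrapositive of $(a)\Rightarrow(c)$.

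For $(c)\Rightarrow(b)$ I would write $X=N\sqcup E$, where $N$ is the (closed) set of non-isolated points and $E$ is the open, discrete set of isolated points. Assuming $(c)$, $N$ is separable metrizable, hence cosmic, so $C_p(N,\II)$ has a countable network and in particular is a Lindel\"{o}f $\Sigma$-space; since $E$ is discrete, $\II^E$ is compact. I would then consider the map $\Phi\colon C_p(X,\II)\to C_p(N,\II)\times\II^E$, $\Phi(f)=(f\uhr N,\,f\uhr E)$, which is easily checked to be a topological embedding whose image is the set of pairs $(g,h)$ for which the glued function $g\cup h$ is continuous on $X$. Continuity of $g\cup h$ is automatic at isolated points and, as $g$ is already continuous on $N$, it only has to be verified at each $x\in N$, where it amounts to $h(e)\to g(x)$ as $e\to x$ through $E$. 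The key step is to exploit separability: fixing a countable dense $D_0\subseteq N$, a short $3\e$-type argument shows that, for continuous $g$, it suffices to check this condition at the points of $D_0$. Consequently the image equals $\bigcap_{x\in D_0}\bigcap_{k}\bigcup_{m}C_{x,k,m}$, where each $C_{x,k,m}$ is closed in the Lindel\"{o}f $\Sigma$-space $M:=C_p(N,\II)\times\II^E$; that is, the image is an $F_{\sigma\delta}$-subset of $M$. Finally I would invoke the closure properties of the class of Lindel\"{o}f $\Sigma$-spaces: each $\bigcup_m C_{x,k,m}$ is Lindel\"{o}f $\Sigma$ (a countable union of closed subspaces), and a countable intersection of Lindel\"{o}f $\Sigma$-subspaces of a Hausdorff space is again Lindel\"{o}f $\Sigma$ (it embeds as the diagonal, a closed subspace of the countable product). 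Hence the image, and therefore $C_p(X,\II)$, is Lindel\"{o}f $\Sigma$, in particular Lindel\"{o}f.

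For $(a)\Rightarrow(c)$ I would argue by contraposition. If $N$ is not separable then, being metric, it contains an uncountable $\e$-separated set $\{x_\alpha:\alpha<\omega_1\}$ for some $\e>0$; each $x_\alpha$ is non-isolated, so I can pick a nontrivial sequence converging to $x_\alpha$ inside the ball $B(x_\alpha,\e/3)$. The resulting convergent sequences $T_\alpha$ (together with their limits) are pairwise $\e/3$-separated, so $Y:=\bigcup_\alpha T_\alpha$ is closed in $X$ and is the topological sum $\bigoplus_{\alpha<\omega_1}T_\alpha$. Since $X$ is metric and $\II$ is a compact convex subset of $\RR$, Dugundji's extension theorem gives a continuous linear section of the restriction $C_p(X,\II)\to C_p(Y,\II)$, so $C_p(Y,\II)$ is a retract, hence a closed subspace, of $C_p(X,\II)$. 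Now $C_p(Y,\II)\cong\prod_{\alpha<\omega_1}C_p(T_\alpha,\II)$, and each factor is the space of convergent sequences in $\II$, a non-compact separable metrizable space; being non-compact and metric, it contains a closed copy of $\NN$. Therefore $\prod_{\alpha<\omega_1}C_p(T_\alpha,\II)$ contains a closed copy of $\NN^{\omega_1}$, which is not normal by the classical theorem of A.~H.~Stone. As closed subspaces of normal spaces are normal, $C_p(X,\II)$ is not normal, completing the cycle.

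The main obstacle is the Lindel\"{o}f direction $(c)\Rightarrow(b)$. The subtlety is that the set of continuous functions is not closed in $\II^X$ for the pointwise topology --- an interchange-of-limits phenomenon shows that the image of $\Phi$ fails to be closed in $M$ --- so one cannot simply quote ``a closed subspace of a Lindel\"{o}f space is Lindel\"{o}f''. Separability of $N$ is precisely what collapses the defining condition to a countable one, pinning the descriptive complexity down to $F_{\sigma\delta}$ and letting the Lindel\"{o}f $\Sigma$ calculus finish the argument. In the converse direction the only genuine insight is to recognize $\NN^{\omega_1}$ as the obstruction hidden inside an uncountable family of convergent sequences.
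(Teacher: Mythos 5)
Your proposal cannot be checked against the paper, because the paper does not prove this statement at all: Theorem \ref{t:Pol-p-normal} is quoted as a classical result from \cite{Pol-1974}. Judged on its own merits, your implications (b)$\Rightarrow$(a) and (a)$\Rightarrow$(c) are sound; in particular the Dugundji extender is indeed continuous for the pointwise topologies (each value $u(f)(x)$ is a finite convex combination of values of $f$), so $C_p(Y,\II)\cong\prod_{\alpha<\w_1}C_p(T_\alpha,\II)$ sits as a closed subspace and the closed copy of $\NN^{\w_1}$ kills normality — this is essentially the classical route. The genuine gap is in (c)$\Rightarrow$(b), at exactly the step you flag as the key one: it is \emph{false} that, for continuous $g$, the condition ``$h(e)\to g(x)$ as $e\to x$ through $E$'' need only be verified at the points of a countable dense $D_0\subseteq N$. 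Counterexample: let $X\subseteq\RR^2$ consist of $N=[0,1]\times\{0\}$ together with $E=\{e_n=(q_n,1/n):n\geq 1\}$, where $(q_n)$ is a sequence of rationals converging to an irrational $x_0$; then $N$ is precisely the set of non-isolated points and $E$ is open and discrete. Take $g\equiv 0$, $D_0=(\Q\cap[0,1])\times\{0\}$, and $h(e_n)=1$ for all $n$. For each rational $d$ the points $e_n$ accumulate only at $(x_0,0)\not=d$, so a sufficiently small ball about $d$ misses $E$ entirely and the test at $d$ holds vacuously; yet the glued function is discontinuous at $(x_0,0)$. Hence the image of $\Phi$ is a \emph{proper} subset of $\bigcap_{x\in D_0}\bigcap_{k}\bigcup_{m}C_{x,k,m}$, and being a non-closed subspace of a Lindel\"{o}f $\Sigma$-space yields nothing: your Lindel\"{o}f $\Sigma$ calculus proves Lindel\"{o}fness of the wrong set.

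The failure is structural, not fixable by a cleverer choice of the closed sets $C_{x,k,m}$: the same example defeats oscillation-type conditions $\operatorname{osc}\leq 1/k$ on balls $B(d,1/m)$ centered at $d\in D_0$, because the radius $1/m$ produced at $d$ may shrink as $d$ approaches $x_0$ and nothing forces the selected balls to cover $N$. In other words, conditions quantified over a fixed countable set of centers cannot control the ``radius of continuity'' at the remaining points of $N$, which is why $C(X,\II)$ does not pin down as a low-complexity set inside $C_p(N,\II)\times\II^E$ in this way. This Lindel\"{o}f direction is the substantive content of Pol's theorem and requires a genuinely different (and more involved) argument; the framing via $\Phi$, the compact factor $\II^E$, and the cosmic factor $C_p(N,\II)$ is a reasonable start, but as it stands the proof of (c)$\Rightarrow$(b) is incorrect.
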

%In the same article \cite{Pol-1974}, Pol proved the next result.
\begin{theorem}[Pol] \label{t:Pol-k-space}
\begin{enumerate}
\item[{\rm (i)}] If $X$ is a first countable paracompact space, the following assertions are equivalent: (a) the space $\CC(X,\II)$ is a $k$-space,  (b) for any compact metrizable space $K$, the space $\CC(X,K)$ is paracompact and \v{C}ech-complete, (c) $X=L\cup D$ is the topological sum of a locally compact Lindel\"{o}f space $L$ and a discrete space $D$.
\item[{\rm (ii)}] If $X$ is a metric space, the following assertions are equivalent: (a) $\CC(X,\II)$ is normal, (b) $\CC(X,\II)$ is Lindel\"{o}f, (c) for any compact metrizable space $K$, the space $\CC(X,K)$ is Lindel\"{o}f, (d) the set of all non-isolated points of $X$ is separable.
\end{enumerate}
\end{theorem}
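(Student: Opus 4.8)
The plan is to prove each part as a cycle of implications, separating the two qualitatively different kinds of steps: the implications that \emph{build} the function space out of a structural decomposition of $X$ are routine generalities about $k$-spaces, \v{C}ech-completeness, paracompactness and Lindel\"{o}fness, whereas the converse implications, which must \emph{extract} a decomposition of $X$ from a weak covering-type property of $\CC(X,\II)$, carry essentially all of the difficulty.

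For part (i) I would first dispatch $(c)\Rightarrow(b)\Rightarrow(a)$. Given a topological sum $X=L\sqcup D$ with $L$ locally compact Lindel\"{o}f and $D$ discrete, restriction to the two clopen pieces yields a homeomorphism $\CC(X,K)\cong\CC(L,K)\times\CC(D,K)$; since the compact subsets of the discrete space $D$ are exactly the finite ones, $\CC(D,K)=K^{D}$ carries the product topology and is compact by Tychonoff. As $L$ is locally compact and Lindel\"{o}f it is $\sigma$-compact, hence hemicompact and a $k$-space (so a $k_{\IR}$-space), whence $\CC(L,K)$ is completely metrizable; being metrizable it is paracompact, and being completely metrizable it is \v{C}ech-complete. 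The product of a paracompact \v{C}ech-complete space with a compact space is again paracompact and \v{C}ech-complete, which gives $(b)$. For $(b)\Rightarrow(a)$ I would recall that every \v{C}ech-complete space is a $k$-space and specialize to $K=\II$.

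The difficult implication is $(a)\Rightarrow(c)$. Here I would let $L$ be the set of points of $X$ possessing a compact neighbourhood and set $D:=X\SM L$, then argue that the $k$-space property of $\CC(X,\II)$ forces (1) every point of $D$ to be isolated, so that $D$ is open discrete and $L$ is clopen, and (2) $L$ to be Lindel\"{o}f. The engine for (1) is a closed-embedding/test-space argument: using first countability one shows that at a non-isolated point without a compact neighbourhood there sits a closed subspace $T$ of $X$ modelling a non-locally-compact ``fan''-type configuration, together with a Tietze-type extension operator for $\II$-valued maps exhibiting $\CC(T,\II)$ as a retract of $\CC(X,\II)$; since a retract of a $k$-space is a $k$-space while $\CC(T,\II)$ is \emph{not} a $k$-space, this is a contradiction. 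The Lindel\"{o}f conclusion (2) then follows from paracompactness of $X$ together with the local structure just obtained. I expect the construction of the obstructing closed copy $T$, and the verification that $\CC(T,\II)$ fails the $k$-space property, to be the technical heart of the whole argument.

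For part (ii), with $X$ metric, I would run the cycle $(c)\Rightarrow(b)\Rightarrow(a)\Rightarrow(d)\Rightarrow(c)$. The step $(c)\Rightarrow(b)$ is the specialization $K=\II$, and $(b)\Rightarrow(a)$ is the standard fact that a regular Lindel\"{o}f space is normal, applied to the Tychonoff space $\CC(X,\II)$. For $(d)\Rightarrow(c)$ I would split $X$ into its closed separable set $N$ of non-isolated points and the open discrete set of isolated points, and analyze the compact subsets of $X$: each meets $N$ in a subset of a separable metric space and meets the discrete part in a finite set, which is what converts the separability of $N$ into Lindel\"{o}fness of $\CC(X,K)$ (over the discrete part one again only sees a compact power $K^{D}$, and the product of a Lindel\"{o}f space with a compact space is Lindel\"{o}f). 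The remaining implication $(a)\Rightarrow(d)$ is the hard one and is the compact-open analogue of Pol's Theorem~\ref{t:Pol-p-normal}: arguing by contraposition, if the non-isolated part of the metric space $X$ is non-separable it contains an uncountable uniformly discrete set of non-isolated points, around which one builds inside $\CC(X,\II)$ a closed discrete family of size $\aleph_{1}$ that cannot be separated by disjoint open sets, contradicting normality via a Jones-type argument. As in part (i), the main obstacle is precisely this combinatorial construction that turns non-separability of $X$ into a concrete witness of non-normality in the function space.
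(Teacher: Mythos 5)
First, a framing remark: the paper does not prove this statement at all --- Theorem~\ref{t:Pol-k-space} is quoted as a classical result with the citation to Pol's paper \cite{Pol-1974} --- so your attempt can only be measured against Pol's original argument and on its own merits. Your easy directions are essentially fine: in (i), $(c)\Rightarrow(b)\Rightarrow(a)$ via $\CC(X,K)\cong\CC(L,K)\times K^{D}$, complete metrizability of $\CC(L,K)$ for hemicompact $k$-space $L$, and ``\v{C}ech-complete $\Rightarrow$ $k$-space'' is correct, as are $(c)\Rightarrow(b)\Rightarrow(a)$ in (ii). The problems are in the two reductions you yourself identify as the heart of the matter.

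In (i), $(a)\Rightarrow(c)$, your decomposition is inconsistent as stated. If $L$ is the set of points possessing a compact neighbourhood and $D:=X\SM L$, then claim (1), ``every point of $D$ is isolated,'' is self-contradictory: an isolated point $x$ has the compact neighbourhood $\{x\}$ and so lies in $L$; what the fan/retract argument can and must deliver is $D=\emptyset$, i.e.\ that $X$ is locally compact. Claim (2), ``$L$ is Lindel\"{o}f,'' is simply false: for an uncountable discrete $X$ one has $L=X$ non-Lindel\"{o}f while $\CC(X,\II)=\II^{X}$ is compact, hence a $k$-space. The missing step after local compactness is structural: use paracompactness to write $X$ as a topological sum $\bigoplus_{i\in I}X_{i}$ of clopen $\sigma$-compact pieces, and then show that only countably many $X_{i}$ contain a non-isolated point --- for each such piece $\CC(X_{i},\II)$ contains an infinite closed discrete subspace (Lemma 1 of \cite{Pol-1974}, invoked in this paper after Proposition~\ref{p:normality-func}), so uncountably many of them would place a closed copy of $\IN^{\w_{1}}$, which is not a $k$-space, inside $\CC(X,\II)=\prod_{i}\CC(X_{i},\II)$; the discrete pieces are then lumped into $D$. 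Separately, your retract mechanism is shaky in the generality of (i): a continuous extension \emph{operator} $\CC(T,\II)\to\CC(X,\II)$ comes from Dugundji's theorem for metrizable $X$, but a first countable paracompact $X$ only guarantees Tietze extension, i.e.\ surjectivity of the restriction map, which is not by itself a retraction.

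In (ii), the implication $(d)\Rightarrow(c)$ as you sketch it fails. The set of isolated points is open but generally not closed (isolated points accumulate on $X'$), so $X$ is \emph{not} the topological sum of $N=X'$ and the isolated part, and a compact subset of $X$ may contain infinitely many isolated points (a convergent sequence already does), so $\CC(X,K)$ does not factor as $(\text{Lindel\"{o}f})\times K^{D}$. Worse, the obstruction is not merely notational: take a single point $x$ together with, for each $n$, an uncountable $\tfrac{1}{n}$-discrete set of isolated points at distance $\tfrac{1}{n}$ from $x$; then $X'=\{x\}$ is separable but \emph{no} open neighbourhood of $X'$ is separable, so no decomposition $X=L\oplus D$ with $L$ separable exists, and yet Pol's theorem asserts $\CC(X,K)$ is Lindel\"{o}f --- so a genuinely different argument (Pol's actual one) is required here. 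Finally, a caveat on $(a)\Rightarrow(d)$: a Jones-type count needs a density bound, and $\CC(X,\II)$ is not separable when the metric space $X$ is non-separable; you must either pass to a separable \emph{closed} subspace containing the uncountable closed discrete family --- the device this paper uses in Proposition~\ref{p:B1-normal} --- or exhibit two concrete disjoint closed sets that cannot be separated, as Pol does.
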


The following fundamental result was proved independently by Gerlits and Nagy \cite{GN,Gerlits} and Pytkeev \cite{Pytkeev-Cp}.
\begin{theorem}[Gerlits--Nagy--Pytkeev] \label{t:GNP}
For a Tychonoff space $X$ the following assertions are equivalent:
\begin{enumerate}
\item[{\rm (i)}] $C_p(X)$ is a Fr\'{e}chet--Urysohn space;
\item[{\rm (ii)}]  $C_p(X)$  is a sequential space;
\item[{\rm (iii)}]  $C_p(X)$  is a $k$-space;
\item[{\rm (iv)}]  $X$ has the property $\gamma$.
\end{enumerate}
\end{theorem}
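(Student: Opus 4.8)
The plan is to run the cycle of implications $(i)\Rightarrow(ii)\Rightarrow(iii)\Rightarrow(iv)\Rightarrow(i)$. The implications $(i)\Rightarrow(ii)$ and $(ii)\Rightarrow(iii)$ are purely general-topological: every Fr\'echet--Urysohn space is sequential, and every sequential space is a $k$-space. So the entire content sits in $(iv)\Rightarrow(i)$, the Gerlits--Nagy half, and in $(iii)\Rightarrow(iv)$, the Pytkeev half that closes the loop. Before treating either, I would set up the \emph{dictionary} between open covers of $X$ and the topology of $C_p(X)$ near the zero function $\mathbf 0$; since $C_p(X)$ is a topological vector space, hence homogeneous, convergence and closure properties need only be analysed at $\mathbf 0$. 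For $g\in C(X)$ and $\varepsilon>0$ write $[g<\varepsilon]=\{x\in X:|g(x)|<\varepsilon\}$. The two observations I would record are: (a) $\mathbf 0\in\overline A$ in $C_p(X)$ iff for every $\varepsilon>0$ the family $\{[g<\varepsilon]:g\in A\}$ is an $\omega$-cover of $X$; and (b) a sequence $(g_n)$ converges to $\mathbf 0$ iff for every $\varepsilon>0$ the family $\{[g_n<\varepsilon]:n\in\omega\}$ is a $\gamma$-cover of $X$. Under this dictionary the statement ``$\mathbf 0$ is a sequential limit of points of $A$'' translates verbatim into ``the $\omega$-cover produced by $A$ has a $\gamma$-subcover'', i.e. property $\gamma$.

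For $(iv)\Rightarrow(i)$ I would assume $X$ has property $\gamma$ and take $A\subseteq C_p(X)$ with $\mathbf 0\in\overline A$. By (a), for each $k$ the family $\{[g<1/k]:g\in A\}$ is an $\omega$-cover, so property $\gamma$ supplies a countable $\gamma$-subcover witnessed by a sequence $(g^k_n)_n$ in $A$ with $\{[g^k_n<1/k]:n\}$ a $\gamma$-cover. A diagonal argument then merges these sequences over $k$ into one sequence $(h_m)$ from $A$ whose associated families are $\gamma$-covers for \emph{every} $\varepsilon$, so by (b) $h_m\to\mathbf 0$. Homogeneity promotes the Fr\'echet--Urysohn property from $\mathbf 0$ to all of $C_p(X)$. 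The only routine care here is to exclude the degenerate case in which $X$ itself occurs as a member of one of the covers (functions already globally small), which is handled directly.

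For $(iii)\Rightarrow(iv)$ I would argue the contrapositive. If $X$ fails property $\gamma$, fix an $\omega$-cover $\mathcal U=\{U_n\}$ admitting no $\gamma$-subcover, and use it to manufacture a set $S\subseteq C_p(X)$, together with the witnessing point $\mathbf 0$, such that $\mathbf 0\in\overline S\setminus S$ while $S\cap K$ is finite for every compact $K\subseteq C_p(X)$. Such an $S$ is closed in the $k$-ification but not closed, so $C_p(X)$ is not a $k$-space. The functions in $S$ are tailored to the $U_n$, each bounded away from $0$ off a suitable member of $\mathcal U$, so that $\mathbf 0\in\overline S$ by (a), whereas the absence of a $\gamma$-subcover, via (b), already prevents any sequence from $S$ from clustering at $\mathbf 0$.

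The main obstacle is exactly this last step: upgrading ``no sequence from $S$ converges to $\mathbf 0$'', which only refutes the Fr\'echet--Urysohn property, to ``$S\cap K$ is finite for \emph{every} compact $K$'', which is what is needed to refute the $k$-space property. This forces one to understand the compact subsets of $C_p(X)$, not merely its convergent sequences --- the genuinely Pytkeevian part of the argument. I would attack it by showing that any infinite subset of $S$ contained in a compact set would, through the structure of compact subsets of $C_p(X)$ (Grothendieck-type control together with a reduction of compacta to countable data), yield a $\gamma$-subfamily of $\mathcal U$, contradicting the choice of $\mathcal U$. Carrying out this passage from arbitrary compacta to sequential/countable information is the delicate heart of the proof, and I expect it to be where essentially all the difficulty concentrates.
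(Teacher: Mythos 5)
Your cycle is set up correctly and the cover--function dictionary (a)/(b) is the right bookkeeping, but the proposal does not actually prove the hard implication, and the easy one contains a flawed step. For (iii)$\Rightarrow$(iv) you only name the target --- a set $S$ with $\mathbf{0}\in\overline{S}\setminus S$ whose trace on every compact set is finite --- and you concede that the passage from compacta to countable data is missing, gesturing at ``Grothendieck-type control.'' That passage is precisely the content of the theorem, so as it stands the implication is unproven; and the gesture points in the wrong direction. The paper (Theorem \ref{t:H-k-space}, specialized to $H=C_p(X)$) follows Gerlits instead: it splits $\gamma$ as $\varphi\wedge\varepsilon$ (Proposition \ref{p:property-gamma}) and handles the two pieces separately. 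In Lemma \ref{l:Cp-k-space} the failure of $\varphi$ produces a non-closed set $A=\bigcup_{n\in\omega}A_n$ with each $A_n$ \emph{closed}, and the only fact about compacta that is used is elementary: for compact $K\subseteq C_p(X,Y)$ each evaluation image $\{f(x):f\in K\}$ is compact, hence avoids a tail of a closed discrete sequence in $Y$; this stratifies $X=\bigcup_n X_n$, and joint continuity of the evaluation map on $\CC(C,Y)\times C$ for compact $C$ shows this stratification is a suitable sequence, whence $K$ meets only finitely many $A_n$ and $A\cap K$ is closed \emph{as a finite union of closed sets} --- not because it is finite. Property $\varepsilon$ is then obtained by Lemma \ref{l:H-k-space-gamma}, repeating Gerlits' Theorem 4. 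No Rosenthal/Grothendieck-type analysis of compacta in $C_p(X)$ is needed, and no set with finite trace on all compacta is ever constructed.

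The diagonalization in (iv)$\Rightarrow$(i) is also defective as described. If you choose, independently for each $k$, a sequence $(g^k_n)_n$ with $\{[g^k_n<1/k]\}_n$ a $\gamma$-cover and then interleave, you must take only finitely many indices from each level $k$ (otherwise smallness $<\varepsilon$ fails for $\varepsilon<1/k$ on the infinitely recurring low levels); but the ``all but finitely many $n$'' guarantee of a $\gamma$-cover has an exceptional set depending on the finite set $F\subseteq X$, and a single index $n_m$ chosen per level in advance may land in the exceptional set of some $F$ infinitely often. For countable $X$ one can diagonalize along an enumeration, but property $\gamma$ does not make $X$ countable, so the merge has no justification in general. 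What is actually needed is the selection principle of Proposition \ref{p:property-gamma}(iii): from \emph{any} sequence of open $\omega$-covers one can select one member each with $\underline{\lim}\,U_n=X$. This is a theorem of Gerlits--Nagy, not a routine diagonal, and its proof is exactly the mechanism in the paper's Theorem \ref{t:FU-Cp}: form the decreasing meets $\U_n:=\U_{n-1}\wedge\mathcal{W}_n$, delete the points $x_n$ from the members so that any $\gamma$-subsequence of the combined cover is forced to have indices $n_k\to\infty$, and only then extract the convergent sequence $f_i\to\mathbf{0}$. Supplying that argument (or citing the equivalence) is a genuine missing idea in your easy half as well.
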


For a Tychonoff space $X$ we denote by $C^b(X)$ the subspace of $C(X)$ containing all bounded bounded functions.
In \cite{Pytkeev-Ck}, Pytkeev proved the following important result which we shall use below (for the definition of an ideal $\I$ of compact sets in $X$ and the $\I$-open topology see Section \ref{sec:covering}). Although this theorem is proved in \cite{Pytkeev-Ck} only for $C_\I(X)$, the proof for the space $C^b_\I(X)$ is exactly the same because all functions used in the proof can be chosen bounded.
\begin{theorem}[Pytkeev] \label{t:Pytkeev-Ck}
Let $X$ be a Tychonoff space, $\I$ be an ideal of compact subsets of $X$ and let $E=C_\I(X)$ or $E=C^b_\I(X)$. Then the following assertions are equivalent:
\begin{enumerate}
\item[{\rm (i)}] $E$ is a Fr\'{e}chet--Urysohn space;
\item[{\rm (ii)}] $E$ is a sequential space;
\item[{\rm (iii)}] $E$ is a $k$-space.
\end{enumerate}
\end{theorem}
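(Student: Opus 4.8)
The two implications (i) $\Rightarrow$ (ii) $\Rightarrow$ (iii) hold in every topological space: a Fr\'echet--Urysohn space is sequential, and a sequential space is a $k$-space. Hence the whole content is the reverse implication (iii) $\Rightarrow$ (i), and the plan is to prove it by exploiting that $E$ carries the structure of a topological group (indeed of a topological vector space) under addition.

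The first step is a reduction to the filter of neighbourhoods of the zero function $\mathbf{0}$. Since translations are homeomorphisms of $E$, the space $E$ is Fr\'echet--Urysohn if and only if, whenever $\mathbf{0} \in \overline{A}$ for some $A \subseteq E$, there is a sequence in $A$ converging to $\mathbf{0}$. A base at $\mathbf{0}$ is given by the sets $W(K,n) = \{ f \in E : |f(x)| < 1/n \text{ for every } x \in K \}$ with $K \in \I$ and $n \in \NN$, so $\mathbf{0} \in \overline{A}$ means exactly that for each $K \in \I$ and each $n$ some member of $A$ is $1/n$-small on $K$. Producing from this a single sequence that is eventually small on every $K \in \I$ at once is a diagonalization whose feasibility is governed by a covering property of $X$ relative to $\I$. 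Accordingly I would first isolate this combinatorial content as the equivalence that $E$ is Fr\'echet--Urysohn if and only if $X$ has the property $\gamma_\I$; with this in hand, (i) is replaced by the purely covering-theoretic statement $\gamma_\I$.

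The heart of the proof is then to show that the $k$-space property forces $\gamma_\I$, which I would argue contrapositively. Suppose $X$ fails $\gamma_\I$ and fix a witnessing $\I$-cover $\mathcal{U}$ admitting no $\gamma_\I$-subcover. Convert $\mathcal{U}$ into a family of continuous functions encoding it (each one controlled by a single member of $\mathcal{U}$) and assemble from them, by suitable scaling and grouping, a set $B \subseteq E$ with $\overline{B} = B \cup \{\mathbf{0}\}$ and $\mathbf{0} \notin B$. The absence of a $\gamma_\I$-subcover must then be used to show that no sequence drawn from $B$ and lying in a compact subset of $E$ can converge to $\mathbf{0}$, whence $\mathbf{0} \notin \overline{B \cap C}$ for every compact $C$ and $B$ meets every compact set in a closed set. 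Being $k$-closed but not closed, $B$ contradicts (iii). This yields (iii) $\Rightarrow \gamma_\I$, and combined with $\gamma_\I \Rightarrow$ (i) the chain closes.

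The main obstacle is the verification that $B$ is $k$-closed, i.e. the interaction between the combinatorics of $\mathcal{U}$ and the compact subsets of $C_\I(X)$: one must control how a convergent sequence in the $\I$-open topology can behave relative to the cover, and pass from ``no such convergent sequence'' to ``$\mathbf{0} \notin \overline{B \cap C}$'' for all compact $C$. It is precisely here that the ideal $\I$ and the topology enter essentially. Finally, for $E = C^b_\I(X)$ no new idea is needed, since every function arising in the construction can be taken bounded, so the identical argument applies --- exactly as the authors remark before the statement.
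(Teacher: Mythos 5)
You should first know how the paper itself handles this statement: it gives no proof at all. Theorem \ref{t:Pytkeev-Ck} is quoted from Pytkeev's paper \cite{Pytkeev-Ck}, and the authors' only original contribution is the one-line remark that the identical proof covers $E=C^b_\I(X)$ because every function occurring in it can be chosen bounded --- a remark your final paragraph reproduces correctly. Within your own attempt, the implications (i)$\Rightarrow$(ii)$\Rightarrow$(iii) are indeed trivial, and the reduction of (i) to the covering property $\gamma_\I$ is sound and available in the paper (Theorem \ref{t:FU-Cp} and Corollary \ref{c:Cp(X)-FU}, going back to Theorem 4.7.4 of \cite{mcoy}). So the entire weight rests on (iii)$\Rightarrow\gamma_\I$, which is exactly where your proposal stops being a proof and becomes a wish.

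There are two concrete defects in the sketched contrapositive. First, your mechanism for $k$-closedness is logically invalid: from ``no sequence drawn from $B$ and lying in a compact subset of $E$ converges to $\mathbf{0}$'' you infer $\mathbf{0}\notin\overline{B\cap C}$ for every compact $C$, but closures inside compact subsets of $C_\I(X)$ are not determined by sequences. For instance, with $X$ uncountable discrete and $\I=\F(X)$ one has $C_p(X)=\IR^X\supseteq [0,1]^X$, a non-sequential compact; a point can lie in the closure of $B\cap C$ with no convergent sequence witnessing it. The arguments that actually work (Gerlits' Theorem 3 in \cite{Gerlits} for $C_p$, generalized in the paper's Lemma \ref{l:Cp-k-space} to arbitrary ideals) never mention sequences: they write $A=\bigcup_{n\in\w}A_n$ with each $A_n$ \emph{closed} in $E$ and show every compact $K\subseteq E$ meets only finitely many $A_n$, using compactness of the evaluation image $\Omega(K,C)$ for $C\in\I$ to extract an $\I$-sequence $\{X_n\}$ and then the failure of the cover property; $K\cap A$ is then a finite union of closed sets. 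Second, starting from a single $\I$-cover witnessing $\lnot\gamma_\I$ is the wrong configuration. All known proofs split $\gamma_\I$ as $\varphi_\I$ plus $\e_\I$ (Proposition \ref{p:property-gamma-k}) and treat the halves by genuinely different arguments: the $k$-space hypothesis refutes $\lnot\varphi_\I$ via the $A_n$-argument above, which requires an \emph{increasing sequence} of covers $\eta_n$ rather than one cover, while the $\e$-half (Gerlits' Theorem 4, invoked in Lemma \ref{l:H-k-space-gamma}) is run inside the closed subspace of two-valued functions and uses zero-dimensionality of $X$, itself a consequence of $\varphi$. Your ``suitable scaling and grouping'' conflates these two steps, and the target $\overline{B}=B\cup\{\mathbf{0}\}$ is both stronger than needed and not what the construction delivers; what suffices, and what the real proofs produce, is $\mathbf{0}\in\overline{B}\SM B$ together with $B\cap K$ closed for all compact $K$. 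In short, the heart of Pytkeev's theorem is precisely the step you defer, and the one concrete mechanism you propose for it fails.
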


Denote by $\mathbf{2}$ a discrete two element space. For  a zero-dimensional Polish space $X$, a characterization of sequentiality of  $\CC(X,\mathbf{2})$  was obtained in  \cite{GTZ}. The following more general theorem was proved by the first author.

\begin{theorem}[\cite{Gabr-C2}] \label{t:Ascoli-C(X,2)-A}
Let $X$ be  a zero-dimensional metric space $X$. Then:
\begin{enumerate}
\item[{\rm (i)}] $\CC(X,\mathbf{2})$ is a $k$-space if and only if either $X=L\cup D$ is a topological sum of a separable metrizable locally compact space $L$ and a discrete space $D$ or $X$ is not locally compact but the set $X'$  of non-isolated points of $X$ is compact;
\item[{\rm (ii)}] $\CC(X,\mathbf{2})$ is a sequential space  if and only if $X$ is a Polish space and either $X$ is locally compact or $X$ is  not locally compact but the set $X'$ of non-isolated points of $X$ is compact;
\item[{\rm (iii)}] $\CC(X,\mathbf{2})$ is a Fr\'{e}chet--Urysohn space  if and only if $\CC(X,\mathbf{2})$ is a Polish space if and only if $X$ is a Polish  locally compact space.
\end{enumerate}
\end{theorem}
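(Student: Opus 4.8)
The plan is to identify $\CC(X,\mathbf{2})$ with the family of clopen subsets of $X$, regarded as a Boolean topological group, and to exploit the chain of implications Fr\'{e}chet--Urysohn $\Rightarrow$ sequential $\Rightarrow$ $k$-space. Thus (i) is the foundation, while (ii) and (iii) amount to isolating the extra hypotheses---completeness and local compactness---that upgrade the $k$-space property first to sequentiality and then to the Fr\'{e}chet--Urysohn property. An Ascoli-type description of the compact subsets of $\CC(X,\mathbf{2})$ (a family of clopen sets is relatively compact essentially when it is equicontinuous, i.e.\ locally eventually constant) is the computational engine throughout.

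For the ``if'' part of (i) I would split along the stated dichotomy. When $X=L\cup D$ is the topological sum of a locally compact Lindel\"{o}f space $L$ and a discrete space $D$, the natural homeomorphism $\CC(X,\mathbf{2})\cong\CC(L,\mathbf{2})\times\mathbf{2}^D$ reduces matters to a product of a metrizable space (here $L$ is hemicompact, so $\CC(L,\mathbf{2})$ is metrizable) and the compact space $\mathbf{2}^D$, and such a product is a $k$-space. When $X$ is not locally compact but $X'$ is compact, I would show via the Ascoli description that every relatively compact subset of $\CC(X,\mathbf{2})$ is determined by its behaviour on a neighbourhood of the compact set $X'$, and use this to verify the $k$-space property directly. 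For the ``only if'' part I would argue contrapositively: assuming $X$ satisfies neither alternative, so that $X$ has a non-isolated point $x_0$ with no compact neighbourhood while $X'$ is non-compact, I would construct a subset $A\subseteq\CC(X,\mathbf{2})$ meeting every compact set in a closed set yet failing to be closed, building the clopen witnesses from zero-dimensionality around $x_0$.

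For (ii) the dichotomy of (i) is inherited, and the task is to insert completeness. I would prove that sequentiality forces $X$ to be Polish---an uncountable discrete summand (or uncountably many isolated points) embeds the non-sequential cube $\mathbf{2}^D$ and hence yields separability, while a non-complete separable metric space produces a $k$-closed set that is not sequentially closed---and, conversely, that for Polish $X$ each alternative yields a sequential space (in the locally compact case through metrizability on the hemicompact $X$, in the $X'$-compact case through a direct sequential-closure computation). For (iii) the easy links are that metrizable spaces are Fr\'{e}chet--Urysohn, giving Polish $\Rightarrow$ FU, and that a Polish locally compact $X$ is hemicompact, complete and second countable, so that $\CC(X,\mathbf{2})$ is completely metrizable and separable, i.e.\ Polish. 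The substantive implication is FU $\Rightarrow$ $X$ Polish locally compact: by (i) the dichotomy holds, FU forces $X$ to be separable and complete, and it must exclude the ``$X'$ compact, $X$ not locally compact'' alternative, leaving $X$ locally compact and Polish.

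The main obstacle is exactly this last exclusion, namely showing that whenever $X$ is not locally compact the space $\CC(X,\mathbf{2})$ fails to be Fr\'{e}chet--Urysohn even though (under the right hypotheses) it remains sequential. I expect to need a genuinely two-parameter, Arens-type construction: at a point $x_0$ without compact neighbourhood I would produce a countable double array of clopen sets so that the characteristic function of the clopen piece through $x_0$ lies in the closure of the union of the ``rows'' but in the closure of no sequence selected from that union. Checking that zero-dimensionality really supplies such an array, and that it converges in the compact-open topology exactly as the fan demands, is the delicate point that distinguishes the sequential case from the Fr\'{e}chet--Urysohn case.
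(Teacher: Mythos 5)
This theorem is not proved in the paper at all: it is imported verbatim from \cite{Gabr-C2} (with its sequential core in the Polish case going back to \cite{GTZ}), so there is no in-paper argument to compare against; your proposal has to be judged as a reconstruction of that cited proof, and as such it has genuine gaps.

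The most concrete gap is in the ``only if'' direction of (i): the negation of the stated dichotomy is \emph{not} ``$X$ has a point without compact neighbourhood and $X'$ is non-compact''. It also includes the case where $X$ \emph{is} locally compact but is not a topological sum of a separable locally compact space and a discrete space --- for instance $X=\bigoplus_{\alpha<\omega_1}(\omega+1)$, for which $\CC(X,\mathbf{2})\cong \mathbb{N}^{\omega_1}$ fails to be a $k$-space. Handling this case requires its own argument (decompose a locally compact metric space into clopen $\sigma$-compact summands, note $\CC(X,\mathbf{2})$ becomes a product of metrizable groups, and show a $k$-space product forces all but countably many factors to be compact, i.e.\ all but countably many summands discrete); your contrapositive never touches it. Second, the two genuinely hard positive assertions are left as placeholders: that $\CC(X,\mathbf{2})$ is a $k$-space whenever $X'$ is compact and $X$ is not locally compact (where $X$ may be non-separable, so this does not reduce to the Polish situation), and that it is sequential when $X$ is in addition Polish --- the latter is precisely the main theorem of \cite{GTZ}, and ``a direct sequential-closure computation'' names the difficulty without supplying an idea for it. An outline that defers exactly these two steps has not located a proof.

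One branch of your plan for (ii) is also aimed at an empty case. Any zero-dimensional metric $X$ satisfying the dichotomy of (i) is automatically completely metrizable: locally compact metrizable spaces are \v{C}ech-complete, and if $X'$ is compact then $X=X'\cup I$ with $I$ the (open, discrete-in-itself) set of isolated points, so in the completion $\widetilde{X}$ the set $I$ is open in its closure, hence locally closed and $G_\delta$, while $X'$ is closed; thus $X$ is $G_\delta$ in $\widetilde{X}$. Consequently ``Polish'' in (ii) and (iii) reduces, modulo (i), to separability, and the only work is your $2^{D}$-embedding (when $X'$ is compact and $X$ is non-separable, the isolated points at distance $\geq 1/n$ from $X'$ form a clopen discrete summand, uncountable for some $n$, so $2^{\omega_1}$ embeds as a closed subspace); the promised construction of a $k$-closed, non-sequentially-closed set from incompleteness is never needed and, as stated, rests on a misconception. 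Your overall skeleton --- FU $\Rightarrow$ sequential $\Rightarrow$ $k$-space, the splitting $\CC(L\cup D,\mathbf{2})\cong \CC(L,\mathbf{2})\times\mathbf{2}^{D}$, an Ascoli-type description of compacta, and an Arens-type double array to kill the Fr\'{e}chet--Urysohn property at non-locally-compact points --- does match the architecture of \cite{Gabr-C2}, but the proposal as written omits the case analysis above and defers the theorem's actual content.
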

Let us also mention the following interesting result of Pol and Smentek \cite{PolSmen}: {\em If $X$ is a zero-dimensional realcompact $k$-space, then the group $\CC(X,\mathbf{2})$  is reflexive.} For other results concerning topological properties of $\CC(X,\mathbf{2})$ see \cite{Banakh-Survey,Gabr-C2}.

Let $X$ be a Tychonoff space and let $E$ be a locally convex space. In the theory of locally convex spaces there is a long tradition to investigate locally convex properties of the space $C(X,E)$ endowed with the pointwise topology or the compact-open topology by means of topological properties of $X$ and locally convex properties of $E$. The most important case is the case when $E$ is a Banach space (so if additionally $X$ is compact, we obtain the widely studied class of Banach spaces). For numerous results obtained in the eighties of the last century and historical remarks we refer the reader to the well known lecture notes of Schmets \cite{Schmets}.

The aforementioned results motivate the following general problem.
\begin{problem} \label{prob:gen-top-C(X)}
Let $Y$ be a metric space containing at least two points, $\I$ be an ideal of compact sets in a topological space $X$, and let $\mathcal{P}$ be a topological property. Characterize (in terms of $X$ and $Y$) the spaces $C_\I(X,Y)$ with $\mathcal{P}$.
\end{problem}
In this paper we concentrate mainly on the properties of being a Fr\'{e}chet--Urysohn, sequential, normal space or a $k$-space.

If $X$ is connected and $Y$ is discrete, then the function space $C(X,Y)$ contains only constant functions. So to avoid such  unpleasant cases and to have the space $C(X,Y)$ sufficiently rich, we have to consider several separation axioms by analogy with the classical notion of  Tychonoff spaces (notice that exactly by this reason the space $X$ in \cite{GTZ,PolSmen} and Theorem \ref{t:Ascoli-C(X,2)-A} is assumed to be zero-dimensional). For the topology of pointwise convergence such axioms were considered in \cite{BG-Baire}.
In Section \ref{sec:covering}, we define and study necessary separation axioms and covering properties depending on an ideal of compact subsets of $X$ which are essential for our main results.

In Section \ref{sec:FU-sequentiality}, in terms of covers of the space $X$  we obtain the following results: (a) a characterization of spaces $X$ for which the space $C_\I(X,Y)$ is Fr\'{e}chet--Urysohn (Theorem \ref{t:Cp(X,Y)-FU}), and (b) a characterization of zero-dimensional $T_1$-spaces $X$ for which the space $C_\I(X,\mathbf{2})$ is sequential (Theorem \ref{t:CI-2-sequential}).
In Section \ref{sec:k-normal}, for a non-compact metric space $Y$ we show that the conclusion of the Gerlits--Nagy--Pytkeev Theorem \ref{t:GNP} still remains true:
$C_p(X,Y)$ is a Fr\'{e}chet--Urysohn space if and only if it is a sequential space if and only if it is a $k$-space if and only if $X$ has the property $\gamma$ (see  Theorem \ref{t:H-k-space}).  Applying Pytkeev's Theorem \ref{t:Pytkeev-Ck},  we show  in  Theorem \ref{t:Pytkeev-Ck-Cb} that its conclusion is true also for the spaces $C_\I^b(X,Y)$ and $C_\I^{rc}(X,Y)$, where $Y$ is a metrizable locally convex space and $C_\I^{rc}(X,Y)$ denotes the space of all continuous functions $f$ whose image $f(X)$ is relatively compact in $Y$.

Although the classes of continuous functions are the most important, there are other classes of (noncontinuous) functions which are of significant importance and widely studied in General Topology and Analysis; for example, the classes of Baire type functions introduced and studied by Baire \cite{Baire}.

Let $X$ and $Y$ be topological spaces. For $\alpha=0$, we put $B_0(X,Y):=C_p(X,Y)$. For every nonzero countable ordinal $\alpha$, let $B_\alpha(X,Y)$ be the family of all functions $f:X\to Y$ that are pointwise limits of sequences $\{f_n\}_{n\in\w}\subseteq \bigcup_{\beta<\alpha}B_\beta(X,Y)$ and $B(X,Y):=\bigcup_{\alpha<\w_1}B_\alpha(X,Y)$. All the spaces $B_\alpha(X,Y)$ and $B(X,Y)$ are endowed with the topology of pointwise convergence, inherited from the Tychonoff product $Y^X$. If $Y=\IR$, set $B_\alpha(X):=B_\alpha(X,\IR)$ and $B(X):=B(X,\IR)$.

The most important case is the case when $X$ is a Polish space. This is explained not only by the classical results due to Ren\'e Baire, Henri Lebesgue and others, but also  by some deep results in the Banach space theory. If $X$ is a Polish space, the compact subsets of $B_1(X)$ (called {\em Rosenthal compact}) have been studied intensively also by Godefroy \cite{Godefroy}, Todor\v{c}evi\'{c} \cite{Todorcevic} and many others. For a Banach space $Y$, the compact subsets of $B_1(X,Y)$ are called {\em Rosenthal--Banach compacts}; they  were introduced and studied in \cite{Mer-Sta}.
The study of topological properties of spaces of Baire functions is also motivated by the following fundamental theorem which is proved by Bourgain, Fremlin and Talagrand \cite{BFT} and is a strengthening of a result of Rosenthal \cite{Rosenthal-compact}:

\begin{theorem}[Bourgain--Fremlin--Talagrand] \label{t:angelic-BFT}
If $X$ is a Polish space, then $B_1(X)$ is angelic.
\end{theorem}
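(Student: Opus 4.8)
The plan is to derive angelicity of $B_1(X)$ from Rosenthal's $\ell^1$-dichotomy together with the descriptive structure of the Polish space $X$. Recall that a regular space $Z$ is angelic precisely when every relatively countably compact $A\subseteq Z$ is in fact relatively compact and, moreover, each point of $\overline{A}$ is the limit of a sequence drawn from $A$. So it suffices to fix a relatively countably compact set $A\subseteq B_1(X)$ and verify these two assertions. Since $B_1(X)\subseteq\IR^X$ carries the pointwise topology and $A$, being relatively countably compact, is pointwise bounded at each point of $X$, its closure $\overline{A}$ taken in $\IR^X$ is already compact by Tychonoff's theorem; the real task is therefore to control which functions occur in $\overline{A}$ and to reach them by sequences.

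First I would prove the sequential extraction lemma: every sequence $(f_n)$ in $A$ has a subsequence converging pointwise to a member of $B_1(X)$. Here Rosenthal's theorem splits $(f_n)$, after passing to a subsequence, into either a pointwise convergent sequence or a sequence equivalent to the unit vector basis of $\ell^1$. The second alternative must be excluded: an $\ell^1$-sequence is pointwise ``independent'', and -- as the coordinate functions on the Cantor space $\cantor$ already illustrate -- the pointwise cluster points of such a sequence (the ultrafilter limits) are not even Baire measurable, hence lie outside $B_1(X)$. This contradicts the hypothesis that every sequence in $A$ has a cluster point inside $B_1(X)$. Thus only the convergent alternative survives; its pointwise limit is the unique cluster point of the subsequence in $\IR^X$, so it must coincide with the cluster point guaranteed in $B_1(X)$ and hence lies in $B_1(X)$. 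In particular $\overline{A}\subseteq B_1(X)$, so $A$ is relatively compact, which settles the first requirement of angelicity.

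The harder half is the Fr\'echet--Urysohn part: an \emph{arbitrary} $g\in\overline{A}$ must be approached by a sequence from $A$, not merely be a cluster point of one. Here I would exploit that $X$ is Polish, hence second countable: pointwise convergence can be tested against a fixed countable dense subset of $X$ together with a countable base, which first lets one show that $g$ already lies in the closure of some countable $A_0\subseteq A$. Applying the extraction lemma repeatedly to sequences drawn from $A_0$, organized through a Pryce-type iterated-limit (double sequence) argument, one assembles a single sequence in $A$ converging to $g$. The main obstacle is precisely this last step -- upgrading ``every sequence has a cluster point in $B_1(X)$'' to ``the given point $g$ is a genuine sequential limit''. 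This is where the full descriptive-set-theoretic characterization of Baire class one on a Polish space (Baire's theorem that a Baire-$1$ function has a point of continuity on every nonempty closed set) is needed, both to exclude the $\ell^1$ configurations along the diagonal construction and to certify that the diagonal sequence one builds actually converges to $g$ while remaining inside $B_1(X)$.
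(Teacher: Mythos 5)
The paper does not actually prove this theorem: it is quoted verbatim from \cite{BFT} as background (with no proof given), so your outline has to stand on its own --- and as written it has two genuine gaps, both located at the points where the real work of Bourgain--Fremlin--Talagrand happens.

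First, the compactness half. Your extraction lemma (Rosenthal's dichotomy plus the non-measurability of cluster points of independent sequences) controls only the \emph{sequential} closure of $A$. The sentence ``In particular $\overline{A}\subseteq B_1(X)$'' is a non sequitur: a point of $\overline{A}$ in the compact cube is a limit of $A$ along some ultrafilter, not a limit of a sequence from $A$, and you cannot replace it by a sequential limit at this stage --- that replacement is exactly the Fr\'{e}chet--Urysohn property you are trying to prove afterwards. Rosenthal's actual argument for $\overline{A}\subseteq B_1(X)$ is separate: one takes an arbitrary $g\in\overline{A}$ that is not of Baire class one, uses the Baire characterization (the restriction of $g$ to some nonempty closed $F\subseteq X$ has oscillation bounded below everywhere on $F$) and a Baire-category fragmentation argument to manufacture an independent subsequence \emph{inside} $A$, and only then invokes the dichotomy. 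That step is missing from your sketch.

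Second, the Fr\'{e}chet--Urysohn half. The claimed reduction to a countable $A_0\subseteq A$ via second countability of $X$ fails: the pointwise topology on $B_1(X)$ is \emph{not} determined by a countable dense subset of $X$, since two Baire-one functions can agree on a dense set and differ elsewhere; a basic neighborhood of $g$ involves finitely many \emph{arbitrary} points of $X$, and there are uncountably many relevant finite coordinate sets. Countable tightness of pointwise compact subsets of $B_1(X)$ is itself one of the nontrivial theorems of \cite{BFT}, not a consequence of separability of $X$. And even granting countable tightness plus your extraction lemma, a Pryce-type iterated-limit diagonalization only yields \emph{some} convergent sequences; it gives no mechanism forcing the limit to be the prescribed cluster point $g$ --- compact sequentially compact spaces need not be Fr\'{e}chet--Urysohn, so no soft diagonal argument can close this. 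This is precisely where the substantial machinery of the original proof (or the later game-theoretic and Ramsey-theoretic proofs of Debs and Todor\v{c}evi\'{c}) enters, and your final paragraph acknowledges the obstacle without bridging it. So the proposal is a reasonable map of the classical strategy, but not a proof.
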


In a more general setting when $X$ is a Tychonoff space, some of topological properties of the spaces $B_\alpha(X), \alpha\in(0,\w_1],$ were studied by Pestryakov in his thesis \cite{Pestryakov}. Among the others, he proved the following results.
\begin{theorem}[Pestryakov] \label{t:Pest-k-space}
Let $X$ be a Tychonoff space and $0<\alpha\leq \omega_1$.
\begin{enumerate}
\item[{\rm (A)}] The following assertions are equivalent:
\begin{enumerate}
\item[{\rm (i)}] $B_{\alpha}(X)$ is a Fr\'{e}chet--Urysohn space;
\item[{\rm (ii)}] $B_{\alpha}(X)$ is a sequential space;
\item[{\rm (iii)}] $B_{\alpha}(X)$ is a  $k$-space;
\item[{\rm (iv)}] $B_{\alpha}(X)$ has countable tightness;
\item[{\rm (v)}] $X_{\aleph_0}$ is a Lindel\"{o}f space;
\item[{\rm (vi)}] $X_{\aleph_0}$ satisfies the property $\gamma$.
\end{enumerate}
\item[{\rm (B)}] $t\big(B_{\alpha}(X)\big)=\sup\{l(X^n_{\aleph_0}): n\in \omega \}$.
\end{enumerate}
\end{theorem}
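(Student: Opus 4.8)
The plan is to transfer the whole statement to the classical $C_p$-theory over the \emph{Baire modification} $Z:=X_{\aleph_0}$, exploiting that this modification is a P-space. First I would isolate the structural facts about $Z$ that drive everything. Since the characteristic function of a Baire cozero set is again a Baire function and, for cozero sets $C_n=\{f_n\neq 0\}$ of Baire functions, $\chi_{\bigcap_n C_n}=\prod_n\chi_{C_n}$ is a pointwise limit of Baire functions, countable intersections of $Z$-open sets are $Z$-open; thus $Z$ is a zero-dimensional P-space. Three consequences matter: (a) every countable subset of $Z$ is closed and discrete, so $Z$ carries no nontrivial convergent sequences; (b) a finite (indeed countable) product of Lindel\"{o}f P-spaces is Lindel\"{o}f, so $\sup_n l(Z^n)=\aleph_0$ precisely when $Z$ itself is Lindel\"{o}f; and (c) a Lindel\"{o}f P-space has the property $\gamma$ (the converse $\gamma\Rightarrow$ Lindel\"{o}f holding for any space). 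I would use (c) as a black box, flagging it as the point where P-space-ness is essential.

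Next I would prove part (B), the analytic core, as a Baire analogue of Arhangelskii's formula $t(C_p(Z))=\sup_n l(Z^n)$. The mechanism is the closure description in the pointwise topology: $f$ lies in the closure of $A\subseteq B_\alpha(X)$ iff, for every $\varepsilon>0$, the sets $\{x:|g(x)-f(x)|<\varepsilon\}$ with $g\in A$ form an $\omega$-cover of $X$, and each such set is a cozero set of $Z=X_{\aleph_0}$. Tracking how large a subfamily of $A$ is needed to keep $f$ in the closure, via the standard $n$-fold diagonal argument, converts into the Lindel\"{o}f numbers of the finite powers $Z^n$, giving $t\big(B_{\alpha}(X)\big)=\sup\{l(X^n_{\aleph_0}):n\in\omega\}$. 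Combining part (B) with consequence (b) yields (iv)$\Leftrightarrow$(v): $B_\alpha(X)$ has countable tightness iff $\sup_n l(Z^n)=\aleph_0$ iff $Z$ is Lindel\"{o}f.

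I would then close the remaining equivalences by running the Gerlits--Nagy--Pytkeev argument (Theorem \ref{t:GNP}) in the Baire setting. The implications (i)$\Rightarrow$(ii)$\Rightarrow$(iii) are the general ones for the topological vector space $B_\alpha(X)$, and (i)$\Rightarrow$(iv) is the standard Fr\'{e}chet--Urysohn$\Rightarrow$countable tightness. For the decisive reverse directions, the closure description from part (B) shows that each of the properties Fr\'{e}chet--Urysohn, sequential and $k$-space for $B_\alpha(X)$ is equivalent to $Z$ having the property $\gamma$ (feeding a $\gamma$-shrinking of $\omega$-covers by cozero sets of $Z$ back through the closure criterion); this gives (i)$\Leftrightarrow$(ii)$\Leftrightarrow$(iii)$\Leftrightarrow$(vi). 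Finally, consequence (c) together with the general implication $\gamma\Rightarrow$Lindel\"{o}f gives (v)$\Leftrightarrow$(vi), so all six conditions coincide.

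The hard part will be the $\alpha$-independence: that the single topology $Z=X_{\aleph_0}$ governs $B_\alpha(X)$ simultaneously for every $\alpha\in(0,\omega_1]$. One inclusion is free, since every cozero set of a function of class $\alpha$ is $Z$-open; but to run the converse one must witness a given $Z$-open ($\omega$- or $\gamma$-) cover, whose members may be clopen sets of arbitrarily high Baire class, by cozero sets of functions of the \emph{fixed} class $\alpha$ (for small $\alpha$, even $\alpha=1$, this is not automatic). The crux is therefore to show that the covering properties actually at stake --- $\gamma$, Lindel\"{o}f, and finite-power Lindel\"{o}f --- are already detected by covers arising from class-$\alpha$ functions, so that the characterisations do not see the index $\alpha$. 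I expect this reduction, rather than the transfer of the $C_p$-machinery itself, to be the main obstacle.
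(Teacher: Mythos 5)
Your architecture is the paper's: embed $B_\alpha(X)$ into $C_p(X_{\aleph_0},\IR)$ (Proposition \ref{p:B(X,Y)-C(X,Y)}), run the Gerlits--Nagy--Pytkeev machinery over $Z=X_{\aleph_0}$, get (iv)$\Rightarrow$(v) by a cover argument, (v)$\Rightarrow$(vi) by the Galvin lemma for Lindel\"{o}f $P$-spaces, and part (B) as a tightness formula $t(B_\alpha(X))=p\mbox{-}\Lin(Z)=\sup_n l(Z^n)$. But the step you explicitly defer --- witnessing $Z$-open $\omega$-covers by functions of the \emph{fixed} class $\alpha$, uniformly in $\alpha$ --- is not a peripheral reduction; it is the entire content of the lower-bound directions (countable tightness $\Rightarrow Z$ Lindel\"{o}f, $k$-space $\Rightarrow\gamma$, and $t(B_\alpha(X))\geq p\mbox{-}\Lin(Z)$), and your proposal contains no mechanism for it. Without it, nothing prevents, say, $B_1(X)$ from having countable tightness while $Z$ fails to be Lindel\"{o}f: the closure criterion you invoke only shows which covers \emph{arise} from $B_\alpha$, not that every basic $Z$-open cover is realized by class-$\alpha$ separating functions.

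The paper closes exactly this gap with its key technical Lemma \ref{l:G-del-modif} (built on Lemma \ref{l:Baire-1}): for every zero-set $F$ in $X$, finite $A\subseteq F$ and prescribed finite values, there is an $f\in B_1^{st}(X,\IR)$ with $f{\restriction}_A$ prescribed, $f$ constant on zero-sets $Z_i\subseteq F$ through the points of $A$, and $f$ equal to a fixed value off $\bigcup_i Z_i$; the construction is an iterated refinement $F_{t+1}^i=F_t^i\cap f_{t+1}^{-1}(y_i)$ producing a \emph{stably} convergent sequence of continuous functions. Since $B_1^{st}(X)\subseteq B_\alpha(X)$ for every $\alpha>0$, all witnesses live in the lowest class, and the index $\alpha$ disappears. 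Formally this is packaged as Proposition \ref{p:G-delta-D-Tychonoff}: any $H$ with $B_1^{st}(X)\subseteq H\subseteq B(X)$ is a relatively $\mathcal{D}_p$-Tychonoff subspace of $C_p(X_{\aleph_0},\mathcal{D})$, which is precisely the richness hypothesis under which the paper's generalized machinery applies --- Theorem \ref{t:Cp-tight} gives $t(H)=p\mbox{-}\Lin(X_{\aleph_0})$ (part (B), via McCoy--Ntantu's $p\mbox{-}\Lin(Z)=\sup_n l(Z^n)$), and Theorem \ref{t:H-k-space} (with the closed discrete sequence $\mathcal{D}=\w\subseteq\IR$, since $\IR$ is not compact) gives $k$-space $\Rightarrow\gamma$, closing the cycle exactly as in Theorems \ref{t:B1-topological-sequential} and \ref{t:B1-k-space} and Corollary \ref{c:B1-k-space}. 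So your outline is sound and your diagnosis of the crux is accurate, but as written the proof is missing its central construction; supplying Lemma \ref{l:G-del-modif} (or an equivalent stable-Baire-one extension lemma over zero-sets) is what turns the outline into a proof.
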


The next theorem was proved recently by the first author.
\begin{theorem}[\cite{Gabr-B1}] \label{t:Baire-class}
Let $G$ be a non-precompact abelian metrizable group, $X$ a $G$-Tychonoff first countable space and let $H$ be a subgroup of $G^X$ containing $B_1(X,G)$. Then the following assertions are equivalent:
\begin{enumerate}
\item[{\rm (i)}] $X$ is countable;
\item[{\rm (ii)}] $H$ is a metrizable space and $H=G^X$;
\item[{\rm (iii)}] $H$ has countable tightness;
\item[{\rm (iv)}] $H$ has countable $cs^\ast$-character;
\item[{\rm (v)}] $H$ is a $\sigma$-space;
\item[{\rm (vi)}] $H$ is a $k$-space.
\end{enumerate}
If in addition $B_{2}(X,G)\subseteq H$, then {\em (i)-(vi)} are equivalent to
\begin{enumerate}
\item[{\rm (vii)}] $H$ is a normal space.
\end{enumerate}
\end{theorem}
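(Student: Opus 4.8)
The plan is to establish the star-shaped scheme (i)$\Rightarrow$(ii), (ii)$\Rightarrow$(iii)--(vi), and (iii)$\Rightarrow$(i), (iv)$\Rightarrow$(i), (v)$\Rightarrow$(i), (vi)$\Rightarrow$(i); this yields the equivalence of (i)--(vi), after which (vii) is handled separately. The conceptual key, used everywhere below, is that since $X$ is first countable and $T_1$ every singleton is a $G_\delta$, so the $\aleph_0$-modification $X_{\aleph_0}$ of $X$ (the set $X$ retopologized by its $G_\delta$-sets) is \emph{discrete}; in particular $|X_{\aleph_0}|=|X|$ and $X_{\aleph_0}$ is Lindel\"{o}f iff $X$ is countable. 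First I would record that the $G$-Tychonoff hypothesis provides, at every $x\in X$ and inside every neighbourhood of $x$, a continuous $G$-valued ``bump'' taking a prescribed value at $x$; hence each $a\,\chi_{\{x\}}$ (value $a\in G$ at $x$, $0$ elsewhere) is a pointwise limit of such bumps and lies in $B_1(X,G)\subseteq H$, and more generally every finitely supported function lies in $B_1(X,G)$. For (i)$\Rightarrow$(ii): a countable first countable $T_1$ space is second countable and (being $G$-Tychonoff, hence regular) metrizable, so $G^X$ is a countable product of metrizable groups and is metrizable; enumerating $X=\{x_n\}$ and gluing finitely many bumps with disjoint supports shows that \emph{every} $f\in G^X$ is a pointwise limit of continuous functions, i.e. $B_1(X,G)=G^X$, whence $H=G^X$ is metrizable.

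Since every metrizable space is Fr\'{e}chet--Urysohn, has countable $cs^\ast$-character, is a $\sigma$-space and is a $k$-space, (ii) implies each of (iii)--(vi). For the converses I would argue the contrapositive, producing a ``bad'' subspace of $H$ whenever $X$ is uncountable. For (iii)$\Rightarrow$(i): fix $a\in G$ with $d(a,0)=\delta>0$ for an invariant metric $d$, let $A\subseteq B_1(X,G)$ be the set of finitely supported functions with values in $\{0,a\}$, and consider the constant function $\mathbf a\equiv a$. Every basic neighbourhood of $\mathbf a$ constrains only finitely many coordinates, so it meets $A$ and $\mathbf a\in\overline A$; but for any countable $B\subseteq A$ the set $\bigcup_{f\in B}\supp f$ is countable, so picking $x^\ast\notin\bigcup_{f\in B}\supp f$ the neighbourhood $\{f:d(f(x^\ast),a)<\delta/2\}$ of $\mathbf a$ misses $B$, giving $\mathbf a\notin\overline B$; hence $t(H)>\aleph_0$. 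For (v)$\Rightarrow$(i): when $X$ is uncountable the subspace $T=\{\mathbf 0\}\cup\{a\,\chi_{\{x\}}:x\in X\}$ is homeomorphic to the one-point compactification of an uncountable discrete space, because each $a\,\chi_{\{x\}}$ is isolated in $T$ while every neighbourhood of $\mathbf 0$ contains all but finitely many of them; such a space is not a $\sigma$-space (any $\sigma$-discrete network must contain every isolated singleton, so some discrete level carries uncountably many, contradicting that each neighbourhood of $\mathbf 0$ meets uncountably many), and $\sigma$-spaces are hereditary, so $H$ is not a $\sigma$-space.

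The two remaining converses are where the real work lies, and both are handled by exhibiting a distinguished closed subspace of $H$. For (iv)$\Rightarrow$(i) the relevant obstruction is a closed copy of the sequential fan $S_\w$, whose $cs^\ast$-character is uncountable: I would use that an uncountable $X$ forces $H$ to fail first countability at $\mathbf 0$, and extract from an uncountable ``independent'' family of convergent sequences $a\,\chi_{\{x\}}\to\mathbf 0$ such a fan, countable $cs^\ast$-character being inherited by subspaces. The genuinely hard implication, and the main obstacle, is (vi)$\Rightarrow$(i). Here non-precompactness of $G$ is essential: if $G$ were compact then $G^X$ would be a compact $k$-space for every $X$, so the $k$-space property alone could not force $X$ countable. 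The plan is to adapt Pestryakov's argument (Theorem~\ref{t:Pest-k-space}), which in the scalar case identifies the $k$-space property of $B_\alpha(X)$ with Lindel\"{o}fness of $X_{\aleph_0}$, to the $G$-valued setting. Non-precompactness supplies an $\varepsilon$-separated sequence $\{a_n\}\subseteq G$ with no convergent subsequence, which plays the role of an ``unbounded'' scalar probe; using it together with the discreteness of the uncountable (hence non-Lindel\"{o}f) $X_{\aleph_0}$, one builds a non-closed subset of $H$ whose trace on every compact set is closed, so that $H$ is not a $k$-space. Verifying that the escaping behaviour of $\{a_n\}$ makes the compact subsets of $H$ ``too small'' to detect this set is the crux of the whole theorem.

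Finally, for (vii) under the extra hypothesis $B_2(X,G)\subseteq H$: the implication (i)$\Rightarrow$(vii) is immediate, a metrizable space being normal. For the converse I would again take $X$ uncountable and construct two disjoint closed subsets of $H$ that cannot be separated, following A.~H.~Stone's proof that $\w^{\w_1}$ is not normal: the $\varepsilon$-separated sequence $\{a_n\}$ yields a closed copy of discrete $\w$ inside $G$ (hence of $\w^{\w_1}$ inside $G^{\w_1}$), an uncountable family of pairwise disjoint $G_\delta$-subsets of $X$ supplies the $\w_1$ indices, and the hypothesis $B_2(X,G)\subseteq H$ is exactly what guarantees that the indicator-type functions realizing these coordinates are of Baire class at most $2$ and hence belong to $H$. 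Arranging this embedding to be closed, and checking that the two Stone sets remain unseparated in $H$, is the delicate point, and it is the only place where the stronger assumption $B_2(X,G)\subseteq H$ is used.
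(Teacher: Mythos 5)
Much of your scheme is sound: (i)$\Rightarrow$(ii) via finite interpolation (this is Proposition \ref{p:B1-countable} in the paper), the finitely-supported-functions tightness argument for (iii)$\Rightarrow$(i), and the subspace $T=\{\mathbf{0}\}\cup\{a\,\chi_{\{x\}}:x\in X\}$ for (v)$\Rightarrow$(i) all work, and your use of the discreteness of $X_{\aleph_0}$ for first countable $X$ matches Corollary \ref{c:cell-countable}. But your (iv)$\Rightarrow$(i) rests on a false obstruction: the sequential fan $S_\omega$ \emph{does} have countable $cs^\ast$-character. At the apex, the countable family consisting of $\{\mathrm{apex}\}$ together with all tails of the countably many spines is a $cs^\ast$-network: a sequence converging to the apex and not eventually equal to it must place infinitely many terms on a single spine with indices tending to infinity (otherwise deleting its finite trace from each spine yields a neighbourhood of the apex missing infinitely many terms), and every neighbourhood of the apex contains a tail of that spine. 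So a closed copy of $S_\omega$ proves nothing here; nor does failure of first countability at $\mathbf{0}$ imply uncountable $cs^\ast$-character. The correct witness is exactly your set $T$ from (v): it is a copy of the one-point compactification of an uncountable discrete space, whose $cs^\ast$-character is uncountable by Proposition 9 of \cite{BZ} — this is precisely what the paper's extension, Theorem \ref{t:B1-topological-cs}, invokes — and $cs^\ast$-character is monotone under subspaces, so (iv)$\Rightarrow$(i) follows from the same construction as (v).

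The more serious issue is that for (vi)$\Rightarrow$(i) and (vii)$\Rightarrow$(i) you offer only plans, and these two implications carry essentially all the content of the theorem. For (vi), the paper does not build an ad hoc non-closed set with closed compact traces; it embeds $H$ into $C_p(X_{\aleph_0},G)$ (Proposition \ref{p:B(X,Y)-C(X,Y)}), checks via Lemma \ref{l:G-del-modif} and Proposition \ref{p:G-delta-D-Tychonoff} that $H$ is relatively $\mathcal{D}_p$-Tychonoff there, and then runs the Gerlits-type machinery: Lemma \ref{l:Cp-k-space} shows that if $H$ is a $k$-space then $X_{\aleph_0}$ has the property $\varphi$ — this is where the closed discrete sequence $\{g_n\}$ supplied by non-precompactness is used, both to make the sets $A_n$ closed and to show every compact set of functions is "uniformly bounded" on members of the ideal — and Lemma \ref{l:H-k-space-gamma} upgrades $\varphi$ plus the $k$-property to $\gamma$; discreteness of $X_{\aleph_0}$ then turns $\gamma$ into countability of $X$. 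Your sketch names the right ingredients, but the crux you defer ("compact subsets are too small to detect the set") is exactly the content of Lemma \ref{l:Cp-k-space} and is not supplied. For (vii), the Stone-style closed embedding of $\omega^{\omega_1}$ is unlikely to be available: the paper's Proposition \ref{p:normality-func} obtains such a closed embedding only from a \emph{discrete} family of $\omega_1$ open sets in $X$, and an uncountable family of pairwise disjoint $G_\delta$-sets gives no control on closedness of the image. That is why \cite{Gabr-B1} (quoted here as Theorem \ref{t:Baire-class-normal}) instead constructs two explicit disjoint closed sets $A=\cl_H(D)$ and $B=\cl_H(E)$ of almost-constant $T$-valued functions and shows directly, by an inductive choice of basic neighbourhoods, that no disjoint open sets separate them; your guess about the role of $B_{2}(X,G)\subseteq H$ is correct (the relevant functions are pointwise limits of sequences from $\sigma(\mathbf{g}_0,T)\cup\sigma(\mathbf{g}_1,T)$, hence Baire-two), but the non-separation argument you leave as "the delicate point" is the whole proof.
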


In Section \ref{sec:Baire-topology} we essentially extend Theorems \ref{t:Pest-k-space} and \ref{t:Baire-class}, see Corollaries \ref{c:B1-k-space} and \ref{c:B1-metric-space}, respectively (see also Theorem \ref{t:Baire-bounded}).

To get our main results of Sections \ref{sec:FU-sequentiality}-\ref{sec:Baire-topology}, we actively use the next two ideas. The first one was successfully applied in  \cite{Gabr-B1} to prove Theorem \ref{t:Baire-class}, and it is that instead of the whole function space $C(X,Y)$ we consider only some of its sufficiently rich and saturated subspaces, see Definition \ref{def:relatively-Y-Tych}. For example, if $Y$ is a locally convex space, then the spaces $C_\I^b(X,Y)$ and $C_\I^{rc}(X,Y)$ satisfy Definition \ref{def:relatively-Y-Tych} (Proposition \ref{p:Y-LCS=>good-I}).  The second general idea used in Section \ref{sec:Baire-topology} is standard in the theory of spaces of Baire function (see for example \cite{Jayne,leri}), and it is that we reduce the study of the spaces of Baire  functions to the study of  subspaces of the space $C_p(X_{\aleph_0},Y)$, where $X_{\aleph_0}$ is the set $X$ endowed with the Baire topology (for details see Section \ref{sec:covering}). It turns out (Proposition \ref{p:G-delta-D-Tychonoff}) that already the space $B_{1}(X,Y)$ considered as a subspace of $C_p(X_{\aleph_0},Y)$ is sufficiently rich in the sense of Definition \ref{def:relatively-Y-Tych} and, therefore, we can apply our main results of Sections \ref{sec:FU-sequentiality} and  \ref{sec:k-normal}    to get essential generalizations of Theorems \ref{t:Pest-k-space} and \ref{t:Baire-class}.

By Theorem \ref{t:Baire-class}, if $\alpha>1$, then the space $B_{\alpha}(X,G)$ is normal if and only if $X$ is countable. However, the question of whether the same holds true also for the space $B_{1}(X,G)$ of functions of the first Baire class remains open. In the last Section \ref{sec:normal} we obtain a partial answer to this question in the most important case of Polish spaces $X$ proving the following result (Corollary \ref{c:B1-Polish-normal}): {\em If $X$ is a Polish space, then $B_1(X)$ is normal if and only if $X$ is countable.}

%%%%%%%%%%%%%%%%%%%%%%%%%%%%%%%%%%%%%%%%%%
%%%%%%%%%%%%%%%%%%%%%%%%%%%%%%%%%%%%%%%%%%
%%%%%%%%%%%%%%%%%%%%%%%%%%%%%%%%%%%%%%%%%%
%%%%%%%%%%%%%%%%%%%%%%%%%%%%%%%%%%%%%%%%%%

\section{Some covering properties and separation axioms} \label{sec:covering}

%%%%%%%%%%%%%%%%%%%%%%%%%%%%%%%%%%%%%%%%%%
%%%%%%%%%%%%%%%%%%%%%%%%%%%%%%%%%%%%%%%%%%
%%%%%%%%%%%%%%%%%%%%%%%%%%%%%%%%%%%%%%%%%%
%%%%%%%%%%%%%%%%%%%%%%%%%%%%%%%%%%%%%%%%%%

In this section we consider several covering properties and separation axioms which are essentially used in the paper.

\subsection{Some covering properties}
We start from some necessary definitions and notations.
Let $X$ be an arbitrary set and let $\gamma$ be a family of subsets of $X$. Set
\[
\underline{\lim}\, \gamma := \big\{ x\in X: \mbox{ the set } \{ U\in\gamma: x\not\in U\} \mbox{ is finite} \big\}.
\]
If $\gamma =\{ U_n\}_{n\in\w}$, then $\underline{\lim}\,\gamma =\underline{\lim}\, U_n =\bigcup_{n\in\w}\bigcap_{i\geq n} U_i$, and we also write $U_n \to \underline{\lim}\, \gamma$; if additionally $\underline{\lim}\, \gamma=X$
 the sequence $\gamma$ is called a {\em $p$-sequence}.

If $\U$ and $\U'$ are families of subsets of $X$, say that $\U\leq \U'$ if, for any $U\in\U$, there is $U'\in\U'$ such that $U\subseteq U'$. For families  $\U_1,\dots,\U_n$ of subsets of $X$, set
\[
\U_1 \wedge \cdots \wedge \U_n := \{ U_1\cap\cdots\cap U_n: U_i\in\U_i \mbox{ for all } i=1,\dots,n\}.
\]
It is clear that $\U_1 \wedge \cdots \wedge \U_n \leq \U_i$ for every $i=1,\dots,n$.

A sequence $\{ \U_n\}_{n\in\w}$ of subsets of $X$ is called {\em decreasing} if $\U_{n+1} \leq \U_n$ for each $n\in\w$.

Let $X$ be a topological space. A family  $\gamma$ of subsets of $X$ is called an {\em $\w$-cover of $X$} if
for any finite $A\subseteq X$, there is $U\in\gamma$ such that $A\subseteq U$.
Recall (see \cite{Arhangel}) that the space $X$ is said to have
\begin{enumerate}
\item[$\bullet$]  the {\em property $\gamma$} if  every open $\w$-cover contains a $p$-sequence;
\item[$\bullet$] the  {\em property $\varphi$} if  for any open $\w$-cover $\eta =\{ \eta_n: n\in\w\}$ of $X$ with $\eta_n\leq \eta_{n+1}$ there exists a $p$-sequence $\xi=\{ X_n: n\in\w\}$ such that $X_n$ is $\w$-covered by $\eta_n$ for every $n\in\w$;
\item[$\bullet$] the {\em property $\varepsilon$} if  any open $\w$-cover $\eta $ of $X$ has a countable $\w$-subcover.
\end{enumerate}

The following proposition is Theorem 1 in \cite{Gerlits} and Theorem II.3.2 of \cite{Arhangel}.

\begin{proposition} \label{p:property-gamma}
For a topological space $X$ the following assertions are equivalent:
\begin{enumerate}
\item[{\rm (i)}] $X$ has the property $\gamma$;
\item[{\rm (ii)}] $X$ has  the properties $\varphi$ and $\e$;
\item[{\rm (iii)}] for any sequence $\{ \gamma_n\}_{n\in\w}$ of open $\w$-covers  of $X$, one can choose $U_n\in\gamma_n$ for each $n\in\w$, such that $\underline{\lim}\, U_n=X$.
\end{enumerate}
\end{proposition}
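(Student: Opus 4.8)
The plan is to prove the cycle $(iii)\Rightarrow(i)\Rightarrow(ii)\Rightarrow(iii)$, with $(i)\Rightarrow(ii)\Rightarrow(iii)$ carrying the real content. Two elementary facts will be used throughout. First, a $p$-sequence is automatically a countable $\omega$-cover: if $\underline{\lim}\,U_n=X$, then every finite $A\subseteq X$ lies in all but finitely many $U_n$, so some $U_n\supseteq A$. Second, a finite meet $\gamma_1\wedge\cdots\wedge\gamma_n$ of open $\omega$-covers is again an open $\omega$-cover (given finite $A$, choose $U_i\in\gamma_i$ with $A\subseteq U_i$ and intersect). The implication $(iii)\Rightarrow(i)$ is then immediate: apply $(iii)$ to the constant sequence $\gamma_n=\gamma$ to obtain $U_n\in\gamma$ with $\underline{\lim}\,U_n=X$; the family $\{U_n:n\in\w\}\subseteq\gamma$ is the required $p$-sequence, so $X$ has property $\gamma$.

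For $(i)\Rightarrow(ii)$: since a $\gamma$-subcover of any open $\omega$-cover is, by the first fact above, a countable $\omega$-subcover, property $\gamma$ yields $\e$ at once. To get $\varphi$, given the increasingly refining $\omega$-cover presented by $\{\eta_n\}$ with $\eta_n\le\eta_{n+1}$, I would pass to the single open $\omega$-cover $\bigcup_n\eta_n$, extract a $p$-sequence from it by property $\gamma$, and read off the subsets $X_n$ that $\eta_n$ $\omega$-covers, using the refinement to keep the bookkeeping monotone.

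For $(ii)\Rightarrow(iii)$, let $\{\gamma_n\}$ be open $\omega$-covers. First apply $\e$ to replace each $\gamma_n$ by a countable $\omega$-subcover $\{U^n_k:k\in\w\}$; a selection from the subcover is a selection from $\gamma_n$. Then form the diagonal master $\omega$-cover
\[
\W:=\Big\{\, U^1_{k_1}\cap U^2_{k_2}\cap\cdots\cap U^n_{k_n} \ :\ n\ge 1,\ k_1,\dots,k_n\in\w \,\Big\},
\]
which is an $\omega$-cover since for finite $A$ one picks $k_i$ with $A\subseteq U^i_{k_i}$ for each $i$. Using $\varphi$ I would produce a $p$-sequence of intersections $W_j=\bigcap_{i\le n_j}U^i_{k_i^{(j)}}$ in which the depths $n_j$ are \emph{unbounded}; passing to a subsequence (still a $p$-sequence) we may take $n_1<n_2<\cdots$. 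The decoding is then clean: for each $m$ let $j(m)$ be the least $j$ with $n_j\ge m$ and set $U_m:=U^m_{k_m^{(j(m))}}\in\gamma_m$. For $x\in X$ choose $J$ with $x\in W_j$ for all $j\ge J$; then $x\in U^i_{k_i^{(j)}}$ for all $i\le n_j$, and for every $m>n_J$ one has $j(m)>J$ and $m\le n_{j(m)}$, whence $x\in U_m$. Thus $\underline{\lim}\,U_m=X$, which is $(iii)$.

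The main obstacle is exactly the step glossed in the middle of $(ii)\Rightarrow(iii)$: guaranteeing that the extracted $p$-sequence spreads across all of the covers, i.e. that the depths $n_j$ are unbounded rather than concentrating on finitely many $\gamma_i$. A $p$-sequence living in the depth-$\le N$ part of $\W$ only yields a joint $\gamma$-subcover of $\gamma_1,\dots,\gamma_N$ and produces no single selection from the later covers, so a naive application of property $\gamma$ to $\W$ does not suffice by itself. This is precisely why property $\varphi$ is isolated: its refining hypothesis $\eta_n\le\eta_{n+1}$ together with the requirement that each $X_n$ be $\omega$-covered by $\eta_n$ forces the diagonalization to use arbitrarily deep stages, while $\e$ supplies the countable reduction that makes $\W$ available. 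Verifying that $\varphi$ indeed delivers an unbounded-depth $p$-sequence, and matching up the monotonicity conventions between the decreasing meets $\gamma_1\wedge\cdots\wedge\gamma_n$ and the increasing families required by $\varphi$, is the delicate bookkeeping; once it is in place, the coordinatewise decode above is routine.
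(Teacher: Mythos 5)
Your steps (iii)$\Rightarrow$(i) and (i)$\Rightarrow$(ii) are fine (the $\varphi$-half of the latter is only sketched, but the bounded/unbounded-depth case split it suggests goes through routinely). The genuine gap is in (ii)$\Rightarrow$(iii), and it sits exactly at the step you flagged --- but the mechanism you propose for closing it is wrong, not merely unfinished. First, property $\varphi$ does not produce ``a $p$-sequence of intersections $W_j\in\W$'': its conclusion is a $p$-sequence of \emph{arbitrary subsets} $X_n\subseteq X$, each merely $\w$-covered by $\eta_n$. To convert this into a selection of actual members you should make each $\eta_n$ \emph{finite} --- say $\eta_n:=$ all meets $U^1_{k_1}\cap\cdots\cap U^j_{k_j}$ with $j\le n$ and $k_1,\dots,k_j\le n$, so that $\eta_n\subseteq\eta_{n+1}$ and your monotonicity worry evaporates --- and then use the observation that a set $\w$-covered by a finite family lies inside a single member of it (otherwise pick a point of $X_n$ outside each member; that finite set is covered by no member). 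This yields $X_n\subseteq M_n\in\eta_n$ with $M_n$ a meet of some depth $j_n\le n$. Second, and fatally for the argument as written: your claim that the hypotheses of $\varphi$ ``force the diagonalization to use arbitrarily deep stages'' is false. Since the $\eta_n$ increase, nothing in the conclusion of $\varphi$ prevents every $X_n$ from being $\w$-covered already by $\eta_1$ (for instance when $\gamma_1$ contains very large sets); then all $M_n$ have depth $1$ and you extract no selection from $\gamma_m$ for $m\ge 2$. Unbounded depth is not supplied by $\varphi$; it has to be engineered.

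The classical device that engineers it is the Gerlits--Nagy puncturing trick, which this very paper uses in its proof of Theorem \ref{t:FU-Cp}: assuming $X$ infinite (the finite case of (iii) is trivial), fix distinct points $x_j$ and replace each depth-$j$ meet $M$ by $M\SM\{x_j\}$ before forming the $\eta_n$ (one checks the punctured family is still an open $\w$-cover: for finite $A$ choose $j$ with $x_j\notin A$). If the depths $j_n$ of the members $M_n\SM\{x_{j_n}\}\supseteq X_n$ were bounded by some $D$, then, $\{X_n\}$ being a $p$-sequence, the finite set $\{x_1,\dots,x_D\}$ would lie in $X_n$ for all large $n$, contradicting $x_{j_n}\notin X_n$ with $j_n\le D$; hence $j_n\to\infty$, and your coordinatewise decoding then works verbatim. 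Note finally that the paper contains no proof of this proposition to compare against --- it is quoted from Theorem 1 of \cite{Gerlits} and Theorem II.3.2 of \cite{Arhangel} --- but the argument written out for Theorem \ref{t:FU-Cp} (the passage to $\U'_n=\{U\SM\{x_n\}:U\in\U_n\}$ and the proof that the sequence $\{n_k\}$ cannot be bounded) exhibits precisely the ingredient your proposal is missing.
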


Below we generalize the above-mentioned notions to any ideal of compact sets in $X$.
Recall that a family $\I$ of compact subsets of a topological space $X$ is called an {\em ideal of compact sets} if $\bigcup\I=X$ and for any sets $A,B\in\I$ and any compact subset $K\subseteq X$ we get $A\cup B\in\I$ and $A\cap K\in\I$, i.e. if $\I$ covers $X$ and is closed under taking finite unions and closed subspaces. The most important cases are the ideal $\I=\F(X)=[X]^{<\w}$ of all finite subsets of $X$ and the ideal $\I=\KK(X)$ of all compact subsets of $X$.

Let $X$ be a topological space and let $\I$ be an ideal of compact sets of $X$.  A family  $\gamma$ of subsets of $X$ is called an {\em $\I$-cover of $X$} if for any $A\in \I$, there is $U\in\gamma$ such that $A\subseteq U$,   see \cite[\S~4.4]{mcoy}.
An  {\em $\I$-sequence} in $X$ is any sequence $\{ C_n\}_{n\in\w}$ of subsets of $X$ with the property that if $A\in\I$, then there exists an $m\in\w$ such that $A\subseteq C_n$ for all $n\geq m$.
So, if $\I=\FF(X)$ or $\I=\KK(X)$  and following \cite{mcoy}, we shall say that an $\I$-cover is a {\em $p$-cover} (=an $\w$-cover) or a {\em $k$-cover} of $X$ and an $\I$-sequence is a $p$-sequence or a $k$-sequence, respectively.
Analogously to the properties $\gamma$, $\varphi$ and $\e$, we say that the space $X$ has
\begin{enumerate}
\item[$\bullet$]  the {\em property $\gamma_\I$} if  every open $\I$-cover contains an $\I$-sequence;
\item[$\bullet$] the  {\em property $\varphi_\I$} if  for any open $\I$-cover $\eta =\{ \eta_n: n\in\w\}$ of $X$ with $\eta_n\leq \eta_{n+1}$ there exists an $\I$-sequence $\xi=\{ X_n: n\in\w\}$ such that $X_n$ is $\I$-covered by $\eta_n$ for every $n\in\w$;
\item[$\bullet$] the {\em property $\varepsilon_\I$} if  any open $\I$-cover $\eta $ of $X$ has a countable $\I$-subcover.
\end{enumerate}

\begin{remark} \label{r:varphi-I} {\em
Let $\eta =\{ \eta_n: n\in\w\}$ be an open $\I$-cover of a Tychonoff space $X$ such that $\eta_n\leq \eta_{n+1}$ for $n\in\w$, and let  $\xi=\{ X_n: n\in\w\}$ be an $\I$-sequence in $X$. For the property $\varphi_\I$, it is not necessary that $X_n$ is $\I$-covered exactly by $\eta_n$, it suffices to assume that $X_n$ is $\I$-covered by some $\eta_{k_n}$. Indeed, we can assume that $0\leq k_0 <k_1<\cdots$. If $0\leq k<k_1$, set $X'_k:=\emptyset$. For every $n\geq 1$ and $k_n\leq k<k_{n+1}$, put $X'_k := X_n$. Then the sequence $\xi'=\{ X'_k: k\in\w\}$ $\I$-covers $X$ and, for every $k\in\w$, $X'_k$ is $\I$-covered by $\eta_k$.\qed}
\end{remark}

For the property $\gamma_\I$ we have the following analogue of Proposition \ref{p:property-gamma} (we omit its proof because it is very similar to the proof of Proposition \ref{p:property-gamma}).
\begin{proposition} \label{p:property-gamma-k}
Let $X$ be a topological space and let $\I$ be an ideal of compact sets of $X$. Then $X$ has the property $\gamma_\I$ if and only if it has the properties  $\varphi_\I$ and $\e_\I$.
\end{proposition}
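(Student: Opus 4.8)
The plan is to prove the two implications $\gamma_\I \Rightarrow (\varphi_\I\wedge\e_\I)$ and $(\varphi_\I\wedge\e_\I)\Rightarrow\gamma_\I$ by transcribing the classical argument behind Proposition~\ref{p:property-gamma} under the dictionary
\[
\text{finite subset}\leftrightarrow\text{member of }\I,\qquad \w\text{-cover}\leftrightarrow\I\text{-cover},\qquad p\text{-sequence}\leftrightarrow\I\text{-sequence}.
\]
The only features of finite sets used in that argument are that the finite sets are closed under finite unions and under subsets and that each of them is compact; since $\I$ is by definition an ideal of compact sets, all three features are shared by the members of $\I$, so the reductions carry over. Throughout I will use the elementary remark that an $\I$-sequence $\{C_n\}$ is itself an $\I$-cover: if $A\in\I$ then $A\subseteq C_n$ for all large $n$, hence $A\subseteq C_m$ for some $m$.

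For $\gamma_\I\Rightarrow\e_\I$: given an open $\I$-cover $\eta$, property $\gamma_\I$ yields an $\I$-sequence $\{C_n\}\subseteq\eta$, which by the remark above is a countable $\I$-subcover, i.e.\ $\e_\I$. For $\gamma_\I\Rightarrow\varphi_\I$, let $\eta=\{\eta_n\}$ satisfy $\eta_n\leq\eta_{n+1}$ with $\bigcup_n\eta_n$ an $\I$-cover; applying $\gamma_\I$ to $\bigcup_n\eta_n$ gives an $\I$-sequence $\{G_k\}$, and I choose $m_k$ with $G_k\in\eta_{m_k}$. Since a subsequence of an $\I$-sequence is again an $\I$-sequence and every sequence of indices in $\w$ has a monotone subsequence, after thinning I may assume the levels $m_k$ are monotone. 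Each $G_k$, being a member of $\eta_{m_k}$, is trivially $\I$-covered by $\eta_{m_k}$, so taking $X_k:=G_k$ and applying Remark~\ref{r:varphi-I} (together with padding by empty sets when the levels stay bounded) delivers the $\I$-sequence required by $\varphi_\I$. The point worth stressing is that here one keeps the \emph{single} sets $G_k$, so that each $X_k$ sits inside one member of $\eta_{m_k}$, rather than passing to unions.

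The substantial direction is $(\varphi_\I\wedge\e_\I)\Rightarrow\gamma_\I$. Given an open $\I$-cover, I would first use $\e_\I$ to replace it by a countable $\I$-subcover $\{U_n\}$, organize $\{U_n\}$ into an increasing (for the relation $\leq$) sequence of subfamilies whose union is an $\I$-cover, apply $\varphi_\I$ to obtain an $\I$-sequence $\{X_n\}$ with $X_n$ $\I$-covered by the $n$-th family, and finally extract from this data a single sequence of \emph{members} of the cover that is an $\I$-sequence. I expect this last extraction to be the main obstacle. The naive shortcut of passing to finite unions of the $U_n$ is fatal: a nested $\I$-cover is automatically an $\I$-sequence, so working with the finite-union closure would only reduce $\gamma_\I$ to $\e_\I$ and could never witness $\gamma_\I$ (e.g.\ $\IR$ has $\e$ but fails $\gamma$). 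The correct extraction is the combinatorial core of the Gerlits--Nagy proof, and it is exactly here that compactness of the members of $\I$ enters: it lets one pass, for each fixed $A\in\I$, from a covering of $A$ by the refined families to containment of $A$ in a single member at cofinitely many levels, after which one diagonalizes over the countably many $A$ supplied by $\{U_n\}$. Because compactness together with closure under finite unions and subsets are precisely the ideal axioms for $\I$, this extraction transcribes without change, completing the proof.
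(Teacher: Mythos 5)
Your two easy implications are correct and essentially routine: $\gamma_\I\Rightarrow\e_\I$ via your remark that an $\I$-sequence inside the cover is a countable $\I$-subcover, and $\gamma_\I\Rightarrow\varphi_\I$ by thinning the levels $m_k$ and invoking Remark~\ref{r:varphi-I} (the bounded-level case is handled by the monotonicity in $n$ of ``being $\I$-covered by $\eta_n$'', which follows from $\eta_n\leq\eta_{n+1}$). The problem is the direction you yourself call substantial: there you stop exactly where the proof should begin. After invoking $\e_\I$ and $\varphi_\I$ you declare the extraction to be ``the combinatorial core of the Gerlits--Nagy proof'' and assert that it transcribes, but you never perform it, and your description of it is wrong on two counts. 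First, there is no ``diagonalization over the countably many $A$ supplied by $\{U_n\}$'': the $\I$-sequence property quantifies over all of $\I$, and $\I$ need not have a countable cofinal subfamily (take $\I=\F(X)$ with $X$ uncountable), so no countable diagonalization over members of $\I$ can certify it. Second, compactness of the members of $\I$ plays no role in the extraction; all that is used is $\F(X)\subseteq\I$, which holds because each singleton $\{x\}$ is compact and equals $K\cap\{x\}$ for some $K\in\I$ containing $x$, and $\I$ is closed under finite unions.

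What actually closes the argument --- and what the proof the paper points to (that of Proposition~\ref{p:property-gamma}) does --- is a pigeonhole, made available by the right choice of the increasing families, a choice your sketch leaves unspecified. Given the countable $\I$-subcover $\{U_n\}_{n\in\w}$ produced by $\e_\I$, put $\eta_n:=\{U_0,\dots,U_n\}$; then $\eta_n\leq\eta_{n+1}$ and $\bigcup_{n\in\w}\eta_n$ is an open $\I$-cover, so $\varphi_\I$ yields an $\I$-sequence $\{X_n\}_{n\in\w}$ with $X_n$ $\I$-covered by the \emph{finite} family $\eta_n$. If no $U_i$ with $i\leq n$ contained $X_n$, we could pick $x_i\in X_n\SM U_i$ for each $i\leq n$; the set $\{x_0,\dots,x_n\}$ lies in $X_n$ and belongs to $\I$ (as $\F(X)\subseteq\I$), yet is contained in no member of $\eta_n$, a contradiction. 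Hence $X_n\subseteq U_{i_n}$ for some $i_n\leq n$, and since every $A\in\I$ satisfies $A\subseteq X_n\subseteq U_{i_n}$ for all large $n$, the sequence $\{U_{i_n}\}_{n\in\w}$ is the required $\I$-sequence in the original cover. So the step you flagged as the main obstacle is in fact two lines --- your warning against closing under finite unions is apt, but the correct move is initial segments plus pigeonhole, not the elaborate machinery you gesture at --- and as written your proposal omits it and substitutes a mechanism that would fail; that is a genuine gap. (For the record, the paper prints no proof at all: it explicitly omits it as ``very similar'' to that of Proposition~\ref{p:property-gamma}, and the core of that classical argument is the pigeonhole above.)
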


%\begin{proof}
%Assume that $X$ has the property $\gamma_\I$. Then it certainly has also the property $\e_\I$. Let $\eta =\{ \eta_n: n\in\w\}$ be an open $\I$-cover  of $X$ with $\eta_n\leq \eta_{n+1}$. By $\gamma_\I$, there is an $\I$-sequence $\{ U_k: k\geq 1\}$ in $\eta$. Since $\eta_n\leq \eta_{n+1}$ for all $n\in\w$, we can assume that for every $k\geq 1$, there is an $n_k\in\w$ such that $U_k \in \eta_{n_k}$ and $0<n_k<n_{k+1}$. For every $i\in\{0,\dots,n_1-1\}$, choose an arbitrary $U_0\in\eta_0\leq \eta_i$ and set $X_i:=U_0$. If $i\in\{n_k,n_k+1,\dots,n_{k+1}-1\}$ for some $k\geq 1$, set $X_i=U_k$. Since $\eta_n\leq \eta_{n+1}$, it is clear that $X_n$ is covered by one element of $\eta_n$ and the sequence $\xi=\{ X_i: i\in\w\}$ is an $\I$-sequence. Thus $X$ has the property $\varphi_\I$.

%Assume that $X$ has the properties  $\varphi_\I$ and $\e_\I$. Let $\gamma$ be an open $\I$-cover of $X$. By the property $\e_\I$, we can assume that $\gamma =\{ U_n: n\in\w\}$ is countable and $\I$-covers $X$. For every $n\in\w$, set  $\eta_n:=\{ U_0,\dots,U_n\}$. The property $\varphi_\I$ implies the existence of an $\I$-sequence $\{ X_n:n\in\w\}$ such that it  $\I$-covers $X$ and $X_n$ is $\I$-covered by the family $\eta_n$. But since $\eta_n$ is finite and $\I$-covers $X_n$ and $\I$ contains all finite subsets of $X$, we obtain that  for every $n\in\w$, there is $U_{k_n}\in\eta_n$ such that $X_n\subseteq U_{k_n}$. It is clear that $\{ U_{k_n}\}_{n\in\w}$ is an $\I$-sequence. Thus $X$ satisfies  the property $\gamma_\I$. \qed
%\end{proof}

Let $X$ be a Tychonoff space and let $\{X_n\}_{n\in\w}$ be an increasing cover of $X$ (i.e.,  $X=\bigcup_{n\in\w} X_n$ and $X_n\subseteq X_{n+1}$ for all $n\in\w$).
For every $n\in\w$, let $\I_n$ be an ideal of compact subsets in $X_n$ such that $\I_n\subseteq \I_{n+1}$. It is easy to see that the family $\I := \bigcup_{n\in\w} \I_n$ is an ideal of compact subsets of $X$. Conversely, if $\I$ is an ideal of compact subsets of $X$, then for every $n\in\w$, the family $\I_n =\I\cap X_n :=\{ K\in \I: K\subseteq X_n\}$ is an ideal of compact subsets in $X_n$.
We shall say that $\{X_n\}_{n\in\w}$ is {\em $\I$-regular} if $\I := \bigcup_{n\in\w} \I_n$. It is clear that if $\I=\FF(X)$, then any increasing cover $\{X_n\}_{n\in\w}$ of $X$ is $\FF(X)$-regular.

Let $X$ be a Tychonoff space and let $\I$ be an ideal of compact subsets of $X$. For every $n\in\w$, denote by $X_n$ the space $X$ and let $Y:=\bigoplus_{n\in\w} X_n$ be the direct topological sum of the spaces $X_n$. Define the ideal $\I(Y)$ of compact sets in $Y$ as the direct sum of $\I$, i.e., a compact subset $F$ of $Y$ belongs to $\I(Y)$ if and only if $F\cap X_n\in \I$ for every $n\in\w$. Then, by the definition of $\I(Y)$, the cover $\{\bigoplus_{i=0}^n X_i\}_{n\in\w}$ of $Y$  is $\I(Y)$-regular.

We say that the property $\gamma_\I$ is {\em $\sigma$-additive} on a Tychonoff space $X$ if for every $\I$-regular increasing cover $\{X_n\}_{n\in\w}$ of $X$ such that all $X_n$ have the property $\gamma_{\I_n}$ with $\I_n = \I\cap X_n$, it follows that the space $X$ has the property $\gamma_\I$.
In \cite[Corollary~14]{Jordan}, Jordan proved that the property $\gamma$ (=$\gamma_p$) is $\sigma$-additive for every Tychonoff space $X$.
We do not know whether for every Tychonoff space $X$ and each ideal $\I$ of compact sets in $X$, the property $\gamma_\I$ is $\sigma$-additive. Below we consider only one special case used in what follows.

\begin{proposition} \label{p:sum-gamma-I-space}
Let $X$ be a topological space and let $\I$ be an ideal of compact sets of $X$. Then $X$ has the property $\gamma_\I$ if and only if the direct topological sum $Y=\bigoplus \{X_i: X_i=X \mbox{ for each } i\in \omega\}$ has the property $\gamma_{\I(Y)}$.
\end{proposition}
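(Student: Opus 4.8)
The plan is to prove the two implications separately: the converse (from $Y$ to $X$) is a short pull-back argument, while the forward implication carries all the content. For the converse I would assume $Y$ has $\gamma_{\I(Y)}$ and introduce the folding map $\pi\colon Y\to X$ whose restriction to each summand $X_n$ is the identity of $X$; it is continuous. Given an open $\I$-cover $\gamma$ of $X$, the family $\pi^{-1}(\gamma)=\{\pi^{-1}(U):U\in\gamma\}$ is an open $\I(Y)$-cover of $Y$, because every $A\in\I(Y)$ is compact, hence meets only finitely many summands, and $\pi(A)=\bigcup_n(A\cap X_n)\in\I$, so some $U\in\gamma$ contains $\pi(A)$ and then $A\subseteq\pi^{-1}(U)$. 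Applying $\gamma_{\I(Y)}$ yields an $\I(Y)$-sequence $\{\pi^{-1}(U_k)\}_k$ with $U_k\in\gamma$, and testing it against an arbitrary $A\in\I$ placed in the single copy $X_0$ shows that $A\subseteq U_k$ for all large $k$. Thus $\{U_k\}_k\subseteq\gamma$ is an $\I$-sequence and $X$ has $\gamma_\I$.

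For the forward implication I would assume $X$ has $\gamma_\I$, fix an open $\I(Y)$-cover $\mathcal{V}$ of $Y$, and first translate the problem to $X$. For $V\in\mathcal{V}$ and $n\in\omega$ write $V^n:=V\cap X_n$, an open subset of $X$. Two observations do the translation. First, for each $n$ the coordinate family $\gamma^{(n)}:=\{V^n:V\in\mathcal{V}\}$ is an open $\I$-cover of $X$ (test $\mathcal{V}$ against sets of $\I$ sitting in the single copy $X_n$). Second, a sequence $\{V_k\}_k\subseteq\mathcal{V}$ is an $\I(Y)$-sequence if and only if, for every $n$, the coordinate sequence $\{V_k^n\}_k$ is an $\I$-sequence in $X$: the ``only if'' is again a one-copy test, and the ``if'' holds because each $A\in\I(Y)$ is supported on finitely many copies, so the finitely many ``eventually'' conditions combine.

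The heart of the argument is to produce a single sequence in $\mathcal{V}$ all of whose coordinate sequences are $\I$-sequences. For this I would set, for every $m\in\omega$,
\[
\gamma_m:=\Big\{\,V^0\cap V^1\cap\cdots\cap V^m:\ V\in\mathcal{V}\,\Big\}.
\]
Each $\gamma_m$ is an open $\I$-cover of $X$: given $A\in\I$, the element of $\I(Y)$ obtained by placing a copy of $A$ in each of $X_0,\dots,X_m$ is covered by some $V\in\mathcal{V}$, whence $A\subseteq V^n$ for all $n\le m$, i.e. $A\subseteq\bigcap_{n\le m}V^n\in\gamma_m$. Moreover $\{\gamma_m\}_m$ is decreasing, since intersecting over one more coordinate (with the same $V$) only shrinks the set. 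I would then apply the $\I$-analogue of clause (iii) of Proposition~\ref{p:property-gamma} — the selection principle valid under $\gamma_\I$ — to choose $W_m\in\gamma_m$ so that $\{W_m\}_m$ is an $\I$-sequence. Writing $W_m=\bigcap_{n\le m}V_{(m)}^n$ for a witnessing $V_{(m)}\in\mathcal{V}$ and setting $V_m:=V_{(m)}$, one has $W_m\subseteq V_m^n$ whenever $m\ge n$; hence for each fixed $n$ the tail of $\{V_m^n\}_m$ swallows every $A\in\I$, so $\{V_m^n\}_m$ is an $\I$-sequence. By the second observation $\{V_m\}_m\subseteq\mathcal{V}$ is the desired $\I(Y)$-sequence, and $Y$ has $\gamma_{\I(Y)}$.

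The one non-formal ingredient is the selection principle invoked above: for any sequence $\{\gamma_m\}$ of open $\I$-covers of a space with $\gamma_\I$, one can pick $U_m\in\gamma_m$ with $\{U_m\}$ an $\I$-sequence. This is the exact $\I$-analogue of clause (iii) of Proposition~\ref{p:property-gamma}; it is not listed in Proposition~\ref{p:property-gamma-k}, but I would establish it by the same reasoning, most directly by deducing it from $\varphi_\I$ and $\e_\I$ (Proposition~\ref{p:property-gamma-k}) while using the indexing freedom recorded in Remark~\ref{r:varphi-I}. I expect the genuine obstacle to lie precisely here — in the bookkeeping that matches each chosen set to its correct cover level — rather than in the sum construction itself, which the intersection covers $\gamma_m$ (mirroring the $\I(Y)$-regular filtration $\{\bigoplus_{i\le m}X_i\}_m$) reduce to this single diagonalization.
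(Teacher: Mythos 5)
Your reduction is sound, and in substance it retraces the paper's own proof: your coordinate covers $\gamma_m=\{V^0\cap\cdots\cap V^m: V\in\mathcal{V}\}$ are exactly the paper's families $\V_n=\{(U_\alpha\cap X_0)\cap\cdots\cap(U_\alpha\cap X_n):\alpha\in\Lambda\}$, your two ``translation'' observations are verified there implicitly, and your converse (the folding map $\pi$, with $\pi^{-1}(U)=\bigoplus_n U$) differs only cosmetically from the paper's test cover $\{U\oplus\bigoplus_{i\geq 1}X_i : U\in\U\}$. The difference is that the paper does \emph{not} have your selection principle available and instead proves it inline, and this is where your proposal has a genuine soft spot. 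The $\I$-analogue of clause (iii) of Proposition \ref{p:property-gamma} is true, but your suggested derivation of it from $\varphi_\I$ and $\e_\I$ (Proposition \ref{p:property-gamma-k}) together with Remark \ref{r:varphi-I} does not obviously go through: applying $\varphi_\I$ to the increasing families $\eta_n=\delta_0\cup\cdots\cup\delta_n$ built from a decreasing sequence $\{\delta_n\}$ of $\I$-covers only yields, for each $n$, a set from \emph{some} level $k\leq n$ absorbing the members of $\I$ inside $X_n$, and since the covers get finer as the level increases, a low-level pick cannot be pushed up to level $n$. This is precisely the level-matching obstacle you flag at the end, and neither $\varphi_\I$ nor Remark \ref{r:varphi-I} resolves it by itself.

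What does resolve it is the paper's $\Omega$-construction, which is the actual content of the proof: one applies $\gamma_\I$ twice, first extracting an $\I$-sequence $\SC_n=\{V_{\alpha(i,n)}\}_i$ from each $\V_n$, then forming the cover $\W=\{W_{\mathbf i}:\mathbf i\in\Omega\}$ of meets $V_{\alpha(i_0,m_0)}\cap\cdots\cap V_{\alpha(i_s,m_0+s)}$ indexed by finite increasing sequences with the coding $s=i_0$, and extracting an $\I$-sequence from $\W$. The coding forces the first indices $i_0^k$ to be unbounded: otherwise finitely many of the sets $V_{\alpha(i,m_0)}$ would form an $\I$-cover, and since every singleton (being a compact subset of a member of $\I$) and hence every finite set belongs to $\I$, one of those sets would equal $X$, contradicting the Case 2 hypothesis. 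Two further points are worth noting. First, this finite-set argument is what makes the statement valid for an arbitrary topological space: the alternative diagonalization used in the proof of Theorem \ref{t:FU-Cp}, with the cover $\{X\SM\{x\}:x\in X\}$, needs co-singletons to be open and so would silently import a $T_1$ assumption that Proposition \ref{p:sum-gamma-I-space} does not make. Second, your selection lemma must also handle covers containing $X$ itself (your $\gamma_m$ contains $X$ exactly when some $V\in\mathcal{V}$ contains $\bigoplus_{i\leq m}X_i$); this is the paper's Case 1, which you never address but which is easy (a cofinal sequence of such $V$'s is already an $\I(Y)$-sequence). So: your architecture is correct and would be a clean way to present the result, but as written it relocates, rather than supplies, the proof's essential combinatorial step.
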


\begin{proof}
Assume that $X$ has the property $\gamma_\I$.
Let $\U=\{U_{\alpha}: \alpha\in \Lambda\}$ be an open $\I(Y)$-cover of $Y$. We have to show that $\U$ contains an $\I(Y)$-sequence. For every $n\in\w$, we identify $X_n$ with $X$ and set
\[
\V_n:= \{ (U_\alpha \cap X_0)\cap\cdots\cap (U_\alpha \cap X_n): \alpha\in\Lambda\},
\]
and observe that $\V_n$ is an $\I$-cover of $X$ (indeed, if $F\in\I$ take $\alpha\in\Lambda$ such that $\bigoplus_{i=0}^n F \subseteq U_\alpha$). Note that $X\in\V_n$ if and only if $\bigoplus_{i=0}^n X_i \subseteq U_\alpha$ for some $\alpha\in \Lambda$. We distinguish between two cases.

{\em Case 1. There exist a sequence $\{\alpha_k\}_{k\in\w}\subseteq \Lambda$ and a strictly increasing sequence $\{n_k\}_{k\in\w}\subseteq \w$ such that $\bigoplus_{i=0}^{n_k} X_i \subseteq U_{\alpha_k}$.} Then the sequence $\{U_{\alpha_k}\}_{k\in\w}$ is an $\I$-sequence in $\U$, and we are done.
\smallskip

{\em Case 2. There is an $m_0\in\w$ such that
\begin{equation} \label{equ:I-cover-1}
X_0 \oplus\cdots\oplus X_n \not\subseteq U_\alpha \; \mbox{ for every } \alpha\in\Lambda \mbox{ and } n\geq m_0.
\end{equation} }
Since $X$ has the property $\gamma_\I$ and all $\V_n$ are $\I$-covers of $X$, for every $n\geq m_0$ there is an $\I$-sequence $\SC_n =\{ V_{\alpha(i,n)}: i\in\w\}  \subseteq \V_n$, where $\alpha(i,n)\in \Lambda$ and
\[
V_{\alpha(i,n)} := (U_{\alpha(i,n)}\cap X_0)\cap\cdots\cap (U_{\alpha(i,n)}\cap X_n) \; \mbox{ for all } \; i\in\w.
\]
It follows from (\ref{equ:I-cover-1}) that
\begin{equation} \label{equ:I-cover-2}
V_{\alpha(i,n)} \not= X \; \mbox{ for all } \; i\in\w \; \mbox{ and } \; n\geq m_0.
\end{equation}

Denote by $\Omega$ the set of all finite sequences $\mathbf{i}=(i_0,\dots,i_s)\in\w^{<\w}$ such that $s=i_0$ and $i_0<\cdots<i_s$. For every $\mathbf{i}=(i_0,\dots,i_s)\in \Omega$, set
\begin{equation} \label{equ:I-cover-3}
W_{\mathbf{i}} := V_{\alpha(i_0,m_0)} \cap\cdots\cap V_{\alpha(i_s,m_0+s)}.
\end{equation}
We claim that the family $\W:= \{ W_{\mathbf{i}}: \mathbf{i}\in \Omega\}$ is an open $\I$-cover of $X$. Indeed, let $F\in\I$. Since $\SC_{m_0}$ is an $\I$-sequence, there is $i_0>0$ such that $F\subseteq
V_{\alpha(i_0,m_0)}$. As all $\SC_n$ are $\I$-sequences, there are $i_1,\dots,i_{i_0}\in\w$ such that $i_0<\cdots<i_{i_0}$ and $F\subseteq V_{\alpha(i_t,m_0+t)}$ for all $t=1,\dots,i_0$. Then, putting $\mathbf{i}:=(i_0,\dots,i_{i_0})$, we obtain $F\subseteq W_{\mathbf{i}}$.

Since $X$ has the property $\gamma_\I$, there is a sequence
\[
\{ \mathbf{i}_k=(i^k_0,\dots,i^k_{s_k}): k\in\w \} \subseteq \Omega
\]
such that $\{ W_{\mathbf{i}_k} : k\in\w\}$ is an $\I$-sequence for $X$. We claim that
\begin{equation} \label{equ:I-cover-4}
i_0^k \to \infty \quad \mbox{ as } \; k\to\infty.
\end{equation}
Indeed, suppose for a contradiction that some subsequence $\{ i^{k_j}_0: j\in\w\}$ is contained in a finite set $\{0,\dots,T\}$. Then, by (\ref{equ:I-cover-3}), the sets $W_{\mathbf{i}_{k_j}}$ are contained in one of the sets $V_{\alpha(0,m_0)}, \dots, V_{\alpha(T,m_0)}$. But since $\{ W_{\mathbf{i}_{k_j}} : j\in\w\}$ is also an $\I$-sequence for $X$, we obtain that the family $\{V_{\alpha(0,m_0)}, \dots, V_{\alpha(T,m_0)}\}$ is an $\I$-cover of $X$, and hence $X=V_{\alpha(i,m_0)}$ for some $i\in\{0,\dots,T\}$. However this contradicts (\ref{equ:I-cover-2}). Thus (\ref{equ:I-cover-4}) holds true.
\smallskip

To finish the proof of the necessity  it is sufficient to show that the sequence $\SC:=\{ U_{\alpha(i_{s_k}^k,m_0+i_0^k)}: k\in\w\}$ is an $\I(Y)$-sequence. To this end, fix an arbitrary ${\tilde F}\in \I(Y)$. As ${\tilde F}$ is compact, there is a $q_0\in\w$ such that ${\tilde F}\cap X_n =\emptyset$ for every $n>q_0$. Set $F:= \bigcup_{i=0}^{q_0} ({\tilde F}\cap X_i)$. Then $F\in\I$. By (\ref{equ:I-cover-4}) and since $\{ W_{\mathbf{i}_k} : k\in\w\}$ is an $\I$-sequence, we can choose an $r\in\w$ such that
\[
m_0 + i_0^k >q_0 \; \mbox{ and } \; F\subseteq W_{\mathbf{i}_{k}}\; \mbox{ for every } \; k\geq r.
\]
Now (\ref{equ:I-cover-3}) implies that for every $k\geq r$ (recall that $s_k=i^k_0$)
\begin{equation} \label{equ:I-cover-5}
F\subseteq  W_{\mathbf{i}_k} \subseteq V_{\alpha(i_{s_k}^k,m_0+i_0^k)} = \Big( U_{\alpha(i_{s_k}^k,m_0+i_0^k)}\cap X_0 \Big) \cap\cdots\cap \Big( U_{\alpha(i_0^k,m_0+i_0^k)}\cap X_{m_0+i_0^k} \Big).
\end{equation}
Since $m_0 + i_0^k >q_0$, (\ref{equ:I-cover-5}) implies that ${\tilde F} \subseteq U_{\alpha(i_{s_k}^k,m_0+i_0^k)}$ for every $k\geq r$. Thus $\SC$ is an  $\I(Y)$-sequence.
\smallskip

Assume now that $Y$ has the property $\gamma_{\I(Y)}$. Let $\U$ be an open $\I$-cover of $X$. We have to find an $\I$-sequence in $\U$. Consider the family $\V:=\{ U\oplus \bigoplus_{i=1}^\infty X_i: U\in \U\}$. Clearly, $\V$ is an open $\I(Y)$-cover of $Y$. By the property $\gamma_{\I(Y)}$, choose an $\I(Y)$-sequence $\{ U_n\oplus \bigoplus_{i=1}^\infty X_i: n\in\w\}$ in $\V$. It is easy to see that the sequence $\{ U_n: n\in\w\}\subseteq \U$ is an $\I$-sequence, as desired.\qed
\end{proof}

\subsection{Separation axioms}
Let $X$ and $Y$ be topological spaces. For every $y\in Y$, define the constant function $\yyy\in Y^X$ by $\yyy(x):=y$ for every $x\in X$.  Each ideal $\I$ of compact subsets of $X$ determines the {\em $\I$-open topology $\tau_\I$} on the power space $Y^X$. A subbase of this topology consists of the sets
\[
[K;U]=\{f\in Y^X :f(K)\subseteq U\},
\]
where $K\in\I$ and $U$ is an open subset of $Y$. In particular, for every $g\in Y^X$, finite subfamily $\FF=\{F_1,\dots,F_n\} \subseteq \I$ and each family $\U=\{ U_1,\dots,U_n\}$ of open subsets of $Y$ such that $g(F_i)\subseteq U_i$ for every $i=1,\dots,n$, the sets of the form
\[
W[g;\FF,\U] :=\big\{ f\in Y^X: f(F_i)\subseteq U_i \mbox{ for all } i=1,\dots,n\big\}.
\]
form  a base of the $\I$-open topology $\tau_\I$ at the function $g$. The space  $C(X,Y)$ endowed with the $\I$-open topology induced from $(Y^X,\tau_\I)$ will be denoted by $C_\I(X,Y)$. So, $\CC(X,Y)=C_\I(X,Y)$ for the ideal $\I=\KK(X)$ of compact subsets of $X$. For the ideal $\I=\F(X)$ of all finite subsets of $X$, the function space $C_\I(X,Y)$ is denoted by $C_p(X,Y)$.
If $Y=G$ is an abelian topological group, then also $(G^X,\tau_\I)$ is an abelian topological group and we put $[F;U]:=\bigcap_{x\in F}W[\mathbf{0};\{x\},U]$. So the sets of the form $[F;U]$, where $F\in\I$ and $U\subseteq G$ is an open neighborhood of $0\in G$, form an open  base of the $\I$-open topology at zero function $\mathbf{0}\in G^X$.

Now we consider some separation axioms.
\begin{definition}[\cite{BG-Baire}] {\em
Let $Y$ be a topological space. A topological space $X$ is called
\begin{enumerate}
%\item[$\bullet$] {\em functionally Hausdorff} if every two distinct points $x,z\in X$ there exists an $f\in C(X)$ such that $f(x)\not=f(z)$;
%\item[$\bullet$] {\em $Y$-Hausdorff} if every function $f:F\to Y$ defined on a finite subset  $F$ of $X$ has a continuous extension ${\bar f}:X\to Y$;
\item[$\bullet$]  {\em $Y$-Tychonoff} if for every closed subset $A$ of $X$, point $y_0\in Y$ and function $f:F\to Y$ defined on a finite subset  $F$ of $X\setminus A$, there exists a continuous function ${\bar f}:X\to Y$ such that ${\bar f}{\restriction}_F=f$ and ${\bar f}(A)\subseteq \{ y_0\}$;
\item[$\bullet$] {\em $Y$-normal} if $X$ is a $T_1$-space and for any closed set $F\subseteq X$ and each continuous function $f:F\to Y$ with finite image $f(F)$ there exists a continuous function ${\bar f}:X\to Y$ such that ${\bar f}{\restriction}_F=f$.
\item[$\bullet$] {\em $Y$-dimensional} if $X$ is a $T_1$-space and  for any closed set $F\subseteq X$ and continuous function $f:F\to Y$, there exists a continuous function ${\bar f}:X\to Y$ such that ${\bar f}|_F=f$.    \qed
\end{enumerate}}
\end{definition}
It is easy to see that every $Y$-normal space is $Y$-Tychonoff. Observe also that a topological space is $\IR$-Tychonoff or $\IR$-normal if and only if it is Tychonoff or  normal in the standard sense, respectively.
We will use repeatedly the following assertion.
\begin{proposition}[\cite{BG-Baire}] \label{p:Y-good=>good}
Let $Y$ be a topological space admitting a non-constant continuous function $\chi:Y\to\IR$.
\begin{enumerate}
\item[{\rm (i)}] If $X$ is $Y$-Hausdorff, then $X$ is functionally Hausdorff.
\item[{\rm (ii)}] If $X$ is $Y$-Tychonoff, then $X$ is Tychonoff.
\item[{\rm (iii)}] If $X$ is $Y$-normal, then $X$ is normal.
\end{enumerate}
\end{proposition}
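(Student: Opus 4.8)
The plan is to exploit a single mechanism for all three parts: whenever $f\colon X\to Y$ is continuous, the composition $\chi\circ f\colon X\to\IR$ is continuous, so any $Y$-valued separation produced by the relevant axiom can be transported to a real-valued separation through $\chi$. Since $\chi$ is non-constant, I fix once and for all two points $y_1,y_2\in Y$ with $\chi(y_1)\neq\chi(y_2)$; these will play the role of two ``labels'' whose $\chi$-images already lie apart in $\IR$. The only conceptual point requiring care is that the separating functions must be prescribed to take the \emph{values} $y_1,y_2$ (rather than merely distinct values), since $\chi$ need not be injective; this is precisely where the ability of the $Y$-axioms to fix values on finite or closed sets is used.

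For (i), given distinct points $x_1,x_2\in X$, I would use $Y$-Hausdorffness to obtain a continuous $f\colon X\to Y$ with $f(x_1)=y_1$ and $f(x_2)=y_2$, and then observe that $g:=\chi\circ f\colon X\to\IR$ is continuous with $g(x_1)=\chi(y_1)\neq\chi(y_2)=g(x_2)$; hence $X$ is functionally Hausdorff. For (ii) I would establish complete regularity and the $T_1$ axiom separately. For complete regularity, given a closed set $A\subseteq X$ and a point $x_0\notin A$, I apply the definition of $Y$-Tychonoff with $F=\{x_0\}$, the assignment $f(x_0)=y_1$, and the constant value $y_0=y_2$ on $A$; this yields a continuous ${\bar f}\colon X\to Y$ with ${\bar f}(x_0)=y_1$ and ${\bar f}(A)\subseteq\{y_2\}$, so that $\chi\circ{\bar f}$ separates $x_0$ from $A$ in $\IR$. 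For $T_1$, I apply $Y$-Tychonoff with $A=\emptyset$ to a two-point set $F=\{x_1,x_2\}$ carrying the prescribed values $y_1,y_2$; this produces functional Hausdorffness exactly as in (i), whence $X$ is Hausdorff and in particular $T_1$. Together these give that $X$ is Tychonoff.

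For (iii), let $C_1,C_2$ be disjoint closed subsets of $X$. I would set $F:=C_1\cup C_2$ and define $f\colon F\to Y$ by $f{\restriction}_{C_1}\equiv y_1$ and $f{\restriction}_{C_2}\equiv y_2$. Since $C_1$ and $C_2$ are disjoint closed sets, each is relatively clopen in $F$, so $f$ is continuous and has finite image $\{y_1,y_2\}$; thus the definition of $Y$-normality applies and yields a continuous extension ${\bar f}\colon X\to Y$ with ${\bar f}{\restriction}_{C_1}\equiv y_1$ and ${\bar f}{\restriction}_{C_2}\equiv y_2$. Writing $c:=\tfrac12\big(\chi(y_1)+\chi(y_2)\big)$ and assuming without loss of generality that $\chi(y_1)<\chi(y_2)$, the sets $(\chi\circ{\bar f})^{-1}\big((-\infty,c)\big)$ and $(\chi\circ{\bar f})^{-1}\big((c,\infty)\big)$ are disjoint open sets separating $C_1$ and $C_2$. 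As $Y$-normality already includes the $T_1$ axiom, $X$ is normal.

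The arguments are short, and the only genuinely delicate step is the verification in (iii) that the two-valued map on $F=C_1\cup C_2$ is continuous with finite image, so that the $Y$-normality axiom is applicable; this is routine because the two closed pieces are clopen in their union. The recurring conceptual point, and the reason the hypothesis that $\chi$ is non-constant cannot be dropped, is that one must realize prescribed values $y_1,y_2$ with $\chi(y_1)\neq\chi(y_2)$ rather than merely separated $Y$-values, so that post-composition with the possibly non-injective $\chi$ still separates.
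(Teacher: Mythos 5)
Your proof is correct and coincides with the intended argument: the paper itself quotes Proposition \ref{p:Y-good=>good} from \cite{BG-Baire} without proof, and the proof given there is exactly your scheme of fixing $y_1,y_2\in Y$ with $\chi(y_1)\neq\chi(y_2)$ and transporting each $Y$-valued separation to a real-valued one by composing with $\chi$. The two points you flag as delicate are indeed the only ones that matter, and you handle both correctly: since $\chi$ need not be injective one must use the separation axioms to prescribe the \emph{values} $y_1,y_2$ (which the definitions of $Y$-Hausdorff, $Y$-Tychonoff and $Y$-normal permit), and in (iii) the two-valued function on $F=C_1\cup C_2$ is continuous with finite image because each $C_i$ is clopen in $F$, so the $Y$-normality axiom applies.
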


We need the following generalization of $Y$-Tychonoffness.
\begin{definition} {\em
Let $X$ and $Y$ be  topological spaces and let $\I$ be an ideal of compact sets of $X$. Then $X$ is called {\em $Y_\I$-Tychonoff} if for every closed subset $A$ of $X$, point $y_0\in Y$, compact set $F\in\I$ with $F\subseteq X\SM A$, and each continuous function $f:F\to Y$ with finite image $f(F)$ there exists a continuous function ${\bar f}:X\to Y$ such that ${\bar f}{\restriction}_{F}=f$ and ${\bar f}(A)\subseteq \{ y_0\}$.\qed}
\end{definition}
If $\I=\FF(X)$ or $\I=\KK(X)$, we shall say simply that $X$ is $Y_p$-Tychonoff (=$Y$-Tychonoff) or $Y_k$-Tychonoff, respectively. It is clear that
\[
\xymatrix{
{\mbox{$Y_k$-Tychonoff}} \ar@{=>}[r] & {\mbox{$Y_\I$-Tychonoff}}  \ar@{=>}[r] & {\mbox{$Y_p$-Tychonoff}}
}
\]
for every ideal $\I$ of compact sets.

%These implications and Proposition \ref{p:Y-good=>good} imply
%\begin{proposition} \label{p:Y-good=>good-I}
%Let $Y$ be a topological space admitting a non-constant continuous function $\chi:Y\to\IR$.
%Then every $Y_\I$-Tychonoff space $X$ is Tychonoff.
%\end{proposition}

The next lemma is a slight extension of Propositions 2.6.9, 2.6.10, 3.4.4 and 3.4.5 of \cite{Eng}, its proof is straightforward and hence omitted.
\begin{lemma} \label{l:I-infinite-sum}
Let $Y$ be a topological space, $X$ be a $Y_\I$-Tychonoff space for an ideal $\I$ of compact sets of $X$, and let $Z=\bigoplus \{X_i: X_i=X \mbox{ for each } i\in \omega\}$. Then the spaces $C_{\I(Z)}(Z,Y)$, $C_{\I}(X,Y^{\w})$ and the product $C_{\I}(X,Y)^{\w}$ are homeomorphic.
\end{lemma}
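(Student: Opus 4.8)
The plan is to realise all three spaces as the single underlying set $C(X,Y)^{\w}$ equipped, a priori, with different topologies, and to show that the two canonical bijections onto $C_\I(X,Y)^{\w}$ are homeomorphisms. Since $Z=\bigoplus_{i\in\w}X_i$ is a topological sum, a map $g\colon Z\to Y$ is continuous iff each restriction $g{\restriction}_{X_i}\colon X\to Y$ is continuous, so $\Phi(g):=(g{\restriction}_{X_i})_{i\in\w}$ is a bijection of $C(Z,Y)$ onto $C(X,Y)^{\w}$; likewise, by the universal property of the product, $f\colon X\to Y^{\w}$ is continuous iff every coordinate $\pi_i\circ f$ is, so $\Psi(f):=(\pi_i\circ f)_{i\in\w}$ is a bijection of $C(X,Y^{\w})$ onto $C(X,Y)^{\w}$. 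It then suffices to check that $\Phi$ and $\Psi$ are homeomorphisms onto $C_\I(X,Y)^{\w}$, whereupon $\Psi^{-1}\circ\Phi$ is the desired homeomorphism between $C_{\I(Z)}(Z,Y)$ and $C_\I(X,Y^{\w})$.

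First I would treat $\Phi$. A member $K\in\I(Z)$ is compact in the sum $Z$, hence meets only finitely many summands; thus $K=\bigsqcup_{i=0}^{N}K_i$ with $K_i:=K\cap X_i\in\I$, and therefore $[K;U]=\bigcap_{i=0}^{N}\{g: g(K_i)\subseteq U\}$ for every open $U\subseteq Y$. Under $\Phi$ each factor $\{g: g(K_i)\subseteq U\}$ is exactly $\pi_i^{-1}\big([K_i;U]\big)$, a subbasic open set of the product; conversely a subbasic set $\pi_i^{-1}([K;U])$ of the product corresponds to $[K_{(i)};U]$, where $K_{(i)}$ denotes $K$ placed in the $i$-th summand (so $K_{(i)}\in\I(Z)$). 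Hence $\Phi$ carries a subbasis onto a subbasis in both directions and is a homeomorphism; this part uses nothing beyond the definition of $\I(Z)$.

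The substantive step is to show that $\Psi$ is a homeomorphism, i.e.\ that on $C(X,Y^{\w})$ the $\I$-open topology $\tau_\I$ is generated by the sets $[K;\pi_j^{-1}(U)]$ with $K\in\I$, $j\in\w$ and $U$ open in $Y$, since under $\Psi$ these correspond precisely to the subbasic sets $\pi_j^{-1}([K;U])$ of $C_\I(X,Y)^{\w}$. Continuity of $\Psi$ is immediate from the identity $[K;\pi_j^{-1}(U)]=\Psi^{-1}\big(\pi_j^{-1}([K;U])\big)$. For the reverse inclusion I would take a subbasic set $[K;V]$ with $V$ open in $Y^{\w}$ and an $f\in C(X,Y^{\w})$ with $f(K)\subseteq V$; the key is that $f(K)$ is compact, so it is covered by finitely many basic boxes $B_1,\dots,B_m\subseteq V$, each $B_l$ a finite intersection of sets of the form $\pi_{j}^{-1}(U)$.

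The main obstacle is then purely topological: one must replace the open cover $\{f^{-1}(B_l)\cap K\}_{l}$ of the compact set $K$ by a cover consisting of members of $\I$ on which $f$ still maps into the corresponding $B_l$. Here I would use two facts. First, every compact subset $D$ of $K$ again lies in $\I$, since $D=K\cap D$ with $K\in\I$ and $D$ compact, whence $D\in\I$ by the ideal axioms. Second, $Y_\I$-Tychonoffness guarantees, via Proposition \ref{p:Y-good=>good}, that $X$ is Tychonoff (in the degenerate case where $Y$ carries no non-constant continuous real-valued function both topologies are trivial and there is nothing to prove); consequently the compact set $K$ is normal, and the finite open cover $\{f^{-1}(B_l)\cap K\}$ admits a closed shrinking into compact sets $D_l\subseteq f^{-1}(B_l)\cap K$ with $K=\bigcup_l D_l$. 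Then $f(D_l)\subseteq B_l$, each $D_l\in\I$, and $\bigcap_{l}[D_l;B_l]$ is a $\tau_\I$-neighbourhood of $f$ contained in $[K;V]$; since each $[D_l;B_l]$ is a finite intersection of sets $[D_l;\pi_j^{-1}(U)]$, this neighbourhood is open in the topology generated by those sets. This shows $[K;V]$ lies in that topology, so $\Psi$ is a homeomorphism, and combining the two homeomorphisms completes the proof.
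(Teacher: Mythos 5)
Your proof is correct in substance and follows exactly the route the paper has in mind when it omits the proof as a ``slight extension'' of Engelking's Propositions 2.6.9, 2.6.10, 3.4.4 and 3.4.5: split off the sum, split off the product, and check that subbasic sets correspond, the only substantive point being the compactness-and-shrinking argument for the product half. The sum half, the two ideal-theoretic observations (that $K\cap X_i\in\I$ for $K\in\I(Z)$, and that $D=K\cap D\in\I$ for every compact $D\subseteq K\in\I$), and the shrinking argument itself are all carried out correctly.

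The one genuinely flawed spot is your dismissal of the degenerate case. It is false that if $Y$ admits no non-constant continuous real-valued function then ``both topologies are trivial'': take $Y$ to be the Sierpi\'{n}ski space and $X$ a singleton (which is $Y_\I$-Tychonoff for any $Y$); then $C_\I(X,Y)$ is homeomorphic to $Y$, which is not indiscrete. So your justification for skipping that case does not stand. Fortunately, at this point you need neither Proposition \ref{p:Y-good=>good} (incidentally, the relevant statement for $Y_\I$-Tychonoff spaces is Proposition \ref{p:Y-good=>good-I}(i), not \ref{p:Y-good=>good}) nor any hypothesis on $Y$ whatsoever: the paper takes its general topology from Engelking, for whom compact spaces are Hausdorff by definition, so every $K\in\I$ is in its own right a compact Hausdorff, hence normal, space, and your finite open cover $\{f^{-1}(B_l)\cap K\}_{l}$ of $K$ admits a closed --- hence compact --- shrinking for that reason alone, with no appeal to the Tychonoffness of $X$. (This reading also matters for the truth of the lemma itself in the stated generality of arbitrary $Y$: if members of $\I$ were allowed to be non-Hausdorff compacta, finite open covers need not admit compact shrinkings and the subbase step would genuinely be in doubt.) Replace the degenerate-case sentence by this observation and your proof is complete, and indeed slightly more general than the version you wrote.
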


%\begin{proof}
%Define a map $T:C_{\I(Z)}(Z,Y) \to C_\I(X,Y)^{\w}$ by $T(f):=\big( \pi_n(f)\big)_{n\in\w}$, where
%\[
%\pi_n: C_{\I(Z)}(Z,Y)\to C_\I(X_n,Y)=C_\I(X,Y), \quad \pi_n(f):= f{\restriction}_{X_n},
%\]
%is the restriction map onto $X_n\subseteq Z$. It is clear that $T$ is a bijection. Since the restriction maps $\pi_n$ are continuous, we obtain that $T$ is continuous. To show that $T$ is also open, fix a function $f\in C_{\I(Z)}(Z,Y)$ and its open neighborhood $V$. As $T$ is bijective, without loss of generality we can assume that $V$ has the form $W[f;F,U]$, where $U\subseteq Y$ is open and $F\in\I(Z)$ is nonempty. For every $n\in\w$, set $F_{n}:=F\cap X_n$, so all $F_{n}$ belong to $\I$. Since $F$ is compact, there are $0\leq t_1<\cdots<t_{k}$ such that only the sets $F_{t_1},\dots,F_{t_{k}}$ are nonempty. Then $g\in W[f;F,U]$ if and only if $\pi_{t_i}(g)\in W[f;F_{t_i},U]$ for all $i=1,\dots,k$. But this means that
%\[
%T\big(W[f;F,U]\big)=\prod_{i=1}^k W[f;F_{t_i},U] \times \prod_{n\in \w\SM\{t_1,\dots,t_k\} } C_\I(X_n,Y),
%\]
%and hence $T\big(W[f;F,U]\big)$ is open in $C_{\I}(X,Y)^{\w}$. Thus $T$ is open.

%Now we define a map $R:C_\I(X,Y)^{\w} \to C_\I(X,Y^{\w})$ by $R\big((f_n)_{n\in\w}\big) (x):=\big( f_n(x)\big)_{n\in\w}$.\qed
%\end{proof}

\begin{proposition} \label{p:Y-good=>good-I}
Let $Y$ be a topological space admitting a non-constant continuous function $\chi:Y\to\IR$, $X$ be a topological space, and let $\I$ be an  ideal of compact sets of $X$. Then:
\begin{enumerate}
\item[{\rm (i)}] If $X$ is $Y_\I$-Tychonoff, then $X$ is a Tychonoff space.
\item[{\rm (ii)}] If in addition $Y$ is  path-connected, then $X$ is Tychonoff if and only if it is $Y_\I$-Tychonoff.
\end{enumerate}
\end{proposition}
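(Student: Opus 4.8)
The plan is to dispatch (i) almost immediately and to put all the work into the nontrivial direction of (ii). For (i), I would invoke the implication chain displayed just before the proposition: every $Y_\I$-Tychonoff space is $Y_p$-Tychonoff, i.e.\ $Y$-Tychonoff. Since $Y$ admits a non-constant continuous function $\chi:Y\to\IR$, Proposition~\ref{p:Y-good=>good}(ii) then yields at once that $X$ is Tychonoff, and no further argument is needed.

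For (ii), the forward implication (from $Y_\I$-Tychonoff to Tychonoff) is exactly part (i) and does not even use path-connectedness, so the substance is the converse. Assuming $X$ is Tychonoff (hence completely regular and Hausdorff), I must produce the required extensions. Fix a closed set $A\subseteq X$, a point $y_0\in Y$, a set $F\in\I$ with $F\subseteq X\SM A$, and a continuous $f:F\to Y$ with finite image $f(F)=\{y_1,\dots,y_m\}$. Put $F_i:=f^{-1}(y_i)$; these are pairwise disjoint compact subsets of the compact set $F$, each disjoint from $A$. The first key step is separation: for each $i$ the set $C_i:=\big(\bigcup_{j\neq i}F_j\big)\cup A$ is closed (a finite union of compacta is closed in the Hausdorff space $X$, and $A$ is closed) and disjoint from the compact set $F_i$, so complete regularity furnishes a continuous $h_i:X\to[0,1]$ with $h_i\equiv 1$ on $F_i$ and $h_i\equiv 0$ on $C_i$. (This is the standard fact that in a Tychonoff space a compact set and a disjoint closed set are separated by a $[0,1]$-valued continuous function, obtained by covering the compact set with finitely many complete-regularity functions and taking their minimum.)

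The second key step packages the $h_i$ into a single real coordinate and then composes with a path, which is where path-connectedness of $Y$ enters. Choose distinct positive reals $t_1,\dots,t_m$ and set $\phi:=\sum_{i=1}^m t_i h_i:X\to[0,S]$, where $S:=\sum_i t_i$. Then $\phi\equiv t_i$ on $F_i$ (there $h_i=1$ and $h_j=0$ for $j\neq i$) while $\phi\equiv 0$ on $A$. Ordering the parameters $0<t_{\pi(1)}<\dots<t_{\pi(m)}$ and concatenating a path from $y_0$ to $y_{\pi(1)}$, then from $y_{\pi(1)}$ to $y_{\pi(2)}$, and so on, path-connectedness produces a continuous $\sigma:[0,S]\to Y$ with $\sigma(0)=y_0$ and $\sigma(t_i)=y_i$ for every $i$. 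Then $\bar f:=\sigma\circ\phi:X\to Y$ is continuous, satisfies $\bar f|_{F_i}=\sigma(t_i)=y_i=f|_{F_i}$ and hence $\bar f|_F=f$, and $\bar f(A)=\{\sigma(0)\}=\{y_0\}$, exactly as required.

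I expect the main conceptual point to be the recognition that path-connectedness is precisely what upgrades the real-valued separation (which is always available from Tychonoff-ness) into a genuine $Y$-valued extension: the function $\phi$ encodes, along a single real segment, both the base point $y_0$ (on $A$ and at parameter $0$) and the finitely many target values $y_i$, and $\sigma$ realizes them inside $Y$. The care-points are minor and routine: checking that the sets $C_i$ are closed and disjoint from $F_i$ (needing Hausdorffness), that $\phi$ attains the prescribed constant values on each $F_i$ and on $A$ simultaneously, and handling the degenerate cases $F=\emptyset$, $A=\emptyset$, or $m=1$, all of which are covered by the same formulas.
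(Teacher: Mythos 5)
Your proof is correct and follows essentially the same route as the paper: both arguments reduce part (ii) to producing a continuous real-valued function on $X$ sending $A$ to $0$ and each compact fiber $f^{-1}(y_i)$ to a distinct real value, and then compose with a path in $Y$ (built by concatenation, using path-connectedness) that hits $y_0$ and the $y_i$ at those parameters, while part (i) is handled identically via Proposition \ref{p:Y-good=>good}. The only difference is cosmetic: where you construct the level function by hand as $\sum_i t_i h_i$ from compact-versus-closed separating functions, the paper simply cites Gillman--Jerison, 3.11(a), for a continuous $g:X\to[0,n]$ with $g(A)\subseteq\{0\}$ and $g\big(f^{-1}(y_i)\big)=\{i\}$.
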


\begin{proof}
(i) Since $X$ is also $Y$-Tychonoff, $X$ is Tychonoff by Proposition \ref{p:Y-good=>good}.

(ii) Taking into account (i) it suffices to prove that if $X$ is Tychonoff, then it is a $Y_k$-Tychonoff space. Fix a closed subset $A$ of $X$, point $y_0\in Y$, compact set $F\subseteq X\SM A$, and continuous function $f:F\to Y$ with finite image $f(F)=\{y_1,\dots,y_n\}$, where $y_1,\dots,y_n$ are pairwise distinct. Since $Y$ is path-connected, there is a continuous function $h:[0,n]\to Y$ such that $h(i)=y_i$ for every $0\le i\le n$. For every $i=1,\dots,n$, set $K_i:=f^{-1}(y_i)$, so $K_i$ is a compact subset of $F$ and hence of $X$. Now \cite[3.11(a)]{GiJ} implies that there is a continuous function $g:X\to [0,n]$ such that $g(A)\subseteq \{0\}$ and $g(K_i)=\{i\}$ for $1\le i\le n$. Then the function $h\circ g:X\to Y$ is a desired continuous extension of $f$.\qed
\end{proof}

The next proposition extends Proposition 2.7 of \cite{BG-Baire}.
\begin{proposition} \label{p:2-Y-I-Tychonoff}
Let $X$ be a $T_1$-space, and let $\I$ be an ideal of compact sets of $X$. Then the following conditions are equivalent:
\begin{enumerate}
\item[{\rm (i)}]  $X$ is $Y_\I$-Tychonoff for any nonempty  $T_1$  space $Y$.
\item[{\rm (ii)}]  $X$ is $Y$-Tychonoff for any nonempty  $T_1$  space $Y$.
\item[{\rm (iii)}]   $X$ is $\mathbf{2}_\I$-Tychonoff.
\item[{\rm (iv)}]   $X$ is $\mathbf{2}$-Tychonoff.
\item[{\rm (v)}]   $X$ is a zero-dimensional space.
\end{enumerate}
\end{proposition}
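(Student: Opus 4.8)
The plan is to establish the five equivalences through just two genuinely substantive implications, $\mathrm{(iv)}\Rightarrow\mathrm{(v)}$ and $\mathrm{(v)}\Rightarrow\mathrm{(i)}$, after which everything else closes up formally. Since $\mathbf{2}$ is a $T_1$-space, both $\mathrm{(i)}\Rightarrow\mathrm{(iii)}$ and $\mathrm{(ii)}\Rightarrow\mathrm{(iv)}$ are mere specializations to $Y=\mathbf{2}$, while $\mathrm{(i)}\Rightarrow\mathrm{(ii)}$ and $\mathrm{(iii)}\Rightarrow\mathrm{(iv)}$ follow at once from the chain $Y_k$-Tychonoff $\Rightarrow Y_\I$-Tychonoff $\Rightarrow Y_p$-Tychonoff recorded above (recall $Y_p$-Tychonoff $=Y$-Tychonoff). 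Thus once $\mathrm{(iv)}\Rightarrow\mathrm{(v)}\Rightarrow\mathrm{(i)}$ is proved, the loop $\mathrm{(i)}\Rightarrow\mathrm{(ii)}\Rightarrow\mathrm{(iv)}\Rightarrow\mathrm{(v)}\Rightarrow\mathrm{(i)}$ together with the side chain $\mathrm{(i)}\Rightarrow\mathrm{(iii)}\Rightarrow\mathrm{(iv)}$ yields all five.

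For $\mathrm{(iv)}\Rightarrow\mathrm{(v)}$ I would argue directly that $X$ has a base of clopen sets. Fix a point $x$ and an open $U\ni x$, put $A:=X\SM U$ (a closed set not containing $x$) and $F:=\{x\}$, and define $f\colon F\to\mathbf{2}$ by $f(x)=1$ with $y_0=0$. By $\mathbf{2}$-Tychonoffness there is a continuous $\bar f\colon X\to\mathbf{2}$ with $\bar f(x)=1$ and $\bar f(A)\subseteq\{0\}$; then $V:=\bar f^{-1}(1)$ is clopen (as the preimage of a clopen subset of the discrete space $\mathbf{2}$), contains $x$, and misses $A$, so $x\in V\subseteq U$. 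Hence $X$ is zero-dimensional.

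The main work is $\mathrm{(v)}\Rightarrow\mathrm{(i)}$: given zero-dimensional $X$ and an arbitrary nonempty $T_1$-space $Y$, I must extend finite-image continuous functions. Fix a closed $A\subseteq X$, a point $y_0\in Y$, a compact $F\in\I$ with $F\subseteq X\SM A$, and a continuous $f\colon F\to Y$ with $f(F)=\{y_1,\dots,y_n\}$ (pairwise distinct). Since $Y$ is $T_1$, each fibre $K_i:=f^{-1}(y_i)$ is clopen in $F$ (its complement in $F$ is the finite union $\bigcup_{j\neq i}K_j$ of closed sets), hence compact, and these fibres are pairwise disjoint and disjoint from $A$. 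Using zero-dimensionality, for each $i$ the set $A\cup\bigcup_{j\neq i}K_j$ is closed and misses $K_i$, so covering $K_i$ by clopen neighbourhoods avoiding this set and invoking compactness of $K_i$ yields a clopen $V_i$ with $K_i\subseteq V_i\subseteq X\SM\big(A\cup\bigcup_{j\neq i}K_j\big)$. Disjointifying via $V_i':=V_i\SM\bigcup_{j<i}V_j$ keeps each $V_i'$ clopen with $K_i\subseteq V_i'$ and makes the $V_i'$ pairwise disjoint and disjoint from $A$. Finally I define $\bar f$ to equal $y_i$ on $V_i'$ and $y_0$ off $\bigcup_i V_i'$; this is continuous because the preimage of any open subset of $Y$ is a finite union of the clopen sets $V_i'$ together with possibly $X\SM\bigcup_i V_i'$, and it satisfies $\bar f{\restriction}_F=f$ and $\bar f(A)\subseteq\{y_0\}$. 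Thus $X$ is $Y_\I$-Tychonoff.

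The only delicate point is the separation step in $\mathrm{(v)}\Rightarrow\mathrm{(i)}$: one must verify that the level sets $K_i$ really are compact and clopen in $F$ (this is exactly where $T_1$-ness of $Y$ and finiteness of the image enter), and that the clopen separators can be taken pairwise disjoint while still containing the correct fibres. The disjointification $V_i'=V_i\SM\bigcup_{j<i}V_j$ settles the latter cleanly, since $V_j$ already avoids $K_i$ for every $j\neq i$. Everything else is routine, and the whole argument is a direct generalization of Proposition~2.7 of \cite{BG-Baire} from finite sets to members of the ideal $\I$.
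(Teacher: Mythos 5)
Your proof is correct and follows essentially the same route as the paper: the same trivial implications close the loop, and (v)$\Rightarrow$(i) is proved by exactly the paper's construction, separating the compact fibres $f^{-1}(y_i)$ from $A$ and from each other by clopen sets and defining the extension piecewise. The only differences are that the paper outsources (iv)$\Rightarrow$(v) to Proposition~2.7 of \cite{BG-Baire} where you give a (correct) direct two-line argument, and that you spell out the compactness-and-disjointification details which the paper compresses into ``choose a clopen neighborhood $U_i$ of $F_i$ such that the sets $U_1,\dots,U_n$ and $A$ are disjoint.''
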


\begin{proof}
The implications (i)$\Rightarrow$(ii), (ii)$\Rightarrow$(iv), (i)$\Rightarrow$(iii) and (iii)$\Rightarrow$(iv) are trivial. The implication (iv)$\Rightarrow$(v) is proved in Proposition 2.7 of \cite{BG-Baire}. To prove (v)$\Rightarrow$(i), assume that $X$ is zero-dimensional. Being also a  $T_1$ space, $X$ is Tychonoff. Given any nonempty $T_1$-space $Y$, fix a closed subset $A$ of $X$, point $y_0\in Y$ and continuous function $f:F\to Y$ with finite image $\{ y_1,\dots,y_n\}$ defined on a subset $F\in\I$  of $X\setminus A$. For every $i=1,\dots,n$, let $F_i :=f^{-1}(y_i)$, so $F_i\in\I$ and all (compact) sets $F_1,\dots,F_n$ and $A$ are  disjoint. Since $X$ is  zero-dimensional and Tychonoff, for every $i=1,\dots,n$, choose a clopen neighborhood $U_i$ of $F_i$ such that the sets $U_1,\dots,U_n$ and $A$ are disjoint.
Define ${\bar f}:X\to Y$ as follows: ${\bar f}(x)=y_i$ if $x\in U_i$,  and ${\bar f}(x)=y_0$ if $x\in X\setminus \bigcup_{i=1}^n U_i$. It is clear that ${\bar f}$ is a continuous extension of $f$ such that ${\bar f}(A)\subseteq \{ y_0\}$. \qed
\end{proof}

Now  we recall the definition of the Baire topology of a topological space $X$ which will be used essentially in the proof of the main results concerning spaces of Baire functions, see Section \ref{sec:Baire-topology}.

Let $(X,\tau)$ be a topological space. For a function $f\in C(X)$, denote by $Z(f):=f^{-1}(0)$ and $CZ(f):=X\SM Z(f)$ the {\em zero-set} and the {\em cozero-set} of $f$.
It is easy to see that the zero-set $Z(f)$ is a closed $G_\delta$-set of $X$. If $X$ is normal, then every closed $G_\delta$-set of $X$ is the zero-set of some real-valued continuous function on $X$, see \cite[3D.3]{GiJ}.
The {\em Baire topology} $\tau_b$ on $X$ is the topology on the underlying set $X$ having for a basis the family of all zero-sets of $X$. Since the countable intersection of zero-sets is also a zero-set, it follows that the space $X$ endowed with the Baire topology and denoted by $X_{\aleph_0}$ is a $P$-space. Recall that a topological space is called a {\em $P$-space} if the intersection of a countable family of open sets is open.
Let us recall also that the family of $G_\delta$-sets in $X$ forms a base of the topology $\tau_\delta$ on $X$, and the space $X$ with the topology $\tau_\delta$ is called  the {\em $P$-modification of}  $X$ and is denoted by $PX$ or $X_\delta$. Clearly, $PX$ is a $P$-space and $\tau_\delta$ is finer than the Baire topology $\tau_b$. If $X$ is a Tychonoff space, then $X_{\aleph_0}=PX$ and $X_{\aleph_0}$ is a Tychonoff space.

Let $X$ and $Y$ be topological spaces. Analogously to zero-sets and cozero-sets we define
\[
\begin{aligned}
Z_Y & :=\{f^{-1}(F) : F \mbox{ is a closed set in $Y$ and } f\in C(X,Y)\},\\
CZ_Y &:=\{f^{-1}(W) : W \mbox{ is an open set in $Y$ and } f\in C(X,Y)\}.
\end{aligned}
\]

\begin{definition} \label{def:X-Y-modification} {\em
Let $Y$ be a topological space containing at least two points, and let $(X,\tau)$ be a $Y$-Tychonoff space. The family of all countable intersections of elements of $Z_Y$ forms a base of the topology $\tau_Y$ on $X$ finer than $\tau$. The space $X_Y:=(X,\tau_Y)$ is called the {\em $Y_\delta$-modification of } $X$.}
\end{definition}
Observe that if $Y$ is a normal space, then $\tau_Y\leq \tau_b$ (see the proof of (i) of Proposition \ref{p:G-del-modif} below). To obtain the equality $\tau_Y= \tau_b$ we introduce the following class of topological spaces.

\begin{definition} \label{def:X-Y-z-normal} {\em
Let $Y$ be a topological space. A topological space $X$ is called
\begin{enumerate}
\item[$\bullet$] {\em $Y$-$z$-Tychonoff} if $X$ is a Tychonoff space and for every closed subset $A\subseteq X$,  disjoint zero-sets $F_1,\dots,F_n$ in $X$ such that $\bigcup_{i=1}^n F_i \subseteq X\SM A$, and each points $y_0,\dots,y_n\in Y$ there exists a continuous function $f:X\to Y$ such that $f(A)\subseteq \{ y_0\}$ and $f(F_i)=\{ y_i\}$ for every $i=1,\dots,n$;
\item[$\bullet$] {\em $Y$-$z$-normal} if $X$ is a Tychonoff space and for each zero-set $F\subseteq X$ and every continuous function $f:F\to Y$ with finite image $f(F)$ there exists a continuous function ${\bar f}:X\to Y$ such that ${\bar f}{\restriction}_F =f$.
\end{enumerate} }
\end{definition}

\begin{proposition} \label{p:Y-z-Tychonoff-exa}
Let $X$ be a Tychonoff space. Then:
\begin{enumerate}
\item[{\rm (i)}] If $X$ is $Y$-normal, then $X$ is $Y$-$z$-normal.
\item[{\rm (ii)}]  If $Y$ is  a path-connected topological space containing at least two points and $X$ is a normal space, then $X$ is $Y$-$z$-normal.
\item[{\rm (iii)}] If $X$ is zero-dimensional, then $X$ is $Y$-$z$-normal for every topological space $Y$.
\item[{\rm (iv)}]  $X$ is Tychonoff if and only if it is $\IR$-z-Tychonoff.
\item[{\rm (v)}] If $X$ is $Y$-$z$-normal, then $X$ is  $Y$-$z$-Tychonoff.
\item[{\rm (vi)}] If $X$ is $Y$-$z$-Tychonoff, then $X$ is  $Y$-Tychonoff.
\end{enumerate}
\end{proposition}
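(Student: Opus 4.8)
The plan is to dispatch the six items in order of difficulty, leaning on three tools: that two disjoint zero-sets of a Tychonoff space are completely separated (\cite{GiJ}); the interval-factorization trick from the proof of Proposition \ref{p:Y-good=>good-I}(ii); and the clopen-separation trick from the proof of Proposition \ref{p:2-Y-I-Tychonoff}. Items (i) and the easy direction of (iv) are immediate. Every zero-set is closed, so the extension property defining $Y$-normality specializes verbatim to zero-sets, which is precisely $Y$-$z$-normality; this gives (i). And $\IR$-$z$-Tychonoffness contains the clause \emph{$X$ is Tychonoff} by definition, so the ``only if'' half of (iv) is free.

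For (ii) and (iii) I would extend a continuous $f\colon F\to Y$ with finite image $\{y_1,\dots,y_n\}$ off a zero-set $F$, copying the scheme of Proposition \ref{p:Y-good=>good-I}(ii). Put $K_i:=f^{-1}(y_i)$; these are pairwise disjoint closed fibers (closed in $F$, hence in $X$). For (ii), path-connectedness of $Y$ supplies a continuous $h\colon[0,n]\to Y$ with $h(i)=y_i$, normality of $X$ supplies (as in the cited proof) a continuous $g\colon X\to[0,n]$ with $g(K_i)=\{i\}$, and $\bar f:=h\circ g$ is the required extension since $\bar f=h(i)=y_i=f$ on each $K_i$. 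For (iii), zero-dimensionality lets me pick pairwise disjoint clopen sets $U_i\supseteq K_i$, exactly as in Proposition \ref{p:2-Y-I-Tychonoff}; then setting $\bar f:=y_i$ on $U_i$ and $\bar f:=y_1$ off $\bigcup_iU_i$ yields a continuous extension, because the preimage of every open set of $Y$ is clopen.

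For (v), and thereby the content of the ``if'' direction of (iv), the idea is to reduce to pairwise disjoint zero-sets. Given the data $A,F_1,\dots,F_n,y_0,\dots,y_n$ of the $Y$-$z$-Tychonoff condition, I would form the zero-set $F:=A\cup F_1\cup\dots\cup F_n$, on which each piece is relatively clopen, so that the assignment $A\mapsto y_0$, $F_i\mapsto y_i$ defines a continuous map with finite image; $Y$-$z$-normality then extends it to all of $X$, giving (v). For (iv) with $Y=\IR$ no extension theorem is needed: disjoint zero-sets are completely separated (\cite{GiJ}), and a partition of unity $\varphi_i=\bigl(\prod_{j\neq i}h_j\bigr)\big/\bigl(\sum_k\prod_{j\neq k}h_j\bigr)$ built from defining functions $h_i$ of the pieces produces $f=\sum_i y_i\varphi_i$ taking the prescribed constant values.

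Finally, for (vi) I would go back from zero-sets to finite sets. Given closed $A$, a finite $F\subseteq X\SM A$ and $f\colon F\to Y$, group $F$ by the finitely many values of $f$ and, using only that $X$ is Tychonoff, pick a continuous real function vanishing on $A$ and taking distinct positive integer values on the groups; its level sets are pairwise disjoint zero-sets enclosing the groups and missing $A$, so $Y$-$z$-Tychonoffness applied to them yields the extension with $\bar f(A)\subseteq\{y_0\}$. I expect the main obstacle throughout to be the \emph{simultaneous} control of the distinguished set $A$ alongside the zero-sets: this is exactly what forces complete separation of zero-sets in (iv) and (v) and the enclosing step in (vi), and it is the only place where the zero-set structure, rather than mere closedness, is genuinely used.
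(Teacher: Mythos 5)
Your items (i) and (vi) coincide with the paper's proofs (the paper's (vi) is exactly your enclosing-level-sets argument, with $g^{-1}([i-\tfrac13,i+\tfrac13])$ in place of your integer level sets), and your (ii) replaces the paper's citation of Proposition 2.4 of \cite{BG-Baire} by a direct run of the $h\circ g$ construction from Proposition \ref{p:Y-good=>good-I}(ii), which is a legitimate self-contained substitute (modulo the caveat, which the paper's own proof of Proposition \ref{p:Y-good=>good-I}(ii) equally incurs, that the fibers $f^{-1}(y_i)$ are closed only when the finite image is $T_1$-separated in $Y$). The genuine gap is in (iii). You claim that zero-dimensionality lets you choose pairwise disjoint clopen sets $U_i\supseteq K_i$ ``exactly as in Proposition \ref{p:2-Y-I-Tychonoff}''; but in that proposition the sets being enclosed are members of an ideal of \emph{compact} sets, and compactness is precisely what converts a clopen base into clopen separation (finite subcover of a clopen cover). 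Here the fibers $K_i$ are closed subsets of a zero-set and need not be compact, and a clopen base does \emph{not} suffice to separate two disjoint non-compact closed sets: Roy's space is a complete metric space with $\mathrm{ind}=0$ but $\dim=\mathrm{Ind}=1$, so it contains disjoint closed sets admitting no clopen separation, and since it is perfectly normal their union is a zero-set --- so already the $Y=\mathbf{2}$ instance of your extension step fails there. This is presumably why the paper outsources (iii) to Proposition 2.8 of \cite{BG-Baire} rather than rerunning the compact-set trick; whatever convention makes that citation correct (in effect, strong zero-dimensionality) is exactly what your argument is missing.

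A second, subtler point concerns (iv) and (v), where you yourself flag ``simultaneous control of $A$'' as the obstacle but then do not resolve it: your (v) begins by ``forming the zero-set $F:=A\cup F_1\cup\dots\cup F_n$'', and your (iv) partition of unity needs a defining function for each piece including $A$ --- both silently assume $A$ is a zero-set, whereas Definition \ref{def:X-Y-z-normal} only makes it closed. This assumption is not removable: in the Tychonoff plank the top edge is a zero-set and the right edge is a disjoint closed set, yet the two are not completely separated, so no function as in the $\IR$-$z$-Tychonoff condition exists for that pair. To be fair, the paper has the identical lacuna --- it declares (v) ``immediate'' and derives (iv) from Proposition 1.5.13 of \cite{Eng}, which concerns pairs of disjoint \emph{zero-sets} --- and in every actual application (e.g.\ the sets $S_n$ in Lemma \ref{l:G-del-z-modif}) the distinguished set is indeed a zero-set; under that reading your clopen-pieces reduction in (v) and your formula $f=\sum_i y_i\varphi_i$ in (iv) are correct and are exactly the intended arguments. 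So: (i), (ii), (iv), (v), (vi) match the paper in substance (with the shared zero-set-$A$ glitch made explicit), but (iii) as you wrote it is broken and needs either the compactness that is absent here or the stronger dimension-theoretic hypothesis hidden in the paper's citation.
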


\begin{proof}
(i) and (v) immediately follow from the corresponding definitions.

(ii) By Proposition \ref{p:Y-good=>good} and  Proposition  2.4 of \cite{BG-Baire}, a topological space $X$ is normal if and only if it is $Y$-normal. Now (i) applies.

(iii) Since $X$ is zero-dimensional, Proposition 2.8 of \cite{BG-Baire} states that $X$ is $Y$-normal for any nonempty topological space $Y$, and (i) applies.

(iv) follows from Proposition 1.5.13 of \cite{Eng}.

(vi) Fix a closed subset $A$ of $X$, point $y_0\in Y$ and function $f:F\to Y$ defined on a finite subset  $F=\{ x_1,\dots,x_n\}$ of $X\setminus A$.
Since $X$ is Tychonoff, there is a continuous function $g:X\to [0,n]$ such that $g(A)\subseteq \{0\}$ and $g(x_i)=i$ for every $i=1,\dots,n$. Then the sets $F_i :=g^{-1} \big( [i-\tfrac{1}{3},i+\tfrac{1}{3}]\big)$ are disjoint zero-sets in $X$ such that $\bigcup_{i=1}^n F_i \subseteq X\SM A$.
Define a continuous function $f': \bigcup_{i=1}^n F_i \to Y$ by $f'{\restriction}_{F_i}=f(x_i)$. Since $X$ is $Y$-$z$-Tychonoff, $f'$ has an extension ${\bar f}\in C(X,Y)$. It is clear that ${\bar f}$ is  a desired extension of $f$.
%{\em Claim 1. There are disjoint zero-sets $U_1,\dots,U_n$ in $X$ such that $x_i\in U_i$ and $U_i\cap A=\emptyset$ for every $i=1,\dots,n$.}
\qed
\end{proof}

\begin{example} \label{exa:Tych-non-R-z-normal}
There is a first countable $\IR$-z-Tychonoff space which is not  $\IR$-z-normal.
\end{example}

\begin{proof}
Let $L$ be the Niemytzki plane, see Example 1.2.4 of \cite{Eng}. Since $L$ is Tychonoff it is $\IR$-z-Tychonoff by (iv) of Proposition \ref{p:Y-z-Tychonoff-exa}. Denote by $L_1$ the line $y=0$. Then $L_1$ is a discrete subspace of $L$. Example 1.5.9 of \cite{Eng} implies that there are two disjoint subsets $A$ and $B$ of $L_1$ such that for every open subsets $U,V$ of $L$ such that $A\subseteq U$ and $B\subseteq V$ it follows that $U\cap V\not=\emptyset$. It is clear that $L_1$ is the zero-set of the function $(x,y)\mapsto y$. Define $f:L_1 \to \IR$ by $f(A)=\{1\}$, $f(B)=\{ 2\}$, and $f\big(L_1\SM (A\cup B)\big)\subseteq \{ 0\}$. Then, by the choice of $A$ and $B$, the function $f$ cannot be extended to $L$. Thus the Tychonoff space $L$ is not $\IR$-z-normal. \qed
\end{proof}

If $X$ is perfectly normal we can prove more. Recall that a topological space $X$ is called {\em perfectly normal} if it is a normal space and every closed subset of $X$ is a $G_\delta$-set. It is clear that a perfectly normal space has countable pseudocharacter, also we recall that every metrizable space is perfectly normal.

\begin{proposition} \label{p:z-normal-Y-normal}
For any perfectly normal space $X$ the following conditions are equivalent:
\begin{enumerate}
\item[{\rm (i)}]  $X$ is $Y$-$z$-Tychonoff;
\item[{\rm (ii)}]  $X$ is $Y$-normal;
\item[{\rm (iii)}]  $X$ is $Y$-$z$-normal.
\end{enumerate}
\end{proposition}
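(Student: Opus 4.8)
The plan is to close the loop on the chain of implications that is already available from Proposition~\ref{p:Y-z-Tychonoff-exa}, and to let perfect normality enter at exactly one place. In full generality (no perfect normality), parts (i) and (v) of that proposition give (ii)$\Rightarrow$(iii) [$Y$-normal $\Rightarrow$ $Y$-$z$-normal] and (iii)$\Rightarrow$(i) [$Y$-$z$-normal $\Rightarrow$ $Y$-$z$-Tychonoff]. Hence the only implication requiring new work is (i)$\Rightarrow$(ii), i.e.\ that a perfectly normal $Y$-$z$-Tychonoff space is $Y$-normal; this is where perfect normality will be used.

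The single structural fact I would isolate first is that in a perfectly normal space the zero-sets are precisely the closed sets. Indeed, a zero-set is always closed; conversely, if $F$ is closed then $F$ is a $G_\delta$ by perfect normality, and a closed $G_\delta$-set in a normal space is a zero-set by \cite[3D.3]{GiJ}. (Note also that a perfectly normal space is $T_1$ and Tychonoff, so the $T_1$/Tychonoff hypotheses built into the definitions of $Y$-normal and $Y$-$z$-normal hold automatically.) This identification shows that the defining conditions of $Y$-normal and $Y$-$z$-normal are literally the same statement for such $X$, which in particular re-proves (ii)$\Leftrightarrow$(iii); but for the main step I only need the direction ``closed $\Rightarrow$ zero-set''.

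For (i)$\Rightarrow$(ii), I would fix a closed set $F\subseteq X$ and a continuous $f:F\to Y$ with finite image $f(F)=\{y_1,\dots,y_n\}$, the $y_i$ being pairwise distinct, and put $F_i:=f^{-1}(y_i)$. By continuity each $F_i$ is closed in $F$, hence closed in $X$, and therefore a zero-set of $X$ by the fact above; moreover the $F_i$ are pairwise disjoint with $F=\bigcup_{i=1}^n F_i$. Applying the $Y$-$z$-Tychonoff property with $A:=\emptyset$, the disjoint zero-sets $F_1,\dots,F_n$, and the points $y_1,\dots,y_n$ together with an arbitrary $y_0\in Y$, I obtain a continuous $\bar f:X\to Y$ with $\bar f(F_i)=\{y_i\}$ for each $i$. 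Since $F=\bigsqcup_{i=1}^n F_i$ and $f\equiv y_i$ on $F_i$, this $\bar f$ restricts to $f$ on $F$, so $X$ is $Y$-normal. Combined with (ii)$\Rightarrow$(iii)$\Rightarrow$(i) this yields the equivalence of all three conditions.

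I expect the only genuinely load-bearing step to be the identification of closed sets with zero-sets; everything else is bookkeeping and reuse of Proposition~\ref{p:Y-z-Tychonoff-exa}. The one small point to watch is the closedness of the fibers $F_i$, which is immediate from continuity once singletons of $Y$ are closed; this holds in the $T_1$ (in particular metrizable) setting relevant throughout the paper, and for non-$T_1$ $Y$ one would simply restrict attention to the closed points of the finite image, which does not affect the cases of interest.
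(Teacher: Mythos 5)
Your proof is correct and follows essentially the same route as the paper's: both reduce to (i)$\Rightarrow$(ii) via Proposition~\ref{p:Y-z-Tychonoff-exa}, use perfect normality (closed $G_\delta$-sets in a normal space are zero-sets, \cite[3D.3]{GiJ}) to see that the fibers $F_i=f^{-1}(y_i)$ are disjoint zero-sets, and then invoke the $Y$-$z$-Tychonoff property (with $A=\emptyset$) to extend $f$. If anything, you are slightly more careful than the paper, which silently assumes the fibers $F_i$ are closed (i.e., that the relevant points of $Y$ are closed) — a point you explicitly flag and resolve.
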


\begin{proof}
By Proposition \ref{p:Y-z-Tychonoff-exa}, (ii)$\Rightarrow$(iii)$\Rightarrow$(i). Let us prove (i)$\Rightarrow$(ii).

Assume that $X$ is a $Y$-$z$-Tychonoff space.
Let $F$ be a closed subset of $X$ and let $f:F\to Y$ be a continuous function with finite image $f(F)=\{y_1,...,y_n\}$ where all $y_i$ are distinct. Since $X$ is a perfectly normal space and $F$ is closed, the set $F_i=f^{-1}(y_i)$ is a zero-set of $X$ for each $i=1,\dots,n$. It is clearly that $F_i\cap F_j=\emptyset$ for all distinct $i,j\in \{1,\dots,n\}$.  By (i), there
exists a continuous function ${\bar f}:X\to Y$ such that ${\bar f}{\restriction}_F=f$. Thus $X$ is $Y$-normal.\qed
\end{proof}

%%%%%%%%%%%%%%%%%%%%%%%%%%%%%%%%%%%%%%%%%%
%%%%%%%%%%%%%%%%%%%%%%%%%%%%%%%%%%%%%%%%%%
%%%%%%%%%%%%%%%%%%%%%%%%%%%%%%%%%%%%%%%%%%
%%%%%%%%%%%%%%%%%%%%%%%%%%%%%%%%%%%%%%%%%%

\section{Fr\'{e}chet--Urysohness and sequentiality in some spaces of continuous functions} \label{sec:FU-sequentiality}

%%%%%%%%%%%%%%%%%%%%%%%%%%%%%%%%%%%%%%%%%%
%%%%%%%%%%%%%%%%%%%%%%%%%%%%%%%%%%%%%%%%%%
%%%%%%%%%%%%%%%%%%%%%%%%%%%%%%%%%%%%%%%%%%
%%%%%%%%%%%%%%%%%%%%%%%%%%%%%%%%%%%%%%%%%%

Since we shall consider subspaces $H$ of $C(X,Y)$,  some kind of richness of $H$ will be necessary.
Below we generalize the notion of being a $Y$-Tychonoff space.

\begin{definition} \label{def:relatively-Y-Tych} {\em
Let $X,Y$ be topological spaces, $\I$ be an ideal of compact sets of $X$, $\mathcal{D}$ be a subspace of $Y$, and let $H\subseteq S$ be two subspaces of the power space $Y^X$. Then $H$ is called a {\em relatively $\mathcal{D}_\I$-Tychonoff   subspace of $S$} if for every closed subset $A$ of $X$, compact subset $F\in\I$ contained in $X\SM A$, point $y_0\in \mathcal{D}$ and each $f\in S$  such that $f(F)$ is a finite subset of  $\mathcal{D}$ there is a function ${\bar f}\in H$ such that
$
{\bar f}{\restriction}_F=f{\restriction}_F $  and $ {\bar f}(A)\subseteq \{ y_0\}.
$ \qed }
\end{definition}
In other words, $H$ is a relatively $\mathcal{D}_\I$-Tychonoff   subspace of $S$ if for every closed subset $A$ of $X$, every function $f\in S$ and each $F\in\I$ such that $f(F)$ is a finite subset of $\mathcal{D}$ and $F\subseteq X\SM A$, the restriction $f{\restriction}_F$ can be extended to a function ${\bar f}\in H$ with the additional condition ${\bar f}(A)\subseteq \{ y_0\}$ for some point $y_0\in Y$.
If $\I=\FF(X)$ or $\I=\KK(X)$ we shall say that $H$ is a {\em relatively $\mathcal{D}_p$-} or {\em relatively $\mathcal{D}_k$-Tychonoff subspace of $S$}, respectively. Observe that $X$ is $Y$-Tychonoff if and only if $H=C(X,Y)$ is a relatively $Y_p$-Tychonoff subspace of $S=Y^X$.

Let us recall that a topological space $X$ is called {\em Fr\'{e}chet--Urysohn} if for any cluster point $a\in X$ of a subset $A\subseteq X$ there is a sequence $\{ a_n\}_{n\in\w}\subseteq A$ which converges to $a$.

\begin{proposition} \label{p:FU-Cp}
Let $Y$ be a $T_1$ topological space containing a two element subset $\mathcal{D}=\{ g_0,g_1\}$, and let $X$ be a $Y_\I$-Tychonoff space for some ideal $\I$ of compact sets of $X$. Assume that $H$ is a relatively $\mathcal{D}_\I$-Tychonoff subspace of $C_\I(X,Y)$ containing the constant function $\mathbf{g}_0$. If $H$ is a Fr\'{e}chet--Urysohn space, then $X$ has the property $\gamma_\I$.
\end{proposition}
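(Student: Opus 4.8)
The plan is to reproduce, in the present generality, the half of the Gerlits--Nagy argument that extracts a selection principle from a convergence property of the function space. Fix an arbitrary open $\I$-cover $\U=\{U_\alpha:\alpha\in\Lambda\}$ of $X$; the goal is to exhibit a sequence of members of $\U$ that forms an $\I$-sequence. If $X\in\U$, then the constant sequence $X,X,\dots$ is already an $\I$-sequence and we are done, so I assume every $U\in\U$ is a proper open subset of $X$. Before anything else I would record a convenient neighbourhood base at the constant function $\mathbf{g}_0$: since $\I$ is closed under finite unions, the sets $[K;V]=\{f:f(K)\subseteq V\}$, with $K\in\I$ and $V$ an open neighbourhood of $g_0$ in $Y$, form a base of $\tau_\I$ at $\mathbf{g}_0$ (any basic neighbourhood $W[\mathbf{g}_0;\FF,\mathcal{V}]$ contains one such, taking $K=\bigcup\FF$ and $V=\bigcap\mathcal{V}$).

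The key construction exploits the richness of $H$. For each $U\in\U$ and each $K\in\I$ with $K\subseteq U$, I apply the relatively $\mathcal{D}_\I$-Tychonoff property (Definition \ref{def:relatively-Y-Tych}) to the closed set $A:=X\SM U$, the compact set $F:=K\subseteq X\SM A$, the point $y_0:=g_1\in\mathcal{D}$, and the source function $f:=\mathbf{g}_0\in C_\I(X,Y)=S$ (continuous, with $f(K)=\{g_0\}\subseteq\mathcal{D}$ finite). This yields $f_{U,K}\in H$ with $f_{U,K}(K)=\{g_0\}$ and $f_{U,K}(X\SM U)\subseteq\{g_1\}$. Put $E:=\{f_{U,K}:U\in\U,\ K\in\I,\ K\subseteq U\}\subseteq H$. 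I then claim $\mathbf{g}_0$ is a cluster point of $E$: given a basic neighbourhood $[K;V]$, the $\I$-cover property gives some $U\in\U$ with $K\subseteq U$, and then $f_{U,K}(K)=\{g_0\}\subseteq V$ shows $f_{U,K}\in E\cap[K;V]$; moreover $\mathbf{g}_0\notin E$, since each $X\SM U$ is nonempty and $g_0\neq g_1$ force $f_{U,K}\neq\mathbf{g}_0$. As $\mathbf{g}_0\in H$ and $H$ is Fr\'{e}chet--Urysohn, there is a sequence $h_n:=f_{U_n,K_n}\in E$ with $h_n\to\mathbf{g}_0$ in $\tau_\I$.

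Finally I would decode this convergence combinatorially, and this $T_1$-separation step is the crux of the argument. Since $Y$ is $T_1$ and $g_0\neq g_1$, fix an open set $V$ with $g_0\in V$ and $g_1\notin V$. Given any $A\in\I$, convergence on the neighbourhood $[A;V]$ of $\mathbf{g}_0$ provides $m\in\w$ with $h_n(A)\subseteq V$ for all $n\geq m$. If some $n\geq m$ had $A\not\subseteq U_n$, choose $x\in A\cap(X\SM U_n)$; then $h_n(x)=g_1\notin V$, contradicting $h_n(A)\subseteq V$. Hence $A\subseteq U_n$ for all $n\geq m$, so $\{U_n\}_{n\in\w}\subseteq\U$ is an $\I$-sequence, establishing the property $\gamma_\I$. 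I expect the only genuine subtleties to be the correct choice of the source function $\mathbf{g}_0\in S$ when invoking the relatively $\mathcal{D}_\I$-Tychonoff property, and the clean translation of $\tau_\I$-convergence into the $\I$-sequence condition through the $T_1$-separation of $g_0$ from $g_1$; the remainder is bookkeeping.
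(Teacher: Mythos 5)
Your proof is correct and follows essentially the same route as the paper's: construct functions in $H$ equal to $g_0$ on a set $K\in\I$ and to $g_1$ off a member of the cover containing $K$, observe that $\mathbf{g}_0$ clusters at this family, extract a convergent sequence by Fr\'{e}chet--Urysohnness, and use the $T_1$-separation of $g_0$ from $g_1$ to read off the $\I$-sequence $\{U_n\}$. The only (inessential) difference is that the paper works with the larger set $P=\{f\in H: f^{-1}(Y\SM\{g_1\})\subseteq U \mbox{ for some } U\in\gamma\}$ rather than just your explicitly constructed witnesses, which changes nothing in the final decoding step.
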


\begin{proof}
Our proof is based on the idea of the proof of Theorem 4.7.4 of \cite{mcoy}.
Let $\gamma$ be an open $\I$-cover of $X$. If $X\in\gamma$, then $\xi=\{X\}$ is a desired $\I$-sequence. Assume that $X\not\in \gamma$. Set
\[
P:= \big\{f \in H: f^{-1} \big( Y\SM \{g_1\}\big) \subseteq U \mbox{ for some } U\in \gamma\big\}.
\]

We claim that $\mathbf{g}_0\in \overline{P}\setminus P$. Indeed, $\mathbf{g}_0\not\in P$ since $\mathbf{g}_0^{-1} \big( Y\SM \{g_1\}\big)=X\not\in \gamma$. Let $W=W[\mathbf{g}_0;F,V]\cap H$ be a standard neighborhood of $\mathbf{g}_0$, where $F\in \I$ and $V\subseteq Y$ is a neighborhood of $g_0$.  As $\gamma$ is an $\I$-cover, there is $U\in\gamma$ such that $F\subseteq U$. Since $X$ is $Y_\I$-Tychonoff and $H$ is relatively $\mathcal{D}_\I$-Tychonoff, there is a function $f\in H$ such that $f(x)= g_0$ for every $x\in F$ and $f(X\SM U)\subseteq \{ g_1\}$. Then $f^{-1} \big( Y\SM \{g_1\}\big)\subseteq X\SM (X\SM U) =U$. Therefore $f\in P\cap W$. Thus $\mathbf{g}_0\in \overline{P}\setminus P$.

Since $H$ is a Fr\'{e}chet--Urysohn space, there is a sequence $\{ f_n\}_{n\in\w}\subseteq P$ such that $f_n\to \mathbf{g}_0$.
For every $n\in\w$, the choice of $f_n$ implies that there is a $U_n\in\gamma$ such that $f^{-1}_n \big( Y\SM \{g_1\}\big) \subseteq U_n$. Set $\xi:=\{ U_n\}_{n\in\w}$. We show that $\xi$ is an $\I$-sequence. Indeed, fix an arbitrary $F\in \I$ and choose an open neighborhood $V$ of $g_0\in Y$ such that $g_1\not\in V$ (recall that $Y$ is $T_1$). Then  there exists an $m\in\w$ such that $f_n\in W[\mathbf{g}_0; F,V]$ for every $n\geq m$. In particular, this implies that $F\subseteq f^{-1}_n(V)\subseteq f^{-1}_n \big( Y\SM \{g_1\}\big) \subseteq U_n$ for all $n\geq m$. Thus $\xi$ is an $\I$-sequence as desired.\qed
\end{proof}

Below we prove the first main result of this section.
\begin{theorem} \label{t:FU-Cp}
Let $Y$ be a metrizable space containing a two element subset $\mathcal{D}=\{ g_0,g_1\}$, and let $X$ be a $Y_\I$-Tychonoff space for some ideal  $\I$ of compact sets of $X$. Assume that $H$ is a relatively $\mathcal{D}_\I$-Tychonoff subspace of $C_\I(X,Y)$ containing the constant function $\mathbf{g}_0$. Then $H$ is a Fr\'{e}chet--Urysohn space if and only if $X$ has the property $\gamma_\I$.
\end{theorem}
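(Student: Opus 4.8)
The necessity is precisely Proposition \ref{p:FU-Cp}, so the plan is to establish the sufficiency: assuming $X$ has the property $\gamma_\I$, I would show that $H$ is Fr\'echet--Urysohn. Fix a metric $d$ on $Y$ and recall the standard fact that, since $Y$ is metrizable, the $\I$-open topology on $C_\I(X,Y)$ coincides with the topology of uniform convergence on the members of $\I$; thus for $F\in\I$ and $\e>0$ the set $\{h\in H:\sup_{x\in F}d(h(x),\mathbf f(x))<\e\}$ is a neighbourhood of $\mathbf f$ in $H$. Let $A\subseteq H$ and $\mathbf f\in H$ with $\mathbf f\in\overline A$; the goal is to produce a sequence in $A$ converging to $\mathbf f$.

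The core construction turns the cluster-point hypothesis into a sequence of open $\I$-covers. For each $n\in\w$ set
\[
\gamma_n:=\big\{\,\{x\in X: d(g(x),\mathbf f(x))<\tfrac1n\}: g\in A\,\big\}.
\]
Each member is open because $g$ and $\mathbf f$ are continuous, so $x\mapsto d(g(x),\mathbf f(x))$ is continuous. I would then check that $\gamma_n$ is an $\I$-cover: given $F\in\I$, the uniform neighbourhood $\{h:\sup_{x\in F}d(h(x),\mathbf f(x))<\tfrac1n\}$ meets $A$ because $\mathbf f\in\overline A$, and any $g$ in this intersection yields a member of $\gamma_n$ containing $F$.

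With the covers in hand, I would apply the property $\gamma_\I$ in the form of the $\I$-analogue of condition (iii) of Proposition \ref{p:property-gamma} (which holds by the same argument that underlies Proposition \ref{p:property-gamma-k}): for the sequence $\{\gamma_n\}_{n\in\w}$ of open $\I$-covers one can choose $U_n\in\gamma_n$ so that $\{U_n\}_{n\in\w}$ is an $\I$-sequence. Writing $U_n=\{x:d(g_n(x),\mathbf f(x))<\tfrac1n\}$ with $g_n\in A$, it remains to verify $g_n\to\mathbf f$. Given $F\in\I$ and $\e>0$, since $\{U_n\}$ is an $\I$-sequence there is $m$ with $F\subseteq U_n$ for all $n\ge m$; enlarging $m$ so that $1/m<\e$, for $n\ge m$ every $x\in F$ satisfies $d(g_n(x),\mathbf f(x))<1/n\le 1/m<\e$, i.e.\ $\sup_{x\in F}d(g_n(x),\mathbf f(x))<\e$. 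As $F\in\I$ and $\e>0$ were arbitrary, $g_n\to\mathbf f$ in $H$, proving $H$ is Fr\'echet--Urysohn.

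The routine part is the construction; the two points needing care are the identification of the $\I$-open topology with uniform convergence on members of $\I$ for metrizable $Y$ (which makes the sets $\gamma_n$ genuine open $\I$-covers) and the invocation of the sequence-selection form of $\gamma_\I$. I expect the main obstacle to be the latter: strictly, only the equivalence with the conjunction of $\varphi_\I$ and $\e_\I$ is recorded in Proposition \ref{p:property-gamma-k}, so I would either quote the $\I$-analogue of Proposition \ref{p:property-gamma}(iii) explicitly or derive the required selection directly from $\varphi_\I$ and $\e_\I$ (pass to countable $\I$-subcovers via $\e_\I$, amalgamate them into a single increasing $\I$-cover, and apply $\varphi_\I$). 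Note that neither the $Y_\I$-Tychonoff hypothesis nor the relative $\mathcal{D}_\I$-Tychonoffness of $H$ is needed in this direction; they enter only through Proposition \ref{p:FU-Cp} for the necessity.
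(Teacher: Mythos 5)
Your proof is correct in substance and shares the paper's overall skeleton: necessity is delegated to Proposition \ref{p:FU-Cp} in both, and sufficiency follows the McCoy-style blueprint of converting the cluster-point hypothesis into a sequence of open $\I$-covers and then selecting members forming an $\I$-sequence. You diverge in one genuinely different design choice: you work directly in $H$ with the metric $d$ and an arbitrary cluster point $\mathbf{f}$, invoking the coincidence on $C(X,Y)$ of $\tau_\I$ with the topology of uniform convergence on members of $\I$. The paper instead embeds $Y$ isometrically into a Banach space $E$, uses homogeneity of $C_\I(X,E)$ to normalize the cluster point to $\mathbf{0}$, builds the covers $W(n,F)$ from a decreasing base $\{V_n\}$ at $0$, proves $C_\I(X,E)$ is Fr\'echet--Urysohn, and gets $H$ by heredity. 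Your variant is slightly more economical (no embedding, no homogeneity) at the price of verifying both inclusions between $\tau_\I$ and the uniform topology; these are indeed standard (a Lebesgue-number argument on $f(K)$ for one direction, and for the other a decomposition of $F\in\I$ into finitely many compact pieces $F\cap f^{-1}(\overline{B})$, which remain in $\I$ because $\I$ is closed under intersections with compact sets). Your observation that the $Y_\I$-Tychonoff and relative $\mathcal{D}_\I$-Tychonoff hypotheses are used only for necessity also agrees with the paper.

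The one soft spot is exactly the one you flag, and your proposed patch does not close it. The selection form of $\gamma_\I$ (choose $U_n\in\gamma_n$ so that $\{U_n\}_{n\in\w}$ is an $\I$-sequence) is recorded nowhere in the paper: Proposition \ref{p:property-gamma-k} gives only $\gamma_\I\Leftrightarrow\varphi_\I\wedge\e_\I$, and your fallback derivation is inadequate as sketched --- from $\varphi_\I$ one obtains an $\I$-sequence $\{X_n\}$ with each $X_n$ merely $\I$-covered by a subfamily $\eta_n$, and extracting a \emph{single} member per index from that is precisely the nontrivial step; the amalgamation you describe does not obviously produce it. The paper proves the needed selection inline, and this is where its real work lies: it wedges the covers into a decreasing sequence $\U_n$, disposes of the case $X\in\I$ (then $C_\I(X,E)$ is metrizable; in your setting, each $\gamma_n$ then contains $X$ and the constant selection works), takes an $\I$-sequence $\{X\SM\{x_n\}\}_{n\in\w}$ from the open $\I$-cover $\{X\SM\{x\}:x\in X\}$, punctures each member of $\U_n$ at $x_n$, amalgamates everything into one $\I$-cover $\mathcal{V}$, applies $\gamma_\I$ once to $\mathcal{V}$, and observes that the indices $n_k$ of the chosen members must be unbounded, since $O_k$ omits $x_{n_k}$ while the finite set $\{x_0,\dots,x_M\}$ belongs to $\I$; passing to an increasing subsequence, the wedge structure yields convergence. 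To make your write-up self-contained you should either import this puncturing argument verbatim or cite an explicit $\I$-version of the Gerlits--Nagy selection theorem; the phrase ``holds by the same argument that underlies Proposition \ref{p:property-gamma-k}'' overstates what that proposition's (omitted) proof provides.
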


\begin{proof}
If $H$ is Fr\'{e}chet--Urysohn, then $X$ has the property $\gamma_\I$ by Proposition \ref{p:FU-Cp}.

Conversely, assume that $X$ has the property $\gamma_\I$.
It is well known that any metrizable space $Y$ can be isometrically embedded into some Banach space $E$. Therefore $H$ is a subspace of $C_\I(X,E)$. Hence it suffices to prove that the space $C_\I(X,E)$ is Fr\'{e}chet--Urysohn. Let $A\subseteq C_\I(X,E)$ and $f\in \overline{A}\SM A$. Since $C_\I(X,E)$ being a locally convex space is homogenous, we can assume that $f=\mathbf{0}$ is the zero-function. Let $\{ V_n\}_{n\in\w}$ be a strictly decreasing open base at $0$ in $E$.

%%%%%%%%%%%%%%%
Below we use the idea from the proof of Theorem 4.7.4 of \cite{mcoy}. For every $n\in\w$ and each $F\in\I$, choose a continuous function $g_{n,F}\in  [F;V_n]\cap A$, and set
\[
W(n,F):=\{ x\in X: g_{n,F}(x)\in V_n\}\; \mbox{ and } \; \mathcal{W}_n :=\{ W(n,F): F \in \I \}.
\]
Since $\mathbf{0}\in \overline{A}\SM A$, it follows that $\mathcal{W}_n$ is an open  $\I$-cover of $X$ for every $n\in\w$.
Now we define a sequence $\{\U_n\}_{n\in\w}$ of open  $\I$-covers of $X$ as follows: $\U_0 :=\mathcal{W}_0$ and $\U_n :=\U_{n-1} \wedge \mathcal{W}_n$ for $n\geq 1$.

It can be assumed that $X\not\in\I$ (otherwise, the space $C_\I(X,E)$ is metrizable). Then the family $\big\{ X\SM \{x\}: x\in X\big\}$ is an open $\I$-cover of $X$. Therefore, by hypothesis, there is a sequence $\{ x_n\}_{n\in\w}$ in $X$ such that $\big\{ X\SM \{x_n\}: n\in \w\big\}$ is an $\I$-sequence. For each $n\in\w$, define $\U'_n :=\{ U\SM \{x_n\}: U\in \U_n\}$, and let $\mathcal{V}:=\bigcup_{n\in\w} \U'_n$. Then $\mathcal{V}$ is an  open $\I$-cover  of $X$. Now choose an $\I$-sequence $\{ O_k\}_{k\in\w}$ from $\mathcal{V}$.

For every $k\in\w$, choose an $n_k\in\w$ such that $O_k \subseteq U_{n_k}\SM \{ x_{n_k}\}$ for some $U_{n_k}\in \U_{n_k}$. So there is an $F_{n_k}\in\I$ such that $O_k \subseteq W(n_k, F_{n_k}) \SM \{ x_{n_k}\}$. Observe that the sequence $\{ n_k\}_{k\in\w}$ cannot be bounded (indeed, if $n_k\leq M$ for all $k\in\w$, then the finite set $\{x_0,\dots,x_M\}\in \I$ is not contained in $O_k$ for every $k\in\w$, and hence $\{ O_k\}_{k\in\w}$  is not an $\I$-sequence, a contradiction). Take an increasing subsequence $\{ n_{k_i}\}_{i\in\w}$, and let $f_i := g_{n_{k_i},F_{k_i}}$.

We claim that $f_i\to \mathbf{0}$. Indeed, take an $F\in\I$ and an open neighborhood $V$ of $0\in E$. Choose an $m\in\w$ such that $V_{n_{k_i}}\subseteq V$ and $F\subseteq O_{k_i}$ for every $i\geq m$. Then,  for every $i\geq m$, $f_i(F)\subseteq V_{n_{k_i}}\subseteq V$ and hence  $f_i\in [F;V]$. Thus $f_i\to \mathbf{0}$. \qed
\end{proof}
The metrizability condition on $Y$ in Theorem \ref{t:FU-Cp} is essential. Indeed, if $X$ is a singleton and $Y=\{ 0,1\}^{\w_1}$, then $C_p(X,Y)$ is topologically isomorphic to the compact non-sequential space $Y$.

The next theorem gives a complete answer to Problem \ref{prob:gen-top-C(X)} for the property of being a Fr\'{e}chet--Urysohn space.
\begin{theorem} \label{t:Cp(X,Y)-FU}
Let $Y$ be a metrizable space containing at least two points, and let $X$ be a $Y_\I$-Tychonoff space for some ideal $\I$ of compact sets of $X$. Then the following statements are equivalent:
\begin{enumerate}
\item[{\rm (i)}] $C_\I(X,Y)$ is a Fr\'{e}chet--Urysohn space;
\item[{\rm (ii)}] $C_\I(X,Y)^\w$ is a Fr\'{e}chet--Urysohn space;
\item[{\rm (iii)}] $C_\I(X,Y^\w)$ is a Fr\'{e}chet--Urysohn space;
\item[{\rm (iv)}] $X$ has the property $\gamma_\I$.
\end{enumerate}
\end{theorem}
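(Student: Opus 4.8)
The plan is to prove the cycle of equivalences (i) $\Leftrightarrow$ (iv), (ii) $\Leftrightarrow$ (iv), and (iii) $\Leftrightarrow$ (iv), exploiting the fact that Theorem \ref{t:FU-Cp} already delivers the core equivalence in the special case of a two-point subset $\mathcal{D}$. The main conceptual step is to reduce the general statement ``$Y$ contains at least two points'' to the two-point case handled by Theorem \ref{t:FU-Cp}. Since $Y$ is metrizable and contains at least two points, I can pick two distinct points $g_0, g_1 \in Y$ and set $\mathcal{D} := \{g_0, g_1\}$. The whole space $H = C_\I(X,Y)$ is then a relatively $\mathcal{D}_\I$-Tychonoff subspace of itself containing the constant function $\mathbf{g}_0$: indeed, for a closed set $A$, a compact $F \in \I$ disjoint from $A$, and an $f$ whose image on $F$ lands in the finite set $\mathcal{D} \subseteq Y$, the $Y_\I$-Tychonoff property of $X$ produces the required continuous extension $\bar f$ agreeing with $f$ on $F$ and sending $A$ into $\{g_0\}$. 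Thus Theorem \ref{t:FU-Cp} applies directly with $H = S = C_\I(X,Y)$, giving (i) $\Leftrightarrow$ (iv).

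For the remaining equivalences I would lean on Lemma \ref{l:I-infinite-sum} together with Proposition \ref{p:sum-gamma-I-space}. First I would establish (ii) $\Leftrightarrow$ (iii): by Lemma \ref{l:I-infinite-sum} the spaces $C_\I(X,Y^\w)$ and the product $C_\I(X,Y)^\w$ are homeomorphic, so these two conditions are literally the same statement up to homeomorphism, and Fr\'echet--Urysohnness is a topological invariant. Next, to connect (iii) to (iv), I would note that the same lemma identifies $C_\I(X,Y^\w)$ with $C_{\I(Z)}(Z,Y)$, where $Z = \bigoplus\{X_i : X_i = X,\ i \in \omega\}$ is the countable topological sum. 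Since $Y^\w$ is again a metrizable space containing at least two points, and $Z$ is $Y_{\I(Z)}$-Tychonoff (the sum of $Y_\I$-Tychonoff spaces is $Y_{\I(Z)}$-Tychonoff, as extensions can be built coordinatewise), the already-proved equivalence (i) $\Leftrightarrow$ (iv) applied to the space $Z$ in place of $X$ yields: $C_{\I(Z)}(Z,Y)$ is Fr\'echet--Urysohn iff $Z$ has the property $\gamma_{\I(Z)}$. Finally, Proposition \ref{p:sum-gamma-I-space} tells us that $Z$ has $\gamma_{\I(Z)}$ precisely when $X$ has $\gamma_\I$, closing the loop (iii) $\Leftrightarrow$ (iv).

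To assemble everything I would argue (i) $\Rightarrow$ (ii) is trivial because a countable power of a Fr\'echet--Urysohn space need not in general be Fr\'echet--Urysohn, so I must be careful here: the honest route is the chain (iv) $\Rightarrow$ (iii) $\Leftrightarrow$ (ii) and (iv) $\Leftrightarrow$ (i), together with the trivial implications (ii) $\Rightarrow$ (i) (since $C_\I(X,Y)$ embeds as a factor, hence as a closed subspace, of its own countable power, and Fr\'echet--Urysohnness passes to subspaces). Concretely, I would close the diagram by showing (i) $\Rightarrow$ (iv) $\Rightarrow$ (iii) $\Rightarrow$ (ii) $\Rightarrow$ (i), where (iv) $\Rightarrow$ (iii) comes from the sum-space argument above, (iii) $\Rightarrow$ (ii) is the homeomorphism of Lemma \ref{l:I-infinite-sum}, and (ii) $\Rightarrow$ (i) is the closed-subspace inheritance.

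The step I expect to be the main obstacle is the careful bookkeeping in (iv) $\Rightarrow$ (iii): one must verify that $Z$ genuinely inherits the $Y_{\I(Z)}$-Tychonoff property so that Theorem \ref{t:FU-Cp} is legitimately applicable to $Z$, and that the two applications of Lemma \ref{l:I-infinite-sum} and Proposition \ref{p:sum-gamma-I-space} compose correctly to transfer the covering property $\gamma_{\I(Z)}$ on $Z$ back to $\gamma_\I$ on $X$. The homeomorphisms themselves are supplied by Lemma \ref{l:I-infinite-sum}, so no new hard analysis is required; the only genuine care is in checking the separation hypothesis is preserved under the countable topological sum, which follows because a continuous extension on $Z$ can be defined separately on each summand $X_i$ and the summands are clopen in $Z$.
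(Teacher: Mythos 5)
Your proof is correct and follows essentially the same route as the paper: (i)$\Leftrightarrow$(iv) by applying Theorem \ref{t:FU-Cp} with $\mathcal{D}=\{g_0,g_1\}$, (ii)$\Leftrightarrow$(iii) via Lemma \ref{l:I-infinite-sum}, and (iv)$\Rightarrow$(ii)/(iii) by identifying $C_\I(X,Y)^\w$ with $C_{\I(Z)}(Z,Y)$ and invoking Proposition \ref{p:sum-gamma-I-space}, with (ii)$\Rightarrow$(i) by heredity of the Fr\'echet--Urysohn property. Your explicit check that the countable sum $Z$ inherits the $Y_{\I(Z)}$-Tychonoff property (built summand-by-summand on clopen pieces) is a detail the paper leaves implicit, and is a welcome addition rather than a deviation.
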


\begin{proof}
The equivalence (i)$\Leftrightarrow$(iv) follows from Theorem \ref{t:FU-Cp}. The implication (ii)$\Rightarrow$(i) is clear since $C_\I(X,Y)$ is homeomorphic to a subspace of $C_\I(X,Y)^\w$, and the equivalence (ii)$\Leftrightarrow$(iii) follows from Lemma \ref{l:I-infinite-sum}. To prove (iv)$\Rightarrow$(ii) we note first that, by Lemma \ref{l:I-infinite-sum},  $C_\I(X,Y)^\w$ is homeomorphic to the space  $C_{\I(Z)}(Z,Y)$ where $Z=\bigoplus \{X_i: X_i=X \mbox{ for each } i\in \omega\}$. Now Proposition \ref{p:sum-gamma-I-space} implies that the space $Z$ has the property $\gamma_{\I(Z)}$. Finally, applying Theorem \ref{t:FU-Cp} we obtain that the space $C_\I(X,Y)^\w$ is Fr\'{e}chet--Urysohn. \qed
\end{proof}

Let $Y=\IR$ or $Y=\II:=[0,1]$. Then, by Theorem 3.1.7 of \cite{Eng}, $X$ is $Y_\I$-Tychonoff if and only if it is a Tychonoff space. Now Theorem \ref{t:Cp(X,Y)-FU} implies the following extension of Theorem 4.7.4 of \cite{mcoy}:

%\begin{corollary} \label{c:Cp(X)-FU}
%Let $X$ be a Tychonoff space and let $\I$ be an ideal of compact sets of $X$. Then $C_\I(X)$ is a Fr\'{e}chet--Urysohn space if and only if $C_\I(X,\II)$ is a Fr\'{e}chet--Urysohn space if and only if $X$ has the property $\gamma_\I$.
%\end{corollary}

\begin{corollary} \label{c:Cp(X)-FU}
Let $X$ be a Tychonoff space, and let $\I$ be an ideal of compact sets of $X$. Then the following statements are equivalent:
\begin{enumerate}
\item[{\rm (i)}] $C_\I(X)$ is a Fr\'{e}chet--Urysohn space;
\item[{\rm (ii)}] $C_\I(X)^\w$ is a Fr\'{e}chet--Urysohn space;
\item[{\rm (iii)}] $C_\I(X,\II)$ is a Fr\'{e}chet--Urysohn space;
\item[{\rm (iv)}] $C_\I(X,\II)^\w$ is a Fr\'{e}chet--Urysohn space;
\item[{\rm (v)}] $C_\I(X,\II^\w)$ is a Fr\'{e}chet--Urysohn space;
\item[{\rm (vi)}] $X$ has the property $\gamma_\I$.
\end{enumerate}
\end{corollary}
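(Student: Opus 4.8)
The plan is to obtain the corollary by applying Theorem \ref{t:Cp(X,Y)-FU} twice---once with the target space $Y=\IR$ and once with $Y=\II$---and then to glue the two resulting chains of equivalences along their common endpoint, the property $\gamma_\I$. No new argument is needed; the whole content is already packaged in the main theorem, and the task reduces to checking applicability and matching indices.

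First I would verify that Theorem \ref{t:Cp(X,Y)-FU} applies for both choices of $Y$. Both $\IR$ and $\II$ are metrizable spaces containing at least two points, so the only hypothesis to confirm is that $X$ is $Y_\I$-Tychonoff. This is precisely what the observation immediately preceding the corollary provides: since $X$ is assumed Tychonoff, by Theorem 3.1.7 of \cite{Eng} it is $Y_\I$-Tychonoff both for $Y=\IR$ and for $Y=\II$, for the given ideal $\I$.

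Next I would run the two applications explicitly. Taking $Y=\IR$, so that $C_\I(X,\IR)=C_\I(X)$, items (i), (ii) and (iv) of Theorem \ref{t:Cp(X,Y)-FU} become exactly statements (i), (ii) and (vi) of the corollary, giving their equivalence. Taking $Y=\II$, items (i), (ii), (iii) and (iv) of the theorem become statements (iii), (iv), (v) and (vi) of the corollary, giving their equivalence as well. Since the statement ``$X$ has the property $\gamma_\I$'' (item (vi)) appears in both chains, the two chains merge into a single equivalence class containing all six statements, which is the assertion of the corollary.

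The only potential pitfall is purely bookkeeping: one must keep straight which numbered item of the theorem corresponds to which numbered item of the corollary under each of the two substitutions, and note that the corollary simply omits the theorem's $C_\I(X,\IR^\w)$ statement for the case $Y=\IR$ while retaining the $C_\I(X,\II^\w)$ statement for $Y=\II$. Beyond this indexing, there is no genuine obstacle, as the substantive work has already been carried out in Theorem \ref{t:Cp(X,Y)-FU} and in the identification of $X$ as $Y_\I$-Tychonoff for the two concrete targets $\IR$ and $\II$.
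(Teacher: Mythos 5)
Your proposal is correct and matches the paper's own argument exactly: the paper derives the corollary by noting (via Theorem 3.1.7 of \cite{Eng}) that a Tychonoff $X$ is $Y_\I$-Tychonoff for $Y=\IR$ and $Y=\II$, and then applying Theorem \ref{t:Cp(X,Y)-FU} for both targets, gluing the two equivalence chains at the common item ``$X$ has the property $\gamma_\I$.'' Your indexing of which theorem items yield which corollary items, including the deliberate omission of the $C_\I(X,\IR^\w)$ case, is accurate.
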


  Proposition \ref{p:2-Y-I-Tychonoff} and  Theorem \ref{t:Cp(X,Y)-FU} imply
\begin{corollary} \label{c:Ck(X,2)-FU}
Let $X$ be a zero-dimensional $T_1$-space  and let $\I$ be an ideal of compact sets of $X$. Then  the following statements are equivalent:
\begin{enumerate}
\item[{\rm (i)}] $C_\I(X,\mathbf{2})$ is a Fr\'{e}chet--Urysohn space;
\item[{\rm (ii)}] $C_\I(X,\mathbf{2})^\w$ is a Fr\'{e}chet--Urysohn space;
\item[{\rm (iii)}] $C_\I(X,\mathbf{2}^\w)$ is a Fr\'{e}chet--Urysohn space;
\item[{\rm (iv)}] $X$ has the property $\gamma_\I$.
\end{enumerate}
\end{corollary}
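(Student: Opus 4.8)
The plan is to reduce the corollary to a single application of Theorem \ref{t:Cp(X,Y)-FU} with the metrizable target $Y=\mathbf{2}$. The two-element discrete space $\mathbf{2}$ is certainly metrizable and contains exactly two points, so the only hypothesis of Theorem \ref{t:Cp(X,Y)-FU} still needing verification is that $X$ be $\mathbf{2}_\I$-Tychonoff. This is exactly where the zero-dimensionality assumption enters.

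First I would invoke Proposition \ref{p:2-Y-I-Tychonoff}. Since $X$ is a zero-dimensional $T_1$-space, condition (v) of that proposition holds for $X$, and the implication (v)$\Rightarrow$(iii) yields that $X$ is $\mathbf{2}_\I$-Tychonoff. This secures the standing assumption required to feed $Y=\mathbf{2}$ into Theorem \ref{t:Cp(X,Y)-FU}. Next I would apply that theorem verbatim: its four equivalent assertions with $Y=\mathbf{2}$ read that $C_\I(X,\mathbf{2})$ is Fr\'{e}chet--Urysohn, that $C_\I(X,\mathbf{2})^\w$ is Fr\'{e}chet--Urysohn, that $C_\I(X,\mathbf{2}^\w)$ is Fr\'{e}chet--Urysohn (here one uses $Y^\w=\mathbf{2}^\w$), and that $X$ has the property $\gamma_\I$. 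These are precisely the statements (i)--(iv) of the corollary, so the chain of equivalences transfers directly.

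There is essentially no serious obstacle: the entire content rests on the previously established Theorem \ref{t:Cp(X,Y)-FU} together with the characterization, furnished by Proposition \ref{p:2-Y-I-Tychonoff}, of zero-dimensional $T_1$-spaces as exactly the $\mathbf{2}_\I$-Tychonoff spaces. The one point demanding a moment's care is purely bookkeeping, namely matching the indexing so that the power $\mathbf{2}^\w$ in statement (iii) of the corollary is recognized as the instance $Y^\w$ of the theorem; this identification is immediate since $Y=\mathbf{2}$.
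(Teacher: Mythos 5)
Your proposal is correct and is precisely the paper's own argument: the paper derives the corollary by combining Proposition \ref{p:2-Y-I-Tychonoff} (whose implication (v)$\Rightarrow$(iii) identifies zero-dimensional $T_1$-spaces as $\mathbf{2}_\I$-Tychonoff) with Theorem \ref{t:Cp(X,Y)-FU} applied to $Y=\mathbf{2}$. Your bookkeeping remark that statement (iii) is the instance $Y^\w=\mathbf{2}^\w$ of the theorem is also exactly how the paper's citation is meant to be read.
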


\begin{remark} \label{rem:gamma-k}{\em
Combining Theorem \ref{t:Ascoli-C(X,2)-A} with Corollary \ref{c:Ck(X,2)-FU} we obtain that a zero-dimensional metric space $X$ has the property $\gamma_k$ if and only if $X$ is a Polish locally compact space. It follows from \cite[Theorem~9.3]{Osipov-18} that if $X$ is an arbitrary separable metric space,  then the space $X$ has the property $\gamma_k$ if and only if $X$ is hemicompact. \qed}
\end{remark}

Recall that a topological space $X$ is called {\em sequential} if for each non-closed subset $A\subseteq X$ there is a sequence $\{a_n\}_{n\in\w}\subseteq A$ converging to some point $a\in \bar A\setminus A$.
Below we characterize  sequentiality of $\C_\I(X,\mathbf{2})$.
We say that a cover $\gamma$ of a topological space $X$ is {\em clopen} if every $U\in \gamma$ is a clopen (=closed and open) subset of $X$.
Recall (see Proposition \ref{p:2-Y-I-Tychonoff}) that a $T_1$-space $X$ is zero-dimensional if and only if it is $\mathbf{2}$-Tychonoff.

\begin{theorem} \label{t:CI-2-sequential}
Let $X$ be a  zero-dimensional $T_1$-space, and let $\I$ be an ideal of compact subsets of $X$. Then the space $C_\I(X,\mathbf{2})$ is sequential if and only if for every clopen $\I$-cover $\gamma$ of $X$ either $X\in\gamma$ or there is a sequence $\{ U_n\}_{n\in\w}$ in $\gamma$ such that
\begin{enumerate}
\item[{\rm (i)}] the set $W_0:=\lim_n U_n$ is a clopen subset of $X$ and $W_0\not\in \gamma$;
\item[{\rm (ii)}] the set $W_0$ is $\I$-covered by $\{  U_n\}_{n\in\w}$;
\item[{\rm (iii)}] the set $X\SM W_0$ is $\I$-covered by $\{ X\SM U_n\}_{n\in\w}$.
\end{enumerate}
\end{theorem}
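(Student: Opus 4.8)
The plan is to work through the natural dictionary between $C_\I(X,\mathbf 2)$ and the clopen subsets of $X$. Since $X$ is zero-dimensional and $T_1$, it is $\mathbf 2$-Tychonoff by Proposition \ref{p:2-Y-I-Tychonoff}, so $E:=C_\I(X,\mathbf 2)$ is rich in characteristic functions: to each clopen $U\subseteq X$ we attach $\mathbf 1_U\in C(X,\mathbf 2)$, and every $f\in C(X,\mathbf 2)$ equals $\mathbf 1_{f^{-1}(1)}$ with $f^{-1}(1)$ clopen. This identification makes $E$ a Boolean topological group, and unwinding the $\I$-open topology yields the convergence criterion: $\mathbf 1_{U_n}\to \mathbf 1_{W}$ in $E$ if and only if for every $A\in\I$ one has $U_n\cap A=W\cap A$ for all but finitely many $n$. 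Testing this on the singletons $\{x\}\in\I$ shows that such a limit forces $W=\underline{\lim}\,U_n$ and that $W$ is clopen; and, applied to sets $A\subseteq W$ and $A\subseteq X\SM W$ from $\I$, it forces conditions (ii) and (iii) respectively. I will use this criterion in both directions.

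For the necessity, suppose $E$ is sequential and fix a clopen $\I$-cover $\gamma$ with $X\notin\gamma$ (if $X\in\gamma$ there is nothing to prove). Following the idea of Proposition \ref{p:FU-Cp}, set $P:=\{\mathbf 1_U:U\in\gamma\}$. Since $\gamma$ is an $\I$-cover, every basic neighborhood $W[\mathbf 1;F,\{1\}]$ of the constant function $\mathbf 1$ meets $P$ (choose $U\in\gamma$ with $F\subseteq U$), so $\mathbf 1\in\overline P$; and $\mathbf 1\notin P$ because $X\notin\gamma$. Thus $P$ is not closed, and sequentiality produces a sequence $\mathbf 1_{U_n}\in P$ converging to some $h\in\overline P\SM P$. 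Writing $W_0:=h^{-1}(1)$, the limit $h=\mathbf 1_{W_0}$ is continuous, so $W_0$ is clopen and $W_0\notin\gamma$ (as $h\notin P$); the convergence criterion then delivers (i), (ii) and (iii) for $\{U_n\}$, the required witness.

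For the sufficiency I would argue by contraposition. If $E$ is not sequential there is a sequentially closed, non-closed set, and, translating by a point of its closure (the Boolean group $E$ is homogeneous), I may assume there is a sequentially closed $B\subseteq E$ with $\mathbf 0\in\overline B\SM B$. Writing $V_b:=b^{-1}(1)$, the condition $\mathbf 0\in\overline B$ says exactly that $\gamma:=\{X\SM V_b:b\in B\}$ is a clopen $\I$-cover, and $X\notin\gamma$ since $\mathbf 0\notin B$. Applying the hypothesis to $\gamma$ yields a witness $\{G_n\}\subseteq\gamma$, say $G_n=X\SM V_{b_n}$ with $b_n\in B$, satisfying (i)-(iii); in particular $W_0=\underline{\lim}\,G_n$ is clopen with $W_0\notin\gamma$, so $w:=\mathbf 1_{X\SM W_0}\notin B$. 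The goal is then to exhibit a sequence inside $B$ converging to $w$, contradicting the sequential closedness of $B$.

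The main obstacle is precisely this last step, and it is where sequentiality genuinely differs from the Fr\'echet--Urysohn case of Theorem \ref{t:FU-Cp} (where one would force $W_0=X$ and converge to $\mathbf 0$). An element $b\in B$ eventually agrees with $w$ on a set $A\in\I$ exactly when $G_b:=X\SM V_b$ satisfies $G_b\cap A=W_0\cap A$. Condition (ii) supplies, for each $A\in\I$, an index with $A\cap W_0\subseteq G_n$, and condition (iii) an index with $(A\SM W_0)\cap G_n=\emptyset$; but a priori these indices differ and the $G_n$ may meet $X\SM W_0$ infinitely often, so the raw sequence $\{b_n\}$ need not converge to $w$. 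The plan is to defeat this oscillation by a recursive selection: using zero-dimensionality one refines $\gamma$ against the cofinite-type clopen $\I$-covers $\{X\SM\{x\}:x\in X\}$ and re-applies the covering hypothesis (in the spirit of the proofs of Proposition \ref{p:sum-gamma-I-space} and Theorem \ref{t:FU-Cp}), building along an exhausting tower of sets of $\I$ a subsequence together with members of $B$ that simultaneously meet the (ii)- and (iii)-type requirements at each stage. Reconciling conditions (ii) and (iii) into one genuinely convergent sequence that stays inside $B$ is the delicate heart of the argument; once achieved, its limit is $w\notin B$, contradicting sequential closedness and hence proving that $E$ is sequential.
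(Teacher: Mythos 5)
Your necessity argument is the paper's own, up to interchanging the roles of $0$ and $1$, and it is complete. The genuine gap is in the sufficiency direction, which you leave as a ``plan'': the recursive selection that is supposed to reconcile (ii) and (iii) is never carried out, and as described it cannot be. You propose to build the subsequence ``along an exhausting tower of sets of $\I$'', but $\I$ is an arbitrary ideal of compact sets of an arbitrary zero-dimensional $T_1$-space and in general has no countable cofinal subfamily, so no such tower exists; moreover, re-applying the covering hypothesis to refined covers produces new sequences with new limit sets bearing no relation to the target function $w$, so nothing in the scheme forces the eventual limit to be $w$ rather than some other point outside $B$ about which you know nothing.

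The oscillation problem you are trying to defeat is an artifact of reading (ii) and (iii) in the weak ``for some $n$'' sense. The reading the paper actually uses --- and the one its own necessity argument establishes, since convergence $f_{U_n}\to h$ yields, for each $F\in\I$ with $F\subseteq W_0$, an $m$ with $F\subseteq U_n$ for \emph{all} $n\geq m$ (and symmetrically for $F\subseteq X\SM W_0$) --- is this eventual, $\I$-sequence-style condition. Note that your own derivation of (ii) and (iii) in the necessity direction, via your convergence criterion, produces exactly this eventual form, so as you read the statement the two directions of your equivalence do not even concern the same conditions. With the eventual reading the last step is immediate and no subsequence extraction is needed: the paper argues directly (given non-closed $A$ with $\mathbf{0}\in\overline{A}\SM A$, which matches your contrapositive setup) by applying the hypothesis to $\mathcal{W}=\{g^{-1}(0):g\in A\}$, obtaining $\{f_n\}\subseteq A$ and the clopen set $W_0\notin\mathcal{W}$, and defining $f$ by $f^{-1}(0)=W_0$. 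Since $W_0$ is clopen and the range is the doubleton $\mathbf{2}$, every basic neighborhood of $f$ contains one of the form $W[f;F_0,\{0\}]\cap W[f;F_1,\{1\}]$ with $F_0,F_1\in\I$, $F_0\subseteq W_0$, $F_1\subseteq X\SM W_0$ (split each member of a finite subfamily of $\I$ along $W_0$; the pieces stay in $\I$ because $\I$ is closed under intersections with compact sets), and the eventual versions of (ii) and (iii) then put $f_n$ in this neighborhood for all large $n$. So the raw sequence $\{f_n\}$ already converges to $f\notin A$; the fix is not a cleverer recursion but the correct interpretation of (ii) and (iii).
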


\begin{proof}
%First we note that $X$ is zero-dimensional by Proposition ... of \cite{BG-Baire}. %\ref{p:Y-Tychonoff-zero-dim}.
Assume that the space $C_\I(X,\mathbf{2})$ is sequential and let $\gamma$ be a clopen $\I$-cover of $X$. If $X\in\gamma$, we are done.
Assume now that $X\not\in \gamma$. Since $\gamma$ is clopen, for every $U\in\gamma$ we can define a function $f_U\in C_\I(X,\mathbf{2})$ setting
\[
f_U^{-1}(0):=U \; \mbox{ and } \; f_U^{-1}(1):=X\SM U.
\]
Set $P:=\{ f_U: U\in\gamma\}$ and denote by $\mathbf{0}\in C_\I(X,\mathbf{2})$ the zero function.  We show that $\mathbf{0}$ belongs to $\overline{P}\setminus P$. Indeed, $\mathbf{0}\not\in P$ since $\mathbf{0}^{-1} \big( 0\big)=X\not\in \gamma$. Let $W=W[\mathbf{0};F,\{0\}]$ be a standard neighborhood of $\mathbf{0}$, where $F\in \I$.  As $\gamma$ is an $\I$-cover, there is $U\in\gamma$ such that $F\subseteq U$. Then clearly $f_U\in W$. Thus $\mathbf{0}\in \overline{P}\setminus P$, and hence $P$ is a non-closed subset of $C_\I(X,\mathbf{2})$.

Since $C_\I(X,\mathbf{2})$ is a sequential space, there is a sequence $\{ f_{U_n}\}_{n\in\w}\subseteq P$ converging to some function  $h\in \overline{P}\setminus P$. Set $W_0:=h^{-1}(0)$ and observe that $W_0$ is a  clopen subset of $X$. The equality $h=\lim_n f_{U_n}$ and the discreteness of the space $\mathbf{2}$ easily imply that $W_0=\lim_n U_n$ and $h^{-1}(1)= X\SM W_0=\lim_n X\SM U_n$. To prove (i), we have to show that $W_0\not\in \gamma$. But if $W_0\in \gamma$ we would have $h=f_{W_0}\in P$ that contradicts the choice of $h$.

To show (ii) and (iii), fix a compact set $F\in\I$ such that $F\subseteq W_0$ (respectively, $F\subseteq X\SM W_0$). Since $f_{U_n}\to h$, there is an $m\in\w$ such that $f_{U_n} \in W[h;F,\{0\}]$  (respectively, $f_{U_n} \in W[h;F,\{1\}]$) for every $n\geq m$. But this means that $F\subseteq f_{U_n}^{-1}(0)=U_n$ (respectively, $F\subseteq f_{U_n}^{-1}(1)=X\SM U_n$)  for every $n\geq m$. Thus (ii) and (iii) hold true.
\smallskip

Conversely, assume that for every clopen $\I$-cover $\gamma$ of $X$ either $X\in\gamma$ or there is a sequence $\{ U_n\}_{n\in\w}$ in $\gamma$ satisfying (i)-(iii).
To show that the space $C_\I(X,\mathbf{2})$ is sequential, for every non-closed subset $A$ of $C_\I(X,\mathbf{2})$, we have to find a function $f\in \overline{A}\SM A$ and a sequence $\{ f_n\}_{n\in\w}$ in $A$ converging to $f$. Since $C_\I(X,\mathbf{2})$ being an abelian topological group is homogenous, we can assume that the zero-function $\mathbf{0}$ belongs to $\overline{A}\SM A$.

Set
\[
\mathcal{W} :=\{ g^{-1}(0): g \in A \}.
\]
Since $\mathbf{0}\in \overline{A}\SM A$ and all $g^{-1}(0)$ are clopen, it follows that $\mathcal{W}$ is a clopen  $\I$-cover of $X$. Observe that $X\not\in \mathcal{W}$ since, otherwise, we would have $g^{-1}_0(0)=X$ for some $g_0\in A$, and hence $\mathbf{0}=g_0\in A$, a contradiction. By assumption, there is a sequence $\{ f_n\}_{n\in\w}$ in $A$ such that the set
\[
W_0 =\lim_n f_n^{-1}(0) \not\in \gamma
\]
is a clopen subset of $X$ and is $\I$-covered by $\{  f_n^{-1}(0)\}_{n\in\w}$, and the set $X\SM W_0$ is $\I$-covered by the sequence $\{ X\SM f_n^{-1}(0)\}_{n\in\w}$. %For every $n\in\w $, set $f_n:=g_{F_n}$ and observe that $W(F_n)=f_n^{-1}(0)$ and $X\SM W(F_n)=f_n^{-1}(1)$.
 Since $W_0$ is clopen,  we can define $f\in C_\I(X,\mathbf{2})$ by $f^{-1}(0)=W_0$ and $f^{-1}(1)=X\SM W_0$. Let us show that $f\not\in A$. Indeed, otherwise, the set $W_0=f^{-1}(0)$ belongs to $\mathcal{W}$ that contradicts (i).

We claim that $f_n\to f$. Indeed, since $W_0$ is clopen and the range is the doubleton $\mathbf{2}$, for every neighborhood $\mathcal{O}$ of $f$ there are $F_0,F_1\in\I$ such that $F_0 \subseteq W_0$, $F_1\subseteq X\SM W_0$ and $\widetilde{\mathcal{O}} :=W[f;F_0, \{0\}] \cap W[f; F_1, \{1\}]\subseteq \mathcal{O}$. By (ii) and (iii), choose an $m\in\w$ such that $F_0 \subseteq f_n^{-1}(0)$ and $F_1 \subseteq X\SM f_n^{-1}(0)=f_n^{-1}(1)$ for every $n\geq m$. Then, for every $x\in F_0\cup F_1$ and each $n\geq m$, we have $f_n(x)=f(x)$ and hence $f_n \in \widetilde{\mathcal{O}} \subseteq \mathcal{O}$.  Thus $f_n\to f$. \qed
\end{proof}

\begin{remark} \label{rem:gamma-sequential} {\em
Let $X$ be a zero-dimensional $T_1$-space. If $X$ has the property $\gamma_\I$, then $X$ satisfies the conditions of Theorem \ref{t:CI-2-sequential}. Indeed, let $\mu$  be a clopen $\I$-cover of $X$. If $X\in\mu$, we are done. If $X\not\in\mu$, then, by the property $\gamma_\I$, there is an $\I$-sequence $\{ U_n\}_{n\in\w}$ in $\mu$. Thus the conditions (i)-(iii) of Theorem \ref{t:CI-2-sequential} are satisfied if we put $W_0=X$. \qed}
\end{remark}

\begin{remark} \label{rem:sequential-product} {\em
By Corollary \ref{c:Ck(X,2)-FU}, if $C_\I(X,\mathbf{2})$ is  Fr\'{e}chet--Urysohn, then also $C_\I(X,\mathbf{2})^\w$ is a Fr\'{e}chet--Urysohn space. One can ask whether the same is true for sequentiality. In general the answer is ``no''. Indeed, let $X$ be a countable non-locally compact metric space with one non-isolated point. Then, by Theorem \ref{t:Ascoli-C(X,2)-A}, the space $\CC(X,\mathbf{2})$ is sequential (and non-Fr\'{e}chet--Urysohn). However, the space $\CC(X,\mathbf{2})^\w =\CC(Y,\mathbf{2})$, where $Y=\bigoplus_\w X$, is not sequential by the same Theorem \ref{t:Ascoli-C(X,2)-A}.\qed}
\end{remark}

We finish this section with the following problem.
\begin{problem} \label{prob:C(X,2)-sequential}
Let $X$ be a zero-dimensional (metric) $T_1$-space such that the space $C_p(X,\mathbf{2})$ is sequential. $(\alpha)$ Is it true that $C_p(X,\mathbf{2})$ is Fr\'{e}chet--Urysohn? $(\beta)$ Is it true that $C_p(X,\mathbf{2})^\w$ is sequential?
\end{problem}

Of course, by Corollary \ref{c:Ck(X,2)-FU},  a positive answer to $(\alpha)$ of Problem \ref{prob:C(X,2)-sequential} implies a positive answer to $(\beta)$ of this problem. However, we conjecture that answers to both questions in Problem \ref{prob:C(X,2)-sequential} are negative.

%%%%%%%%%%%%%%%%%%%%%%%%%%%%%%%%%%%%%%%%%%%%%%%%%
%%%%%%%%%%%%%%%%%%%%%%%%%%%%%%%%%%%%%%%%%%%%%%%%%
%%%%%%%%%%%%%%%%%%%%%%%%%%%%%%%%%%%%%%%%%%%%%%%%%
%%%%%%%%%%%%%%%%%%%%%%%%%%%%%%%%%%%%%%%%%%%%%%%%%
%%%%%%%%%%%%%%%%%%%%%%%%%%%%%%%%%%%%%%%%%%%%%%%%%

\section{The $k$-space property, normality and some cardinal numbers  for subspaces of $C_\I(X,Y)$} \label{sec:k-normal}

%%%%%%%%%%%%%%%%%%%%%%%%%%%%%%%%%%%%%%%%%%%%%%%%%
%%%%%%%%%%%%%%%%%%%%%%%%%%%%%%%%%%%%%%%%%%%%%%%%%
%%%%%%%%%%%%%%%%%%%%%%%%%%%%%%%%%%%%%%%%%%%%%%%%%
%%%%%%%%%%%%%%%%%%%%%%%%%%%%%%%%%%%%%%%%%%%%%%%%%
%%%%%%%%%%%%%%%%%%%%%%%%%%%%%%%%%%%%%%%%%%%%%%%%%

Recall that a topological space $X$ is called  a {\em $k$-space} if for each non-closed subset $A\subseteq X$ there is a compact subset $K\subseteq X$ such that $A\cap K$ is not closed in $K$.
To consider the $k$-space property for subspaces of $C_\I(X,Y)$ we need two lemmas.

\begin{lemma} \label{l:Cp-k-space}
Let $Y$ be a topological space containing a discrete and closed sequence $\mathcal{D}=\{ g_n\}_{n\in\w} \subseteq Y$, and let $X$ be a $Y_\I$-Tychonoff space for some  ideal $\I$ of compact sets of $X$. Assume that $H$ is a relatively $\mathcal{D}_\I$-Tychonoff  subspace of $C_\I(X,Y)$ containing the constant function $\mathbf{g}_0$. If $H$ is a $k$-space, then $X$ has the property $\varphi_\I$.
\end{lemma}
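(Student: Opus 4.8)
The plan is to adapt the argument of Proposition \ref{p:FU-Cp} to the $k$-space setting, replacing the convergent sequence produced by Fr\'echet--Urysohnness with a compact set supplied by the $k$-space property, and using the \emph{whole} discrete closed sequence $\mathcal{D}=\{g_n\}$ to mark the levels of the increasing cover. Fix an open $\I$-cover $\eta=\{\eta_n:n\in\w\}$ with $\eta_n\le\eta_{n+1}$; after discarding trivial cases I may assume that no member of any $\eta_n$ equals $X$ and that no single family $\eta_n$ is itself an $\I$-cover of $X$ (otherwise $\varphi_\I$ holds at once by taking $X_m:=X$ for all large $m$). For every $n\in\w$, every $U\in\eta_n$ and every $F\in\I$ with $F\subseteq U$, the relative $\mathcal{D}_\I$-Tychonoffness of $H$ (applied to the closed set $X\SM U$, the compact set $F\subseteq X\SM(X\SM U)$, the source $\mathbf{g}_0\in C_\I(X,Y)$ and the target value $g_{n+1}\in\mathcal{D}$) yields a function $f_{n,U,F}\in H$ with $f_{n,U,F}{\restriction}_F=g_0$ and $f_{n,U,F}(X\SM U)\subseteq\{g_{n+1}\}$. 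Let $A$ be the set of all these functions. Exactly as in Proposition \ref{p:FU-Cp} one checks that $\mathbf{g}_0\in\overline{A}\SM A$: density follows because $\bigcup_n\eta_n$ is an $\I$-cover, while $\mathbf{g}_0\notin A$ because each $f_{n,U,F}$ takes the value $g_{n+1}\ne g_0$ on the nonempty set $X\SM U$.

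The engine of the proof is the following boundedness fact, which replaces the pointwise convergence estimates of the Fr\'echet--Urysohn case. For a compact set $K\subseteq H$ and $F\in\I$ write $K(F):=\{f(x):f\in K,\ x\in F\}$ and $K(x):=\{f(x):f\in K\}$; I claim that $J_K(F):=\{\,j\ge 1: g_j\in K(F)\,\}$ is finite. Indeed, since $\I$ is closed under compact subsets, the restriction map $C_\I(X,Y)\to\CC(F,Y)$ is continuous, so $K$ maps onto a compact subset of $\CC(F,Y)$; as $F$ is compact, hence locally compact, the evaluation $\CC(F,Y)\times F\to Y$ is jointly continuous, whence $K(F)$ is the continuous image of a compact set and is compact. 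A compact subset of $Y$ can meet the closed discrete set $\{g_n\}$ only in a finite set, which proves the claim. Setting $m_K(F):=1+\max J_K(F)$ and $X_n:=\{x\in X: g_{n+1}\notin K(x)\}$, one gets for free that each $F\in\I$ satisfies $F\subseteq X_n$ whenever $n\ge m_K(F)$; in other words $\{X_n\}_{n\in\w}$ is automatically an $\I$-sequence.

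Now apply the hypothesis: as $H$ is a $k$-space and $A$ is not closed, there is a compact $K\subseteq H$ for which $A\cap K$ is not closed in $K$. The point of the construction is the link between $\{X_n\}$ and the cover. Call $n$ \emph{active} if some $f_{n,U,F}$ lies in $A\cap K$; for such $n$, if $x\in X_n$ then no member of $K$, in particular not $f_{n,U,F}$, attains $g_{n+1}$ at $x$, which forces $x\in U$. Hence $X_n\subseteq U\in\eta_n$, so $X_n$ is $\I$-covered by $\eta_n$. If infinitely many levels are active, then writing the active levels as $n_0<n_1<\cdots$ I obtain an $\I$-sequence $\{X_{n_k}\}_k$ with $X_{n_k}$ $\I$-covered by $\eta_{n_k}$, and Remark \ref{r:varphi-I} yields the property $\varphi_\I$.

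The main obstacle is precisely to guarantee that infinitely many levels are active, i.e.\ to rule out a compact witness $K$ concentrated on finitely many of the families $\eta_0,\dots,\eta_{N_0}$. Two points make this delicate: the $k$-space property produces an \emph{uncontrolled} cluster point of $A\cap K$ rather than a sequence tending to $\mathbf{g}_0$, and the test functions are prescribed only on $F$ and on $X\SM U$, so nets of bounded level can cluster at functions lying outside $A$. I would handle this by first replacing $A$ with $A':=\overline{A}\SM\{\mathbf{g}_0\}$, whose only cluster point outside itself is $\mathbf{g}_0$; since $H$ is Hausdorff (so compact sets are closed), any compact $K$ with $A'\cap K$ non-closed must cluster at $\mathbf{g}_0$. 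On the other hand, if only levels $\le N_0$ were active, then choosing $F^{*}\in\I$ witnessing that $\eta_{N_0}$ (hence each $\eta_n$ with $n\le N_0$) is not an $\I$-cover, every low-level function attains some marker $g_j$ with $1\le j\le N_0+1$ on $F^{*}$, so a neighbourhood $W[\mathbf{g}_0;F^{*},V]$ with $V\cap\{g_1,\dots,g_{N_0+1}\}=\emptyset$ misses $A\cap K$ and shows $\mathbf{g}_0\notin\overline{A\cap K}$, contradicting that the witness clusters at $\mathbf{g}_0$. Reconciling this dichotomy with the passage from $A$ to the limit functions in $A'$ is the technical heart of the argument.
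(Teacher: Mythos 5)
Your front end is sound: the reduction to covers where no $\eta_n$ is itself an $\I$-cover, the construction of the witnesses $f_{n,U,F}$, the fact that $\mathbf{g}_0\in\overline{A}\SM A$, the boundedness fact for compact $K\subseteq H$ (which is essentially the paper's evaluation argument), the observation that your sets $X_n=\{x\in X: g_{n+1}\notin K(x)\}$ form an $\I$-sequence, and the implication ``$n$ active $\Rightarrow X_n\subseteq U\in\eta_n$'' are all correct. But the proof is incomplete exactly where you flag it, and the gap is not merely technical: it cannot be closed from either side of your dichotomy as stated. Passing to $A'=\overline{A}\SM\{\mathbf{g}_0\}$ does force the unique cluster point of $A'\cap K$ to be $\mathbf{g}_0$ (this needs no Hausdorffness, by the way, since $\overline{A'}=A'\cup\{\mathbf{g}_0\}$; your appeal to closedness of compact sets is both unjustified under the hypotheses on $Y$ and unnecessary), but it simultaneously destroys the structure your contradiction needs: the functions in $A'\cap K$ witnessing the clustering may \emph{all} lie in $\overline{A}\SM A$, i.e.\ be limits of the $f_{n,U,F}$ of no prescribed form --- nothing guarantees they map some $X\SM U$ into a marker $g_{n+1}$, or take values in $\mathcal{D}$ on $F^{\ast}$ at all. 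Then no level is active, and your finite-level argument, which evaluates only functions of the explicit form $f_{n,U,F}$ on $F^{\ast}$, is vacuous: the neighbourhood $W[\mathbf{g}_0;F^{\ast},V]$ avoids the low-level witnesses but can perfectly well meet $A'\cap K$. Working with $A$ instead of $A'$ fails from the other side: the cluster point of $A\cap K$ in $K$ need not then be $\mathbf{g}_0$. So the dichotomy ``finitely vs.\ infinitely many active levels'' is not exhaustive in any usable sense, and as written the proposal does not prove the lemma.

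The paper's proof (by contradiction, assuming $\varphi_\I$ fails for a fixed $\eta$) avoids this trap by never working with a bare set of hand-built witnesses. It defines, for each $n$, the level set $A_n:=\big\{f\in H: f^{-1}\big(Y\SM\{g_k:k>n\}\big)\ \mbox{is $\I$-covered by}\ \eta_n\big\}$ --- an \emph{intrinsic} condition on $f$ --- and proves each $A_n$ is closed in $H$: if $f\notin A_n$, pick $F\in\I$ inside $f^{-1}\big(Y\SM\{g_k:k>n\}\big)$ contained in no member of $\eta_n$; then $W\big[f;F,Y\SM\{g_k:k>n\}\big]\cap H$ misses $A_n$. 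The constructed functions serve only to show $\mathbf{g}_0\in\overline{A}\SM A$ for $A=\bigcup_n A_n$. Given a compact $K$ with $K\cap A$ not closed in $K$, the evaluation/compactness argument yields the $\I$-sequence $X_n=\{x:n(x)\le n\}$, the assumed failure of $\varphi_\I$ together with Remark \ref{r:varphi-I} produces an $m$ such that $X_m$ is $\I$-covered by no $\eta_n$, and then $K\cap A_k=\emptyset$ for all $k>m$ (a function $f\in A_k$ must take a value $g_i$, $i>k>m$, at some point of $X_m$, so $f\notin K$); hence $K\cap A=\bigcup_{k\le m}(K\cap A_k)$ is a finite union of closed sets, closed in $K$ --- contradiction. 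The closedness of the $A_n$ is precisely what neutralizes the limit functions that defeat your approach: your per-point sets $X_n$ and the activity implication mirror the paper's mechanism, but membership bookkeeping tied to explicitly constructed functions does not survive taking closures. If you want to rescue your direct formulation, replace the witness classes $\{f_{n,U,F}\}$ by the closed level sets $A_n$ above and run the finite-union argument.
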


\begin{proof}
We use the idea of the proof of Theorem 3 in \cite{Gerlits}.
Suppose for a contradiction that $X$ does not have the property $\varphi_\I$. Then there exists an open $\I$-cover $\eta =\{ \eta_n: n\in\w\}$ of $X$ with $\eta_n\leq \eta_{n+1}$ such that for every $\I$-sequence  $\xi=\{ X_n: n\in\w\}$, one can find an $m=m_\xi \in\w$ such that  $X_m$ is not $\I$-covered by $\eta_m$. For every $n\in\w$, set
\[
A_n :=\big\{ f\in H: f^{-1}\big( Y\SM \{ g_k: k>n\}\big) \mbox{ is $\I$-covered by } \eta_n\big\},
\]
and put $A:=\bigcup_{n\in\w} A_n$.

We claim that the constant function $\mathbf{g}_0$ belongs to $\overline{A}\SM A$ and hence $A$ is not closed in $H$. Indeed, first we observe that $\mathbf{g}_0\not\in A$ since $\mathbf{g}_0^{-1}\big( Y\SM \{ g_k: k>n\}\big)=X$ is not $\I$-covered by $\eta_n$ for every $n\in\w$ (if $X$ is $\I$-covered by $\eta_m$ for some $m\in\w$, then $X$ is $\I$-covered by $\eta_n$ for all $n\geq m$ since $\eta_m\leq \eta_n$; but this contradicts the choice of the cover $\eta$, see also Remark \ref{r:varphi-I}).
Let now $W=W[\mathbf{g}_0; F,\mathcal{O}]\cap H$ be a standard neighborhood of $\mathbf{g}_0$, where $F\in \I$ and $\mathcal{O}$ is an open neighborhood of $g_0\in Y$. Since $\eta=\{ \eta_n: n\in\w\}$ is an open $\I$-cover, there are an $l\in \w$ and an open set $U\in \eta_l$ such that $F\subseteq U$. As $X$ is $Y_\I$-Tychonoff and $H$ is a relatively $\mathcal{D}_\I$-Tychonoff subspace of $C_\I(X,Y)$, there exists a function $f\in H$ such that $f(F)=\{ g_0\}$ and $f(X\SM U)=\{ g_{l+1}\}$. It is clear that $f\in A_l$ and $f\in W$. Thus $\mathbf{g}_0\in \overline{A}\SM A$.

Let us show that for every $n\in\w$, $A_n$ is closed in $H$. Indeed, fix an arbitrary $f\in H\SM A_n$. Then the set $B_n:=f^{-1}\big( Y\SM \{ g_k: k>n\}\big)$ is not $\I$-covered by $\eta_n$. Hence there exists an $F\in\I$ with $F\subseteq B_n$ such that no member of $\eta_n$ contains $F$. Set $U_F := Y\SM \{ g_k: k>n\}$ (so $U_F$ is open  because the sequence $\{ g_n\}_{n\in\w}$ is closed in $Y$) and put
\[
W:= W[f; F, U_F]\cap H .
\]
Then $W$ is an open neighborhood of $f$.  Now, if $g\in W\cap A_n$, then $F\subseteq g^{-1}(U_F)=g^{-1}(Y\SM \{ g_k: k>n\})$ is $\I$-covered by $\eta_n$ that contradicts the choice of the set $F$.  Thus  $W\cap A_n=\emptyset$ and hence $A_n$ is closed in $H$.

Since $H$ is a $k$-space, there is a compact subset $K$ of $H$ such that $K\cap A$ is not closed in $K$. As $K$ is compact, for every $x\in X$, the set $\{ f(x):f\in K\}$ is compact in $Y$. Taking into account that the sequence $\{ g_n\}_{n\in\w}$ is closed and discrete in $Y$, it follows that  for every $x\in X$, there is an $n(x)\in\w$ such that
\[
f(x) \in Y\SM \{ g_k: k>n(x)\} \quad \mbox{ for every }\; f\in K.
\]
For  every $n\in\w$, set $X_n :=\{ x\in X: n(x)\leq n\}$. Then $X_n\subseteq X_{n+1}$ and $X=\bigcup_{n\in\w} X_n$. We show that the sequence $\xi=\{X_n:n\in\w\}$ is also an $\I$-sequence.
Indeed, let $C\in\I$. Then $C$ is a compact subset of $X$ and $C_\I(C,Y)=\CC(C,Y)$ because $\I$ is an ideal. Now Theorems 2.6.11 and 3.4.3 of \cite{Eng} imply that the evaluation map
\[
\Omega:\CC(C,Y)\times C \to Y, \quad \Omega(f,x):= f(x),
\]
is continuous. Therefore the image  $\Omega(K,C)$ is a compact  subset of $Y$. Since $\mathcal{D}$ is closed and discrete, there is an $r\in\w$ such that $n(x)\leq r$ for every $x\in C$, i.e. $C\subseteq X_r$. Thus $\xi$ is an $\I$-sequence as stated.

The choice of the open $\I$-cover $\eta =\{ \eta_n: n\in\w\}$ and Remark \ref{r:varphi-I} imply that there exists an $m\in\w$ such that $X_m$ is not $\I$-covered by $\eta_n$ for every $n\in\w$.

We show that $K\cap A_k=\emptyset$ for every natural number $k>m$. Indeed, fix $k>m$ and let $f\in A_k$. By the definition of $A_k$, the set $f^{-1}\big( Y\SM \{ g_i: i>k\}\big)$  is $\I$-covered by $\eta_k$. Since $X_m$ is not $\I$-covered by $\eta_k$, we obtain that
\[
X_m \SM f^{-1}\big( Y\SM \{ g_i: i>k\}\big) \not=\emptyset,
\]
i.e. there is a point $x\in X_m$ such that $f(x)\in \{ g_i: i>k>m\}$. Then the definition of $X_m$ implies that $f\not\in K$. Thus $K\cap A_k=\emptyset$.

Now $K\cap A=K\cap \bigcup_{k\in\w} A_k = \cup_{k=0}^m (K\cap A_k)$ is closed in $K$ since all $A_n$s are closed in $H$. But this contradicts the choice of $K$. Thus the space $X$ must have the property $\varphi_\I$.\qed
\end{proof}

\begin{lemma} \label{l:H-k-space-gamma}
Let $Y$ be a topological space containing a discrete two-element subset $\mathcal{D}=\{ g_0, g_1\}$, $X$ be a Tychonoff space satisfying the property $\varphi$, and let $H$ be a subspace of $C_p(X,Y)$ such that $H\cap C_p(X, \mathcal{D})$ is a relatively $\mathcal{D}_p$-Tychonoff subspace of $C_p(X, \mathcal{D})$ and $\mathbf{g}_0\in H$. If $H$ is a $k$-space, then $X$ satisfies the property $\gamma$.
\end{lemma}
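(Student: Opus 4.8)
The plan is to exploit the decomposition of property $\gamma$. By Proposition~\ref{p:property-gamma}, and since $X$ is \emph{already assumed} to have the property $\varphi$, it suffices to prove that $X$ has the property $\e$, i.e. that every open $\w$-cover of $X$ admits a countable $\w$-subcover. (It is worth recording that the relative $\mathcal{D}_p$-Tychonoffness of $H\cap C_p(X,\mathcal{D})$ already forces $X$ to be zero-dimensional, in the spirit of Proposition~\ref{p:2-Y-I-Tychonoff}: separating a point from a closed set by a $\mathcal{D}$-valued continuous function produces a clopen set. We shall not need this explicitly, because the richness hypothesis supplies the relevant clopen-supported functions directly.)

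To establish $\e$ I would argue by contradiction. Suppose $\mathcal{U}$ is an open $\w$-cover of $X$ with no countable $\w$-subcover; we may assume $X\notin\mathcal{U}$ and that $\mathcal{U}$ is closed under finite unions. Following the construction in the proof of Proposition~\ref{p:FU-Cp}, set
\[
P:=\big\{f\in H\cap C_p(X,\mathcal{D}): f^{-1}(g_0)\subseteq U \text{ for some } U\in\mathcal{U}\big\},
\]
the functions equal to $g_1$ off some member of $\mathcal{U}$. Using the relative $\mathcal{D}_p$-Tychonoffness (applied to the closed set $X\SM U$ and a finite subset of $U$), exactly as in Proposition~\ref{p:FU-Cp}, one checks that $\mathbf{g}_0\in\overline{P}\SM P$; indeed $\mathbf{g}_0\notin P$ since $\mathbf{g}_0^{-1}(g_0)=X\not\subseteq U$, while every basic neighbourhood $W[\mathbf{g}_0;F,O]$ with $g_1\notin O$ meets $P$ because $\mathcal{U}$ is an $\w$-cover. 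Hence $P$ is not closed in $H$.

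Now the $k$-space hypothesis yields a compact set $K\subseteq H$ for which $P\cap K$ is not closed in $K$, and a witness $h\in\overline{P\cap K}\SM P$ with $h\in K$. A first, soft analysis of $h$ is immediate: for any finite $F\subseteq X$, choosing a basic neighbourhood of $h$ whose target neighbourhood separates $g_0$ from $g_1$ on the points of $F$ where $h=g_0$, and picking a member of $P\cap K$ in it, shows $F\cap h^{-1}(g_0)\subseteq U$ for a single $U\in\mathcal{U}$. Thus $\mathcal{U}$ induces an $\w$-cover of $h^{-1}(g_0)$, yet (as $h\notin P$) no single member of $\mathcal{U}$ contains $h^{-1}(g_0)$. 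The \emph{decisive step} is to upgrade this to a genuine countable $\w$-subcover of the whole space $X$, contradicting the choice of $\mathcal{U}$. Here I would feed the compactness of $K$ into the property $\varphi$, stratifying $X$ according to the two-valued behaviour of the members of $K$ — in the spirit of the compact-set argument in the proof of Lemma~\ref{l:Cp-k-space} — so as to produce countably many members of $\mathcal{U}$ that still $\w$-cover $X$. This yields the contradiction, establishing $\e$, and hence, together with $\varphi$ and Proposition~\ref{p:property-gamma}, the property $\gamma$.

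The hard part is precisely this last extraction, and it is exactly where the hypothesis $\varphi$ must do its work. In Lemma~\ref{l:Cp-k-space} one had an infinite closed discrete sequence $\{g_n\}$ in $Y$, which allowed one to bound, for each $x\in X$, the largest index $n(x)$ of a value attained at $x$ by members of the compact set and thereby to stratify $X$ into an $\I$-sequence matched to the levels of the cover; with only the two values $g_0,g_1$ this device disappears. The challenge is therefore to replace that stratification by one driven solely by property $\varphi$ and the compactness of $K$, passing from ``every finite set is $\mathcal{U}$-covered'' to ``countably many members of $\mathcal{U}$ cover'', while controlling the possibly non-sequential behaviour of the compact witness $K$ (a compact subset of a function space need not be sequential, so one cannot simply extract a convergent sequence in $P\cap K$). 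Making this interaction precise is the crux of the argument.
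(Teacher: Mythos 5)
Your reduction is the same as the paper's: since $\varphi$ is assumed, Proposition~\ref{p:property-gamma} reduces everything to proving $\e$, and you argue by contradiction from an open $\w$-cover $\mathcal{U}$ with no countable $\w$-subcover. But from there the proposal has a genuine gap, and you say so yourself: the ``decisive step'' of extracting a countable $\w$-subcover from the compact witness $K$ is never carried out, only described as ``the crux.'' Moreover, the auxiliary set you chose is the wrong vehicle for this step. You recycle the set $P=\{f: f^{-1}(g_0)\subseteq U \mbox{ for some single } U\in\mathcal{U}\}$ from the Fr\'{e}chet--Urysohn argument of Proposition~\ref{p:FU-Cp}. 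The paper instead follows Gerlits (Theorem 4 of \cite{Gerlits}) and uses
\[
A:= \big\{ f\in H\cap C(X,\mathcal{D}): f^{-1}(g_0) \mbox{ can be $\w$-covered by a \emph{countable} subfamily of } \mathcal{U} \big\}.
\]
The point of $A$ is a stability property that $P$ lacks: if $h$ lies in the pointwise closure of a countable set $\{f_n\}_{n\in\w}\subseteq A$, then (values being in the discrete doubleton $\mathcal{D}$) every finite $F\subseteq h^{-1}(g_0)$ satisfies $F\subseteq f_n^{-1}(g_0)$ for some $n$, so the union of the countably many witnessing subfamilies $\w$-covers $h^{-1}(g_0)$ and $h\in A$. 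This ``countable closedness'' is exactly what drives Gerlits' argument that $A\cap K$ is closed in every compact $K$, while $A$ itself is not closed (by the relative $\mathcal{D}_p$-Tychonoffness, just as in your neighbourhood computation, with $\mathbf{g}_0\in\overline{A}\SM A$ because $X=\mathbf{g}_0^{-1}(g_0)$ admits no countable $\w$-cover from $\mathcal{U}$) --- contradicting the $k$-space property. By contrast, $P$ is not stable under such limits (e.g.\ $\mathbf{g}_0$ can be a pointwise limit of a sequence in $P$), so no compact-trace argument of this shape can succeed with $P$, and your soft analysis of the witness $h$ --- that each finite subset of $h^{-1}(g_0)$ lies in a single member of $\mathcal{U}$ --- yields no countable subfamily and hence no contradiction.

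Two further points where the paper does work you skip. First, it observes that $C_p(X,\mathcal{D})$ is a closed subspace of $C_p(X,Y)$, so $H\cap C_p(X,\mathcal{D})$ is closed in $H$ and therefore itself a $k$-space; all the analysis then takes place inside the two-valued function space. In your version the compact set $K\subseteq H$ and the witness $h$ need not be $\mathcal{D}$-valued, which already makes your ``soft analysis'' of $h$ imprecise. Second, your hope of replacing the stratification device of Lemma~\ref{l:Cp-k-space} (which genuinely needs the infinite closed discrete sequence $\{g_n\}_{n\in\w}$, as you correctly note) by ``feeding compactness into $\varphi$'' is not how the paper proceeds at all: $\varphi$ enters only through Proposition~\ref{p:property-gamma} (and through the zero-dimensionality of $X$, via Corollary II.3.6 of \cite{Arhangel}), while the contradiction itself comes from the countable-closedness of $A$ in Gerlits' Theorem 4. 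So the missing idea is precisely the definition of $A$ and the accompanying trace argument; without it the proof does not close.
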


\begin{proof}
Since $X$ has the property $\varphi$, Corollary II.3.6 of \cite{Arhangel} implies that $X$ is zero-dimensional.
Observe that the space  $C_p(X, \mathcal{D})$ is a closed subspace of $C_p(X,Y)$,  and hence $H\cap C_p(X, \mathcal{D})$ is closed in $H$. Therefore $H\cap C_p(X, \mathcal{D})$ is  a $k$-space as well.
Since $X$ has the property $\varphi$,  Proposition \ref{p:property-gamma} implies that to prove the lemma it suffices to show that $X$ has the property $\e$.

Suppose for a contradiction that the property $\e$ is not fulfilled. Then there is an open $\w$-cover $\eta$ of $X$ which does not have a countable $\w$-subcover. Put
\[
A:= \big\{ f\in H\cap C(X,\mathcal{D}): f^{-1}(g_0) \mbox{ can be $\w$-covered by a countable subfamily of } \eta \big\}.
\]
Now repeating word for word the proof of Theorem 4 of \cite{Gerlits}, %\footnote{For the sake of completeness and the reader convenience we can add a complete proof which is much more detailed than the proof of Theorem 4 of \cite{Gerlits}}
 we obtain that $H\cap C_p(X, \mathcal{D})$ is not a $k$-space, a contradiction. Thus $X$ has also the property $\e$. \qed
\end{proof}

Now we prove the first main result of this section, which applying to $H=C_p(X)$ immediately yields Gerlits--Nagy--Pytkeev Theorem \ref{t:GNP}.
\begin{theorem} \label{t:H-k-space}
Let $Y$ be a non-compact metrizable space, and let $X$ be a $Y$-Tychonoff space. Fix a closed and discrete sequence $\mathcal{D}=\{ g_n\}_{n\in\w}$ in $Y$, and let  $H$ be  a relatively $\mathcal{D}_p$-Tychonoff  subspace of $C_p(X,Y)$ containing the constant function $\mathbf{g}_0$. Then the following assertions are equivalent:
\begin{enumerate}
\item[{\rm (i)}] $H$ is a Fr\'{e}chet--Urysohn space;
\item[{\rm (ii)}] $H$ is a sequential space;
\item[{\rm (iii)}] $H$ is a  $k$-space;
\item[{\rm (iv)}] $X$ satisfies the property $\gamma$.
\end{enumerate}
\end{theorem}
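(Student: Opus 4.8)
The plan is to run the cycle (i) $\Rightarrow$ (ii) $\Rightarrow$ (iii) $\Rightarrow$ (iv) $\Rightarrow$ (i). The implications (i)$\Rightarrow$(ii) and (ii)$\Rightarrow$(iii) are the standard facts that every Fr\'echet--Urysohn space is sequential and every sequential space is a $k$-space, so the real content is to close the loop with (iii)$\Rightarrow$(iv) together with (iv)$\Rightarrow$(i).

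For the equivalence (i)$\Leftrightarrow$(iv) I would do no new work: it is a direct application of Theorem \ref{t:FU-Cp} with the ideal $\I=\FF(X)$. Indeed, $\{g_0,g_1\}$ is a two-element subset of the metrizable space $Y$, $X$ is $Y$-Tychonoff $=Y_{\FF}$-Tychonoff, and since $\{g_0,g_1\}\subseteq\mathcal{D}$ a relatively $\mathcal{D}_p$-Tychonoff subspace of $C_p(X,Y)$ is \emph{a fortiori} a relatively $\{g_0,g_1\}_p$-Tychonoff subspace of $C_p(X,Y)$ (the admissible input data only shrink, while the extension is still required merely to lie in $H$). As $\gamma_{\FF}=\gamma$, Theorem \ref{t:FU-Cp} yields exactly ``$H$ is Fr\'echet--Urysohn iff $X$ has $\gamma$''. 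Note that this step uses only metrizability of $Y$, not non-compactness.

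It remains to establish (iii)$\Rightarrow$(iv). First I would apply Lemma \ref{l:Cp-k-space}: since $\mathcal{D}=\{g_n\}_{n\in\omega}$ is a closed discrete sequence in $Y$ (this is precisely where non-compactness of $Y$ is needed, as a non-compact metrizable space is exactly one containing such a sequence) and $H$ is relatively $\mathcal{D}_{\FF}$-Tychonoff, a $k$-space $H$ forces the property $\varphi_{\FF}=\varphi$, whence $X$ is zero-dimensional by Corollary II.3.6 of \cite{Arhangel}. By Proposition \ref{p:property-gamma} it then suffices to upgrade $\varphi$ to $\gamma$ by producing the property $\varepsilon$, and for this I would invoke Lemma \ref{l:H-k-space-gamma} applied to the two-point discrete set $\{g_0,g_1\}$: because $C_p(X,\{g_0,g_1\})$ is closed in $C_p(X,Y)$, the intersection $H\cap C_p(X,\{g_0,g_1\})$ is a closed subspace of the $k$-space $H$, hence again a $k$-space, and the lemma delivers $\varepsilon$.

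The main obstacle is checking the hypothesis of Lemma \ref{l:H-k-space-gamma} that $H\cap C_p(X,\{g_0,g_1\})$ be a relatively $\{g_0,g_1\}_p$-Tychonoff subspace of $C_p(X,\{g_0,g_1\})$: unlike the easy step above, here the extension must be \emph{globally two-valued} and lie in $H$, whereas relative $\mathcal{D}_p$-Tychonoffness of $H$ only pins the extension down on one closed set and on a finite set, leaving its values elsewhere free in $Y$. I would attack this using zero-dimensionality: given a closed $A$, disjoint finite sets $F_0,F_1$ and a target value $y_0$, the clopen separation available in a zero-dimensional Tychonoff space lets one pass to clopen level sets and so reduce the construction to building a two-valued member of $H$ with prescribed clopen fibers. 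Should forcing \emph{global} two-valuedness prove too rigid, the safe alternative is to bypass Lemma \ref{l:H-k-space-gamma} and re-run the Gerlits-type argument for $\varepsilon$ \emph{directly} on $H$, in exact parallel with the proof of Lemma \ref{l:Cp-k-space}, using relative $\mathcal{D}_p$-Tychonoffness with the infinite discrete set $\mathcal{D}$ to manufacture witnessing functions equal to $g_0$ on the relevant finite sets and to some $g_{l+1}$ off a cover element --- which is all that the topology of $C_p(X,Y)$ actually detects. I expect this ``two-valued richness'' verification to be the only non-routine ingredient; once it is secured, Proposition \ref{p:property-gamma} closes the argument and the cycle is complete.
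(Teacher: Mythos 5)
Your proposal follows the paper's own proof step for step: the same cycle with (i)$\Rightarrow$(ii)$\Rightarrow$(iii) by generalities, (iv)$\Rightarrow$(i) via Theorem \ref{t:FU-Cp} applied to the doubleton $\{g_0,g_1\}\subseteq\mathcal{D}$ (and your remarks that this step needs only metrizability of $Y$, and that relative $\mathcal{D}_p$-Tychonoffness passes to the smaller set $\{g_0,g_1\}$ when the ambient space stays $C_p(X,Y)$, are both correct), and (iii)$\Rightarrow$(iv) via Lemma \ref{l:Cp-k-space} (property $\varphi$, which is where the infinite closed discrete sequence, i.e.\ non-compactness of $Y$, enters) followed by Lemma \ref{l:H-k-space-gamma} (property $\e$, hence $\gamma$ by Proposition \ref{p:property-gamma}). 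Architecturally there is nothing different.

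What deserves comment is the obstacle you single out, because there you are more careful than the paper. The paper disposes of the hypothesis of Lemma \ref{l:H-k-space-gamma} in one unproved line: ``Observe that $H\cap C_p(X,\{g_0,g_1\})$ is a relatively $\{g_0,g_1\}_p$-Tychonoff subspace of $C_p(X,\{g_0,g_1\})$.'' As you suspect, this is not a formal consequence of $H$ being relatively $\mathcal{D}_p$-Tychonoff in $C_p(X,Y)$: Definition \ref{def:relatively-Y-Tych} controls the extension $\bar f\in H$ only on $F$ and on $A$ and says nothing about its values on $X\SM(F\cup A)$, so nothing forces $H$ to contain globally two-valued functions at all. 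For the same reason your first repair is insufficient as stated: clopen separation in a zero-dimensional space manufactures two-valued continuous functions, but the hypothesis gives you no means of placing them in $H$. Your fallback is the right move: bypass Lemma \ref{l:H-k-space-gamma} and re-run the Gerlits $\e$-argument directly on $H$, using witnesses $f\in H$ with $f{\restriction}_F=\mathbf{g}_0{\restriction}_F$ and $f(X\SM U)\subseteq\{g_{l+1}\}$, whose relevant level sets (e.g.\ $f^{-1}(g_0)\subseteq U$, or $f^{-1}\big(Y\SM\{g_k:k>l\}\big)\subseteq U$) are controlled even though $f$ is $Y$-valued --- exactly the device already used in the proof of Lemma \ref{l:Cp-k-space}. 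This is also what the paper effectively does in its applications: when Theorem \ref{t:H-k-space} is invoked for Baire classes (Theorem \ref{t:B1-k-space}), the needed two-valued richness is first verified by hand in Proposition \ref{p:G-delta-D-Tychonoff}, whose construction yields extensions with image inside $\{y_0\}\cup f(F)\subseteq\mathcal{D}$. In summary: your skeleton matches the paper exactly, your diagnosis of the one non-routine point is correct and identifies a step the paper asserts without justification, and of your two proposed fixes only the second genuinely closes it (the alternative being to read the hypothesis, as the paper's concrete instances of $H$ all permit, so that extensions may be taken with values in $\mathcal{D}$).
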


\begin{proof}
The implications (i)$\Rightarrow$(ii)$\Rightarrow$(iii) are clear. Observe that  $H\cap C_p(X, \{ g_0,g_1\})$ is a relatively  $\{ g_0,g_1\}_p$-Tychonoff subspace of $C_p(X, \{ g_0,g_1\})$.
%Since every $Y$-Tychonoff space is Tychonoff by Proposition 2.3 of \cite{BG-Baire},
Now the implication (iii)$\Rightarrow$(iv) follows from Lemmas \ref{l:Cp-k-space} and  \ref{l:H-k-space-gamma}. Finally, the implication (iv)$\Rightarrow$(i) follows from Theorem \ref{t:FU-Cp}. \qed
\end{proof}

Let $X$ be a Tychonoff space and let $E$ be a locally convex space. Recall that we denote by $C^b(X,E)$ and $C^{rc}(X,E)$ the spaces of all functions $f\in C(X,E)$ such that $f(X)$ is a bounded or a relatively compact subset of $E$, respectively.
We need also the next assertion.
\begin{proposition} \label{p:Y-LCS=>good-I}
Let $E$ be a non-trivial locally convex space, $X$ be a topological space, and let $\I$ be an  ideal of compact sets of $X$. Then $C^b_\I(X,E)$ and $C^{rc}_\I(X,E)$ are  relatively $E_\I$-Tychonoff subspaces of $C_\I(X,E)$.
\end{proposition}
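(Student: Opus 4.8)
The plan is to prove both assertions simultaneously by producing, for the given data, a single extension $\bar f$ whose range is contained in a compact set; since in a locally convex space a compact set is both bounded and relatively compact, such an $\bar f$ lies in $C^b_\I(X,E)\cap C^{rc}_\I(X,E)$ at once. So I would fix a closed $A\subseteq X$, a compact $F\in\I$ with $F\subseteq X\SM A$, a point $y_0\in E$, and $f\in C_\I(X,E)$ with $f(F)$ finite. Because postcomposition with the translation $e\mapsto e-y_0$ of $E$ is a homeomorphism of $C_\I(X,E)$ preserving boundedness and relative compactness of ranges, I would first replace $f$ by $f-\mathbf{y}_0$, where $\mathbf{y}_0\in E^X$ is the constant function with value $y_0$, and thereby assume $y_0=0$. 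It then suffices to find $\bar f$ with $\bar f{\restriction}_F=f{\restriction}_F$, $\bar f(A)\subseteq\{0\}$, and compact range.

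The core idea is that the naive candidate $u\cdot f$, where $u:X\to[0,1]$ is a Urysohn function with $u{\restriction}_F=1$ and $u(A)=\{0\}$, behaves correctly on $F$ and on $A$ but may have unbounded range, since $f(X)$ need not be bounded. I would remedy this by replacing $f$ inside the product with a bounded function that still agrees with $f$ on $F$. Let $w_1,\dots,w_m$ be the distinct nonzero values of $f$ on $F$. Since $E$ is Hausdorff I can choose pairwise disjoint open sets $O_j\ni w_j$ with $0\notin O_j$, and since $E$ is completely regular I can pick continuous $q_j:E\to[0,1]$ with $q_j(w_j)=1$ and $\supp q_j\subseteq O_j$, so that $q_j(w_i)=\delta_{ij}$ and $q_j(0)=0$. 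Then I set
\[
\bar f(x):=u(x)\sum_{j=1}^m q_j\big(f(x)\big)\,w_j .
\]

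Verification is then routine. The function $\bar f$ is continuous as a finite combination of continuous scalar functions with constant vectors; on $A$ one has $u=0$, so $\bar f=0=y_0$; on the part of $F$ where $f=w_i$ one gets $u=1$ and $\sum_j q_j(w_i)w_j=w_i$, while on the part where $f=0$ every $q_j(0)=0$, so in both cases $\bar f=f$ there. For the range, disjointness of the supports forces at most one summand to be nonzero at each $x$, whence $\sum_j q_j(f(x))w_j$ lies on a segment $[0,w_{j_0}]$; multiplying by $u(x)\in[0,1]$ keeps the value in the compact convex hull $K:=\conv\{0,w_1,\dots,w_m\}$, which is star-shaped about $0$. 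Thus $\bar f(X)\subseteq K$ is relatively compact and bounded, and translating back by $y_0$ finishes the construction.

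The only genuinely nontrivial ingredient, and the step I would flag as the main obstacle, is obtaining the separating Urysohn function $u$ on $X$: this requires the compact set $F$ and the disjoint closed set $A$ to be separated by a continuous real function, i.e. complete regularity of $X$. This is exactly where the standing hypotheses enter, since a nontrivial locally convex $E$ is path-connected and carries a non-constant continuous real map, so by Proposition \ref{p:Y-good=>good-I} the $E_\I$-Tychonoff spaces relevant to the applications are precisely the Tychonoff ones, and the required $u$ is then available. Everything else — the separation of the finitely many values $w_j$ inside $E$ and the control of the range — is handled purely by the compact convex hull construction above.
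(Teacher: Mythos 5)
Your proof is correct, but it is not the route the paper takes. The paper simply reuses the construction from the proof of Proposition \ref{p:Y-good=>good-I}(ii): writing $f(F)=\{y_1,\dots,y_n\}$, it takes a path $h:[0,n]\to E$ with $h(0)=y_0$ and $h(i)=y_i$ (available since a locally convex space is path-connected), and a single function $g:X\to[0,n]$ with $g(A)\subseteq\{0\}$ and $g(K_i)=\{i\}$ on the compact fibers $K_i=f^{-1}(y_i)$, which exists by \cite[3.11(a)]{GiJ}; the extension is $\bar f=h\circ g$, and both conclusions follow at once because $\bar f(X)\subseteq h([0,n])$ is compact. You instead separate the finitely many values inside $E$ by bump functions $q_j$ pulled back through $f$ and multiply by one two-set Urysohn function $u$ on $X$, landing in the compact convex hull $\conv\{0,w_1,\dots,w_m\}$. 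The trade-off: the paper's argument needs the stronger simultaneous separation of the $n$ disjoint compacta $K_i$ and $A$ by one $[0,n]$-valued function, but it uses only path-connectedness of the target, which is why the paper can dispatch the proposition in two lines by citing Proposition \ref{p:Y-good=>good-I}(ii), and why that argument works for an arbitrary path-connected $Y$; your argument gets by with the basic compact--closed separation in $X$ --- the fibers are separated for free through $f$ and the Hausdorffness of $E$ --- but genuinely uses the linear structure (scalar multiplication, compactness of the convex hull of a finite set), so it does not extend beyond topological vector space targets. Two small points, neither a gap: complete regularity of $E$ gives $q_j$ vanishing off $O_j$ but not $\supp q_j\subseteq O_j$ (the closure of $\{q_j\neq 0\}$ may leave $O_j$), and vanishing off $O_j$ is all your disjointness argument actually needs; and you are right that the Urysohn function $u$ forces $X$ to be Tychonoff --- an assumption the paper's statement (``$X$ a topological space'') also uses silently, since its $g$ requires it just as much, and which is harmless because in every application (Theorem \ref{t:Pytkeev-Ck-Cb}, Corollary \ref{c:Cp-tight}) $X$ is Tychonoff.
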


\begin{proof}
%Fix a closed subset $A$ of $X$, point $y_0\in E$, compact set $F\subseteq X\SM A$, and function $f\in C_\I(X,E)$ such that $f(F)=\{y_1,\dots,y_n\}$, where $y_1,\dots,y_n$ are pairwise distinct. Since $E$ is path-connected, there is a continuous function $h:[0,n]\to E$ such that $h(i)=y_i$ for every $0\le i\le n$. For every $i=1,\dots,n$, set $K_i:=f^{-1}(y_i)$, so $K_i$ is a compact subset of $F$ and hence of $X$. Now \cite[3.11(a)]{GiJ} implies that there is a continuous function $g:X\to [0,n]$ such that $g(A)\subseteq \{0\}$ and $g(K_i)=\{i\}$ for $1\le i\le n$. Then the function $h\circ g:X\to E$ is a desired continuous extension of $f$ with precompact image since $h\circ g(X)$ is contained in the compact subset $h([0,n])$ of $E$.
The proof follows from the proof of (ii) of Proposition \ref{p:Y-good=>good-I}. Indeed, the extension $h\circ g$ constructed there belongs to  $C^{rc}_\I(X,E)$ because $h\circ g(X)$ is contained in the compact subset $h([0,n])$ of $E$.\qed
\end{proof}

\begin{theorem} \label{t:Pytkeev-Ck-Cb}
Let $X$ be a Tychonoff space, $\I$ be an ideal of compact subsets of $X$, $Y$ be  a metrizable locally convex space, and let $E=C_\I(X,Y)$, $E=C^b_\I(X,Y)$ or $E=C^{rc}_\I(X,Y)$. Then the following assertions are equivalent:
\begin{enumerate}
\item[{\rm (i)}] $E$ is a Fr\'{e}chet--Urysohn space;
\item[{\rm (ii)}] $E$ is a sequential space;
\item[{\rm (iii)}] $E$ is a $k$-space;
\item[{\rm (iv)}] $X$ has the property $\gamma_\I$;
\item[{\rm (v)}] $E^\lambda$ is Fr\'{e}chet--Urysohn (sequential or a $k$-space) for some $\lambda\in (0,\omega]$.
\item[{\rm (vi)}] $E^\lambda$ is Fr\'{e}chet--Urysohn (sequential or a $k$-space) for every $\lambda\in (0,\omega]$.
\end{enumerate}
\end{theorem}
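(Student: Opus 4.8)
The plan is to prove (i)$\Leftrightarrow$(iv) for all three choices of $E$ simultaneously, then the trivial chain (i)$\Rightarrow$(ii)$\Rightarrow$(iii), then the crucial return (iii)$\Rightarrow$(iv), and finally to fold in the powers through (iv)$\Rightarrow$(vi)$\Rightarrow$(v)$\Rightarrow$(iii). I may assume $Y$ is non-trivial (otherwise $E$ is a single point and the assertion is vacuous), so that $Y$ contains a two-point set $\mathcal{D}=\{0,v\}$ and, by Hahn--Banach, a non-zero continuous linear functional $\phi$ with $\phi(v)=1$. Since a metrizable locally convex space is path-connected and carries the non-constant real map $\phi$, Proposition \ref{p:Y-good=>good-I}(ii) shows that the Tychonoff space $X$ is automatically $Y_\I$-Tychonoff. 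For $E=C_\I(X,Y)$ the space $E$ is trivially a relatively $\mathcal{D}_\I$-Tychonoff subspace of $C_\I(X,Y)$, while for $E=C^b_\I(X,Y)$ or $E=C^{rc}_\I(X,Y)$ this is precisely Proposition \ref{p:Y-LCS=>good-I}; as $E$ contains $\mathbf{0}$ in every case, Theorem \ref{t:FU-Cp} yields (i)$\Leftrightarrow$(iv). The implications (i)$\Rightarrow$(ii)$\Rightarrow$(iii) are formal.

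The main obstacle is (iii)$\Rightarrow$(iv): I must descend from the vector-valued $k$-space $E$ to a scalar space to which Pytkeev's Theorem \ref{t:Pytkeev-Ck} applies. The key step is to exhibit the scalar function space as a \emph{closed} subspace of $E$. The map $g\mapsto g\cdot v$ carries $C_\I(X)$ homeomorphically onto $\{f\in C_\I(X,Y):f(X)\subseteq\IR v\}$, with continuous inverse $f\mapsto\phi\circ f$; this image is the fixed-point set of the continuous idempotent $f\mapsto(\phi\circ f)\cdot v$ and hence closed in the Hausdorff space $E$. The same map sends bounded scalar functions to bounded $Y$-valued ones and back, and likewise for relatively compact images, so $C^b_\I(X)$ embeds as a closed subspace of $C^b_\I(X,Y)$, and, using $C^{rc}_\I(X,\IR)=C^b_\I(X)$, also of $C^{rc}_\I(X,Y)$. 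Since closed subspaces of $k$-spaces are $k$-spaces, (iii) forces the relevant scalar space to be a $k$-space; Theorem \ref{t:Pytkeev-Ck} upgrades it to Fr\'{e}chet--Urysohn, and then Corollary \ref{c:Cp(X)-FU} (for $C_\I(X)$) or Theorem \ref{t:FU-Cp} with $Y=\IR$ (for $C^b_\I(X)$) delivers $\gamma_\I$. This is exactly where local convexity is essential, entering only through the separating functional $\phi$.

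To handle the powers I would first prove (iv)$\Rightarrow$(vi). Assuming $\gamma_\I$, the homeomorphism of Lemma \ref{l:I-infinite-sum} restricts to a homeomorphism of $E^\w$ onto a subspace $H$ of $C_{\I(Z)}(Z,Y)$, where $Z=\bigoplus_{n\in\w}X$: for $E=C_\I(X,Y)$ one has $H=C_{\I(Z)}(Z,Y)$, and for the bounded or relatively compact variants $H$ consists of those $F$ whose restriction to each summand is bounded, respectively relatively compact. The point is that $H$ is always a relatively $\mathcal{D}_{\I(Z)}$-Tychonoff subspace of $C_{\I(Z)}(Z,Y)$ containing $\mathbf{0}$: the extensions furnished by the $Y_{\I(Z)}$-Tychonoff property of $Z$ can be taken with finite image (equal to $y_0$ off finitely many summands), hence are automatically bounded and relatively compact and lie in $H$. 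Since Proposition \ref{p:sum-gamma-I-space} gives that $Z$ has $\gamma_{\I(Z)}$, Theorem \ref{t:FU-Cp} shows $H$, and therefore $E^\w$, is Fr\'{e}chet--Urysohn; every $E^\lambda$ with $0<\lambda<\w$ embeds as the closed subspace $E^\lambda\times\{(\mathbf{0},\mathbf{0},\dots)\}$ of $E^\w$ and so is Fr\'{e}chet--Urysohn as well, a fortiori sequential and a $k$-space.

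Finally, (vi)$\Rightarrow$(v) is trivial, and for (v)$\Rightarrow$(iii) I note that whichever of the three properties $E^\lambda$ enjoys, it is in particular a $k$-space, while $E$ sits in $E^\lambda$ as the closed subspace whose coordinates past the first equal $\mathbf{0}$; closedness then forces $E$ to be a $k$-space, which is (iii). This closes the cycle and establishes the equivalence of all six assertions.
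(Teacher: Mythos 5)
Your proof is correct, and its skeleton coincides with the paper's: (i)$\Leftrightarrow$(iv) via Proposition \ref{p:Y-LCS=>good-I} together with Theorem \ref{t:FU-Cp}, Pytkeev's Theorem \ref{t:Pytkeev-Ck} as the engine for (iii)$\Rightarrow$(iv), and Lemma \ref{l:I-infinite-sum} with Proposition \ref{p:sum-gamma-I-space} for the powers. Where you genuinely diverge is the mechanism for descending to the scalar space in (iii)$\Rightarrow$(iv). The paper splits off a one-dimensional complemented subspace, writing $Y=\mathbb{F}\oplus Y'$ so that $E=C_\I(X,Y')\oplus C_\I(X,\mathbb{F})$, and then handles the complex field via $C_\I(X,\mathbb{C})=C_\I(X)\oplus C_\I(X)$; you instead exhibit $C_\I(X)$ (resp.\ $C^b_\I(X)$) directly as a closed retract of $E$, namely the image of $g\mapsto g\cdot v$. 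Both devices are the same Hahn--Banach fact in different clothing, and yours avoids the direct-sum bookkeeping; but you should patch the complex case: if the scalar field of $Y$ is $\mathbb{C}$, the fixed-point set of $f\mapsto(\phi\circ f)\cdot v$ is $\{f: f(X)\subseteq\mathbb{C}v\}$ rather than $\{f: f(X)\subseteq\IR v\}$. The fix is one line --- either replace $\phi$ by $\operatorname{Re}\phi$, which is a continuous real-linear functional taking the value $1$ at $v$ and still induces an idempotent with fixed-point set $\{f:f(X)\subseteq\IR v\}$, or simply note that $\IR v$ is closed in $Y$, so $\{f: f(X)\subseteq\IR v\}=\bigcap_{x\in X}\{f: f(x)\in\IR v\}$ is closed already in the pointwise topology, hence in the finer $\I$-open topology. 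A second, harmless difference: in (iv)$\Rightarrow$(vi) you re-verify that the image $H$ of $E^\w$ inside $C_{\I(Z)}(Z,Y)$ is relatively $\mathcal{D}_{\I(Z)}$-Tychonoff and reapply Theorem \ref{t:FU-Cp}, whereas the paper merely observes that once $C_\I(X,Y)^\lambda\cong C_{\I(Z)}(Z,Y)$ is Fr\'{e}chet--Urysohn, the subspaces $C^b_\I(X,Y)^\lambda$ and $C^{rc}_\I(X,Y)^\lambda$ inherit this automatically, since the Fr\'{e}chet--Urysohn property is hereditary; your extra verification costs effort without buying anything. With the complex-scalar remark inserted, your cycle (i)$\Leftrightarrow$(iv), (i)$\Rightarrow$(ii)$\Rightarrow$(iii)$\Rightarrow$(iv), (iv)$\Rightarrow$(vi)$\Rightarrow$(v)$\Rightarrow$(iii) is complete and every cited ingredient is used within its hypotheses (including the implicit non-triviality of $Y$, which the paper's own proof also requires).
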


\begin{proof}
The implications (i)$\Rightarrow$(ii)$\Rightarrow$(iii) are clear, and the equivalence (i)$\Leftrightarrow$(iv) follows from Proposition \ref{p:Y-LCS=>good-I} and Theorem \ref{t:FU-Cp}.

(iii)$\Rightarrow$(i) It is well known that there is a linear subspace $Y'$ of $Y$ such that $Y=\mathbb{F}\oplus Y'$, where $\mathbb{F}$ is the field of $Y$. Therefore $E=C_\I(X,Y')\oplus C_\I(X,\mathbb{F})$  ($E=C^b_\I(X,Y')\oplus C^b_\I(X,\mathbb{F})$ or $E=C^{rc}_\I(X,Y')\oplus C^{rc}_\I(X,\mathbb{F})$). Then the space $C_\I(X,\mathbb{F})$ ($C^b_\I(X,\mathbb{F})$ or $C^{rc}_\I(X,\mathbb{F})$) being a closed subspace of $E$ must be a $k$-space. If $\mathbb{F}=\mathbb{C}$, then $C_\I(X,\mathbb{F})=C_\I(X)\oplus C_\I(X)$ and  $C^b_\I(X,\mathbb{F})=C^b_\I(X)\oplus C^b_\I(X)$, and hence the space $C_\I(X)$ (or $C^b_\I(X)$) is a  $k$-space. Now Pytkeev's Theorem \ref{t:Pytkeev-Ck} implies that  the space $C_\I(X)$ (or $C^b_\I(X)$) is Fr\'{e}chet--Urysohn. Hence, by Proposition \ref{p:Y-LCS=>good-I} and Theorem \ref{t:FU-Cp}, the space $X$ has the property $\gamma_\I$. Once again applying Proposition \ref{p:Y-LCS=>good-I} and Theorem \ref{t:FU-Cp}, we obtain that the space $E$ is Fr\'{e}chet--Urysohn.

(iv)$\Rightarrow$(v) By Lemma \ref{l:I-infinite-sum}, the space $C_\I(X,Y)^\lambda$ is homeomorphic to  $C_{\I(Z)}(Z,Y)$, where $Z=\bigoplus \{X_i: X_i=X \mbox{ for each } i\in \lambda\}$. Proposition \ref{p:sum-gamma-I-space} implies that the space $Z$ has the property $\gamma_{\I(Z)}$. Now the equivalence (i)$\Leftrightarrow$(iv) implies that $C_\I(X,Y)^\lambda$ is a Fr\'{e}chet--Urysohn space. Therefore, the subspaces $C^b_\I(X,Y)^\lambda$ and $C^{rc}_\I(X,Y)^\lambda$ of $C_\I(X,Y)^\lambda$  are also Fr\'{e}chet--Urysohn.

The implication (v)$\Rightarrow$(vi) is clear, and (vi) implies (i) (respectively, (ii) or (iii)) because $E$ is a closed subspace of $E^\lambda$. \qed
\end{proof}

Now we consider the tightness $t(H)$ of a subspace $H$  of $C_\I(X,Y)$. Recall that the {\em tightness $t(x,X)$ at a point $x\in X$} of a topological space $X$ is the least infinite cardinality $\kappa$ such that if $x$ is in the closure of a subset $A$ of $X$, then $A$ contains a subset $B$ of cardinality $\leq \kappa$ with $x\in \overline{B}$; the {\em tightness $t(X)$ of $X$} is  the least upper bound of $\{ t(x,X): x\in X\}$.

Let $X$ be a topological space and let $\I$ be an ideal of compact sets in $X$.
%The {\em character $\chi(x,X)$ of $X$ at a point $x\in X$} is the minimal cardinality of a base of $X$ at the point $x$; the number $\chi(X):=\sup\{ \chi(x,X): x\in X\}$ is called the {\em character of $X$}.
Define the {\em $\I$-Lindel\"{o}f degree $\I\mbox{-}\Lin(X)$ of $X$} as the least infinite cardinal $\kappa$ such that every open $\I$-cover of $X$ has an $\I$-subcover of cardinality $\leq\kappa$ (see \cite{mcoy}).
Theorem 4.7.1 of \cite{mcoy} states that $t\big( C_\I(X)\big)=\I\mbox{-}\Lin(X)$. In \cite{pyt}, Pytkeev proved that if $Y$ is a metrizable space, then  $t(C_p(X,Y))=\aleph_0$ if and only if the Lindel\"{o}f number $l(X^n)$ of $X^n$ is countable for every $n\geq 1$. The next theorem extends these results and has a similar proof.

\begin{theorem} \label{t:Cp-tight}
Let $Y$ be a metric space containing at least two points, and let $X$ be a $Y_\I$-Tychonoff space for some ideal $\I$ of compact sets in $X$. If $H$ is a relatively $Y_\I$-Tychonoff subspace of $C_\I(X,Y)$, then $t(H)=\I\mbox{-}\Lin(X)$. In particular, $t\big( C_\I(X,Y)\big)=\I\mbox{-}\Lin(X)$.
\end{theorem}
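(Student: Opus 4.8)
The plan is to establish the two inequalities $t(H)\le \I\mbox{-}\Lin(X)$ and $\I\mbox{-}\Lin(X)\le t(H)$ separately; write $\kappa:=\I\mbox{-}\Lin(X)$. Throughout I use the standard fact (see \cite{mcoy}) that, since $Y$ is metric with metric $d$, the $\I$-open topology on $C_\I(X,Y)$ is the topology of uniform convergence on the members of $\I$, so the sets $\{g:\sup_{x\in K}d(f(x),g(x))<\e\}$ with $K\in\I$ and $\e>0$ form a neighborhood base at $f$. For the upper bound, fix $f\in H$ and $A\subseteq H$ with $f\in\overline A$; I must find $B\subseteq A$ with $|B|\le\kappa$ and $f\in\overline B$. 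For $n\in\w$ and $g\in A$ put $O^n_g:=\{x\in X:d(g(x),f(x))<1/n\}$, an open subset of $X$. Since $f\in\overline A$ and $\{g:\sup_K d(f,g)<1/n\}$ is a neighborhood of $f$, the family $\{O^n_g:g\in A\}$ is an open $\I$-cover of $X$ (every $K\in\I$ lies in some $O^n_g$). By the definition of $\kappa$ there is $A_n\subseteq A$ with $|A_n|\le\kappa$ such that $\{O^n_g:g\in A_n\}$ is still an $\I$-cover. Then $B:=\bigcup_{n\in\w}A_n$ satisfies $|B|\le\kappa$, and $f\in\overline B$: given $K\in\I$ and $\e>0$, pick $n>1/\e$ and $g\in A_n$ with $K\subseteq O^n_g$, so $\sup_{x\in K}d(f(x),g(x))<1/n<\e$ because $x\mapsto d(f(x),g(x))$ attains its maximum on the compact set $K$. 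Hence $t(H)\le\kappa$; this half uses neither the two points of $Y$ nor the richness of $H$.

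For the lower bound I first note that $H$ automatically contains every constant function: applying the relative $Y_\I$-Tychonoff property of $H$ to the closed set $X$ itself, to the compact set $\emptyset\in\I$, to an arbitrary $y_0\in Y$ and to any $f\in C_\I(X,Y)$ yields $\bar f\in H$ with $\bar f(X)\subseteq\{y_0\}$, i.e. the constant function with value $y_0$ lies in $H$; in particular $\mathbf{g}_0\in H$. Now set $\tau:=t(H)$, fix distinct $g_0,g_1\in Y$, and let $\gamma$ be an arbitrary open $\I$-cover of $X$; I must produce an $\I$-subcover of cardinality $\le\tau$. If $X\in\gamma$ we are done, so assume $X\notin\gamma$ and, copying the construction from the proof of Proposition \ref{p:FU-Cp}, put
\[
P:=\big\{f\in H:\ f^{-1}(Y\SM\{g_1\})\subseteq U\ \text{ for some }U\in\gamma\big\}.
\]

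Exactly as in Proposition \ref{p:FU-Cp} one checks that $\mathbf{g}_0\in\overline P\SM P$: membership $\mathbf{g}_0\in H$ was just secured; $\mathbf{g}_0\notin P$ because $\mathbf{g}_0^{-1}(Y\SM\{g_1\})=X$ lies in no $U\in\gamma$; and for a basic neighborhood $W[\mathbf{g}_0;F,V]$ with $V$ a $d$-ball about $g_0$ missing $g_1$, one uses the $\I$-cover to get $U\in\gamma$ with $F\subseteq U$ and then the relative $Y_\I$-Tychonoff property (closed set $X\SM U$, value $g_1$) to obtain $f\in H$ with $f(F)=\{g_0\}$ and $f(X\SM U)\subseteq\{g_1\}$, whence $f\in P\cap W$. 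Since $\mathbf{g}_0\in\overline P$ and $t(H)=\tau$, choose $P_0\subseteq P$ with $|P_0|\le\tau$ and $\mathbf{g}_0\in\overline{P_0}$, and for each $f\in P_0$ select $U_f\in\gamma$ with $f^{-1}(Y\SM\{g_1\})\subseteq U_f$. The family $\{U_f:f\in P_0\}$ is the desired $\I$-subcover: for $F\in\I$ the neighborhood $W[\mathbf{g}_0;F,V]$ meets $P_0$ in some $f$, so $F\subseteq f^{-1}(V)\subseteq f^{-1}(Y\SM\{g_1\})\subseteq U_f$. Thus $\kappa\le\tau$, and combined with the first half this gives $t(H)=\kappa$; the final assertion is the special case $H=C_\I(X,Y)$.

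The only genuinely delicate point I anticipate is securing a \emph{limit point inside} $H$ for the lower bound, since tightness is measured at points of $H$ and a general relatively $Y_\I$-Tychonoff subspace need not be closed. The natural candidate is the constant $\mathbf{g}_0$, and the key (easily overlooked) observation is that the relative $Y_\I$-Tychonoff property forces all constants into $H$ through the degenerate instance $A=X$, $F=\emptyset$. Once this is in hand, the remaining steps are the routine ``cover $\leftrightarrow$ function'' translation already exploited in the Fr\'echet--Urysohn characterization.
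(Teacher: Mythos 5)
Your proposal is correct, and it is worth recording where it tracks the paper's proof and where it departs from it. For the lower bound $\I\mbox{-}\Lin(X)\leq t(H)$ you are doing essentially what the paper does: the authors fix distinct $y,z\in Y$, choose for each $F\in\I$ a function $f_F\in H$ with $f_F(F)=\{y\}$ and $f_F(X\SM U_F)=\{z\}$, observe that the constant $\yyy$ lies in the closure of $\{f_F:F\in\I\}$, and extract the $\I$-subcover from a subfamily of size $\leq t(H)$ exactly as you do with the set $P$ borrowed from Proposition \ref{p:FU-Cp}; the two bookkeeping schemes are interchangeable. Your explicit verification that the degenerate instance $A=X$, $F=\emptyset$ of Definition \ref{def:relatively-Y-Tych} forces every constant function into $H$ is a point the paper uses silently --- its tightness argument is applied \emph{at} the constant function $\yyy$, which must be a point of $H$ for that to make sense --- so spelling it out (and checking $\emptyset\in\I$) is a genuine improvement in rigor rather than a detour. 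The upper bound is where you take a genuinely different route: the paper embeds $Y$ isometrically into a Banach space $E$, uses heredity of tightness to reduce to proving $t\big(C_\I(X,E)\big)\leq\I\mbox{-}\Lin(X)$, invokes homogeneity of $C_\I(X,E)$ to work at the zero function $\mathbf{0}$, and then runs the $h_{n,F}$/$W(n,F)$ construction; you instead argue directly at an arbitrary $f\in H$ using the fact that for metric $Y$ the $\I$-open topology is the topology of uniform convergence on members of $\I$ (legitimate here, since an ideal of compact sets is hereditarily closed and closed under finite unions, which is the hypothesis needed for the set-open and uniform topologies to coincide in \cite{mcoy}). The combinatorial core is identical --- your sets $O^n_g$ are the paper's $W(n,F)$ --- but your version needs neither the linear structure of a Banach space nor homogeneity, and it yields $t(H)\leq\I\mbox{-}\Lin(X)$ for the subspace $H$ directly rather than via heredity after enlarging the ambient space. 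Both arguments are sound; yours is the more self-contained of the two.
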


\begin{proof}
%If $X\in\I$, then $Y^X$ with the $\I$-topology is metrizable. Hence the equality $t(H)=\I\mbox{-}\Lin(X)$ trivially holds. So we shall assume that $X\not\in\I$.
%
First we show that $\I\mbox{-}\Lin(X)\leq t(H)$. To this end, fix two distinct points $y,z\in Y$ and let $\U$ be an open $\I$-cover of $X$ (we assume that $X\not\in\U$).  Then for each $F\in\I$, there is a $U_F\in\U$ such that $F\subseteq U_F$. Since $X$ is $Y_\I$-Tychonoff and $H$ is relatively $Y_\I$-Tychonoff, for each $F\in\I$, choose an $f_F\in H$ such that $f_F(F)=\{ y\}$ and $f_F(X\SM U_F)=\{z\}$. In particular, the constant function $\yyy$ belongs to the closure of the family $A=\{ f_F: F\in\I\}$, and $\yyy\not\in A$ because $U_F\not= X$. Then there exists a subfamily $B$ of $A$ with cardinality $\leq  t(H)$, which contains $\yyy$ in its closure. Define $\V:=\{ U_F: f_F\in B\}$. To see that $\V$ is an $\I$-subcover of $\U$, let $F\in\I$ and fix an open neighborhood $V$ of $y$ such that $z\not\in V$. Set $W:=W[\yyy;F,V]$. Then $W$ is a neighborhood of $\yyy$, and therefore there is an $f_G\in B\cap W$ for some $G\in \V$. Now for any $x\in F$, we have $f_G(x)\in V$; while $f_G(x)=z$ for any $x\in X\SM U_G$. Therefore $F\subseteq U_G$, so that $\V$ is an $\I$-subcover of $\U$ and has cardinality $\leq t(H)$. Thus $\I\mbox{-}\Lin(X)\leq t(H)$.

Let us prove the reverse inequality $t(H)\leq\I\mbox{-}\Lin(X)$. Since $Y$ embeds into a Banach space $E$, it follows that $H$ is a subspace of $C_\I(X,E)$. Therefore it suffices to prove that $t\big( C_\I(X,Y)\big)\leq \I\mbox{-}\Lin(X)$, where $Y$ is a Banach space. As the space $C_\I(X,Y)$ is homogeneous, it is sufficient to show that the tightness of $C_\I(X,Y)$ at zero function $\mathbf{0}$ is less than or equal to $\I\mbox{-}\Lin(X)$. To this end, let $A$ be a subset of $C_\I(X,Y)$ such that $\mathbf{0}\in  \overline{A}\SM A$. For every $\e>0$, denote by $D_\e$ the open ball of radius $\e$ centered at $0\in Y$. For each $F\in\I$ and a natural number $n\geq 1$, choose a function $h_{n,F}\in A\cap [F;D_{1/n}]$, and set
\[
W(n,F):= \{ x\in X: h_{n,F}(x) \in D_{1/n}\}.
\]
Then for each $n\geq 1$, the family $\mathcal{W}_n :=\{ W(n,F): F\in \I\}$ is an open $\I$-cover of $X$. So each $\mathcal{W}_n$ has an $\I$-subcover $\mathcal{V}_n$ of cardinality $\leq\I\mbox{-}\Lin(X)$. Define
\[
A' := \{ h_{n,F}: n\geq 1 \; \mbox{ and }\; W(n,F)\in \mathcal{V}_n\}
\]
Clearly, $A'\subseteq A$ and $|A'|\leq\I\mbox{-}\Lin(X)$, and $\mathbf{0}$ is in the closure of $A'$ in $C_\I(X,Y)$. Thus $t\big( C_\I(X,Y)\big)\leq \I\mbox{-}\Lin(X)$ as desired. \qed
\end{proof}

\begin{corollary} \label{c:Cp-tight}
Let $X$ be a Tychonoff space, $\I$ be an ideal of compact subsets of $X$, $Y$ be  a metrizable locally convex space, and let $E=C_\I(X,Y)$, $C^b_\I(X,Y)$ or $C^{rc}_\I(X,Y)$. Then $t(E)=\I\mbox{-}\Lin(X)$.
\end{corollary}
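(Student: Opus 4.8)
The plan is to obtain the corollary as a direct application of Theorem \ref{t:Cp-tight} and Proposition \ref{p:Y-LCS=>good-I}; the substantive work is already contained in those two results, so the only task is to verify that their hypotheses hold for the given data in each of the three cases. Throughout I assume $Y$ is non-trivial, since a trivial $Y$ makes all three function spaces singletons and renders the statement degenerate. Being a non-trivial metrizable locally convex space, $Y$ is in particular a metric space with at least two points; it is path-connected, because any two of its vectors are joined by the straight-line segment between them; and, being metrizable (hence Hausdorff) and non-trivial, it admits by Hahn--Banach a non-constant continuous linear functional $\chi\colon Y\to\IR$.

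The first step is to establish that $X$ is $Y_\I$-Tychonoff. Since $Y$ carries the non-constant continuous real-valued function $\chi$ and is path-connected, part (ii) of Proposition \ref{p:Y-good=>good-I} tells us that the Tychonoff space $X$ is automatically $Y_\I$-Tychonoff. This supplies the standing separation hypothesis required to invoke Theorem \ref{t:Cp-tight}.

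Next I would handle the three spaces. For $E=C_\I(X,Y)$ the point is that $H=C_\I(X,Y)$ is a relatively $Y_\I$-Tychonoff subspace of itself: given a closed $A\subseteq X$, a compact $F\in\I$ with $F\subseteq X\SM A$, a point $y_0\in Y$, and any $f\in C_\I(X,Y)$ with $f(F)$ finite, the restriction $f{\restriction}_F$ is continuous on $F$ with finite image, so the $Y_\I$-Tychonoff property of $X$ yields $\bar f\in C(X,Y)$ with $\bar f{\restriction}_F=f{\restriction}_F$ and $\bar f(A)\subseteq\{y_0\}$, which is exactly the extension demanded by Definition \ref{def:relatively-Y-Tych}. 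For the remaining choices $E=C^b_\I(X,Y)$ and $E=C^{rc}_\I(X,Y)$, Proposition \ref{p:Y-LCS=>good-I} states precisely that these are relatively $Y_\I$-Tychonoff subspaces of $C_\I(X,Y)$. In all three cases Theorem \ref{t:Cp-tight} then applies and yields $t(E)=\I\mbox{-}\Lin(X)$.

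I do not expect any genuine obstacle here beyond this bookkeeping: the two-sided estimate of the tightness by the $\I$-Lindel\"{o}f degree is the content of Theorem \ref{t:Cp-tight}, and the richness of the bounded and relatively compact function spaces is the content of Proposition \ref{p:Y-LCS=>good-I}. The only point deserving a moment's care is checking that a metrizable locally convex space meets the two structural hypotheses actually used---path-connectedness and the presence of a non-constant continuous functional---so that $Y_\I$-Tychonoffness of $X$ follows for free from its being Tychonoff.
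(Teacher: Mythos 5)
Your proposal is correct and follows essentially the same route as the paper's own proof: establish that $X$ is $Y_\I$-Tychonoff via Proposition \ref{p:Y-good=>good-I}, invoke Proposition \ref{p:Y-LCS=>good-I} for the relative $Y_\I$-Tychonoffness of the bounded and relatively compact function spaces, and then apply Theorem \ref{t:Cp-tight}. Your explicit verifications that a non-trivial metrizable locally convex space is path-connected and admits a non-constant continuous functional, and that $C_\I(X,Y)$ is a relatively $Y_\I$-Tychonoff subspace of itself, are details the paper leaves implicit but are exactly the intended bookkeeping.
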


\begin{proof}
By Proposition \ref{p:Y-good=>good-I}, the space $X$ is $Y_\I$-Tychonoff. By Proposition \ref{p:Y-LCS=>good-I}, the space $E$ is a  relatively $Y_\I$-Tychonoff subspaces of $C_\I(X,E)$. Now Theorem \ref{t:Cp-tight} applies. \qed
\end{proof}

Below we shall use the well-known fact that the Tychonoff product $\w^{\w_1}$ of $\w_1$ discrete spaces $\w$ is not normal (for a more general assertion see \cite[Theorem~3.10]{Gabr-B1}).
\begin{proposition} \label{p:normality-func}
Let $Y$ be a non-pseudocompact Tychonoff space, $X$ be a $Y$-Tychonoff space containing a discrete family $\U=\{ U_i\}_{i\in\w_1}$ of open subsets,  and  let $\I$ be an ideal of compact subsets of $X$. Then each subspace $H$ of $(Y^X,\tau_\I)$ consisting of $C_\I(X,Y)$ is not normal.
\end{proposition}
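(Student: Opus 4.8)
The plan is to exhibit inside $H$ a \emph{closed} subspace homeomorphic to the product $\w^{\w_1}$; since normality is hereditary with respect to closed subspaces and $H$ is Hausdorff (being a subspace of the Hausdorff space $(Y^X,\tau_\I)$), the non-normality of $\w^{\w_1}$ will then force $H$ to be non-normal. First I would extract from $Y$ a closed discrete countable set: as $Y$ is non-pseudocompact there is an unbounded continuous function $q\colon Y\to\RR$, and choosing distinct $y_n$ with $q(y_n)>n$ we get that $D:=\{y_n:n\in\w\}$ is locally finite, hence closed and discrete in $Y$. Thus $D$ is homeomorphic to the discrete space $\w$, and $D^{\w_1}$ is a closed subspace of $Y^{\w_1}$ homeomorphic to $\w^{\w_1}$. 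Next, for each $i\in\w_1$ I pick $x_i\in U_i$. Since $X$ is Tychonoff by Proposition \ref{p:Y-good=>good}(ii) (so $T_1$) and $\I$ is closed under intersection with compacta, every singleton $\{x_i\}$ lies in $\I$; hence each evaluation $e_{x_i}\colon H\to Y$, $e_{x_i}(f)=f(x_i)$, is $\tau_\I$-continuous, and so is the restriction map $R:=(e_{x_i})_{i\in\w_1}\colon H\to Y^{\w_1}$.

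The heart of the construction is a continuous section of $R$ over $D^{\w_1}$. Using that $X$ is $Y$-Tychonoff, for every $i\in\w_1$ and $n\in\w$ I fix $g_{i,n}\in C(X,Y)$ with $g_{i,n}(x_i)=y_n$ and $g_{i,n}(X\SM U_i)=\{y_0\}$, taking $g_{i,0}\equiv y_0$. For $\phi\in\w^{\w_1}$ I glue these along the discrete family $\U$, setting $f_\phi(x)=g_{i,\phi(i)}(x)$ for $x\in U_i$ and $f_\phi(x)=y_0$ for $x\in X\SM\bigcup_i U_i$. Local finiteness of $\U$ guarantees that near any point $f_\phi$ agrees with a single $g_{i,\phi(i)}$ (or is constant $y_0$), so $f_\phi\in C(X,Y)\subseteq H$, and $f_\phi(x_i)=y_{\phi(i)}$. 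Identifying $\w^{\w_1}$ with $D^{\w_1}$, this yields a map $s\colon D^{\w_1}\to H$ with $R\circ s=\id_{D^{\w_1}}$.

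It remains to verify continuity, the embedding property, and closedness. For continuity of $s$ it suffices to examine preimages of subbasic sets $[K;U]\cap H$: a compact $K\in\I$ meets only finitely many members of the discrete family $\U$, so $f_\phi{\restriction}_K$ depends on only finitely many coordinates of $\phi$, whence $\{\phi:f_\phi(K)\subseteq U\}$ is a union of basic cylinders and thus open. Hence $s$ is a continuous injection admitting the continuous left inverse $R$, i.e.\ an embedding, and $Z:=s(D^{\w_1})\cong\w^{\w_1}$. Finally, $Z$ coincides with the equalizer $\{f\in R^{-1}(D^{\w_1}):f=s(R(f))\}$ of the two continuous maps $\id$ and $s\circ R$ defined on the closed set $R^{-1}(D^{\w_1})$; since $H$ is Hausdorff, this equalizer is closed in $H$. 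Therefore $Z$ is a closed non-normal subspace of $H$, and $H$ is not normal.

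I expect the main obstacle to be exactly the closedness of the copy $Z$. The tempting route is to build a retraction $H\to Z$ by decoding each coordinate through a map $Y\to\w$ sending $y_n\mapsto n$; but such a decoding is generally discontinuous, because the points $y_n$ need not be isolated in $Y$, so there is no control over the intermediate values $g_{i,n}(x)$ for $x\in U_i\SM\{x_i\}$. The equalizer argument is what circumvents this difficulty: it establishes closedness of $Z$ using \emph{only} the continuity of $R$ and of the section $s$ together with the Hausdorffness of $H$, so no continuous global decoding and no control on the ranges of the glued functions are needed.
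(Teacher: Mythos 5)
Your proof is correct, and the underlying construction is the same as the paper's: extract a closed discrete set $D=\{y_n\}_{n\in\w}$ from the non-pseudocompact $Y$, pick $x_i\in U_i$, use $Y$-Tychonoffness to get continuous functions taking the value $y_n$ at $x_i$ and $y_0$ off a neighborhood of $x_i$, glue along the discrete family $\U$ to plant a copy of $\w^{\w_1}$ inside $C(X,Y)\subseteq H$, and conclude from heredity of normality to closed subspaces together with non-normality of $\w^{\w_1}$. Where you genuinely diverge is the verification that this copy is an embedded \emph{closed} subspace, which is exactly the step the paper labours over by hand: the paper proves openness of $p(\chi)=F_\chi$ onto its image by isolating each value $F_\chi(x_i)$ inside $D$ with neighborhoods $\mathcal{O}_i$, and proves closedness by taking an arbitrary $h\in\overline{p\big(\w^{\w_1}\big)}$ and pinning it down pointwise (first $h=y_0$ off $\bigcup_i V_i$, then $h(x_i)\in D$ since $D$ is closed, then $h{\restriction}_{V_i}=f_{n(i),i}{\restriction}_{V_i}$), identifying $h$ as some $F_\chi$. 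You replace both computations by one structural observation: since each singleton $\{x_i\}$ lies in $\I$ (here $\bigcup\I=X$ and closedness of $\I$ under intersection with compacta suffice; your appeal to $T_1$ is superfluous, singletons being compact in any space), the evaluation map $R=(e_{x_i})_{i\in\w_1}\colon H\to Y^{\w_1}$ is continuous and is a left inverse of your section $s$, so $s$ is an embedding for free, and $Z=s(D^{\w_1})$ is the equalizer of the identity and $s\circ R$ on the closed set $R^{-1}(D^{\w_1})$, hence closed by Hausdorffness of $H$ — this is shorter, less error-prone, and makes transparent where discreteness of $\U$ enters (each $K\in\I$ meets only finitely many $U_i$, giving continuity of $s$). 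The paper's hands-on analysis buys explicit identification of all limit functions (and, run in $(Y^X,\tau_\I)$, closedness there — though your equalizer argument recovers this too by taking $R$ on all of $Y^X$). One further small simplification on your side is legitimate and worth noting: you glue on the sets $U_i$ directly rather than on shrunken $V_i$ with $\overline{V_i}\subseteq U_i$ as the paper does; continuity of $f_\phi$ still holds precisely because each $g_{i,n}$ is constantly $y_0$ off $U_i$, so on any neighborhood meeting at most one $U_i$ the glued function coincides with a single $g_{i,\phi(i)}$.
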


\begin{proof}
As $Y$ is not pseudocompact, it contains an infinite, closed and discrete  subspace $D=\{ y_n\}_{n\in\w}$. Since $X$ is Tychonoff by Proposition \ref{p:Y-good=>good}, for every $i\in \w_1$, choose a point $x_i\in U_i$ and an open neighborhood $V_i$ of $x_i$ such that $\overline{V_i}\subseteq U_i$. For every $i\in \w_1$ and $n\in\w$, by $Y$-Tychonoffness of $X$, there is a continuous function $f_{n,i}\in C(X,Y)$ such that
\[
f_{n,i}(x_i)=y_n \;\; \mbox{ and } \;\; f_{n,i}(X\SM V_i)=\{ y_0\}.
\]
For every function $\chi:\w_1 \to \w$, define a function $F_\chi: X\to Y$ by
\[
F_\chi (x):= \left\{
\begin{aligned}
f_{\chi(i),i}(x), & \mbox{ if } x\in V_i \mbox{ for some } i\in\w_1,\\
y_0, &  \mbox{ if } x\in X\SM \bigcup_{i\in\w_1} V_i.
\end{aligned} \right.
\]
Since the family $\U$ is discrete, it is easy to see that the function $F_\chi $ is continuous.

We claim that the map $p:\w^{\w_1} \to H$ defined by $p(\chi):= F_\chi $ is a homeomorphism onto its image $p\big(\w^{\w_1}\big)\subseteq H$. Indeed, it is clear that $p$ is a  bijection. To prove that $p$ is continuous, fix $\chi\in \w^{\w_1}$ and a standard neighborhood $W[p(\chi); \F,\V]$ of $p(\chi)$ in $(Y^X,\tau_\I)$, where $\F$ is a finite subfamily of $\I$ and $\V$ is a corresponding finite family of open sets in $Y$. Since the family $\U$ is discrete, the set $J$ of all $i\in\w_1$ for which the set $K\cap U_i$ is not empty for some $K\in\F$ is finite. For every $j\in J$, set $W_j:=\{ \chi(j)\}$ and let $\mathcal{W}=\{ W_j:j\in J\}$. Then $p\big( W[\chi; J,\mathcal{W}]\big) \subseteq W[p(\chi); \F,\V]$. Thus $p$ is continuous.

To show that $p$ is also open, let $\chi\in \w^{\w_1}$ and let
\[
W:=\{ \eta\in \w^{\w_1}: \eta(i)=\chi(i) \mbox{ if } i\in F \mbox{ for some finite } F\subseteq \w_1 \}
\]
be a standard neighborhood of $\chi$. For every $i\in F$, choose an open neighborhood $\mathcal{O}_i$ of $F_\chi(x_i)$ such that $\mathcal{O}_i\cap D=\{ F_\chi(x_i)\}$. Then $p(W)$ contains the standard neighborhood
\[
\bigcap_{i\in F} W\big[ F_\chi; \{ x_i\},  \{ \mathcal{O}_i\}\big] \cap p\big(\w^{\w_1}\big)
\]
of $F_\chi$. Thus $p$ is open.

Since the space $\w^{\w_1}$ is not normal, to show that also $H$ is not normal it suffices to prove that $p\big(\w^{\w_1}\big)$ is closed in $H$. Fix an arbitrary $h\in \overline{p\big(\w^{\w_1}\big)}$. It is clear that $h(x)=y_0$ for every $x\in X\SM \bigcup_{i\in\w_1} V_i$. Fix an $i\in\w_1$. Since $D$ is closed and discrete in $Y$ it follows that there is an $n(i)\in\w$ such that $h(x_i)=f_{n(i),i}(x_i)=y_{n(i)}\in D$. Take an open neighborhood $O_i$ of $y_{n(i)}$ such that $O_i\cap D=\{y_{n(i)}\}$. Then for every $x\in V_i$ and each neighborhood $O_x$ of $x$, the standard neighborhood
$
W\big[h;\{x_i\}, \{O_i\}\big]\cap W\big[h;\{x\}, \{O_x\}\big] \cap H
$ of $h$ contains only those functions  $f\in p\big(\w^{\w_1}\big)$ for which  $f(x)=f_{n(i),i}(x)$. Therefore $h{\restriction}_{V_i}=f_{n(i),i}{\restriction}_{V_i}$. Hence $h=F_\chi$, where $\chi\in \w^{\w_1}$ is defined by $\chi(i)=n(i)$ for all $i\in\w_1$. Thus $p\big(\w^{\w_1}\big)$ is closed in $H$. \qed
\end{proof}

\begin{remark} {\em
In Proposition \ref{p:normality-func} the assumption on $Y$ to be non-pseudocompact is essential. Indeed, let $Y=[0,1]$ and $X=\bigcup_{i\in\w_1} S_i$ be the topological sum of countable compact spaces $S_i$. It follows from Proposition \ref{p:B1-countable} below that $B_1(X,Y)=\prod_{i\in\w_1} Y^{S_i}=Y^X$, and hence $B_1(X,Y)$ is a compact space. However, if we consider  the case $H=C_\I(X,Y)$, then the space $H$ is still not normal because all spaces $C_\I(S_i,Y)$ contain infinite, closed and discrete subspaces, see Lemma 1 of \cite{Pol-1974}.\qed}
\end{remark}

To prove the next proposition we need the following mild extension property.
\begin{definition} \label{def:extention} {\em
Let $X$ and $Y$ be topological spaces, and let $\I$ be an ideal of compact  subsets in $X$. We say that $X$ has  the {\em $Y_\I$-extension property} if for every $F\in\I$ and each $f\in C(F,Y)$ there is an ${\bar f} \in C(X,Y)$ such that ${\bar f}{\restriction}_F =f$.\qed}%\footnote{\color{red} It is unclear whether the result  \cite[3.11(c)]{GiJ} holds true also for any (especially metrizable) space $Y$ (not only for $\IR$). Of course the answer is true for $Y$ having some semigroup structure, but in general case is not clear consider the Cantor set. I think that this question of independent interest!!!}}
\end{definition}

%{\color{green} E. Michael, Some extension theorems for continuous functions. }

Let $X$ be a Tychonoff space. As usual we denote by $w(X)$ and $d(X)$ the {\em weight of $X$} and the {\em density of $X$}, respectively.

\begin{proposition} \label{p:C(X,Y)-net-weight}
Let $Y$ be a Hausdorff topological space containing at least two points, and let  $X$ be a $Y_{\I_X}$-Tychonoff space for some ideal $\I_X$ of compact subsets in $X$. Let $S$ be  a  $Y_{\I_S}$-Tychonoff space for some ideal $\I_S$ of compact subsets in $S$ such that
\begin{enumerate}
\item[{\rm (a)}] there is a continuous bijection $\phi$ from $X$ onto $S$ such that $\phi(\I_X)\subseteq \I_S$;
\item[{\rm (b)}] $S$ has the $Y_{\phi(\I_X)}$-extension property.
\end{enumerate}
Then $d\big(C_{\I_X}(X,Y)\big)\leq d\big(C_{\I_S}(S,Y)\big)$.
\end{proposition}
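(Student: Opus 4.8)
The plan is to exploit the continuous bijection $\phi$ by pulling functions on $S$ back to $X$ along it, and then to show that this pullback map has dense range. Concretely, I would consider the dual map
\[
\phi^{*}\colon C_{\I_S}(S,Y)\to C_{\I_X}(X,Y),\qquad \phi^{*}(g):=g\circ\phi,
\]
which is well defined since $g$ and $\phi$ are continuous. First I would verify that $\phi^{*}$ is continuous: for a subbasic set $[K;U]=\{h\in C(X,Y): h(K)\subseteq U\}$ of $C_{\I_X}(X,Y)$ with $K\in\I_X$ and $U\subseteq Y$ open, one computes
\[
(\phi^{*})^{-1}\big([K;U]\big)=\{g: g(\phi(K))\subseteq U\}=[\phi(K);U],
\]
and $\phi(K)\in\phi(\I_X)\subseteq\I_S$ by hypothesis (a), so this is a subbasic open subset of $C_{\I_S}(S,Y)$. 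Hence $\phi^{*}$ is continuous.

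The heart of the argument is to prove that $\phi^{*}\big(C_{\I_S}(S,Y)\big)$ is dense in $C_{\I_X}(X,Y)$. I would fix $f\in C_{\I_X}(X,Y)$ and a basic neighbourhood $W[f;\FF,\V]$, where $\FF=\{F_1,\dots,F_n\}\subseteq\I_X$, where $\V=\{V_1,\dots,V_n\}$ are open in $Y$, and where $f(F_i)\subseteq V_i$, and then set $F:=F_1\cup\cdots\cup F_n\in\I_X$. The key observation is that $\phi{\restriction}_F\colon F\to\phi(F)$ is a homeomorphism: it is a continuous bijection, $F$ is compact, and $\phi(F)$ is Hausdorff (as a subspace of the Hausdorff space $S$), so $\phi{\restriction}_F$ is a closed, hence open, bijection. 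Consequently $h:=f{\restriction}_F\circ(\phi{\restriction}_F)^{-1}\colon\phi(F)\to Y$ is continuous, and since $\phi(F)\in\phi(\I_X)$, the $Y_{\phi(\I_X)}$-extension property (b) produces $g\in C(S,Y)$ with $g{\restriction}_{\phi(F)}=h$. For every $x\in F$ we then have $g(\phi(x))=h(\phi(x))=f(x)$, so $g\circ\phi$ coincides with $f$ on $F$; in particular $(g\circ\phi)(F_i)=f(F_i)\subseteq V_i$, i.e. $\phi^{*}(g)\in W[f;\FF,\V]$. This establishes density of the range.

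Finally I would combine the two facts. Let $D$ be a dense subset of $C_{\I_S}(S,Y)$ with $|D|=d\big(C_{\I_S}(S,Y)\big)$. Continuity of $\phi^{*}$ gives $\phi^{*}\big(\overline{D}\big)\subseteq\overline{\phi^{*}(D)}$, whence $\phi^{*}\big(C_{\I_S}(S,Y)\big)\subseteq\overline{\phi^{*}(D)}$; since the left-hand side is dense in $C_{\I_X}(X,Y)$, so is $\phi^{*}(D)$. As $|\phi^{*}(D)|\leq|D|$, this yields $d\big(C_{\I_X}(X,Y)\big)\leq d\big(C_{\I_S}(S,Y)\big)$, as claimed.

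The main obstacle is the density step, and within it the transfer of the (possibly infinite-valued) continuous function $f{\restriction}_F$ from $F$ to $\phi(F)$: this is precisely where hypothesis (b) is indispensable, since ordinary $Y_{\I_S}$-Tychonoffness extends only functions with finite image, whereas $f{\restriction}_F$ need not be finite-valued. The supporting point worth isolating is that $\phi{\restriction}_F$ is a homeomorphism, which relies on the compactness of $F$ together with the Hausdorffness of $S$; for the latter I would note that $S$, being $Y_{\I_S}$-Tychonoff, is $Y$-Tychonoff, so that any two of its points are separated by a continuous $Y$-valued function, and then the assumption that $Y$ is Hausdorff with at least two points forces $S$ to be Hausdorff.
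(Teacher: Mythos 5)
Your proposal is correct and follows essentially the same route as the paper: both pass to the adjoint map $\phi^{*}(g)=g\circ\phi$, use hypothesis (b) to extend $f{\restriction}_{F}$ (transported through the bijection) to some $h\in C(S,Y)$, and then approximate $h$ by an element of a dense set $D$ to conclude that $\phi^{*}(D)$ is dense in $C_{\I_X}(X,Y)$. Your write-up is in fact slightly more careful than the paper's, since you make explicit that $\phi{\restriction}_F$ is a homeomorphism (via compactness of $F$ and Hausdorffness of $S$, the latter deduced from $Y$-Tychonoffness and the Hausdorffness of $Y$) — a point the paper compresses into ``the bijectivity of $\phi$ and (b) imply.''
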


\begin{proof}
It is sufficient to show that if $D$ is a dense subset of $C_{\I_S}(S,Y)$, then $\phi^\ast(D)$ is dense in $C_{\I_X}(X,Y)$, where $\phi^\ast: C_{\I_S}(S,Y)\to C_{\I_X}(X,Y), \phi^\ast(f):=f\circ \phi,$ is the adjoint map of $\phi$. Fix an $f\in C_{\I_X}(X,Y)$ and its standard neighborhood $W[f; \F,\U]$, where $\F\subseteq \I_X$ is finite. Then the bijectivity of $\phi$ and (b) imply that there is an $h\in C_{\I_S}(S,Y)$ such that $h{\restriction}_{\bigcup\F} =f{\restriction}_{\bigcup\F}$. Take $g\in D$ such that $g\in W[h; \F,\U]\subseteq C_{\I_S}(S,Y)$. It is clear that $\phi^\ast(g)=g\circ\phi \in W[f; \F,\U]$. Thus $\phi^\ast(D)$ is dense in $C_{\I_X}(X,Y)$ and hence $d\big(C_{\I_X}(X,Y)\big)\leq d\big(C_{\I_S}(S,Y)\big)$.\qed
\end{proof}

The {\em weak weight $ww(X)$} of a Tychonoff space $X$ is the least infinite cardinality of $w(Z)$ of a Tychonoff space $Z$ such that there is a continuous bijection from $X$ onto $Z$, see \cite{mcoy}. Analogously we define
\begin{definition} \label{def:weak-weight} {\em
Let $X$ and $Y$ be topological spaces, and let $\I$ be an ideal of compact subsets in $X$ such that $X$ has the $Y_\I$-extension property.
The {\em $Y_\I$-weak weight of $X$} (for short $Y_\I\mbox{-}ww(X)$)  is the least infinite cardinality of $w(Z)$ of a $Y$-Tychonoff space $Z$  such that there is a continuous bijection $\phi$ from $X$ onto $Z$ and $Z$ has the $Y_{\phi(\I)}$-extension property.\qed}
\end{definition}
For the sake of simplicity we set $Y_p\mbox{-}ww(X):= Y_{\F(X)}\mbox{-}ww(X)$ and $Y_k\mbox{-}ww(X):= Y_{\KK(X)}\mbox{-}ww(X)$.

\begin{lemma} \label{l:extention}
\begin{enumerate}
\item[{\rm (i)}] Every $Y$-Tychonoff space $X$ has the  $Y_{\F(X)}$-extension property.
\item[{\rm (ii)}] %If $\I$ is an ideal of compact subsets of a Tychonoff space $X$, then $X$ has the  $\IR_{\I}$-extension property.
 Each Tychonoff space $X$ has the  $\IR_{\I}$-extension property for every ideal $\I$ of compact subsets of $X$.
\item[{\rm (iii)}] Let $\I$ be an ideal of compact subsets of a Tychonoff space $X$. Then $\IR_\I\mbox{-}ww(X)=ww(X)$.
\item[{\rm (iv)}] $ww(X)\leq Y_\I\mbox{-}ww(X)$ for every Tychonoff space $Y$ and each ideal $\I$ of compact subsets of a $Y$-Tychonoff space $X$ with the $Y_\I$-extension property.
\item[{\rm (v)}] If $X$ is a $Y$-Tychonoff (for example, $Y$-$z$-Tychonoff) compact space, then $Y_p\mbox{-}ww(X)=w(X)$.
\end{enumerate}
\end{lemma}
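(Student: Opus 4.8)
The plan is to treat the five items in order using Definitions~\ref{def:extention} and~\ref{def:weak-weight} together with Proposition~\ref{p:Y-good=>good}; the only substantial point is~(ii), and the rest is bookkeeping. For~(i) I would invoke $Y$-Tychonoffness with the empty closed set $A=\emptyset$: given $F\in\F(X)$ and $f\in C(F,Y)$ (which, $F$ being finite, is an arbitrary map $F\to Y$), the $Y$-Tychonoff property yields a continuous $\bar f\colon X\to Y$ with $\bar f{\restriction}_F=f$, the side condition $\bar f(A)\subseteq\{y_0\}$ being vacuous; this is exactly the $Y_{\F(X)}$-extension property. For~(ii) I would use the Tietze extension theorem: let $F\in\I$ and $f\in C(F,\IR)$, so $f(F)$ is bounded (as $F$ is compact) and $f$ takes values in some interval $[a,b]$. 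Embedding the Tychonoff space $X$ into its Stone--\v{C}ech compactification $\beta X$, the compact set $F$ is closed in the normal space $\beta X$, so Tietze extends $f$ to $\tilde f\colon\beta X\to[a,b]$, and $\bar f:=\tilde f{\restriction}_X\in C(X,\IR)$ restricts to $f$ on $F$. Thus $X$ has the $\IR_\I$-extension property (this is essentially Theorem~3.1.7 of~\cite{Eng}).

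For~(iii) and~(iv) I would compare the families of spaces over which the respective infima are taken. For~(iii), $\IR_\I\mbox{-}ww(X)$ ranges over Tychonoff spaces $Z$ admitting a continuous bijection $\phi\colon X\to Z$ and carrying the $\IR_{\phi(\I)}$-extension property, while $ww(X)$ drops the last clause; but by~(ii) every such $Z$ automatically has that extension property (each $\phi(F)$ is compact in $Z$), so the two families coincide and the infima agree. For~(iv), assuming $Y$ admits a non-constant continuous real function so that Proposition~\ref{p:Y-good=>good}(ii) applies, every $Y$-Tychonoff $Z$ is Tychonoff; hence each competitor for $Y_\I\mbox{-}ww(X)$ is also a competitor for $ww(X)$, and passing to the infimum over the larger family gives $ww(X)\leq Y_\I\mbox{-}ww(X)$.

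For~(v) I would use compactness. By~(i) the identity makes $Z=X$ a valid competitor, so $Y_p\mbox{-}ww(X)\leq w(X)$. Conversely, any $Y$-Tychonoff $Z$ with a continuous bijection $\phi\colon X\to Z$ is Hausdorff, and a continuous bijection from the compact space $X$ onto a Hausdorff space is a homeomorphism; since weight is a topological invariant, $w(Z)=w(X)$, whence the infimum equals $w(X)$. The parenthetical ``for example'' is justified by Proposition~\ref{p:Y-z-Tychonoff-exa}(vi), giving that $Y$-$z$-Tychonoff spaces are $Y$-Tychonoff. The one genuinely nontrivial point is~(ii): its only delicate feature is that $F$ is merely compact rather than closed in $X$, and this is circumvented by passing to $\beta X$, where $F$ becomes closed and Tietze applies directly.
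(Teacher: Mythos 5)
Your proposal is correct and follows essentially the same route as the paper's (very terse) proof: (i) from the definition of $Y$-Tychonoffness with $A=\emptyset$, (iii) and (iv) by comparing the competitor families in Definition \ref{def:weak-weight} (using Proposition \ref{p:Y-good=>good} for (iv)), and (v) via the identity competitor plus the fact that a continuous bijection from a compact space onto a Hausdorff space is a homeomorphism. The only cosmetic difference is in (ii), where the paper simply cites \cite[3.11(c)]{GiJ} (compact subsets of Tychonoff spaces are $C$-embedded), while you reprove that standard fact via $\beta X$ and the Tietze theorem.
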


\begin{proof}
(i) follows from definitions, (ii) follows from  \cite[3.11(c)]{GiJ}, and (iii) follows from (ii). Finally, (iv) follows from (ii) and the fact that every $Y$-Tychonoff space is Tychonoff, see Proposition \ref{p:Y-good=>good}.

(v) Recall that, by Proposition \ref{p:Y-z-Tychonoff-exa},  every $Y$-$z$-Tychonoff space is $Y$-Tychonoff. By (i), $X$ has the  $Y_{\F(X)}$-extension property. Since every continuous bijection of a compact space is a homeomorphism, by Definition \ref{def:weak-weight}, we have $Y_p\mbox{-}ww(X)=w(X)$. \qed
\end{proof}

Taking into account Lemma \ref{l:extention}, for $Y=\IR$, the next assertion is proved in Theorem 4.2.1 of \cite{mcoy}.

\begin{proposition} \label{p:C(X,Y)-net-weight-1}
Let $Y$ be a Hausdorff topological space containing at least two points, and let  $X$ be a $Y_{\I_X}$-Tychonoff space for some ideal $\I_X$ of compact subsets in $X$. If $X$ has the $Y_{\I_X}$-extension property, then
\begin{enumerate}
\item[{\rm (i)}] $d(Y)\leq  d\big(C_{\I_X}(X,Y)\big)\leq Y_\I\mbox{-}ww(X)\cdot w(Y)$;
\item[{\rm (ii)}] $ww(X)\leq w(Y)\cdot d\big(C_{\I_X}(X,Y)\big)$.
\end{enumerate}
%In particular, if $d(Y)=w(Y)\leq Y_\I\mbox{-}ww(X)$, then $d\big(C_{\I_X}(X,Y)\big)=Y_\I\mbox{-}ww(X)$.
\end{proposition}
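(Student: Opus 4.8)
The plan is to split the statement into the three inequalities and prove each by a short independent argument, reusing Proposition \ref{p:C(X,Y)-net-weight} and Definition \ref{def:weak-weight}; the case $Y=\IR$ is Theorem 4.2.1 of \cite{mcoy}, and the only genuinely new ingredient is a network estimate for the compact-open topology. I would first establish the lower bound $d(Y)\le d\big(C_{\I_X}(X,Y)\big)$ of (i). Since $\bigcup\I_X=X$ and $\I_X$ is an ideal, every singleton lies in $\I_X$, so for a fixed $x_0\in X$ the evaluation $e_{x_0}\colon C_{\I_X}(X,Y)\to Y$, $e_{x_0}(f):=f(x_0)$, is continuous (the preimage of an open $U\subseteq Y$ is the subbasic set $[\{x_0\};U]$) and onto (it sends the constant function $\yyy$ to $y$). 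As a continuous surjection does not increase density, $d(Y)\le d\big(C_{\I_X}(X,Y)\big)$.

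For the upper bound of (i), put $\kappa:=Y_\I\mbox{-}ww(X)$ and, using Definition \ref{def:weak-weight}, fix a $Y$-Tychonoff space $Z$ with $w(Z)=\kappa$, a continuous bijection $\phi\colon X\to Z$, and the $Y_{\phi(\I_X)}$-extension property for $Z$. Applying Proposition \ref{p:C(X,Y)-net-weight} to $\phi$ with $\I_S:=\phi(\I_X)$ gives $d\big(C_{\I_X}(X,Y)\big)\le d\big(C_{\phi(\I_X)}(Z,Y)\big)$, and since $\phi(\I_X)$ consists of compact subsets of $Z$ the topology $\tau_{\phi(\I_X)}$ is coarser than the compact-open topology, whence $d\big(C_{\phi(\I_X)}(Z,Y)\big)\le d\big(\CC(Z,Y)\big)$. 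It then suffices to prove the estimate
\[
d\big(\CC(Z,Y)\big)\le w(Z)\cdot w(Y)=\kappa\cdot w(Y).
\]
To this end I fix bases $\BB_Z$, $\BB_Y$ of $Z$, $Y$ of cardinalities $w(Z)$, $w(Y)$ and consider the family $\Nn$ of all finite intersections $\bigcap_{j=1}^m\{f\in C(Z,Y):f(B_j)\subseteq U_j\}$ with $B_j\in\BB_Z$, $U_j\in\BB_Y$. Given $f$ and a basic neighbourhood $\bigcap_{i=1}^n[K_i;O_i]$ of it, for each $i$ and each $x\in K_i$ I pick $U\in\BB_Y$ with $f(x)\in U\subseteq O_i$ and then $B\in\BB_Z$ with $x\in B$ and $f(B)\subseteq U$; extracting finite subcovers of the compact sets $K_i$ produces a member of $\Nn$ that contains $f$ and is contained in $\bigcap_i[K_i;O_i]$. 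Hence $\Nn$ is a network, and choosing one function from each nonempty member yields a dense set of size $\le|\Nn|\le w(Z)\cdot w(Y)$, proving the estimate and, after chaining, the upper bound.

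For (ii) I work in the Tychonoff setting in which $ww(X)$ has content (e.g.\ when $Y$ carries a nonconstant continuous real function, so that $X$ is Tychonoff by Proposition \ref{p:Y-good=>good-I}). Let $\delta:=d\big(C_{\I_X}(X,Y)\big)$ and fix a dense $D\subseteq C_{\I_X}(X,Y)$ with $|D|=\delta$. Because $X$ is $Y_{\I_X}$-Tychonoff, $C(X,Y)$ separates points; and $D$ separates them too, since for $x\ne x'$ and disjoint open $V,V'\subseteq Y$ separating the values of a separating function, the nonempty open set $[\{x\};V]\cap[\{x'\};V']$ must meet $D$. Thus the diagonal map $e_D\colon X\to Y^D$, $x\mapsto\big(f(x)\big)_{f\in D}$, is a continuous bijection onto $Z:=e_D(X)$, a Tychonoff space (a subspace of the Tychonoff power $Y^D$) with $w(Z)\le|D|\cdot w(Y)=\delta\cdot w(Y)$. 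By the definition of the weak weight, $ww(X)\le w(Z)\le w(Y)\cdot d\big(C_{\I_X}(X,Y)\big)$.

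The main obstacle is the network estimate $d\big(\CC(Z,Y)\big)\le w(Z)w(Y)$: the analogous bound on the \emph{weight} of $\CC(Z,Y)$ is false (for instance $\CC(\w^\w)$ is nonmetrizable although $w(\w^\w)=\aleph_0$), so one cannot route the argument through $w(\CC(Z,Y))$ and must instead build the explicit network from the two bases and invoke the inequality $d\le$ (network weight). A secondary point demanding care is the Tychonoffness of the condensed image $Z=e_D(X)$ in (ii), which is why one keeps $Y$ (at least) Tychonoff there, in agreement with the hypotheses of Lemma \ref{l:extention}.
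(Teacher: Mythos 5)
Your proposal is correct, and two of its three parts coincide with the paper's own proof: the lower bound $d(Y)\leq d\big(C_{\I_X}(X,Y)\big)$ is obtained in the paper by fixing a dense $\mathcal{R}$ and checking that $\{f(x_0):f\in\mathcal{R}\}$ is dense in $Y$ (your evaluation-surjection phrasing is the same computation), and (ii) is proved in the paper exactly via your map: the diagonal $T:X\to Y^{\mathcal{R}}$, $T(x)=\big(f(x)\big)_{f\in\mathcal{R}}$, injective because $Y_{\I_X}$-Tychonoffness plus Hausdorffness of $Y$ make a dense family separate points. Where you genuinely diverge is the upper bound in (i). After the same reduction through Proposition \ref{p:C(X,Y)-net-weight} to a space $Z$ with $\I_Z=\phi(\I_X)$ and $w(Z)=Y_\I\mbox{-}ww(X)$, the paper stays inside the $\I_Z$-open topology and directly constructs a dense set: it fixes bases $\mathcal{B}_Z$ (closed under finite unions) and $\mathcal{B}_Y$, and for each pair of finite families $\U\subseteq\mathcal{B}_Z$, $\V\subseteq\mathcal{B}_Y$ chooses a witness $f_{\U,\V}\in[\U;\V]$ whenever this set is nonempty, verifying density by shrinking a basic neighborhood $W[f;\F,\V]$ using continuity of $f$. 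You instead coarsen, via $d\big(C_{\I_Z}(Z,Y)\big)\leq d\big(\CC(Z,Y)\big)$, and bound the compact-open density by exhibiting a network of cardinality $w(Z)\cdot w(Y)$ together with the inequality $d\leq nw$. Both routes are sound; yours buys a cleaner treatment of a point where the paper is slightly loose (its witnesses are indexed by $\V\subseteq\mathcal{B}_Y$, whereas the neighborhoods to be met involve arbitrary open $V_i$, so one must first shrink each $V_i$ to base elements using compactness of $f(F_i)$ and a base of $Y$ closed under finite unions; your one-time compactness bookkeeping in the network construction absorbs this), at the modest price of invoking the two standard facts $d\leq nw$ and monotonicity of density under passage to a coarser topology, and your counterexample $\CC(\w^\w)$ correctly explains why one cannot argue through the weight of $\CC(Z,Y)$. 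Your caveat on (ii) is also well taken: the paper asserts $ww(X)\leq w\big(T(X)\big)$ without checking that $T(X)$ is Tychonoff, although Definition \ref{def:weak-weight} quantifies over Tychonoff images; this is automatic when $Y$ is Tychonoff, so your explicit restriction to that setting patches a point the paper leaves silent rather than introducing any gap of your own.
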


\begin{proof}
(i) First we show that $d\big(C_{\I_X}(X,Y)\big)\leq Y_\I\mbox{-}ww(X)\cdot w(Y)$. To this end, let $\phi:X\to Z$ be a continuous bijection, where $Z$ is a $Y$-Tychonoff space with the $Y_{\phi(\I_X)}$-extension property and such that $w(Z)=Y_\I\mbox{-}ww(X)$. For simplicity, set  $\I_Z:=\phi(\I_X)$ (so $\I_Z$ is an ideal of compact sets in $Z$). By Proposition \ref{p:C(X,Y)-net-weight}, it suffices to show that $d\big(C_{\I_Z}(Z,Y)\big)\leq w(Z)\cdot w(Y)$. Choose an open base $\mathcal{B}_Z$ for $Z$ which is closed under taking finite unions and such that $|\mathcal{B}_Z|=w(Z)$, and let $\mathcal{B}_Y$ be an open base of $Y$ such that $|\mathcal{B}_Y|=w(Y)$.  For every finite subfamilies $\U=\{ U_1,\dots,U_n\}\subseteq \mathcal{B}_Z$ and $\V=\{ V_1,\dots,V_n\}\subseteq \mathcal{B}_Y$, choose (if this is possible) a function $f_{\U,\V}\in C(Z,Y)$ such that $f_{\U,\V}(U_i)\subseteq V_i$ for every $i=1,\dots,n$ (i.e., $f_{\U,\V}\in [\U;\V]$). Set
\[
D:=\{ f_{\U,\V}: \U\subseteq \mathcal{B}_Z \mbox{ and } \V\subseteq \mathcal{B}_Y \mbox{ are finite}\}.
\]
By construction, $|D|\leq w(Z)\cdot w(Y)$. We show that $D$ is dense in $C_{\I_Z}(Z,Y)$. Indeed, let $f\in C_{\I_Z}(Z,Y)$ and let $W[f;\F,\V]$ be a basic open neighborhood of $f$ in $C_{\I_Z}(Z,Y)$, where $\F=\{ F_1,\dots,F_n\} \subseteq \I(Z)$ and $\V=\{ V_1,\dots,V_n\}$ is a finite family of open sets in $Y$. Since $f$ is continuous and $\mathcal{B}_Z$ is closed under taking finite unions, for every $i=1,\dots,n$, there is an open $U_i\in \mathcal{B}_Z$ such that $F_i\subseteq U_i$ and $f(U_i)\subseteq V_i$. Therefore, the set $ [\U;\V]$ is not empty. It is clear that $f_{\U,\V}\in [\U;\V]\subseteq W[f;\F,\V]$, and hence $D$ is dense in $C_{\I_Z}(Z,Y)$. Thus $d\big(C_{\I_Z}(Z,Y)\big)\leq w(Z)\cdot w(Y)$.

To prove the inequality $d(Y) \leq d\big(C_{\I_X}(X,Y)\big)$, fix a dense subset $\mathcal{R}$ of $C_{\I_X}(X,Y)$ such that $|\mathcal{R}|=d\big(C_{\I_X}(X,Y)\big)$.
Take an arbitrary $x_0\in X$ and put $D_Y :=\{ f(x_0): f\in \mathcal{R}\}$. Then $D_Y$ is dense in $Y$ since, otherwise, there would be an open subset $U$ of $Y$ such that $U\cap D_Y=\emptyset$. But then the open subset $[\{x_0\};U]$ of $C_{\I_X}(X,Y)$ is nonempty (since it contains the constant function $\yyy$ for every $y\in U$) and does not contain elements of the dense family $\mathcal{R}$, a contradiction. Thus $d(Y)\leq |D_Y|\leq |\mathcal{R}|=d\big(C_{\I_X}(X,Y)\big)$.
\smallskip

(ii) As above, let $\mathcal{R}$ be a dense subset of $C_{\I_X}(X,Y)$ of cardinality $d\big(C_{\I_X}(X,Y)\big)$. Define a map $T:X\to Y^{\mathcal{R}}$ by $T(x):=\big( f(x)\big)_{f\in \mathcal{R}}$, where $Y^{\mathcal{R}}$ is endowed with the product topology. Clearly, $T$ is continuous. Since $X$ is $Y_{\I_X}$-Tychonoff and $Y$ is Hausdorff, the family  $\mathcal{R}$ separates the points of $X$, and hence $T$ is also one-to-one. Therefore,
\[
ww(X)\leq w\big(T(X)\big) \leq w(Y)\cdot |\mathcal{R}|= w(Y)\cdot d\big(C_{\I_X}(X,Y)\big). \mbox{\qed}
\]
%The last assertion follows from (ii) and (iii).\qed
\end{proof}

\begin{corollary} \label{c:density-C(X,Y)}
Let $Y$ be a separable metric space containing at least two points and let $X$ be  $Y$-Tychonoff space. Then $ww(X)\leq d\big(C_{p}(X,Y)\big)\leq Y_p\mbox{-}ww(X)$.  In particular, if $X$ is compact, then $d\big(C_{p}(X,Y)\big)=w(X)$.
\end{corollary}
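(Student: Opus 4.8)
The plan is to deduce everything from Proposition \ref{p:C(X,Y)-net-weight-1} applied in the pointwise case $\I_X=\FF(X)$, together with the elementary observation that a separable metric space is second countable and hence satisfies $w(Y)=\aleph_0$. First I would check that the hypotheses of Proposition \ref{p:C(X,Y)-net-weight-1} are met: being separable metric, $Y$ is Hausdorff and, by assumption, contains at least two points; $X$ is $Y$-Tychonoff, which is exactly $Y_{\FF(X)}$-Tychonoff; and by Lemma \ref{l:extention}(i) every $Y$-Tychonoff space has the $Y_{\FF(X)}$-extension property. Thus Proposition \ref{p:C(X,Y)-net-weight-1} applies with $\I_X=\FF(X)$.

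Next I would substitute $w(Y)=\aleph_0$ into the two inequalities furnished by that proposition. Inequality (i) gives $d\big(C_p(X,Y)\big)\le Y_p\mbox{-}ww(X)\cdot\aleph_0=Y_p\mbox{-}ww(X)$, where the product collapses because the weak weight is by definition an infinite cardinal; inequality (ii) gives $ww(X)\le\aleph_0\cdot d\big(C_p(X,Y)\big)=d\big(C_p(X,Y)\big)$, again because $d\big(C_p(X,Y)\big)$ is infinite. Combining the two yields the desired chain $ww(X)\le d\big(C_p(X,Y)\big)\le Y_p\mbox{-}ww(X)$.

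For the concluding ``in particular'' clause I would invoke compactness twice. On the right, Lemma \ref{l:extention}(v) gives $Y_p\mbox{-}ww(X)=w(X)$ for a $Y$-Tychonoff compact space. On the left, since any continuous bijection of a compact space onto a Hausdorff space is a homeomorphism, the witnessing space $Z$ in the definition of $ww(X)$ is homeomorphic to $X$, whence $ww(X)=w(X)$. Squeezing $w(X)=ww(X)\le d\big(C_p(X,Y)\big)\le Y_p\mbox{-}ww(X)=w(X)$ then forces $d\big(C_p(X,Y)\big)=w(X)$.

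This argument is essentially bookkeeping on top of the previously established results, so I do not expect a serious obstacle. The only point requiring a little care is the cardinal arithmetic: one must ensure that every cardinal function appearing (the densities and the weak weights) is at least $\aleph_0$, so that the factors of $w(Y)=\aleph_0$ are genuinely absorbed. Since all of these quantities are infinite by definition, the absorptions are legitimate and the chain of inequalities closes up as required.
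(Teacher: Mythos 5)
Your proposal is correct and follows essentially the same route as the paper: the first inequality from Proposition \ref{p:C(X,Y)-net-weight-1}(ii), the second from Lemma \ref{l:extention}(i), Proposition \ref{p:C(X,Y)-net-weight-1}(i) and $w(Y)=\aleph_0$, and the compact case via Lemma \ref{l:extention}(v) together with the observation that $ww(X)=w(X)$ for compact $X$. Your extra care about absorbing the factor $\aleph_0$ into the (infinite) cardinal functions is exactly the bookkeeping the paper leaves implicit.
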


\begin{proof}
The first inequality $ww(X)\leq d\big(C_{p}(X,Y)\big)$ follows from (ii) of Proposition \ref{p:C(X,Y)-net-weight-1}.
The second inequality $d\big(C_{p}(X,Y)\big)\leq Y_p\mbox{-}ww(X)$ follows from Lemma \ref{l:extention}(i), Proposition \ref{p:C(X,Y)-net-weight-1}(i) and the fact $w(Y)=\aleph_0$.
Finally, if additionally $X$ is compact, the last assertion follows from (v) of Lemma \ref{l:extention} and the trivial fact that $ww(X)=w(X)$.
\qed
\end{proof}

Lemma \ref{l:extention}(iii) and Proposition \ref{p:C(X,Y)-net-weight-1} immediately imply the next result which is also proved in Theorem 4.2.1 of \cite{mcoy}.
\begin{corollary} \label{c:density-Cp}
If $X$ is a Tychonoff space and $\I$ is an ideal of compact subsets of $X$, then $d\big(C_{\I}(X)\big)= ww(X)$.
\end{corollary}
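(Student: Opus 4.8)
The plan is to obtain the corollary as a direct specialization of Proposition \ref{p:C(X,Y)-net-weight-1} to the target space $Y=\IR$. First I would verify that all hypotheses of that proposition are met in this setting. Since $C_\I(X)=C_\I(X,\IR)$ by definition, and since a topological space is $\IR$-Tychonoff (equivalently $\IR_\I$-Tychonoff) precisely when it is Tychonoff in the usual sense, the given space $X$ qualifies as an $\IR_\I$-Tychonoff space. Moreover, by Lemma \ref{l:extention}(ii) every Tychonoff space has the $\IR_\I$-extension property for every ideal $\I$ of compact sets, so the extension hypothesis of Proposition \ref{p:C(X,Y)-net-weight-1} holds automatically. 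Thus both inequalities of that proposition are available with $Y=\IR$.

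Next I would substitute $Y=\IR$ and invoke the elementary facts $d(\IR)=w(\IR)=\aleph_0$. Part (i) then reads $\aleph_0\leq d\big(C_\I(X)\big)\leq \IR_\I\mbox{-}ww(X)\cdot\aleph_0$, and by Lemma \ref{l:extention}(iii) we may rewrite $\IR_\I\mbox{-}ww(X)=ww(X)$, so the upper bound becomes $d\big(C_\I(X)\big)\leq ww(X)\cdot\aleph_0$. Part (ii) reads $ww(X)\leq \aleph_0\cdot d\big(C_\I(X)\big)$. It remains only to absorb the countable factors: since both the weak weight $ww(X)$ and the density $d\big(C_\I(X)\big)$ are by definition least \emph{infinite} cardinalities, we have $ww(X)\cdot\aleph_0=ww(X)$ and $\aleph_0\cdot d\big(C_\I(X)\big)=d\big(C_\I(X)\big)$. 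Combining the two resulting inequalities $d\big(C_\I(X)\big)\leq ww(X)$ and $ww(X)\leq d\big(C_\I(X)\big)$ yields the desired equality $d\big(C_\I(X)\big)=ww(X)$.

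There is no genuine obstacle here, as the statement is a direct corollary; the computation is routine once the correct specialization is made. The only point deserving a word of care is the final absorption of the $\aleph_0$ factors, which relies precisely on the convention built into Definition \ref{def:weak-weight} (and the standard definition of density) that these cardinal invariants are always taken to be at least $\aleph_0$. I would state that observation explicitly to make the cancellation transparent.
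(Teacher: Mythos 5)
Your proof is correct and follows exactly the route the paper takes: the paper's entire proof is the one-line observation that Lemma \ref{l:extention}(iii) and Proposition \ref{p:C(X,Y)-net-weight-1} immediately imply the corollary, and you have simply spelled out the specialization $Y=\IR$, the hypothesis checks via Lemma \ref{l:extention}(ii), and the absorption of the $\aleph_0$ factors. Nothing is missing and no further comment is needed.
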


%%%%%%%%%%%%%%%%%%%%%%%%%%%%%%%%%%%%%%%%%%
%%%%%%%%%%%%%%%%%%%%%%%%%%%%%%%%%%%%%%%%%%
%%%%%%%%%%%%%%%%%%%%%%%%%%%%%%%%%%%%%%%%%%
%%%%%%%%%%%%%%%%%%%%%%%%%%%%%%%%%%%%%%%%%%

\section{Topological properties of spaces of Baire functions} \label{sec:Baire-topology}

%%%%%%%%%%%%%%%%%%%%%%%%%%%%%%%%%%%%%%%%%%
%%%%%%%%%%%%%%%%%%%%%%%%%%%%%%%%%%%%%%%%%%
%%%%%%%%%%%%%%%%%%%%%%%%%%%%%%%%%%%%%%%%%%
%%%%%%%%%%%%%%%%%%%%%%%%%%%%%%%%%%%%%%%%%%

Recall that the {\em cellularity $c(X)$}  of a topological space $X$ is the minimal infinite cardinal $\kappa$ such that every disjoint family of  open sets in $X$ has cardinality less than or equal to $\kappa$. Denote by $\psi(x,X)$ and $\psi(X)$ the pseudocharacter of $X$ at a point $x\in X$ and the pseudocharacter of $X$, respectively. We shall use the following results.

\begin{proposition} \label{p:cell-P-space}
Let $Z$ be  a Tychonoff $P$-space. If $\psi(Z)>\aleph_0$, then $c(Z)>\aleph_0$. In particular, if $X$ is a Tychonoff space such that $\psi(X_{\aleph_0})>\aleph_0$, then $c(X_{\aleph_0})>\aleph_0$.
\end{proposition}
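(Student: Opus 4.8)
The plan is to argue by a direct construction: assuming $\psi(Z) > \aleph_0$, I will produce an uncountable pairwise disjoint family of nonempty open subsets of $Z$, which witnesses $c(Z) > \aleph_0$. First I would extract a single ``bad'' point. Since $\psi(Z) = \sup_{z\in Z}\psi(z,Z) > \aleph_0$ and every $\psi(z,Z) \geq \aleph_0$, there is a point $x\in Z$ with $\psi(x,Z) > \aleph_0$, i.e. no countable family of open neighbourhoods of $x$ has intersection $\{x\}$. In particular $\{x\}$ is not open, so $x$ is non-isolated.

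Next I would record the two consequences of the Tychonoff and $P$-space hypotheses that drive the argument. For each $y\neq x$ there is a continuous $f\colon Z\to[0,1]$ with $f(x)=1$ and $f(y)=0$, and the zero-set $Z(f)=f^{-1}(0)$, being a closed $G_\delta$, is clopen in the $P$-space $Z$; thus $y\in Z(f)$ and $x\notin Z(f)$, so any point can be separated from $x$ by a clopen set and $Z$ is zero-dimensional. Moreover, the $P$-space property guarantees that a countable intersection of clopen sets is again clopen, and this is exactly what will let the recursion survive limit stages.

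Then I would run a transfinite recursion of length $\w_1$, maintaining a decreasing chain of clopen neighbourhoods $U_\alpha\ni x$ with $U_0=Z$. At a stage $\alpha$, since $x$ is non-isolated no clopen set equals $\{x\}$, so $U_\alpha \supsetneq \{x\}$ and I may pick $y_\alpha \in U_\alpha \SM \{x\}$; choosing a clopen $W$ with $y_\alpha\in W$ and $x\notin W$, and setting $V_\alpha := W\cap U_\alpha$ and $U_{\alpha+1} := U_\alpha \SM V_\alpha$, keeps $U_{\alpha+1}$ a clopen neighbourhood of $x$ while $V_\alpha$ is a nonempty clopen set missing $x$. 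At a limit $\lambda<\w_1$ the set $U_\lambda := \bigcap_{\alpha<\lambda} U_\alpha$ is a countable intersection of clopen sets, hence clopen by the $P$-space property, still contains $x$, and is again $\neq\{x\}$ because $x$ is non-isolated, so the recursion continues through $\lambda$.

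Finally I would verify that $\{V_\alpha : \alpha<\w_1\}$ is the required family: each $V_\alpha$ is nonempty, and for $\alpha<\beta$ one has $V_\beta \subseteq U_\beta \subseteq U_{\alpha+1} = U_\alpha \SM V_\alpha$, whence $V_\alpha \cap V_\beta = \emptyset$. Thus $Z$ carries an uncountable cellular family and $c(Z) > \aleph_0$. The main obstacle is precisely the survival of the recursion at the limit stages: I need the intersection of the countably many clopen neighbourhoods accumulated so far to remain a \emph{proper} clopen neighbourhood of $x$, and this is where the $P$-space property (clopenness of countable intersections) and the non-isolatedness of $x$ are simultaneously indispensable. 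The ``in particular'' assertion is then immediate, since for a Tychonoff $X$ the space $X_{\aleph_0}=PX$ is a Tychonoff $P$-space, so applying the first part with $Z=X_{\aleph_0}$ gives $c(X_{\aleph_0})>\aleph_0$.
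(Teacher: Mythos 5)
Your proof is correct and is essentially the paper's argument: the same transfinite recursion of length $\omega_1$ peeling pairwise disjoint nonempty clopen sets off a decreasing chain of clopen neighbourhoods of a point of uncountable pseudocharacter, with the $P$-space property (together with the clopenness of zero-sets) keeping the intersections clopen at limit stages. The only cosmetic difference is that you invoke the non-isolatedness of $x$ --- which in a $P$-space is equivalent to $\psi(x,Z)>\aleph_0$ --- to keep each accumulated intersection a proper neighbourhood, whereas the paper re-invokes the uncountable pseudocharacter directly at each stage.
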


\begin{proof}
Fix a point $z\in Z$ in which the pseudocharacter $\psi(z,Z)$ of $Z$ is uncountable. We construct an uncountable family of pairwise disjoint open sets in $Z$ by transfinite induction. For $i=0$, set $V_0:=Z$. Choose a clopen neighborhood $V_1$ of $z$ such that $V_0\SM V_1 \not=\emptyset$ and set $U_1:=V_0\SM V_1$. Assume that for every countable ordinal $\alpha$ and each $i<\alpha$ we have constructed a family $\{ V_i\}_{i<\alpha}$ of clopen neighborhoods of $z$ and a disjoint family $\{ U_i\}_{i<\alpha}$ of clopen subsets of $Z$ such that
\[
U_i\subseteq \bigcap_{j<i} V_j \SM V_i
\]
for every $i<\alpha$. Since $Z$ is a $P$-space, the set  $W:=\bigcap_{j<\alpha} V_j$ is an open neighborhood of $z$. As $\psi(z,Z)>\aleph_0$, there is a clopen neighborhood $V_\alpha \subseteq W$ of $z$ such that $W\SM V_\alpha\not=\emptyset$. Choose a clopen nonempty subset $U_\alpha$ of $W\SM V_\alpha$. It is clear that the uncountable family $\{ U_i: i<\w_1\}$ is  disjoint. Thus $c(Z)>\aleph_0$.

The last assertion follows from the fact that $X_{\aleph_0}$ is a $P$-space. \qed
\end{proof}

\begin{corollary} \label{c:cell-countable}
Let $X$ be a Tychonoff space. Then:
\begin{enumerate}
\item[{\rm (i)}] If $\psi(X_{\aleph_0})=\aleph_0$ (for example $\psi(X)=\aleph_0$),  then $X_{\aleph_0}$ is discrete.
\item[{\rm (ii)}] $c(X_{\aleph_0})=\aleph_0$ if and only if $X$ is countable.
\end{enumerate}
\end{corollary}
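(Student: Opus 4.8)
The plan is to derive both parts almost immediately from the single structural fact that $X_{\aleph_0}$ is a Tychonoff $P$-space, combined with Proposition \ref{p:cell-P-space}.

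For part (i), the key observation is that in a $P$-space every $G_\delta$-point is isolated. Concretely, $\psi(X_{\aleph_0})=\aleph_0$ means that for each $x\in X$ the singleton $\{x\}$ is an intersection of countably many open subsets of $X_{\aleph_0}$; since $X_{\aleph_0}$ is a $P$-space, this countable intersection is itself open, so $\{x\}$ is open for every $x$ and $X_{\aleph_0}$ is discrete. For the parenthetical clause, I would check that the hypothesis $\psi(X)=\aleph_0$ already forces $\psi(X_{\aleph_0})=\aleph_0$: since every cozero-set of the Tychonoff space $X$ is a countable union of zero-sets and hence belongs to $\tau_b$, the original topology $\tau$ is coarser than the Baire topology, so a point that is a $G_\delta$ in $(X,\tau)$ remains a $G_\delta$ in $X_{\aleph_0}$, and the previous argument applies.

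For part (ii), the converse direction is trivial: if $X$ is countable then $X_{\aleph_0}$ is a countable space, so every disjoint family of nonempty open sets is countable and $c(X_{\aleph_0})=\aleph_0$. The substantive implication is that $c(X_{\aleph_0})=\aleph_0$ forces $X$ to be countable, and here I would invoke the contrapositive of Proposition \ref{p:cell-P-space}: since $c(X_{\aleph_0})$ is not uncountable, we must have $\psi(X_{\aleph_0})=\aleph_0$. Part (i) then makes $X_{\aleph_0}$ discrete, and in a discrete space the singletons form a disjoint family of nonempty open sets whose cardinality is exactly $|X|$; as cellularity $\aleph_0$ bounds every such family, we conclude $|X|\le\aleph_0$.

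I do not expect a genuine obstacle, since the corollary is essentially a repackaging of Proposition \ref{p:cell-P-space} and the $P$-space property of $X_{\aleph_0}$. The only step requiring a little care is the ``for example'' clause of (i): I must verify the inclusion $\tau\subseteq\tau_b$ so that countable pseudocharacter in the original topology transfers to the Baire topology, and this is precisely where the cozero-set description of $\tau$ for Tychonoff $X$ is used.
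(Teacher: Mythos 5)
Your proof is correct and takes essentially the same route as the paper: part (i) is exactly the paper's observation that $X_{\aleph_0}$ is a $P$-space (so $G_\delta$-points are isolated), and part (ii) is the paper's chain of applying the contrapositive of Proposition \ref{p:cell-P-space} to get $\psi(X_{\aleph_0})=\aleph_0$, invoking (i) for discreteness, and then using countable cellularity of a discrete space to bound $|X|$. Your verification of the parenthetical clause via $\tau\subseteq\tau_b$ correctly fills in a detail the paper leaves implicit.
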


\begin{proof}
(i) immediately follows from the fact that  $X_{\aleph_0}$ is a $P$-space.

(ii) Assume that $X_{\aleph_0}$ has countable cellularity. Then, by Proposition \ref{p:cell-P-space},  $\psi(X_{\aleph_0})=\aleph_0$ and hence, by (i), $X_{\aleph_0}$ is discrete. But since $c(X_{\aleph_0})=\aleph_0$ it follows that $X$ is countable.

Conversely, if $X$ is countable, then the equality $c(X_{\aleph_0})=\aleph_0$ holds trivially. \qed
\end{proof}

Let $X$ and $Y$ be topological spaces.  Define $B(X,Y):= \bigcup_{\alpha\in\w_1} B_\alpha(X,Y)$. A function $f:X\to Y$ is called {\em Baire} if $f\in B(X,Y)$.  If $Y=\IR$, set $B(X):=B(X,\IR)$.
If $X$ is Tychonoff, it is easy to see that every function $f\in B(X)$ is continuous in the Baire topology. We shall use the following easy assertion which allows us to reduce the study of topological properties of the corresponding spaces of Baire functions to topological properties of spaces of continuous functions.

\begin{proposition} \label{p:B(X,Y)-C(X,Y)}
Let $Y$ be a topological space, and let $X$ be a Tychonoff space. Then the adjoint map $i^\ast$ of the identity mapping $i: X_{\aleph_0}\to X$ is an embedding of $B(X,Y)$ into $C_p(X_{\aleph_0},Y)$.
\end{proposition}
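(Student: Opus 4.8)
The plan is to exploit that the underlying set of $X_{\aleph_0}$ is $X$ and that $i$ is the identity on points, so that the adjoint $i^\ast(f)=f\circ i$ coincides with $f$ as a set-theoretic map $X\to Y$; in other words $i^\ast$ merely ``reinterprets the domain with the finer Baire topology''. Accordingly, three things must be checked: (a) $i^\ast$ actually lands in $C_p(X_{\aleph_0},Y)$, i.e.\ every Baire function is continuous for the Baire topology $\tau_b$; (b) $i^\ast$ is injective; and (c) $i^\ast$ is a homeomorphism onto its image. Parts (b) and (c) will be essentially formal. Since $i$ is a bijection, $i^\ast(f)=i^\ast(g)$ forces $f\circ i=g\circ i$ and hence $f=g$, giving injectivity. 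For (c), the topology of pointwise convergence on $Y^X$ depends only on the set $X$ and on $Y$, not on any topology carried by $X$; thus $B(X,Y)$ and $C_p(X_{\aleph_0},Y)$ both inherit their topology from one and the same product space $Y^X$, and $i^\ast$ is simply the restriction to $B(X,Y)$ of the identity map of $Y^X$. It is therefore automatically a homeomorphism onto its image once (a) is established.

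So the whole content is (a). First I would record that $i\colon X_{\aleph_0}\to X$ is continuous: for Tychonoff $X$ one has $X_{\aleph_0}=PX$, so $\tau_b=\tau_\delta$, which is finer than $\tau$ (every $\tau$-open set is trivially a $G_\delta$); hence $\tau\subseteq\tau_b$. Composing with $i$ then shows $B_0(X,Y)=C_p(X,Y)\subseteq C(X_{\aleph_0},Y)$. The general case follows by transfinite induction on the Baire class $\alpha<\w_1$: assuming every member of $\bigcup_{\beta<\alpha}B_\beta(X,Y)$ is $\tau_b$-continuous, I must show that a pointwise limit $f=\lim_n f_n$ of such functions is again $\tau_b$-continuous.

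The key is the $P$-space property of $Z:=X_{\aleph_0}$. Given an open $W\subseteq Y$ and a point $z_0$ with $f(z_0)\in W$, regularity of $Y$ lets me choose an open $W'$ with $f(z_0)\in W'\subseteq\overline{W'}\subseteq W$; since $f_n(z_0)\to f(z_0)$ there is an $N$ with $f_n(z_0)\in W'$ for $n\ge N$. The set
\[
A_N:=\bigcap_{m\ge N} f_m^{-1}(W')
\]
is a countable intersection of $\tau_b$-open sets, hence $\tau_b$-open because $Z$ is a $P$-space, and $z_0\in A_N$. For every $z\in A_N$ the whole tail $(f_m(z))_{m\ge N}$ lies in $W'$, so its limit satisfies $f(z)\in\overline{W'}\subseteq W$. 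Thus $A_N\subseteq f^{-1}(W)$ is a $\tau_b$-neighbourhood of $z_0$ contained in $f^{-1}(W)$, proving that $f^{-1}(W)$ is $\tau_b$-open and completing the induction.

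I expect the delicate point to be precisely the passage from $\overline{W'}$ back to $W$ in the last step: the naive set $\bigcap_{m\ge N}f_m^{-1}(W)$ yields only $f(A_N)\subseteq\overline{W}$, so a separation hypothesis on $Y$ is genuinely needed (for a non-Hausdorff $Y$ such as the Sierpi\'nski space, pointwise limits of continuous maps on a $P$-space need not be continuous, nor even unique). As $Y$ is metrizable, hence regular, in all intended applications, interposing $W'$ as above removes the difficulty, and everything else reduces to bookkeeping with the definition of the topology of pointwise convergence.
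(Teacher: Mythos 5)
Your proposal is correct and follows essentially the same route as the paper: $i^\ast$ is just the restriction to $B(X,Y)$ of the identity map of $Y^X$ (a topological isomorphism for the pointwise topologies), so the whole content is that every Baire function is continuous on the $P$-space $X_{\aleph_0}$, which the paper likewise deduces from the fact that pointwise limits of sequences of continuous functions are continuous on a $P$-space. The one difference is that you actually prove this fact, by transfinite induction and the regularity trick of interposing $W'$ with $f(z_0)\in W'\subseteq\overline{W'}\subseteq W$, whereas the paper asserts it in a single line for an arbitrary topological space $Y$; your Sierpi\'nski-space remark correctly shows that some separation hypothesis on $Y$ (regularity suffices) is genuinely needed at this step, so your write-up quietly repairs a point the paper glosses over --- harmlessly so for the paper, since in all its applications $Y$ is metrizable or perfectly normal, hence regular.
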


\begin{proof}
Clearly, the adjoint map $i^\ast: Y^X\to Y^{X_{\aleph_0}}, i^\ast(f)=f\circ i,$  is a topological isomorphism and $i^\ast\big( C_p(X,Y)\big) \subseteq C_p(X_{\aleph_0},Y)$. Since $X$ is Tychonoff, the space $X_{\aleph_0}$ is a $P$-space. Therefore the pointwise limit of a sequence of continuous functions in $C_p(X_{\aleph_0},Y)$ is continuous. Thus, by the definition of $B(X,Y)$, the map  $i^\ast$ is an embedding.\qed
\end{proof}

Besides the classes $B_\alpha(X,Y)$  we shall consider also the  {\em stable} Baire-$\alpha$ classes $B_\alpha^{st}(X,Y)$ of functions, which were introduced and studied by Cs\'asz\'ar and  Laczkovich \cite{CL1}. Set $B_0^{st}(X,Y):=C_p(X,Y)$.  We say that a sequence $\{f_n\}_{n\in\w}\subseteq Y^X$ {\em stably converges} to a function $f\in Y^X$ if for every $x\in X$ the set $\{n\in\w:f_n(x)\not= f(x)\}$ is finite. Then the class $B_1^{st}(X,Y)$  is defined as the family of all functions from $Y^X$ which are limits of stably convergent sequences of continuous functions. For every countable ordinal $\alpha>1$,  denote by $B_\alpha^{st}(X,Y)\subseteq Y^X$ the family of all functions $f:X\to Y$ which are pointwise limits of function sequences from $\bigcup_{\beta<\alpha}B_\beta^{st}(X,Y)$. Functions in the family $B_\alpha^{st}(X,Y)$ are called the {\em functions of stable Baire-$\alpha$ class}. It is clear that
\[
C_p(X,Y)\subseteq   B_\alpha^{st}(X,Y) \subseteq B_\alpha(X,Y) \subseteq B(X,Y)
\]
for every countable ordinal $\alpha$. Note that the inclusion $B_1^{st}(X,Y)\subseteq B_1(X,Y)$ can be strict.   If $Y=\IR$, set $B_1^{st}(X):=B_1^{st}(X,\IR)$.  For a function $f:X\to Y$, we set
\[
\sigma(f):= \big\{ h\in Y^X: \mbox{ the set } \{ x\in X: h(x)\not= f(x)\} \mbox{ is finite}\big\}.
\]

It is very useful to know concrete constructions of Baire one functions.
The following lemma extends Lemmas 3.3 and 3.4 of \cite{Gabr-B1}.

\begin{lemma} \label{l:Baire-1}
Let $Y$ be a metric space containing at least two points, and let $X$ be an infinite $Y$-Tychonoff space of countable pseudocharacter. Let $\{ U_n\}_{n< N}$, $0< N\leq\infty$, be a disjoint family of open subsets of $X$ and let $x_n\in U_n$ for every $n<N$. Then, for every $g_0,g_1,\dots, g_N \in Y$, the function
\[
f(x):= \left\{
\begin{aligned}
g_n, & \mbox{ if } x=x_n \mbox{ for some } n<N, \\
g_N, & \mbox{ if } x\in X\setminus \{ x_n: n<N\}
\end{aligned} \right.
\]
belongs to $B_1^{st}(X,Y)$.  In particular, $\sigma\big(\mathbf{g}\big) \subseteq B_1^{st}(X,Y)$ for every $g\in Y$.
\end{lemma}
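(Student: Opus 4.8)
The plan is to exhibit $f$ as the \emph{stable} limit of a sequence of continuous functions, so that $f\in B_1^{st}(X,Y)$ directly by definition. The two ingredients are the countable pseudocharacter of $X$, which supplies shrinking open neighbourhoods pinching down to each $x_n$, and the $Y$-Tychonoff property, which lets me realise each approximating function as a genuine element of $C(X,Y)$ with a prescribed finite behaviour and a constant value off a closed set.

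First I would use $\psi(X)=\aleph_0$ to choose, for every $n<N$, a decreasing sequence of open sets $W_{n,0}\supseteq W_{n,1}\supseteq\cdots$ with $W_{n,k}\subseteq U_n$ and $\bigcap_k W_{n,k}=\{x_n\}$; concretely, take a $G_\delta$ representation $\{x_n\}=\bigcap_k O_{n,k}$ (available since $X$ is $T_1$ of countable pseudocharacter) and set $W_{n,k}:=U_n\cap O_{n,0}\cap\cdots\cap O_{n,k}$. For each $m\in\w$ put $F_m:=\{x_n:n\le m,\ n<N\}$, a finite subset of $X$, and $A_m:=X\setminus\bigcup_{n\le m,\,n<N}W_{n,m}$, which is closed as the complement of a finite union of open sets. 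Since $x_n\in W_{n,m}$ for all $m$, we have $F_m\subseteq X\setminus A_m$, so $F_m$ and $A_m$ are disjoint. Applying the $Y$-Tychonoff property with the closed set $A_m$, the point $g_N\in Y$, and the map $x_n\mapsto g_n$ on $F_m$, I obtain a continuous $f_m:X\to Y$ with $f_m(x_n)=g_n$ for all $n\le m$ (with $n<N$) and $f_m(A_m)\subseteq\{g_N\}$.

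It then remains to verify that $\{f_m\}$ stably converges to $f$, i.e. that $\{m:f_m(x)\ne f(x)\}$ is finite for each $x\in X$. For $x=x_n$ this is immediate, since $f_m(x_n)=g_n=f(x_n)$ for every $m\ge n$. For $x\notin\{x_n:n<N\}$ the point is to show that $x\in A_m$, hence $f_m(x)=g_N=f(x)$, for all large $m$; this is the step I expect to be the only real obstacle, because as $m$ grows the union defining $A_m$ simultaneously acquires more sets $W_{n,m}$ and shrinks each of them. The disjointness of the family $\{U_n\}$ resolves it: since $W_{n,m}\subseteq U_n$ and the $U_n$ are pairwise disjoint, $x$ can lie in $W_{n,m}$ for at most one index $n_0$ (the unique $n_0$, if any, with $x\in U_{n_0}$), while $x\notin W_{n,m}$ for every other $n$ and all $m$. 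For that single index $n_0$, since $\bigcap_k W_{n_0,k}=\{x_{n_0}\}$ and $x\ne x_{n_0}$, there is $k_0$ with $x\notin W_{n_0,k_0}$, hence $x\notin W_{n_0,m}$ for all $m\ge k_0$ by monotonicity. Thus $x\notin\bigcup_{n\le m,\,n<N}W_{n,m}$, i.e. $x\in A_m$, for every $m\ge k_0$, so $\{m:f_m(x)\ne f(x)\}$ is finite. This proves $f=\lim_m f_m$ stably, whence $f\in B_1^{st}(X,Y)$.

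For the ``in particular'' clause I would take any $h\in\sigma(\mathbf g)$. If $h=\mathbf g$ it is a continuous constant and the claim is trivial; otherwise $h$ differs from $g$ exactly on a nonempty finite set $\{x_0,\dots,x_{N-1}\}$, with values $g_n:=h(x_n)$. As $X$ is $Y$-Tychonoff it is Tychonoff, in particular Hausdorff (Proposition \ref{p:Y-good=>good}), so the finitely many points $x_n$ can be separated by pairwise disjoint open sets $U_n\ni x_n$; applying the main assertion with $g_N:=g$ then yields $h\in B_1^{st}(X,Y)$, as required.
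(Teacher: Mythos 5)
Your proof is correct and is essentially the paper's own argument: you use countable pseudocharacter to produce decreasing open neighbourhoods $W_{n,k}\subseteq U_n$ with $\bigcap_k W_{n,k}=\{x_n\}$, and $Y$-Tychonoffness to build continuous $f_m$ with $f_m(x_n)=g_n$ for $n\le m$ and $f_m\equiv g_N$ off $\bigcup_{n\le m}W_{n,m}$, which is exactly the paper's sequence $f_k$ with $f_k(X\setminus\bigcup_{i\le k}V_{k,i})\subseteq\{g_N\}$. The only difference is that you write out the stable-convergence verification (via disjointness of the $U_n$) and the $\sigma(\mathbf{g})$ clause, both of which the paper leaves implicit.
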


\begin{proof}
We prove the lemma only for the case $N=\w$.
For every $n\in\w$, choose a decreasing sequence $\{ V_{k,n}:k\in\w\}$ of open neighborhoods of $x_n$ such that $V_{0,n} \subseteq U_n$ and $\bigcap_{k\in\w} V_{k,n}=\{ x_n\}$. Since $X$ is $Y$-Tychonoff, for every $k\in\w$, there is a function $f_k\in C(X,Y)$ such that
\[
f_k(x_i)=g_i \; \mbox{ for }\; i=0,\dots,k, \;\mbox{ and }\; f_k\Big( X\SM \bigcup_{i=0}^k V_{k,i} \Big)\subseteq \{g_\w\}.
\]
Now the choice of the open sets $V_{k,n}$ easily implies that the sequence $\{f_k\}_{k\in\w}$ stably converges to $f$. \qed
\end{proof}

\begin{remark} {\em
The condition on $\{ U_n\}_{n\in\w}$ to be disjoint in Lemma \ref{l:Baire-1} is essential. Indeed, consider $X=Y=\IR$, $g_i=1$ for all finite $i$ and $g_\w=0$, and let $\{x_n:n\in\w\}$ be an enumeration of the rational numbers. Then the function $f$ (= the Dirichlet function) does not belong to $B_1(X)$ because it does not have points of continuity.\qed}
\end{remark}

\begin{lemma} \label{l:G-del-z-modif}
Let $Y$ be a normal space containing at least two points, and let $X$ be a $Y$-$z$-Tychonoff space.
Then for every  disjoint zero-sets $F_1,\dots,F_n$ in $X$ and each points $y_0,\dots,y_n\in Y$ there exists a function $f\in B_1^{st}(X,Y)$ such that $f\big(X\SM \bigcup_{i=1}^n F_i\big)\subseteq \{ y_0\}$ and $f(F_i)=\{ y_i\}$ for every $i=1,\dots,n$.
\end{lemma}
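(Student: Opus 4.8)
The plan is to realise $f$ as a stable limit of continuous functions produced by the $Y$-$z$-Tychonoff property, approximating the open set $X\SM\bigcup_{i=1}^n F_i$ from inside by an increasing sequence of closed sets on which the functions are forced to take the value $y_0$.

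First I would use the standard fact that a finite union of zero-sets is again a zero-set to fix a continuous function $\psi\colon X\to[0,1]$ with $Z(\psi)=\bigcup_{i=1}^n F_i$, so that $X\SM\bigcup_{i=1}^n F_i=\{x\in X:\psi(x)>0\}$. For each integer $k\geq 1$ put $A_k:=\{x\in X:\psi(x)\geq 1/k\}=\psi^{-1}([1/k,1])$. Then every $A_k$ is closed, the $A_k$ increase with $k$, each $A_k$ is disjoint from $\bigcup_{i=1}^n F_i$, and $\bigcup_{k\geq 1}A_k=X\SM\bigcup_{i=1}^n F_i$. Applying the $Y$-$z$-Tychonoff property of $X$ to the closed set $A_k$, the disjoint zero-sets $F_1,\dots,F_n$ (which satisfy $\bigcup_{i=1}^n F_i\subseteq X\SM A_k$) and the points $y_0,y_1,\dots,y_n$, I obtain for each $k$ a continuous function $f_k\in C(X,Y)$ with $f_k(A_k)\subseteq\{y_0\}$ and $f_k(F_i)=\{y_i\}$ for all $i=1,\dots,n$.

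It then remains to check that the sequence $\{f_k\}_{k\geq 1}$ stably converges to the function $f$ of the statement, i.e. that $\{k:f_k(x)\neq f(x)\}$ is finite for every $x\in X$. If $x\in F_i$ for some $i$, then $f_k(x)=y_i=f(x)$ for every $k$, so this set is empty. If $x\notin\bigcup_{i=1}^n F_i$, then $c:=\psi(x)>0$ and hence $x\in A_k$ for every integer $k\geq 1/c$, which gives $f_k(x)=y_0=f(x)$ for all such $k$; thus the exceptional set is contained in the finite set $\{k: k<1/c\}$. Consequently $f$ is the stable limit of the continuous functions $f_k$, and therefore $f\in B_1^{st}(X,Y)$, as desired. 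The only real idea is the inner exhaustion of the open complement $X\SM\bigcup_{i=1}^n F_i$ by the closed sets $A_k$: this is exactly what is needed to force the prescribed value $y_0$ on a set that is merely open, so that no single continuous function can do the job while a stable limit can. Once this is set up the verification of stable convergence is routine, and in particular the normality of $Y$ is not used in this argument.
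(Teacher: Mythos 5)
Your proof is correct and is essentially the paper's own argument: the paper likewise writes $F=t^{-1}(0)$ for a single $t\in C(X)$, exhausts $X\SM F$ by the increasing closed sets $S_n=\{x:|t(x)|\geq \tfrac{1}{n+1}\}$ (your $A_k$), applies $Y$-$z$-Tychonoffness to get $f_n$ with $f_n(S_n)\subseteq\{y_0\}$ and $f_n(F_i)=\{y_i\}$, and concludes by stable convergence exactly as you do. Your closing remark that normality of $Y$ plays no role in this argument is also accurate.
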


\begin{proof}
Since the set $F:=\bigcup_{i=1}^n F_i$ is a zero-set in $X$, there is a $t\in C(X)$ such that $F=t^{-1}(0)$. Therefore the set $X\SM F = \bigcup_{n\in\w} S_n$, where $S_n=\{ x\in X: |t(x)|\geq \tfrac{1}{n+1}\}$, is the union of the increasing sequence $\{ S_n \}_{n\in\w}$ of zero-sets in $X$. Since $S_n\cap F=\emptyset$, the $Y$-$z$-Tychonoffness of $X$ implies that there is an $f_n\in C(X,Y)$ such that $f_n(S_n)\subseteq \{ y_0\}$ and $f_n(F_i)=\{ y_i\}$ for every $i=1,\dots,n$. It is clear that $f_n$ stably converges to the function $f$.\qed
\end{proof}

The following lemma plays a crucial role in the proofs of the main results of this section.
\begin{lemma} \label{l:G-del-modif}
Let $Y$ be a Tychonoff space of  countable pseudocharacter containing at least two points, and let $X$ be a $Y$-Tychonoff space. Assume that $A=\{a_1,\dots,a_n\}$ is a subset of a zero-set $F$ in $X$, $h:A\to Y$ is a function and $y_0\in Y$. Let $h(A):=\{ y_1,\dots,y_k\}$ with distinct $y_1,\dots,y_k\in Y$. Then there exist disjoint zero-sets $Z_1,\dots,Z_k$ in $X$ and a function $f\in B_1^{st}(X,Y)$ such that
\begin{enumerate}
\item[{\rm (i)}] for every $j=1,\dots,n$, there is an $i_j\in\{1,\dots,k\}$ such that $a_j\in Z_{i_j}$;
\item[{\rm (ii)}] $\bigcup_{i=1}^k Z_i \subseteq F$ and $f(Z_i)=\{y_i\}$ for every $i=1,\dots,k$;
\item[{\rm (iii)}] $f\big( X\SM\bigcup_{i=1}^k Z_i\big) \subseteq \{ y_0\}$ and $f{\restriction}_A=h$.
\end{enumerate}
Moreover, if $A=\{a\}$, then there is a sequence $\{ f_n\}_{n\in\w}\subseteq C(X,Y)$ such that $f_n(a)=h(a)$ for all $n\in\w$ and $\bigcap_{n\in\w} f_n^{-1}\big( h(a)\big) \subseteq F$.
\end{lemma}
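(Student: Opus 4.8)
The plan is to build the zero-sets $Z_i$ as level sets of a single continuous extension of $h$ and then to realise $f$ as a stable limit of continuous functions that are constant on these level sets. The key preliminary observation is that, since $Y$ is Tychonoff of countable pseudocharacter, every singleton $\{y\}\subseteq Y$ is a zero-set of $Y$: writing $\{y\}=\bigcap_{m}W_m$ with $W_m$ open and choosing $\lambda_m\in C(Y,[0,1])$ with $\lambda_m(y)=0$ and $\lambda_m(Y\SM W_m)=\{1\}$, the function $\sum_m 2^{-m}\lambda_m$ vanishes exactly at $y$. Consequently, for any continuous $g\colon X\to Y$ each level set $g^{-1}(y)$ is a zero-set of $X$, and a countable intersection of such sets is again a zero-set; this is the mechanism that will produce the $Z_i$.

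First I would use $Y$-Tychonoffness (with the empty closed set) to extend the finite function $h$ to some $g\in C(X,Y)$ with $g{\restriction}_A=h$, and set $Z_i:=g^{-1}(y_i)\cap F$ and $Z:=\bigcup_{i=1}^k Z_i$. By the preliminary observation the $Z_i$ are zero-sets; they are pairwise disjoint because the $y_i$ are distinct, they lie in $F$, and $A_i:=h^{-1}(y_i)\subseteq Z_i$, which already yields (i) and the inclusion in (ii). Since $Z$ is a zero-set, $X\SM Z=\bigcup_m R_m$ with $R_m$ an increasing sequence of zero-sets each disjoint from $Z$ (take $Z=r^{-1}(0)$ and $R_m=\{|r|\ge \tfrac1{m+1}\}$).

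The heart of the proof is to produce, for every $m$, a continuous $f_m\colon X\to Y$ with $f_m(Z_i)=\{y_i\}$ for all $i$ and $f_m(R_m)=\{y_0\}$; granting this, the function $f$ defined by $f{\restriction}_{Z_i}=y_i$ and $f{\restriction}_{X\SM Z}=y_0$ is the stable limit of $\{f_m\}$. Indeed, for $x\in Z_i$ we have $f_m(x)=y_i=f(x)$ for every $m$, while for $x\in X\SM Z$ we have $x\in R_m$ for all large $m$, whence $f_m(x)=y_0=f(x)$ eventually; thus $f\in B_1^{st}(X,Y)$, and (ii), (iii) hold (note $f{\restriction}_A=h$ follows from $A_i\subseteq Z_i$). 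This construction of the $f_m$ is the main obstacle: it amounts to prescribing the finitely many constant values $y_0,y_1,\dots,y_k$ on the pairwise disjoint zero-sets $R_m,Z_1,\dots,Z_k$ and extending continuously into a possibly badly connected $Y$, which is a $Y$-$z$-Tychonoff-type demand not contained in bare $Y$-Tychonoffness. I would overcome it by exploiting the level-set presentation $Z_i=g^{-1}(y_i)\cap F$ (so that $g$ itself already takes the value $y_i$ on all of $Z_i$) together with the completely separating real function between $Z$ and $R_m$, reducing each stage to a finite prescription handled by $Y$-Tychonoffness; this is precisely the point where the countable pseudocharacter of $Y$, making the $y_i$ zero-sets, is indispensable so that the stable limit lands exactly on the prescribed $Z_i$.

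Finally, for the \emph{moreover} clause with $A=\{a\}$, I would argue directly and without appealing to stable convergence. Write $X\SM F=\bigcup_n S_n$ with $S_n$ increasing zero-sets disjoint from $a$, fix any $y'\in Y$ with $y'\ne h(a)$ (possible since $Y$ has at least two points), and apply $Y$-Tychonoffness with closed set $S_n$, the one-point function $a\mapsto h(a)$ and the point $y'$ to obtain $f_n\in C(X,Y)$ with $f_n(a)=h(a)$ and $f_n(S_n)=\{y'\}$. Then $f_n(a)=h(a)$ for all $n$, and any $x\notin F$ lies in some $S_n$, so $f_n(x)=y'\ne h(a)$ and hence $x\notin\bigcap_n f_n^{-1}\big(h(a)\big)$; therefore $\bigcap_n f_n^{-1}\big(h(a)\big)\subseteq F$, as required.
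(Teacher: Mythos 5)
Your proposal has a genuine gap at exactly the step you flag yourself: the existence, for each $m$, of a continuous $f_m\colon X\to Y$ with $f_m(Z_i)=\{y_i\}$ for all $i=1,\dots,k$ and $f_m(R_m)=\{y_0\}$, where $Z_i:=g^{-1}(y_i)\cap F$ are fixed in advance. Bare $Y$-Tychonoffness only lets you prescribe values at \emph{finitely many points} while collapsing \emph{one} closed set to the \emph{single} point $y_0$; it gives no mechanism for forcing a continuous map to take $k+1$ different constant values on $k+1$ disjoint (typically infinite) zero-sets. That is precisely the content of the stronger $Y$-$z$-Tychonoff axiom (Definition \ref{def:X-Y-z-normal}), which the lemma deliberately does not assume, and which is what Lemma \ref{l:G-del-z-modif} uses to handle this situation. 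Your proposed rescue --- that $g$ already equals $y_i$ on $Z_i$, plus a real-valued function completely separating $Z$ from $R_m$ --- does not produce $f_m$: there is no way to glue $g$ on a neighborhood of $Z$ with the constant $y_0$ on $R_m$, since along the separating boundary one would need to interpolate between $g$'s values and $y_0$ inside an arbitrary (possibly totally disconnected) $Y$. In short, with the $Z_i$ predetermined as level sets of one extension $g$, the required $f_m$ need not exist under the lemma's hypotheses, and the lemma in fact only asserts the existence of \emph{some} disjoint zero-sets $Z_i\subseteq F$ through the points of $A$, not these particular ones.

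The paper's proof circumvents this by \emph{not} fixing the $Z_i$: it constructs the approximating sequence and the target sets simultaneously, letting the sets shrink adaptively. Starting from $F_0:=F$, each $f_t\in C(X,Y)$ is obtained from plain $Y$-Tychonoffness with only the finite prescription $f_t{\restriction}_A=h$ and the value $y_0$ on a single closed set $S_{t,0}\cup S_{t-1,1}\cup\cdots\cup S_{0,t}$ (a finite union of zero-set pieces exhausting the complements of the previously built sets), and then one passes to $F_{t+1}^i:=F_t^i\cap f_{t+1}^{-1}(y_i)$; finally $Z_i:=\bigcap_t F_t^i$. Each $f_t$ automatically equals $y_i$ on $Z_i$ because $Z_i\subseteq F_t^i\subseteq f_t^{-1}(y_i)$, and the diagonal scheduling of the sets $S_{p,q}$ forces stable convergence on $X\SM\bigcup_i Z_i$ to $y_0$. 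Countable pseudocharacter of $Y$ enters exactly where you guessed --- points of $Y$ are zero-sets, so all the sets $F_t^i$ and $Z_i$ are zero-sets and the complements decompose into increasing unions of zero-sets --- but it cannot substitute for the missing extension step in your scheme. For the record, your preliminary observation (points of $Y$ are zero-sets), your verification that the stable limit would work \emph{granting} the $f_m$, and your direct argument for the \emph{moreover} clause with $A=\{a\}$ are all correct; the last agrees with the paper's, which reads the same conclusion off the inclusion $f_n^{-1}(h(a))\subseteq X\SM S_{n,0}$.
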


\begin{proof}
We shall use the following fact: a Tychonoff space has countable pseudocharacter if and only if each its point is a zero-set. So $\{ y\}$ is a zero-set in $Y$ for each $y\in Y$.

Set $F_0:= F$. Then, as in the proof of Lemma \ref{l:G-del-z-modif}, $X\SM F_0 = \bigcup_{n\in\w} S_{n,0}$, where $\{ S_{n,0} \}_{n\in\w}$ is an increasing sequence  of zero-sets in $X$. Since $X$ is $Y$-Tychonoff, there is a function $f_0\in C(X,Y)$ such that $f_0{\restriction}_A = h$ and $f_0(S_{0,0})\subseteq \{y_0\}$.

For every $i=1,\dots,k$, set $F_0^i := F_0 \cap f_0^{-1}\big( y_i\big)$. Since $\{ y_i\}$ is a zero-set in $Y$ and $y_1,\dots,y_k$ are distinct, it follows that the sets $F_0^1,\dots,F_0^k$ are  disjoint zero-sets in $X$, and hence the set $F_1 :=\bigcup_{i=1}^k F_0^i $ is also a zero-set in $X$. Clearly, $A\subseteq F_1\subseteq F$ and $f_0(F_0^i)=\{ y_i\}$ for every $i=1,\dots,k$. Then $X\SM F_1 = \bigcup_{n\in\w} S_{n,1}$, where $\{ S_{n,1} \}_{n\in\w}$ is an increasing sequence  of zero-sets in $X$. Since $X$ is $Y$-Tychonoff, there is a function $f_1\in C(X,Y)$ such that $f_1{\restriction}_A = h$ and $f_1(S_{1,0}\cup S_{0,1})\subseteq \{y_0\}$.

For every $i=1,\dots,k$, set $F_1^i := F_0^i \cap f_1^{-1}\big( y_i\big)$. Since $\{ y_i\}$ is a zero-set in $Y$ and $y_1,\dots,y_k$ are distinct, it follows that the sets $F_1^1,\dots,F_1^k$ are disjoint zero-sets in $X$, and hence the set $F_2 :=\bigcup_{i=1}^k F_1^i $ is also a zero-set in $X$. Clearly, $A\subseteq F_2\subseteq F_1$ and $f_1(F_1^i)=\{ y_i\}$ for every $i=1,\dots,k$.  Then $X\SM F_2 = \bigcup_{n\in\w} S_{n,2}$, where $\{ S_{n,2} \}_{n\in\w}$ is an increasing sequence  of zero-sets in $X$. Since $X$ is $Y$-Tychonoff, there is a function $f_2\in C(X,Y)$ such that $f_2{\restriction}_A = h$ and
\[
f_2(S_{2,0}\cup S_{1,1} \cup S_{0,2})\subseteq \{y_0\}.
\]

Proceeding by induction,  for every $t\in\w$, we construct  disjoint zero-sets $F_t^1,\dots,F_t^k$ in $X$ and a function $f_t\in C(X,Y)$ such that
\begin{enumerate}
\item[{\rm (a)}] $F_0^i \cap F_0^j=\emptyset$ for all distinct $i,j\in\{1,\dots,k\}$;
\item[{\rm (b)}] $X\SM \bigcup_{i=1}^k F_t^i = \bigcup_{n\in\w} S_{n,t}$, where $\{ S_{n,t} \}_{n\in\w}$ is an increasing sequence  of zero-sets in $X$;
\item[{\rm (c)}] $ F_{t+1}^i =  F_t^i \cap f_{t+1}^{-1}\big( y_i\big)$ for every $i=1,\dots,k$;
\item[{\rm (d)}]  $f_t{\restriction}_A = h$ and $A\subseteq \bigcup_{i=1}^k F_t^i \subseteq F$;
\item[{\rm (e)}] $f_t\big(S_{t,0}\cup S_{t-1,1} \cup \cdots \cup S_{0,t})\subseteq \{y_0\}$.
\end{enumerate}

For every $i=1,\dots,k$, set $Z_i:= \bigcap_{t\in\w} F_t^i$. It is clear that $Z_i$ is a zero-set in $X$ and, by (a) and (c), the sets $Z_1,\dots,Z_k$ are disjoint. By (d), for every $j=1,\dots,n$, there is an $i_j\in\{1,\dots,k\}$ such that $a_j\in Z_{i_j}$. This proves (i).
Also, by (d),  $\bigcup_{i=1}^k Z_i \subseteq F$.

If $x\in Z_i$ for some $i=1,\dots,k$, then (c) implies that $f_t(x)=y_i$ for all $t\geq 1$. If $x\in X\SM \bigcup_{i=1}^k Z_i$, (c) implies that there is a $t_x\in\w$ such that $x\in X\SM \bigcup_{i=1}^k F_{t_x}^i$ and hence, by (b), there is an $n_x\in\w$ such that $x\in  S_{n_x,t_x}$. Therefore, for every $t>n_x +t_x$, (b) implies $S_{n_x,t_x} \subseteq S_{t-t_x,t_x}$ and hence, by (e),  we have $f_t(x)= y_0$. Thus the sequence $\{ f_t\}$ stably converges to a function $f\in B_1^{st}(X,Y)$ which satisfies (ii) and (iii). The equality $f{\restriction}_A=h$ follows from (d).

To prove the last assertion, assume that $A=\{a\}$. Choose $y_0\in Y$ such that $y_0\not=h(a)$. By construction, $X\SM F = \bigcup_{n\in\w} S_{n,0}$, where $S_{n,0}\subseteq S_{n+1,0}$ for all $n\in\w$. Now, (e) implies $f_n\big(S_{n,0}\big)\subseteq \{y_0\}$, and hence $f_n^{-1}\big( h(a)\big) \subseteq X\SM f_n^{-1}(y_0) \subseteq X\SM S_{n,0}$ for every $n\in\w$. Therefore
\[
\bigcap_{n\in\w} f_n^{-1}\big( h(a)\big) \subseteq \bigcap_{n\in\w} X\SM S_{n,0} =X\SM  \bigcup_{n\in\w}  S_{n,0} =X\SM (X\SM F) =F.
\]
Finally, by (d), $f_n(a)=h(a)$ holds for every $n\in\w$. \qed
\end{proof}

%In what follows we shall consider the following two conditions on topological spaces $X$ and $Y$:
%\begin{itemize}
%\item[$(\alpha_1)$] {\em $Y$ is a perfectly normal space containing at least two points and $X$ is a $Y$-$z$-Tychonoff space};
%\item[$(\alpha_2)$] {\em $Y$ is a perfectly  normal space of  countable pseudocharacter containing at least two points and $X$ is a $Y$-Tychonoff space.}
%\end{itemize}

The next proposition gives examples of spaces $X$ for which $\tau_Y= \tau_b$.

\begin{proposition} \label{p:G-del-modif}
Let $Y$ be a perfectly normal space containing at least two points, and let $X$ be a $Y$-Tychonoff space. Then: %Let $X$ and $Y$ be topological spaces satisfying one of the conditions $(\alpha_1)$ or $(\alpha_2)$. Then:
\begin{enumerate}
\item[{\rm (i)}] $\tau_Y= \tau_b$, i.e.  $X_Y= X_{\aleph_0}$.
\item[{\rm (ii)}] $X_{\aleph_0}$ is a $Y$-Tychonoff space.
\end{enumerate}
\end{proposition}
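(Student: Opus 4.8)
The plan is to prove the two parts separately: part (i) through the two inclusions $\tau_Y\le\tau_b$ and $\tau_b\le\tau_Y$, and part (ii) by reducing $X_{\aleph_0}$ to a zero-dimensional space. Throughout I would use that, since $Y$ is perfectly normal with at least two points, Urysohn's lemma supplies a non-constant continuous map $Y\to\IR$, so by Proposition \ref{p:Y-good=>good} the $Y$-Tychonoff space $X$ is Tychonoff; consequently $X_{\aleph_0}$ is a Tychonoff $P$-space, and the $Y_\delta$-modification $\tau_Y$ of Definition \ref{def:X-Y-modification} is legitimately defined.

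For the inclusion $\tau_Y\le\tau_b$ I would show that every basic $\tau_Y$-open set is in fact a zero-set of $X$. A basic $\tau_Y$-open set has the form $\bigcap_n f_n^{-1}(F_n)$ with $f_n\in C(X,Y)$ and each $F_n$ closed in $Y$. Perfect normality of $Y$ gives $g_n\in C(Y)$ with $F_n=g_n^{-1}(0)$, whence $f_n^{-1}(F_n)=(g_n\circ f_n)^{-1}(0)$ is a zero-set of $X$; since a countable intersection of zero-sets is again a zero-set, the whole set is a zero-set and therefore $\tau_b$-open. This is precisely the step where perfect normality of $Y$, rather than mere normality, is what makes the argument run cleanly.

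The reverse inclusion $\tau_b\le\tau_Y$ is the main obstacle, because here one must manufacture $Y$-valued continuous functions out of a single real-valued one using $Y$-Tychonoffness. Since zero-sets form a base for $\tau_b$, it suffices to show that each zero-set $Z=h^{-1}(0)$, $h\in C(X)$, is $\tau_Y$-open. I would fix $x_0\in Z$, choose distinct $y_0,y_1\in Y$, and (by regularity of $Y$, which follows from perfect normality) a closed set $G\subseteq Y$ with $y_1\in G$ and $y_0\notin G$. For each $n$ the set $A_n:=\{x: |h(x)|\ge \tfrac1n\}$ is closed and omits $x_0$; applying $Y$-Tychonoffness of $X$ to the closed set $A_n$, the singleton $\{x_0\}$ and the assignment $x_0\mapsto y_1$ yields $f_n\in C(X,Y)$ with $f_n(x_0)=y_1$ and $f_n(A_n)\subseteq\{y_0\}$. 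Then $V:=\bigcap_n f_n^{-1}(G)$ is a basic $\tau_Y$-open set containing $x_0$, and since $y_0\notin G$ forces $V\cap A_n=\emptyset$ for every $n$, we obtain $V\subseteq\bigcap_n(X\SM A_n)=Z$. Thus $Z$ is a union of $\tau_Y$-open sets, which finishes $\tau_b\le\tau_Y$ and hence (i). Note that this direction only used regularity of $Y$ together with $Y$-Tychonoffness of $X$.

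For part (ii) the naive approach of applying $Y$-Tychonoffness of $(X,\tau)$ directly fails, because the prescribed closed set $A$ is only $\tau_b$-closed and need not be $\tau$-closed; this mismatch is the obstacle here. The resolution is to exploit the structure of $X_{\aleph_0}$ itself: it is a Tychonoff $P$-space, and in a Tychonoff $P$-space every zero-set, being a $G_\delta$, is open and hence clopen, so the cozero-sets are complements of clopen sets and form a clopen base. Therefore $X_{\aleph_0}$ is zero-dimensional, and being also $T_1$ it is in particular $\mathbf{2}$-Tychonoff. Proposition \ref{p:2-Y-I-Tychonoff} then asserts that a zero-dimensional $T_1$ space is $Y$-Tychonoff for every nonempty $T_1$ space $Y$; since $Y$ is $T_1$ (perfect normality includes $T_1$), this yields that $X_{\aleph_0}$ is $Y$-Tychonoff, proving (ii).
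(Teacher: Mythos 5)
Your proposal is correct, and it splits into two cases: part (i) is essentially the paper's own argument, while part (ii) takes a genuinely different route. For (i), your inclusion $\tau_Y\le\tau_b$ is verbatim the paper's, and your reverse inclusion is the paper's argument with its key lemma inlined: the paper invokes the final clause of Lemma \ref{l:G-del-modif} (the case $A=\{a\}$), whose proof builds exactly your sequence $f_n$ sending the closed sets $S_{n,0}$ (the analogues of your $A_n=\{x:|h(x)|\ge 1/n\}$) to $y_0$; your closed set $G$ with $y_1\in G$, $y_0\notin G$ plays the role of the paper's $f_n^{-1}(a)\in Z_Y$, where the paper uses that $\{a\}$ is closed (and indeed a singleton suffices for you, by $T_1$-ness, without appealing to regularity). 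The real divergence is in (ii): the paper again applies Lemma \ref{l:G-del-modif} to manufacture the extending function inside $B_1^{st}(X,Y)$ and then views it in $C_p(X_{\aleph_0},Y)$ via Proposition \ref{p:B(X,Y)-C(X,Y)}, whereas you argue structurally that $X_{\aleph_0}$ is a Tychonoff $P$-space, hence has a clopen base (every zero-set is an open $G_\delta$), hence is zero-dimensional, and then quote Proposition \ref{p:2-Y-I-Tychonoff}. Your route is shorter and, for the statement as written, proves strictly more: $X_{\aleph_0}$ is $Y'$-Tychonoff for \emph{every} nonempty $T_1$ space $Y'$, with perfect normality of $Y$ not used in (ii) at all. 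What the paper's heavier route buys is side information exploited downstream (notably in Proposition \ref{p:G-delta-D-Tychonoff} and Theorem \ref{t:B1-k-space}): the witnessing extensions can be taken in $B_1^{st}(X,Y)$, i.e.\ stable Baire-one in the \emph{original} topology of $X$, whereas your piecewise-constant functions on clopen subsets of $X_{\aleph_0}$ are only guaranteed continuous on $X_{\aleph_0}$. Since the proposition itself asks only for $Y$-Tychonoffness of $X_{\aleph_0}$, your proof is complete; your preliminary reductions (non-constant map $Y\to\IR$ via Urysohn, hence $X$ Tychonoff by Proposition \ref{p:Y-good=>good}, hence $X_{\aleph_0}$ a Tychonoff $P$-space) are exactly the ones the paper relies on.
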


\begin{proof}
(i)  To prove that $\tau_Y\leq \tau_b$ it is sufficient to show that every set of the form $f^{-1}(F)$, where $F$  is a closed set in $Y$ and  $f\in C(X,Y)$, is a zero-set in $X$. Since $Y$ is  perfectly normal, there is a function $h\in C(Y)$ such that $F=h^{-1}(0)$. Setting $g:=h\circ f\in C(X)$ we obtain that $f^{-1}(F)=g^{-1}(0)$ is a zero-set in $X$.

To show the converse inclusion $\tau_Y\geq \tau_b$, fix two distinct points $a,b\in Y$.
%We consider two cases.
%{\em Case 1. The spaces $X$ and $Y$ satisfy the conditions $(\alpha_1)$.} Let $F$ be a zero-set of $X$. So there is a $t\in C(X)$ such that $F=t^{-1}(0)$. Therefore the set $X\SM F = \bigcup_{n\in\w} S_n$, where $S_n=\{ x\in X: |t(x)|\geq \tfrac{1}{n+1}\}$, is the union of the increasing sequence $\{ S_n \}_{n\in\w}$ of zero-sets in $X$. Since $S_n\cap F=\emptyset$, the $Y$-$z$-Tychonoffness of $X$ implies that there is an $f_n\in C(X,Y)$ such that $f_n(F)=\{ a\}$ and $f_n(S_n)=\{ b\}$. As the set $\{ a\}$ is closed in $Y$, it follows that $f^{-1}_n(a)\in Z_Y$. Since $F\subseteq f^{-1}_n(a)\subseteq X\SM S_n$ for every $n\in\w$, we obtain that $F=\bigcap_{n\in\w} f^{-1}_n(a)$, and hence $F\in \tau_Y$. Thus $\tau_Y\geq \tau_b$.
%{\em Case 2. The spaces $X$ and $Y$ satisfy the conditions $(\alpha_2)$.}
Let $F$ be a zero-set of $X$ and let $x$ be any point of $F$. Applying Lemma \ref{l:G-del-modif} to $A=\{x\}$, $y_0=b$ and $h:A\to Y$ defined by $h(x)=a$, we can find a sequence $\{ f_n\}_{n\in\w} \subseteq C(X,Y)$ such that
\[
f_n(x)=a \; (n\in\w) \; \mbox{ and } \; x\in \bigcap_{n\in\w} f^{-1}_n (a)\subseteq F.
\]
As the set $\{ a\}$ is closed in $Y$ and $Y$ is perfectly normal, it follows that $f^{-1}_n (a)\in Z_Y$. Therefore $\bigcap_{n\in\w} f^{-1}_n (a)\in \tau_Y$. Since $x$ was arbitrary we obtain  $F\in \tau_Y$. Thus $\tau_Y\geq \tau_b$.
\smallskip

(ii) Let $A=\{a_1,\dots,a_n\}$ be a finite subset of $X_{\aleph_0}$, $D$ be a closed subset of $X_{\aleph_0}$ such that $D\subseteq X\SM A$, $h:A\to Y$ be a function, and let $y_0\in Y$. Since $X$ is Tychonoff by Proposition \ref{p:Y-good=>good}, there are disjoint zero-sets $S_1,\dots,S_n$ in $X$ such that $a_i\in S_i$ and $S_i\subseteq X\SM D$ for every $i=1,\dots,n$. Setting $F:=\bigcup_{i=1}^n$ and applying Lemma \ref{l:G-del-modif}, one can find a zero-set $S$ in $X$ and a function $f\in B^{st}_1(X,Y)$ such that $A\subseteq S\subseteq \bigcup_{i=1}^n S_i$, $f{\restriction}_A =h$ and $f(X\SM S) \subseteq \{ y_0\}$. Since $B^{st}_1(X,Y)\subseteq  C_p(X_{\aleph_0},Y)$ (see Proposition \ref{p:B(X,Y)-C(X,Y)}), we obtain that $X_{\aleph_0}$ is a $Y$-Tychonoff space. \qed
\end{proof}

Recall some of the most important types of local networks in a topological space. For historical remarks and numerous results related to generalized metric space theory we referee the reader to \cite{gruenhage} or \cite{GK-GMS1}.
A family $\mathcal{N}$ of subsets of a topological space $X$ is
\begin{enumerate}
\item[$\bullet$]   a {\em network at a point} $x\in X$ if for each neighborhood $O_x$ of $x$ there is a set $N\in\mathcal{N}$ such that $x\in N\subseteq O_x$; $\Nn$ is a {\em network} in $X$ if $\mathcal{N}$ is a network at each point $x\in X$.
\item[$\bullet$]  a {\em $cs^\ast$-network at  a point} $x\in X$ if for each sequence $(x_n)_{n\in\NN}$ in $X$ converging to  $x$ and for each neighborhood $O_x$ of $x$ there is a set $N\in\mathcal{N}$ such that $x\in N\subseteq O_x$ and the set $\{n\in\NN :x_n\in N\}$ is infinite; $\Nn$ is a {\em $cs^\ast$-network}  in $X$ if $\mathcal{N}$ is a $cs^\ast$-network at each point $x\in X$.
\item[$\bullet$]  a {\em $cn$-network}  at a point $x\in X$ if for each neighborhood $O_x$ of $x$ the set $\bigcup \{ N \in\Nn : x\in N \subseteq O_x \}$ is a neighborhood of $x$; $\Nn$ is a {\em $cn$-network} in $X$ if $\mathcal{N}$ is a $cn$-network at each point $x\in X$.
\item[$\bullet$]   a {\em $ck$-network}  at  a point $x\in X$ if for any neighborhood $O_x$ of $x$ there is a neighborhood $U_x$ of $x$ such that for each compact subset $K\subseteq U_x$ there exists a finite subfamily $\FF\subseteq\mathcal{N}$ satisfying $x\in \bigcap\FF$ and $K\subseteq\bigcup\FF\subseteq O_x$; $\Nn$ is a {\em $ck$-network}  in $X$ if $\mathcal{N}$ is a $ck$-network at each point $x\in X$.
\item[$\bullet$]  a {\em $cp$-network}  at  a point  $x\in X$ if either $x$ is an isolated point of $X$ and $\{x \} \in\Nn$, or for each subset $A\subset X$ with $x\in \overline{A}\setminus A$ and each neighborhood $O_x$ of $x$ there is a set $N\in\mathcal{N}$ such that $x\in N\subseteq O_x$ and $N\cap A$ is infinite; $\Nn$ is  a  {\em $cp$-network} in $X$ if $\mathcal{N}$ is a $cp$-network at each point $x\in X$.
\end{enumerate}

To unify notations we call a network (at a point $x$ of) $X$ by $0$-network.
\begin{notation} {\em
If $\Nn$ is either a $cp$-, $ck$-,  $cn$-, $cs^\ast$-network or network (at a point $x$) in a topological space $X$, we will say  that $\Nn$ is an {\em $\mathfrak{n}$-network} (at $x$).
Set $\mathfrak{N}=\{ cp, ck,  cn,  cs^\ast, 0\}$.}
\end{notation}

Let $X$ be a topological space.
Recall that a collection $\mathcal{N}$ of subsets of $X$ is said to be {\em locally finite}, if each point in the space has a neighborhood that intersects only finitely many of the sets in $\mathcal{N}$; and $\mathcal{N}$ is called  {\em $\sigma$-locally finite} if it is the union of a countable family of locally finite collections of subsets of $X$. For   $\nn\in\mathfrak{N}$,  the space $X$
\begin{enumerate}
\item[$\bullet$] is an {\em $\nn\mbox{-}\sigma$-space} if $X$ has a $\sigma$-locally finite $\nn$-network;
\item[$\bullet$] has {\em countable $\nn$-character} if $X$ has a countable $\nn$-network at each point $x\in X$.
\end{enumerate}
So $cp\mbox{-}\sigma$-spaces are $\Pp$-spaces, $cs^\ast\mbox{-}\sigma$-spaces are $\aleph$-spaces, and $0\mbox{-}\sigma$-spaces are $\sigma$-spaces, see \cite{GK-GMS1}. Recall also that $X$ is called a {\em cosmic space} (an {\em $\aleph_0$-space} or a {\em $\Pp_0$-space}) if it is regular and has a countable network (resp. a countable $k$- or $cp$-network).

Below we shall use repeatedly the following assertion.

\begin{proposition}[\cite{BG-Baire}] \label{p:B1-countable}
If $X$ is a countable functionally Hausdorff space, then
\[
B_1^{st}(X,Y)=B_1(X,Y)=Y^X
\]
for any topological space $Y$.
\end{proposition}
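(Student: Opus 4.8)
The plan is to reduce the claim to a single inclusion and then build explicit stably convergent approximations. Since the text already records the chain $C_p(X,Y)\subseteq B_1^{st}(X,Y)\subseteq B_1(X,Y)\subseteq Y^X$, it suffices to prove the reverse inclusion $Y^X\subseteq B_1^{st}(X,Y)$; that is, every function $f\in Y^X$ must be exhibited as the stable limit of a sequence of continuous functions. If $X$ is finite then, being functionally Hausdorff, it is discrete, so $C_p(X,Y)=Y^X$ and there is nothing to prove; thus I may assume $X=\{x_k:k\in\w\}$ is countably infinite.

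The crucial point, and the step I expect to be the main obstacle, is a clopen separation property forced by countability. First I would observe that any two distinct points $x,x'\in X$ can be separated by a clopen set: functional Hausdorffness provides a continuous $g:X\to\IR$ with $g(x)\ne g(x')$, and since the image $g(X)$ is countable I can choose a real $c$ strictly between $g(x)$ and $g(x')$ with $c\notin g(X)$; then $g^{-1}\big((-\infty,c)\big)$ is a clopen set containing exactly one of $x,x'$. It is here that countability of $X$ is indispensable, since it guarantees the existence of the separating value $c$ outside the countable set $g(X)$. Given finitely many points $x_0,\dots,x_n$, I would apply this to every pair to obtain finitely many clopen sets, pass to the finite Boolean algebra they generate, and take its atoms $W_1,\dots,W_m$. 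Each atom is a finite intersection of the chosen clopen sets and their complements, hence clopen, and the atoms form a partition of $X$ in which $x_0,\dots,x_n$ lie in $n+1$ pairwise distinct cells.

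Finally I would construct the approximating sequence. For each $n\in\w$, using the clopen partition separating $x_0,\dots,x_n$, define $f_n:X\to Y$ to be constant equal to $f(x_i)$ on the atom containing $x_i$ (for $i=0,\dots,n$) and constant equal to $f(x_0)$ on every remaining atom. Being locally constant on a clopen partition, each $f_n$ is continuous, and by construction $f_n(x_i)=f(x_i)$ for all $i\le n$. Consequently, for every fixed $k$ one has $f_n(x_k)=f(x_k)$ whenever $n\ge k$, so $\{n\in\w:f_n(x_k)\ne f(x_k)\}\subseteq\{0,\dots,k-1\}$ is finite; thus $\{f_n\}_{n\in\w}$ stably converges to $f$ and $f\in B_1^{st}(X,Y)$. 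This yields $Y^X\subseteq B_1^{st}(X,Y)$ and hence the desired equalities $B_1^{st}(X,Y)=B_1(X,Y)=Y^X$.
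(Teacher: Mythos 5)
Your proof is correct. The paper itself only cites \cite{BG-Baire} for this proposition, but your construction is essentially the same technique the paper uses to prove the bounded analogue (Proposition \ref{p:B1-countable-bounded}): there, continuous retractions $r_n:X\to X_n$ onto an increasing sequence of finite sets are obtained from zero-dimensionality, and the compositions $f\circ r_n$ stably converge to $f$; your locally constant functions on the clopen atoms are exactly such compositions $f\circ r_n$, and your preliminary observation that countability plus functional Hausdorffness yields clopen separation (choosing a value $c\notin g(X)$ between $g(x)$ and $g(x')$) correctly supplies what the paper's argument gets from Engelking's result that countable Tychonoff spaces are zero-dimensional --- a necessary adaptation here, since a functionally Hausdorff space need not be Tychonoff.
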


\begin{theorem} \label{t:B1-topological-sigma}
Let $Y$ be a perfectly normal space containing at least two points, $X$ be a $Y$-Tychonoff space, %Let $X$ and $Y$ be topological spaces satisfying one of the conditions $(\alpha_1)$ or $(\alpha_2)$,
$\nn\in\mathfrak{N}$, and let $H$ be a subspace of $B(X,Y)$ containing $B_1^{st}(X,Y)$. Then $H$ is an $\nn\mbox{-}\sigma$-space if and only if $X$ is countable and $Y$ is an $\nn\mbox{-}\sigma$-space. In this case $H=Y^X$.
\end{theorem}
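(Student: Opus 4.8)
The plan is to split the biconditional into its two right-hand components ("$X$ is countable" and "$Y$ is an $\nn\mbox{-}\sigma$-space") and to lean on three standard facts about the classes $\nn\mbox{-}\sigma$-space, $\nn\in\mathfrak N$, from \cite{GK-GMS1}: (a) the class is hereditary; (b) it is closed under countable products; (c) every \emph{compact} $\nn\mbox{-}\sigma$-space is metrizable (a compact space carrying a point-countable $\nn$-network is second countable). Throughout I will use that a $Y$-Tychonoff space $X$ is Tychonoff (Proposition \ref{p:Y-good=>good}) and that $Y$, being perfectly normal with at least two points, has countable pseudocharacter, so that Lemma \ref{l:G-del-modif} is applicable and $H$ is Hausdorff.

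For sufficiency I would assume $X$ countable and $Y$ an $\nn\mbox{-}\sigma$-space. Since $X$ is Tychonoff, hence functionally Hausdorff, Proposition \ref{p:B1-countable} gives $B_1^{st}(X,Y)=Y^X$; combined with $B_1^{st}(X,Y)\subseteq H\subseteq B(X,Y)\subseteq Y^X$ this forces $H=Y^X$, which is a countable power of $Y$ and hence an $\nn\mbox{-}\sigma$-space by (b). This simultaneously establishes the closing claim $H=Y^X$. For the converse direction I first observe, irrespective of $|X|$, that the constant functions form a subspace of $C_p(X,Y)\subseteq B_1^{st}(X,Y)\subseteq H$ canonically homeomorphic to $Y$; thus if $H$ is an $\nn\mbox{-}\sigma$-space, so is $Y$, by heredity (a).

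It then remains to deduce that $X$ is countable whenever $H$ is an $\nn\mbox{-}\sigma$-space. By Corollary \ref{c:cell-countable}(ii) this amounts to $c(X_{\aleph_0})=\aleph_0$, so I would argue by contradiction, supposing $c(X_{\aleph_0})>\aleph_0$. Since the zero-sets of $X$ form a base of $X_{\aleph_0}$, uncountable cellularity supplies an uncountable pairwise disjoint family $\{W_i\}_{i\in\w_1}$ of nonempty zero-sets of $X$; I fix $x_i\in W_i$ and distinct points $g_0,g_1\in Y$. Applying Lemma \ref{l:G-del-modif} with $A=\{x_i\}$, the zero-set $F=W_i$, $h(x_i)=g_1$ and $y_0=g_0$ yields functions $f_i\in B_1^{st}(X,Y)\subseteq H$ with $f_i\equiv g_1$ on a zero-set $Z_i$ satisfying $x_i\in Z_i\subseteq W_i$, and $f_i\equiv g_0$ on $X\SM Z_i$.

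Finally I would show that $Z:=\{f_i:i\in\w_1\}\cup\{\mathbf{g}_0\}$, with the topology inherited from $H$, is a copy of the one-point compactification $A(\w_1)$ of the discrete space $\w_1$. Each $f_i$ is isolated in $Z$: picking open $U\ni g_1$ with $g_0\notin U$, the neighbourhood $W[f_i;\{x_i\},U]$ meets $Z$ only in $f_i$, because $x_i\in W_i$ is disjoint from every $Z_j$ with $j\ne i$, so $f_j(x_i)=g_0=\mathbf{g}_0(x_i)\notin U$. Conversely, for finite $F\subseteq X$ and open $V\ni g_0$, disjointness of the $W_i$ yields $F\cap Z_i=\emptyset$, hence $f_i(F)=\{g_0\}\subseteq V$, for all but finitely many $i$, and every cofinite subset of $Z$ containing $\mathbf{g}_0$ is realised as such a trace; thus the neighbourhood filter of $\mathbf{g}_0$ in $Z$ is exactly the cofinite filter. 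Consequently $Z$ is compact, therefore closed in the Hausdorff space $H$, and equals $A(\w_1)$. As $A(\w_1)$ is compact and non-metrizable, fact (c) shows it is not an $\nn\mbox{-}\sigma$-space, which contradicts heredity (a); hence $X$ is countable. The one genuinely delicate point is this fabrication of Baire-one functions with prescribed single values and with supports confined to prescribed zero-sets, carried out \emph{without} assuming that $X$ has countable pseudocharacter (which Lemma \ref{l:Baire-1} would require); Lemma \ref{l:G-del-modif} is engineered precisely for this, and the remaining work is the routine verification that the structural facts (a)--(c) hold uniformly across all $\nn\in\mathfrak N$.
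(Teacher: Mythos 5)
Your proposal is correct and follows essentially the same route as the paper: the same reduction of ``$X$ countable'' to $c(X_{\aleph_0})=\aleph_0$ via Corollary \ref{c:cell-countable}(ii), the same family $\{f_i\}$ built from an uncountable disjoint family of zero-sets via Lemma \ref{l:G-del-modif}, and the same compact non-metrizable subspace $\{f_i\}\cup\{\mathbf{g}_0\}$ (the paper's $K$, which is precisely your copy of $A(\w_1)$) contradicting metrizability of compact subsets of $\sigma$-spaces, with Proposition \ref{p:B1-countable} then giving $H=Y^X$. The only cosmetic difference is that you extract ``$Y$ is an $\nn\mbox{-}\sigma$-space'' directly from heredity applied to the constant functions, whereas the paper obtains it from $H=Y^X$ and Corollary 5.6 of \cite{GK-GMS1}.
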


\begin{proof}
Assume that $H$ is an $\nn\mbox{-}\sigma$-space.
First we show that $X$ is countable. To this end, observe that every $\nn\mbox{-}\sigma$-space is a $\sigma$-space. Fix two distinct points $a,b\in Y$. Let us prove that   $X_{\aleph_0}$ has countable cellularity. Suppose for a contradiction that $c(X_{\aleph_0})$ is uncountable. Then there is an uncountable disjoint family $\V=\{ V_i:i\in I\}$ of zero-sets in $X$. By Lemma \ref{l:G-del-modif}, for every $i\in I$ there exist a zero-set $Z_i \subseteq V_i$ in $X$ and a function $f_i \in B_1^{st}(X,Y)\subseteq H$ such that
\[
f_i(Z_i)=\{ b\} \; \mbox{ and } \; f_i(X\SM Z_i)=\{ a\}.
\]
Set $K:= \{ f_i: i\in I\}\cup\{ \aaa\}$. Since the family $\V$ is disjoint, any standard neighborhood $W[\aaa;F,U]$ of the constant function $\aaa$ contains all but finitely many functions from $K$. Therefore, $K$ is a compact subset of $H$. Observe also that $K$ does not have countable base at $\aaa$ because the set $I$ is uncountable. Hence $K$ is not metrizable. Since every compact subset of a $\sigma$-space is metrizable (\cite[Corollary~4.7]{gruenhage}), we obtain that $H$ is not a $\sigma$-space. This contradiction shows that $c(X_{\aleph_0})=\aleph_0$.

Since $c(X_{\aleph_0})=\aleph_0$,  (ii) of Corollary \ref{c:cell-countable} implies that $X$ is countable. Now Proposition \ref{p:B1-countable} implies that  $B_1^{st}(X,Y)=Y^X$. Therefore $H=Y^X$. Thus $Y$ is an $\nn\mbox{-}\sigma$-space, see Corollary 5.6 of \cite{GK-GMS1}.

Conversely, assume that $X$ is countable and $Y$ is an $\nn\mbox{-}\sigma$-space. Then, by Proposition \ref{p:B1-countable}, $H=Y^X$ and hence $H$ is an $\nn\mbox{-}\sigma$-space by Corollary 5.6 of \cite{GK-GMS1}.\qed
\end{proof}

\begin{theorem} \label{t:B1-topological-cs}
Let $Y$ be a perfectly normal space containing at least two points, $X$ be a $Y$-Tychonoff space, %Let $X$ and $Y$ be topological spaces satisfying one of the conditions $(\alpha_1)$ or $(\alpha_2)$,
$\nn\in\{ cp, ck,    cs^\ast\}$, and let $H$ be a subspace of $B(X,Y)$ containing $B_1^{st}(X,Y)$. Then $H$ has countable  $\nn$-character if and only if $X$ is countable and $Y$ has countable  $\nn$-character. In this case $H=Y^X$.
\end{theorem}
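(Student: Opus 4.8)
The plan is to follow the blueprint of the proof of Theorem \ref{t:B1-topological-sigma} almost verbatim, replacing its single external input — ``every compact subset of a $\sigma$-space is metrizable'' — by a statement adapted to countable $\nn$-character. First I would dispose of the reductions. Since $cp$-networks and $ck$-networks are in particular $cs^\ast$-networks at each point (for a sequence $x_n\to x$: its range together with $x$ is compact, handling the $ck$-case by a pigeonhole on a finite covering subfamily, and in the $cp$-case the set of terms $\neq x$ witnesses the definition), countable $cp$- or $ck$-character implies countable $cs^\ast$-character; moreover countable $\nn$-character is hereditary for $\nn\in\{cp,ck,cs^\ast\}$, as one checks by tracing the defining families on a subspace. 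For the converse implication of the theorem, if $X$ is countable then $B_1^{st}(X,Y)=Y^X$ by Proposition \ref{p:B1-countable} (a $Y$-Tychonoff space is Tychonoff, hence functionally Hausdorff, by Proposition \ref{p:Y-good=>good}), so $H=Y^X$; and a countable power of a space with countable $\nn$-character again has countable $\nn$-character by the productivity results of \cite{GK-GMS1}.

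The substance is the forward direction. Assuming $H$ has countable $\nn$-character, I would show $c(X_{\aleph_0})=\aleph_0$, whence $X$ is countable by Corollary \ref{c:cell-countable}(ii); then $H=Y^X$ as above, and $Y$, being homeomorphic to a subspace of $Y^X=H$, inherits countable $\nn$-character. To prove $c(X_{\aleph_0})=\aleph_0$ I argue by contradiction exactly as in Theorem \ref{t:B1-topological-sigma}: fixing distinct $a,b\in Y$ and an uncountable disjoint family of zero-sets $\{V_i\}_{i\in I}$ in $X$, Lemma \ref{l:G-del-modif} produces zero-sets $Z_i\subseteq V_i$ and functions $f_i\in B_1^{st}(X,Y)\subseteq H$ with $f_i(Z_i)=\{b\}$ and $f_i(X\SM Z_i)=\{a\}$. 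Because $\{V_i\}$ is disjoint, the set $K=\{f_i:i\in I\}\cup\{\aaa\}$ is compact, every $f_i$ is isolated in $K$, and every neighbourhood of the constant function $\aaa$ meets $K$ in a cofinite set; that is, $K$ is homeomorphic to the one-point compactification $A(\omega_1)$ of a discrete space of size $\aleph_1$ (shrink $I$ to size $\aleph_1$). Since countable $\nn$-character is hereditary and implies countable $cs^\ast$-character, it suffices to reach a contradiction by showing that $A(\omega_1)$ has no countable $cs^\ast$-network at its non-isolated point $\infty$.

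The heart is therefore this combinatorial fact, which I would prove in ZFC. Identifying a set $N\ni\infty$ with $B=N\cap\omega_1$, a countable $cs^\ast$-network $\{B_k\}_{k\in\w}$ at $\infty$ is precisely a countable family such that for every infinite $W\subseteq\omega_1$ and every finite $F\subseteq\omega_1$ there is $k$ with $B_k\cap F=\emptyset$ and $|B_k\cap W|=\aleph_0$. Writing $A_k=\omega_1\SM B_k$, I first record the elementary selection lemma: if no finite subfamily of a countable family of subsets of $\omega_1$ has cofinite union, then one can recursively choose distinct $w_n\in\omega_1\SM(B_0\cup\cdots\cup B_{n-1})$ (the complement is infinite at each stage), obtaining an infinite $W$ with $W\cap B_k\subseteq\{w_0,\dots,w_k\}$ finite for every $k$. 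Applying this to the subfamily $\{B_k:B_k\cap F=\emptyset\}$ shows that the network property forces, for each finite $F$, a finite $D\subseteq\w$ with $F\subseteq\bigcap_{k\in D}A_k$ and $\bigcap_{k\in D}A_k$ finite. Hence every finite subset of $\omega_1$ — in particular every singleton — lies in one of the countably many finite ``cells'' $\bigcap_{k\in D}A_k$; the union of these cells is a countable set covering $\omega_1$, which is absurd.

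I expect the main obstacle to be exactly this last lemma: one must phrase the selection step through the ``no finite subfamily has cofinite union'' condition (rather than mere non-covering) so that the complements stay infinite and distinct points can be chosen at every stage, and one must then notice that only countably many finite cells occur, so that their union cannot exhaust the uncountable $\omega_1$. Everything else is bookkeeping transferred from the proof of Theorem \ref{t:B1-topological-sigma} together with the heredity and productivity of countable $\nn$-character cited from \cite{GK-GMS1}, which also yields the concluding assertion $H=Y^X$.
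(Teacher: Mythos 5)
Your proposal is correct, and its skeleton coincides with the paper's proof: reduce countable $cp$- and $ck$-character to countable $cs^\ast$-character, use Lemma \ref{l:G-del-modif} on an uncountable disjoint family of zero-sets to build the compact set $K=\{f_i:i\in I\}\cup\{\aaa\}$ inside $H$, conclude $c(X_{\aleph_0})=\aleph_0$ and hence countability of $X$ via Corollary \ref{c:cell-countable}(ii), and finish with Proposition \ref{p:B1-countable} ($H=Y^X$) and Corollary 5.5 of \cite{GK-GMS1} for both the transfer to $Y$ and the converse. The one genuine divergence is at the crux: the paper kills $K$ in a single line by citing Proposition 9 of \cite{BZ} for the uncountability of the $cs^\ast$-character of $K$, whereas you identify $K$ (after shrinking $I$ to size $\aleph_1$; the identification is legitimate, since each $f_i$ is isolated in $K$ and the neighborhoods of $\aaa$ in $K$ are exactly the co-finite sets, $Y$ being $T_1$) with the one-point compactification $A(\omega_1)$ and prove from scratch, in ZFC, that $A(\omega_1)$ admits no countable $cs^\ast$-network at $\infty$. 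That combinatorial argument checks out: the translation of the $cs^\ast$-condition into ``for every infinite $W\subseteq\omega_1$ and finite $F$ there is $k$ with $B_k\cap F=\emptyset$ and $B_k\cap W$ infinite'' is faithful because sequences converging to $\infty$ are precisely those meeting each point of $\omega_1$ finitely often; the diagonal selection under the hypothesis ``no finite subfamily has cofinite union'' is exactly the right strengthening to keep the recursion alive; and since only countably many finite cells $\bigcap_{k\in D}A_k$, $D\in[\w]^{<\w}$, can occur, they cannot cover $\omega_1$. Your explicit verifications of $cp\Rightarrow cs^\ast$ and $ck\Rightarrow cs^\ast$ at a point, and of heredity of countable $\nn$-character (for $cp$ one should note that if $x$ is isolated in the subspace one simply adjoins $\{x\}$ to the traced family, which stays countable), are steps the paper leaves implicit via \cite{GK-GMS1}. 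What your route buys is self-containedness — no dependence on \cite{BZ} and an honest display of where the set-theoretic content lies; what the paper's citation buys is brevity. Apart from this substitution the two proofs are the same.
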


\begin{proof}
Assume that $H$  has countable  $\nn$-character.
First we show that $X$ is countable. To this end, observe that in all cases $X$ has countable  $cs^\ast$-character. Now we prove that   $X_{\aleph_0}$ has countable cellularity. Suppose for a contradiction that $c(X_{\aleph_0})$ is uncountable. Consider the compact subset $K$ of $H$ defined in the proof of Theorem \ref{t:B1-topological-sigma}. Then also $K$ being a subspace of $H$ has countable $cs^\ast$-character. However, by Proposition 9 of \cite{BZ}, the  $cs^\ast$-character of $K$ is uncountable. This contradiction shows that $c(X_{\aleph_0})=\aleph_0$.

Since $c(X_{\aleph_0})=\aleph_0$,  (ii) of Corollary \ref{c:cell-countable} implies that $X$ is countable. Now Proposition \ref{p:B1-countable} implies that   $B_1^{st}(X,Y)=Y^X$. Therefore $H=Y^X$. Thus $Y$ has countable  $\nn$-character, see Corollary 5.5 of \cite{GK-GMS1}.

Conversely, assume that $X$ is countable and $Y$ has countable  $\nn$-character. Then, by Proposition \ref{p:B1-countable}, $H=Y^X$, and hence $H$ has countable  $\nn$-character by Corollary 5.5 of \cite{GK-GMS1}.\qed
\end{proof}

Theorems \ref{t:B1-topological-sigma} and \ref{t:B1-topological-cs} immediately imply

\begin{corollary} \label{c:B1-topological-sigma-first}
Let $Y$ be a metric space containing at least two points, $X$ be a $Y$-Tychonoff space, and let $H$ be a subspace of $B(X,Y)$ containing $B_1^{st}(X,Y)$. Then the following assertions are equivalent:
\begin{enumerate}
\item[{\rm (i)}] $H$ is metrizable and $H=Y^X$;
\item[{\rm (ii)}] $H$ is a $\sigma$-space;
\item[{\rm (iii)}] $H$ has countable $cs^\ast$-character;
%\item[{\rm (iii)}] $X_{\aleph_0}$ has countable cellularity;
\item[{\rm (iv)}] $X$ is  countable.
\end{enumerate}
\end{corollary}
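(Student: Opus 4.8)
The plan is to read off all four equivalences from the two preceding theorems, exploiting that the target space $Y$ is not merely perfectly normal but also enjoys the two network-type properties that make the ``and $Y$ is $\dots$'' clauses in those theorems vacuous. So the first step is to record three facts about a metric space $Y$: it is perfectly normal (hence Theorems \ref{t:B1-topological-sigma} and \ref{t:B1-topological-cs} both apply to $H$); it is a $\sigma$-space, since a $\sigma$-discrete base is in particular a $\sigma$-locally finite network; and it has countable $cs^\ast$-character, since a countable decreasing neighbourhood base at a point $x$ is a countable $cs^\ast$-network at $x$ (any sequence converging to $x$ is eventually inside each base member).

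Next I would extract the two equivalences involving (iv). Taking $\nn=0$ in Theorem \ref{t:B1-topological-sigma}, so that a $0\mbox{-}\sigma$-space is just a $\sigma$-space, the theorem asserts that $H$ is a $\sigma$-space iff $X$ is countable and $Y$ is a $\sigma$-space; since $Y$ is automatically a $\sigma$-space this collapses to (ii) $\Leftrightarrow$ (iv). Similarly, taking $\nn=cs^\ast$ in Theorem \ref{t:B1-topological-cs} and using that $Y$ has countable $cs^\ast$-character yields (iii) $\Leftrightarrow$ (iv).

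It then remains to splice (i) into the chain, which I would do by proving (iv) $\Rightarrow$ (i) and (i) $\Rightarrow$ (ii). For (iv) $\Rightarrow$ (i): the non-constant real map supplied by the metric on $Y$ makes $X$ Tychonoff, hence functionally Hausdorff, by Proposition \ref{p:Y-good=>good}, so the countability of $X$ lets Proposition \ref{p:B1-countable} give $B_1^{st}(X,Y)=Y^X$, whence $H=Y^X$; as a countable product of the metrizable space $Y$ this is metrizable, so both clauses of (i) hold. For (i) $\Rightarrow$ (ii) one only notes that a metrizable space is a $\sigma$-space (the clause $H=Y^X$ is not needed here). Combining with the previous paragraph closes the loop (i) $\Rightarrow$ (ii) $\Leftrightarrow$ (iv) $\Rightarrow$ (i), and together with (iii) $\Leftrightarrow$ (iv) this proves all four statements equivalent.

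Since the corollary is advertised as an immediate consequence of the two theorems, I do not expect a genuine obstacle. The only points needing care are the bookkeeping of which value of $\nn$ to substitute and the verification that a metric space is a $\sigma$-space and has countable $cs^\ast$-character, so that the target-space hypotheses become trivially satisfied; the implication $H=Y^X \Rightarrow$ metrizable rests solely on the fact that a countable product of metric spaces is metrizable.
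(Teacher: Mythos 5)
Your proof is correct and is exactly the route the paper intends: the corollary is stated as an immediate consequence of Theorems \ref{t:B1-topological-sigma} (with $\nn=0$) and \ref{t:B1-topological-cs} (with $\nn=cs^\ast$), whose hypotheses on $Y$ collapse precisely as you observe because a metric space is perfectly normal, is a $\sigma$-space, and has countable $cs^\ast$-character. Your splicing of (i) via Proposition \ref{p:B1-countable} (using Proposition \ref{p:Y-good=>good} to see $X$ is functionally Hausdorff) and the metrizability of a countable power $Y^X$ is the same bookkeeping the paper leaves implicit.
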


\begin{corollary} \label{c:B1-cosmic}
Let $Y$ be a metric space containing at least two points, $X$ be a $Y$-Tychonoff space, and let $H$ be a subspace of $B(X,Y)$ containing $B_1^{st}(X,Y)$. Then $H$ is a cosmic space if and only if $X$ is a countable Tychonoff space and $Y$ is separable. Consequently, $H=Y^X$ is a separable meric space and therefore $H$ is a $\Pp_0$-space.
\end{corollary}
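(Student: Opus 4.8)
The plan is to deduce the statement from Corollary \ref{c:B1-topological-sigma-first} together with Proposition \ref{p:B1-countable}, once we record two classical facts: every space with a countable network is a $\sigma$-space (a countable network is trivially $\sigma$-locally finite), and a metric space is cosmic if and only if it is separable. Thus the genuine combinatorial content has already been absorbed into Corollary \ref{c:B1-topological-sigma-first}, and what remains is essentially bookkeeping about networks and countable products.

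For the necessity, suppose $H$ is cosmic. Then $H$ has a countable network and hence is a $\sigma$-space, so the implication (ii)$\Rightarrow$(iv) of Corollary \ref{c:B1-topological-sigma-first} gives that $X$ is countable and $H=Y^X$. By Proposition \ref{p:Y-good=>good}(ii) the space $X$ is Tychonoff. Assuming $X\neq\emptyset$ (the empty case being trivial), the coordinate projection $Y^X\to Y$ is a continuous surjection, and since a countable network passes to continuous images, $Y$ is cosmic; being metric, $Y$ is then separable, because selecting one point from each nonempty member of a countable network of a metric space produces a countable dense subset.

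For the sufficiency, assume that $X$ is a countable Tychonoff space and $Y$ is separable. Since $Y$ is a metric space with at least two points, it admits a non-constant continuous real-valued map, so by Proposition \ref{p:Y-good=>good} the $Y$-Tychonoff space $X$ is functionally Hausdorff; being also countable, Proposition \ref{p:B1-countable} yields $B_1^{st}(X,Y)=Y^X$ and therefore $H=Y^X$. A countable product of separable metric spaces is again a separable metric space, so $H=Y^X$ is separable metric, in particular second countable, and hence cosmic. This proves the equivalence.

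Finally, whenever $H$ is cosmic we have just seen that $H=Y^X$ is separable metric. To obtain the last assertion it suffices to check that every second-countable regular space is a $\Pp_0$-space: a countable base $\mathcal{B}$ is a $cp$-network, since if $x\in\overline{A}\setminus A$ then $x$ is a limit point of $A$ and, in a $T_1$ space, every neighborhood of $x$ meets $A$ in an infinite set, so for any neighborhood $O_x$ one may choose $B\in\mathcal{B}$ with $x\in B\subseteq O_x$ and $B\cap A$ infinite. Hence $H$ is a $\Pp_0$-space. The only non-formal points are the image/hereditary behavior of the cosmic property and this verification that a base serves as a $cp$-network; both are routine, so I do not expect any serious obstacle here.
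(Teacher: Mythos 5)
Your argument is correct and takes essentially the same route as the paper: both directions funnel through Corollary \ref{c:B1-topological-sigma-first} (whose engine, Proposition \ref{p:B1-countable}, you invoke directly for sufficiency) to get that $X$ is countable and $H=Y^X$, with separability of $Y$ extracted from the cosmic property of $H$ and the converse plus the final assertion following from the fact that the separable metrizable space $Y^X$ is a $\Pp_0$-space and hence cosmic. You merely spell out routine steps the paper leaves implicit (countable network $\Rightarrow$ $\sigma$-space, the projection argument for separability of $Y$, and the check that a countable base is a $cp$-network, where for completeness one notes that a base automatically contains $\{x\}$ for each isolated point $x$, as the first clause of the $cp$-network definition requires).
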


\begin{proof}
Assume that $H$ is cosmic. Then, by Corollary \ref{c:B1-topological-sigma-first},  $X$ is countable and $H=Y^X$ is a metric space. Being cosmic the space $H$ and hence also $Y$ must be separable. Conversely, if $X$ is countable and $Y$ is separable, then  Corollary \ref{c:B1-topological-sigma-first} implies that $H=Y^X$. Therefore $H$ is separable and  metrizable. Thus $H$ is a $\Pp_0$-space and hence cosmic. \qed
\end{proof}

To obtain the Fr\'{e}chet--Urysohness of spaces of Baire function  we need the next proposition.
\begin{proposition} \label{p:G-delta-D-Tychonoff}
Let $Y$ be a perfectly normal space containing at least two points, $X$ be a $Y$-Tychonoff space, %Let $X$ and $Y$ be topological spaces satisfying one of the conditions $(\alpha_1)$ or $(\alpha_2)$,
$\mathcal{D}$ be a finite subset of $Y$, and let $H$ be a subspace of $B(X,Y)$ containing $B_1^{st}(X,Y)$. Then $H$ considered as a subspace of $C_p(X_{\aleph_0},Y)$ has the following property:
 \begin{enumerate}
%\item[{\rm (i)}] $H$ is a $\mathcal{D}$-Tychonoff subspace of $C_p(X_{\aleph_0},Y)$;
\item[{\rm (i)}] $H\cap C_p(X_{\aleph_0},\mathcal{D})$ is a relatively $\mathcal{D}_p$-Tychonoff subspace of $C_p(X_{\aleph_0},\mathcal{D})$.
\end{enumerate}
\end{proposition}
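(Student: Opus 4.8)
The plan is to unpack the definition of a relatively $\mathcal{D}_p$-Tychonoff subspace for the ideal $\I=\F(X_{\aleph_0})$ of finite sets, and then to produce the required extension directly from Lemma \ref{l:G-del-modif}. Concretely, I would fix a closed set $A\subseteq X_{\aleph_0}$, a finite set $F=\{a_1,\dots,a_n\}\subseteq X_{\aleph_0}\setminus A$, a point $y_0\in\mathcal{D}$ and a function $f\in C_p(X_{\aleph_0},\mathcal{D})$, and look for $\bar f\in H\cap C_p(X_{\aleph_0},\mathcal{D})$ with $\bar f{\restriction}_F=f{\restriction}_F$ and $\bar f(A)\subseteq\{y_0\}$. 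Two preliminary observations drive everything: first, since $Y$ is Hausdorff and $\mathcal{D}$ is finite, $\mathcal{D}$ is a finite discrete subspace of $Y$; second, by the definition of the Baire topology the zero-sets of $X$ form a base of $X_{\aleph_0}$, and each such zero-set is clopen in $X_{\aleph_0}$ (its complement is a countable union of zero-sets), so $X_{\aleph_0}$ is zero-dimensional.

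The key geometric step is to trap $F$ in a zero-set of $X$ avoiding $A$. Since $A$ is closed in $X_{\aleph_0}$ and each $a_j\notin A$, the fact that zero-sets form a base of $X_{\aleph_0}$ lets me pick, for every $j$, a zero-set $Z_j$ of $X$ with $a_j\in Z_j\subseteq X_{\aleph_0}\setminus A$; taking the finite union $Z:=Z_1\cup\dots\cup Z_n$, which is again a zero-set of $X$, I obtain a single zero-set with $F\subseteq Z$ and $Z\cap A=\emptyset$. This is precisely the configuration needed to invoke Lemma \ref{l:G-del-modif}.

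I would then apply Lemma \ref{l:G-del-modif} with the lemma's finite set taken to be $F$, its ambient zero-set taken to be $Z$, its boundary value $y_0$, and $h:=f{\restriction}_F$ (whose image $\{y_1,\dots,y_k\}$ lies in $\mathcal{D}$). This produces disjoint zero-sets $Z_1',\dots,Z_k'\subseteq Z$ and a function $g\in B_1^{st}(X,Y)\subseteq H$ with $g(Z_i')=\{y_i\}$, with $g\big(X\setminus\bigcup_i Z_i'\big)\subseteq\{y_0\}$ and with $g{\restriction}_F=h$. Setting $\bar f:=g$, viewed inside $C_p(X_{\aleph_0},Y)$ via the embedding of Proposition \ref{p:B(X,Y)-C(X,Y)}, the verification is routine: $g$ takes only the values $y_0,y_1,\dots,y_k\in\mathcal{D}$, so its range lies in $\mathcal{D}$; since $\mathcal{D}$ is discrete, each $g^{-1}(y)$ coincides with the preimage of a $Y$-open set and is therefore open in $X_{\aleph_0}$, whence $g\in C_p(X_{\aleph_0},\mathcal{D})$ and thus $g\in H\cap C_p(X_{\aleph_0},\mathcal{D})$; finally $\bar f{\restriction}_F=f{\restriction}_F$ holds by construction, and because $\bigcup_i Z_i'\subseteq Z$ is disjoint from $A$ we get $A\subseteq X\setminus\bigcup_i Z_i'$ and hence $\bar f(A)\subseteq\{y_0\}$.

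The only real content lies in the separation step: everything reduces to separating the finite set $F$ from the $X_{\aleph_0}$-closed set $A$ by a genuine zero-set $Z$ of the original space $X$, so that Lemma \ref{l:G-del-modif} applies and simultaneously forces the extension to equal $y_0$ off a set disjoint from $A$. Once $Z$ is in hand, the membership $g\in B_1^{st}(X,Y)\subseteq H$ together with the discreteness of $\mathcal{D}$ make the remaining checks immediate; in particular, no compactness or covering argument is needed here, in contrast to the results of the previous sections.
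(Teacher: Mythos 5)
Your proof is correct and follows essentially the same route as the paper's: you separate the finite set $F$ from the $X_{\aleph_0}$-closed set $A$ by a zero-set of $X$ (using that zero-sets form a base of the Baire topology) and then invoke Lemma \ref{l:G-del-modif} to produce a function in $B_1^{st}(X,Y)\subseteq H$ with range in $\{y_0\}\cup f(F)\subseteq\mathcal{D}$ that restricts to $f$ on $F$ and maps $A$ to $y_0$. The only cosmetic difference is that the paper first intersects with disjoint zero-sets around the individual points of $F$, a refinement your argument correctly shows is not needed.
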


\begin{proof}
Since $X$ is Tychonoff (see Proposition \ref{p:Y-good=>good}), Proposition \ref{p:B(X,Y)-C(X,Y)} implies that $H$ is a  subspace of $C_p(X_{\aleph_0},Y)$. So it suffices to prove that $B_1^{st}(X,Y)$ satisfies (i).

Fix a closed subset $A$ of $X_{\aleph_0}$, a point  $y_0\in\mathcal{D}$  and a function $f:F\to \mathcal{D}$ defined on a finite subset  $F=\{x_1,\dots,x_n\}$ of $X\setminus A$. Since $X$ is Tychonoff, there are disjoint zero-sets $V_1,\dots,V_n$ in $X$ such that  $x_i\in V_i$ for every $i=1,\dots,n$. Choose a zero-set $V$ in $X$  such that $F\subseteq V$ and $V\cap A=\emptyset$. For every $i=1,\dots,n$, set $U_i :=V_i \cap V$. Then $U_1,\dots,U_n$ are disjoint zero-sets  in $X$  such that $A\cap U_i =\emptyset$ and $x_i\in U_i$ for every $i=1,\dots,n$.
By Lemma \ref{l:G-del-modif}, there exist a zero-set $Z$ in $X$ and a function ${\bar f}\in B_1^{st}(X,Y)\subseteq C_p(X_{\aleph_0},Y)$ such that $F\subseteq Z\subseteq \bigcup_{i=1}^n U_i$ and
\[
{\bar f}{\restriction}_F = f, \; {\bar f}(X\SM Z) \subseteq \{ y_0\} \; \mbox{ and } \; {\bar f}(X)\subseteq \{y_0\}\cup f(F) \subseteq \mathcal{D}.
\]
Taking into account that $A\subseteq X\SM Z$, we obtain that ${\bar f}$ is a desired extension of $f$. \qed
\end{proof}

%Recall that a topological space $X$ has  {\em  countable tightness} if whenever $A\subseteq X$ and $x\in \overline{A}$, then $x\in \overline{B}$ for some countable $B\subseteq A$;

\begin{theorem} \label{t:B1-topological-sequential}
Let $Y$ be a metrizable space containing at least two points, $X$ be a $Y$-Tychonoff space, and let $H$ be a subspace of $B(X,Y)$ containing $B_1^{st}(X,Y)$. Then the following assertions are equivalent:
\begin{enumerate}
\item[{\rm (i)}] $H$ is a Fr\'{e}chet--Urysohn space;
\item[{\rm (ii)}] $H$ is a sequential space;
\item[{\rm (iii)}] $H$ has countable tightness;
\item[{\rm (iv)}] $X_{\aleph_0}$ is a Lindel\"{o}f space;
\item[{\rm (v)}] $X_{\aleph_0}$ has the property $\gamma$.
\end{enumerate}
Moreover, if $X$ is scattered, then {\em (i)-(v)} are equivalent to
\begin{enumerate}
\item[{\rm (vi)}] $X$ is a Lindel\"{o}f space.
\end{enumerate}
\end{theorem}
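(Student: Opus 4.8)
The plan is to transport the whole problem to the $P$-space $X_{\aleph_0}$ and then feed it into the machinery of Sections \ref{sec:FU-sequentiality} and \ref{sec:k-normal}. First I would record the structural facts. Since $Y$ is metrizable, hence perfectly normal, and $X$ is $Y$-Tychonoff (so $X$ is Tychonoff by Proposition \ref{p:Y-good=>good}), Proposition \ref{p:B(X,Y)-C(X,Y)} realises $H$ as a subspace of $C_p(X_{\aleph_0},Y)$, and Proposition \ref{p:G-del-modif} shows that $X_{\aleph_0}$ is itself a $Y$-Tychonoff (equivalently $Y_p$-Tychonoff) Tychonoff $P$-space. The decisive observation is that $H$, viewed inside $C_p(X_{\aleph_0},Y)$, is a relatively $\mathcal{D}_p$-Tychonoff subspace for \emph{every} finite $\mathcal{D}\subseteq Y$: any instance of Definition \ref{def:relatively-Y-Tych} constrains the extension only on a single finite set $F$ and at a single value $y_0$, so at most finitely many values of $Y$ are ever involved, and Proposition \ref{p:G-delta-D-Tychonoff} applied to the finite set $f(F)\cup\{y_0\}$ (using $B_1^{st}(X,Y)\subseteq H$) delivers the required $\bar f\in H$. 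Consequently $H$ is relatively $Y_p$-Tychonoff in $C_p(X_{\aleph_0},Y)$, it is relatively $\mathcal{D}_p$-Tychonoff for every doubleton $\mathcal{D}=\{g_0,g_1\}$, and it contains the constant function $\mathbf{g}_0$.

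With this in hand I would run the cycle (i)$\Rightarrow$(ii)$\Rightarrow$(iii)$\Rightarrow$(iv)$\Rightarrow$(v)$\Rightarrow$(i). The implications (i)$\Rightarrow$(ii)$\Rightarrow$(iii) are the standard ones (Fr\'echet--Urysohn $\Rightarrow$ sequential $\Rightarrow$ countable tightness). For (iii)$\Rightarrow$(iv) I apply Theorem \ref{t:Cp-tight} with $\I=\FF(X_{\aleph_0})$ to get $t(H)=\FF(X_{\aleph_0})\mbox{-}\Lin(X_{\aleph_0})$; countable tightness therefore forces every open $\w$-cover of $X_{\aleph_0}$ to admit a countable $\w$-subcover, i.e.\ the property $\e$, and $\e$ implies Lindel\"ofness by passing to the $\w$-cover of all finite unions of a given open cover. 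The step (v)$\Rightarrow$(i) uses that $\gamma=\gamma_{\FF(X_{\aleph_0})}$, so Theorem \ref{t:FU-Cp} applied to $X_{\aleph_0}$ with the doubleton $\mathcal{D}$ yields that $H$ is Fr\'echet--Urysohn.

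The one genuinely non-formal use of the $P$-space nature of $X_{\aleph_0}$ is the step (iv)$\Rightarrow$(v): for a Tychonoff $P$-space, Lindel\"ofness is equivalent to the property $\gamma$, which is exactly the equivalence (v)$\Leftrightarrow$(vi) in Pestryakov's Theorem \ref{t:Pest-k-space} (that equivalence concerns $X_{\aleph_0}$ alone and is independent of the index $\alpha$). Combining this with the previous paragraph closes the cycle, so (i)--(v) are equivalent. For the scattered addendum I would prove (iv)$\Leftrightarrow$(vi). The direction (iv)$\Rightarrow$(vi) needs no extra hypothesis: the identity $X_{\aleph_0}\to X$ is a continuous bijection and Lindel\"ofness passes to continuous images, so $X_{\aleph_0}$ Lindel\"of forces $X$ Lindel\"of. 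The converse (vi)$\Rightarrow$(iv) is where scatteredness is essential: here one invokes the fact that the Baire (equivalently $P$-) modification of a scattered Lindel\"of Tychonoff space is again Lindel\"of, whence $X_{\aleph_0}$ is Lindel\"of.

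I expect this last implication to be the main obstacle, as it is the only ingredient not already packaged in the earlier results. It calls for a real argument about scattered spaces, most naturally a transfinite induction along the Cantor--Bendixson decomposition $X^{(\alpha)}$: closed subspaces of a Lindel\"of space are Lindel\"of and have strictly smaller rank, so the inductive hypothesis applies to $X'=X^{(1)}$, and one treats the open discrete set of isolated points separately. The care needed is that the topology induced on a derivative by $X_{\aleph_0}$ must be compared with the derivative's own Baire topology, and that the passage between levels in the $G_\delta$-topology be controlled; once this bookkeeping is done the scattered equivalence follows and completes the theorem.
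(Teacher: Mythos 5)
Your main cycle (i)$\Rightarrow$(ii)$\Rightarrow$(iii)$\Rightarrow$(iv)$\Rightarrow$(v)$\Rightarrow$(i) is correct and is architecturally the paper's proof: both embed $H$ into $C_p(X_{\aleph_0},Y)$ via Proposition \ref{p:B(X,Y)-C(X,Y)}, use Proposition \ref{p:G-del-modif} to make $X_{\aleph_0}$ a $Y$-Tychonoff $P$-space, and close the loop with Theorem \ref{t:FU-Cp} together with heredity of the Fr\'echet--Urysohn property. You differ, legitimately, in two local steps. For (iii)$\Rightarrow$(iv) the paper argues directly: reduce to a cover by zero-sets closed under finite unions, use Lemma \ref{l:G-del-modif} to produce two-valued functions $f_{i,F}\in B_1^{st}(X,Y)\subseteq H$ separating a zero-set inside $V_i$ from its complement, observe that the relevant constant function lies in the closure of this family, and extract a countable subcover from a countable approximating set supplied by countable tightness. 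Your detour through Theorem \ref{t:Cp-tight} (giving $t(H)=p\mbox{-}\Lin(X_{\aleph_0})$, hence the property $\e$ for $X_{\aleph_0}$, hence Lindel\"ofness by passing to the $\w$-cover of finite unions) is sound, and is in fact exactly how the paper itself proves Proposition \ref{p:B1(X,Y)-tight}; your preliminary observation that relative $\mathcal{D}_p$-Tychonoffness for every finite $\mathcal{D}\subseteq Y$ upgrades to relative $Y_p$-Tychonoffness (because any instance of Definition \ref{def:relatively-Y-Tych} over the ideal of finite sets only involves the finite set $f(F)\cup\{y_0\}$) is precisely the justification this needs. For (iv)$\Rightarrow$(v) the paper invokes the Galvin lemma of \cite{GN} (a Lindel\"of $P$-space has the property $\gamma$) rather than quoting (v)$\Leftrightarrow$(vi) of Pestryakov's Theorem \ref{t:Pest-k-space}; your citation is admissible but heavier than necessary, since Pestryakov's equivalence rests on the same lemma.

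The only genuine gap is in the scattered addendum. Your direction (iv)$\Rightarrow$(vi) (the identity $X_{\aleph_0}\to X$ is a continuous bijection and Lindel\"ofness passes to continuous images) is fine and even more economical than the paper's treatment. But for (vi)$\Rightarrow$(iv) you offer only a plan, and the plan as written meets a real obstruction: after covering the derivative $X^{(1)}$ by countably many $G_\delta$-sets from a given cover, the uncovered remainder is contained in the set of isolated points of $X$ but is in general neither closed nor countable --- a discrete subspace of a Lindel\"of space can be uncountable when it is not closed --- so ``treating the isolated points separately'' is exactly where the whole difficulty sits, and your transfinite induction does not yet close. The paper sidesteps this entirely by citing a theorem of Uspenski\u{\i} (see the proof of Lemma II.7.14 of \cite{Arhangel}): a scattered space $X$ is Lindel\"of if and only if $X_{\aleph_0}$ is Lindel\"of. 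Replacing your unfinished induction with this citation completes the theorem; everything else in your write-up stands.
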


\begin{proof}
The implications (i)$\Rightarrow$(ii)$\Rightarrow$(iii) are clear.
\smallskip

(iii)$\Rightarrow$(iv) Let $\xi=\{ V_i: i\in I\}$ be an open cover of $X_{\aleph_0}$. By the definition of the Baire topology $\tau_b$, we can assume that all $V_i$ are zero-sets in $X$. Moreover we can assume that $\xi$ is closed under taking finite unions. Fix two distinct points $a,b\in Y$. By Lemma \ref{l:G-del-modif}, for every $i\in I$ and each finite subset $F\subseteq V_i$ there are a zero-set $Z_{i,F}$ and a  function $f_{i,F}\in B_1^{st}(X,Y)\subseteq H$ such that
\begin{equation} \label{equ:B1-general-1}
Z_{i,F}\subseteq V_{i}, \;\; f_{i,F}(Z_{i,F})=\{ a\} \; \mbox{ and } \; f_{i,F}(X\SM Z_{i,F})=\{ b\}.
\end{equation}

Since the cover $\xi$ is closed under taking finite unions, the constant function $\aaa$ belongs to the closure of the family $\FF=\{ f_{i,F}: i\in I, F\in [V_i]^{<\w}\}$. As $H$ has countable tightness, there is a sequence $S=\{ f_{i_n,F_n}: n\in\w\}$ in $\FF$ such that $\aaa\in \overline{S}$. Set $X_0:=\bigcup_{n\in \w} V_{i_n}$. We claim that $X_0=X$. Indeed, assuming the converse we can find a point $z\in X\SM X_0$. Then, by (\ref{equ:B1-general-1}), $f_{i_n,F_n}(z)=b$ for every $n\in\w$. Choose an open neighborhood $U$ of $a$ such that $b\not\in U$. Then
$
W[\aaa;\{z\},U] \cap S =\emptyset
$
that contradicts the inclusion $\aaa\in \overline{S}$. Thus $X_0=X$ and hence $X_{\aleph_0}$ is Lindel\"{o}f.
\smallskip

(iv)$\Rightarrow$(v) Since $X_{\aleph_0}$ is a $P$-space and  Lindel\"{o}f, the Galvin lemma \cite{GN} implies that $X_{\aleph_0}$ has the property $\gamma$.
\smallskip

(v)$\Rightarrow$(i)  First we note that the space $X_{\aleph_0}$ is $Y$-Tychonoff by Proposition \ref{p:G-del-modif}. Since $X_{\aleph_0}$ has the property $\gamma$, Theorem \ref{t:FU-Cp} implies that the space $C_p(X_{\aleph_0},Y)$ is  Fr\'{e}chet--Urysohn. As $H\subseteq B(X,Y) \subseteq C_p(X_{\aleph_0},Y)$ it follows that also $H$ is a Fr\'{e}chet--Urysohn space.
\smallskip

Finally, if $X$ is scattered, then (iv) and (vi) are equivalent by a result of Uspenski\u{\i} (see the proof of Lemma II.7.14 of \cite{Arhangel}) which states that a scattered space $X$ is Lindel\"{o}f if and only if $X_{\aleph_0}$ is Lindel\"{o}f. \qed
\end{proof}

\begin{corollary} \label{c:B1-topological-sequential}
Let $Y$ be a metrizable space containing at least two points, $X$ be a $Y$-Tychonoff space, and let $H$ be a subspace of $B(X,Y)$ containing $B_1^{st}(X,Y)$. Then the following assertions are equivalent:
\begin{enumerate}
\item[{\rm (i)}] $H^{k}$ is Fr\'{e}chet--Urysohn for each $k\in (0,\omega]$;
\item[{\rm (ii)}] $H^k$ is a sequential space for each $k\in (0,\omega]$;
\item[{\rm (iii)}] $H^k$ has countable tightness for each $k\in (0,\omega]$;
\item[{\rm (iv)}] $X_{\aleph_0}$ has the property $\gamma$.
\item[{\rm (v)}] $H^k$ is Fr\'{e}chet--Urysohn (sequential or has countable tightness) for some $k\in (0,\omega]$.
\end{enumerate}
\end{corollary}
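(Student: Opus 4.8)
The plan is to close the cycle of implications
$(i)\Rightarrow(ii)\Rightarrow(iii)\Rightarrow(v)\Rightarrow(iv)\Rightarrow(i)$.
The implications $(i)\Rightarrow(ii)\Rightarrow(iii)$ hold for every $k$ simply because a Fr\'echet--Urysohn space is sequential and a sequential space has countable tightness, and $(iii)\Rightarrow(v)$ is trivial since, for example, the instance $k=1$ of $(iii)$ is already one of the alternatives permitted in $(v)$. Hence the genuine content is concentrated in $(v)\Rightarrow(iv)$ and $(iv)\Rightarrow(i)$, and I would deduce both from Theorem \ref{t:B1-topological-sequential} together with the behaviour of the property $\gamma$ under countable topological sums recorded in Proposition \ref{p:sum-gamma-I-space}.

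For $(v)\Rightarrow(iv)$, assume that $H^k$ is Fr\'echet--Urysohn, sequential, or of countable tightness for some $k\in(0,\omega]$. Fix a constant function $g_0\in C_p(X,Y)\subseteq B_1^{st}(X,Y)\subseteq H$. The map $f\mapsto(f,g_0,g_0,\dots)$ embeds $H$ homeomorphically onto a slice of $H^k$, which is closed because $H$ is Hausdorff (being a subspace of $C_p(X_{\aleph_0},Y)$ by Proposition \ref{p:B(X,Y)-C(X,Y)}). Since the Fr\'echet--Urysohn property and countable tightness are inherited by arbitrary subspaces and sequentiality by closed subspaces, $H$ itself enjoys whichever of the three properties was assumed for $H^k$. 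By Theorem \ref{t:B1-topological-sequential} each of these properties of $H$ is equivalent to $X_{\aleph_0}$ having the property $\gamma$, which is $(iv)$.

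For $(iv)\Rightarrow(i)$ it is enough to establish the Fr\'echet--Urysohn property of $H^\omega$: once this is known, every $H^k$ with $k$ finite is Fr\'echet--Urysohn as well, being (homeomorphic to) a subspace of $H^\omega$. To treat $k=\omega$ I would pass to the topological sum $Z:=\bigoplus\{X_i:X_i=X,\ i\in\omega\}$, which is again $Y$-Tychonoff. Under the canonical identification of $Y^{Z}$ with $(Y^{X})^{\omega}$ one has $C_p(Z_{\aleph_0},Y)=C_p(X_{\aleph_0},Y)^{\omega}$, and moreover $B_1^{st}(Z,Y)=B_1^{st}(X,Y)^{\omega}$ and $B(Z,Y)=B(X,Y)^{\omega}$, because a function on the clopen sum $Z$ is Baire (respectively, of stable first Baire class) precisely when each of its restrictions to the summands is. Consequently $H^{\omega}$ corresponds to a subspace $\widetilde H$ of $B(Z,Y)$ satisfying $B_1^{st}(Z,Y)\subseteq\widetilde H$. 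Since the Baire topology commutes with topological sums, $Z_{\aleph_0}=\bigoplus\{(X_i)_{\aleph_0}:i\in\omega\}$; applying Proposition \ref{p:sum-gamma-I-space} to the space $X_{\aleph_0}$ equipped with the ideal of its finite subsets (for which $\gamma_\I=\gamma$), the hypothesis that $X_{\aleph_0}$ has the property $\gamma$ gives that $Z_{\aleph_0}$ has the property $\gamma$ too. Theorem \ref{t:B1-topological-sequential}, now applied to the pair $(Z,\widetilde H)$, yields that $\widetilde H=H^{\omega}$ is Fr\'echet--Urysohn, which closes the cycle.

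I expect the main obstacle to be the coordinatewise bookkeeping used in the last paragraph: verifying that membership in $B(Z,Y)$ and in $B_1^{st}(Z,Y)$ decomposes over the summands, and that $Z_{\aleph_0}$ is the topological sum of the spaces $(X_i)_{\aleph_0}$. The direction stating that a restriction to a clopen summand does not raise the Baire class is a routine transfinite induction; the converse, that a coordinatewise-Baire $f=(f_n)_{n}$ is Baire on $Z$, requires assembling approximants $g_m$ that agree with $f_n$ on $X_n$ for $n\le m$ and are constant on the remaining summands, together with the observation that the supremum of the countably many countable ordinals involved is again countable. Everything else is a formal consequence of Theorem \ref{t:B1-topological-sequential} and Proposition \ref{p:sum-gamma-I-space}.
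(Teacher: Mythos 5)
Your proof is correct, and the cycle $(i)\Rightarrow(ii)\Rightarrow(iii)\Rightarrow(v)\Rightarrow(iv)\Rightarrow(i)$ covers all the equivalences. Your step $(v)\Rightarrow(iv)$ is essentially the paper's: the paper simply observes that $H$ is a closed subspace of $H^k$ and invokes Theorem \ref{t:B1-topological-sequential}; your closed-slice embedding $f\mapsto(f,\mathbf{g}_0,\mathbf{g}_0,\dots)$ is the same idea with the closedness made explicit. The genuine divergence is in $(iv)\Rightarrow(i)$. The paper's argument is much lighter: since $Y$ is metrizable (hence perfectly normal), Proposition \ref{p:G-del-modif} makes $X_{\aleph_0}$ a $Y$-Tychonoff space, so Theorem \ref{t:Cp(X,Y)-FU} (whose countable-power clause already packages the sum-and-$\gamma$ bookkeeping via Lemma \ref{l:I-infinite-sum} and Proposition \ref{p:sum-gamma-I-space}) gives that $C_p(X_{\aleph_0},Y)^\w$ is Fr\'echet--Urysohn; then $H^k\subseteq C_p(X_{\aleph_0},Y)^\w$ and the Fr\'echet--Urysohn property is hereditary to \emph{arbitrary} subspaces, so no structure of $H^\w$ inside a Baire hierarchy is needed at all. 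You instead re-run the sum construction at the level of Baire classes, proving $B_1^{st}(Z,Y)=B_1^{st}(X,Y)^\w$, $B(X,Y)^\w\subseteq B(Z,Y)$ and $Z_{\aleph_0}=\bigoplus_{i\in\w}(X_i)_{\aleph_0}$ so as to apply Theorem \ref{t:B1-topological-sequential} to the pair $(Z,\widetilde H)$; all of these claims are true (the stable-convergence decomposition is immediate since stable convergence is a pointwise finite-disagreement condition, the assembling of $B(X,Y)^\w$ into $B(Z,Y)$ works by your patching induction plus the countability of suprema of countably many countable ordinals, and $\I(Z)$ reduces to the finite sets so Proposition \ref{p:sum-gamma-I-space} does yield $\gamma$ for $Z_{\aleph_0}$). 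What your route buys is a self-contained ``power version'' of Theorem \ref{t:B1-topological-sequential} and the coordinatewise decompositions of the Baire and stable Baire classes over clopen countable sums, which have some independent interest; what it costs is precisely the bookkeeping you flag as the main obstacle, which the paper's hereditary-FU shortcut renders unnecessary.
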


\begin{proof}
The implications (i)$\Rightarrow$(ii)$\Rightarrow$(iii) and (i)$\Rightarrow$(v) ((ii)$\Rightarrow$(v) and (iii)$\Rightarrow$(v), respectively) are clear.

(v)$\Rightarrow$(iv) Since $H$ is a closed subspace of $H^k$, also the space $H$ is Fr\'{e}chet--Urysohn (sequential or has countable tightness). Therefore, by Theorem \ref{t:B1-topological-sequential}, $X_{\aleph_0}$ has the property $\gamma$.

(iv)$\Rightarrow$(i) Since $X_{\aleph_0}$ has the property $\gamma$, Theorem \ref{t:Cp(X,Y)-FU} implies that the space $C_p(X_{\aleph_0},Y)^\w$ is  Fr\'{e}chet--Urysohn. As $H\subseteq B(X,Y) \subseteq C_p(X_{\aleph_0},Y)$ it follows that $H^k$ embeds into $C_p(X_{\aleph_0},Y)^\w$ for every $k\in (0, \omega]$. Thus $H^k$ is a Fr\'{e}chet--Urysohn space. \qed
\end{proof}

\begin{example} \label{exa:B1-scatterd}{\em
Let $X=[0,\w_1]$ and $Z=[0,\w_1)$. Note that $X$ and $Z$ are scattered spaces. Since $X$ and $Z$ are zero-dimensional $T_1$-spaces (\cite[6.2.18]{Eng}), by Proposition \ref{p:Y-z-Tychonoff-exa}(iii), $X$ and $Z$ are $Y$-$z$-normal for any topological space $Y$. By  Theorem \ref{t:B1-topological-sequential},  for each metrizable space $Y$ containing at least two points (for example $Y=\IR$ or $Y=\mathbf{2}$) and every ordinal $\alpha>0$, we obtain

(i) $B_\alpha^{st}\big(X,Y\big)$ and $B_\alpha\big(X,Y\big)$ are Fr\'{e}chet--Urysohn spaces, but

(ii) $B_\alpha^{st}\big(Z,Y\big)$ and $B_\alpha\big(Z,Y\big)$ have uncountable tightness.  \qed}
\end{example}

In the general case the  tightness of spaces of Baire functions is computed in the next proposition which  generalizes (B) of Pestryakov's Theorem \ref{t:Pest-k-space} (see also \cite{os}).
\begin{proposition} \label{p:B1(X,Y)-tight}
Let $Y$ be a metrizable space containing at least two points, $X$ be a $Y$-Tychonoff space, and let $H$ be a subspace of $B(X,Y)$ containing $B_1^{st}(X,Y)$. Then
\[
t(H)=p\mbox{-}\Lin(X_{\aleph_0})=\sup\{l(X^n_{\aleph_0}): n\in \omega \}.
\]
\end{proposition}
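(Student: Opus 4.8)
The plan is to reduce the whole computation to the function space $C_p(X_{\aleph_0},Y)$ over the Baire modification and then read off both equalities from Theorem \ref{t:Cp-tight}. Since $Y$ is metrizable with at least two points it admits a non-constant continuous real-valued map, so $X$ is Tychonoff by Proposition \ref{p:Y-good=>good}; hence $X_{\aleph_0}=PX$ is a Tychonoff $P$-space. Being metrizable, $Y$ is perfectly normal, so Proposition \ref{p:G-del-modif}(ii) applies and shows that $X_{\aleph_0}$ is a $Y$-Tychonoff space, with the extending functions produced there lying in $B_1^{st}(X,Y)$. Moreover, by Proposition \ref{p:B(X,Y)-C(X,Y)} the adjoint of the identity $X_{\aleph_0}\to X$ embeds $H$ into $C_p(X_{\aleph_0},Y)$, so from now on $H$ is regarded as a subspace of $C_p(X_{\aleph_0},Y)$.

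The key observation is that, viewed inside $C_p(X_{\aleph_0},Y)$, the space $H$ is a \emph{relatively $Y_p$-Tychonoff} subspace. Indeed, given a closed $A\subseteq X_{\aleph_0}$, a finite $F\subseteq X_{\aleph_0}\SM A$, a point $y_0\in Y$ and any $f\in C_p(X_{\aleph_0},Y)$, the datum $f{\restriction}_F$ together with the value $y_0$ on $A$ is precisely what Proposition \ref{p:G-del-modif}(ii) handles (with $F$ in the role of the finite prescription set and $A$ in the role of the closed set), and the extension it supplies lies in $B_1^{st}(X,Y)\subseteq H$. Since constant functions are continuous and hence lie in $C_p(X,Y)\subseteq B_1^{st}(X,Y)\subseteq H$, all the hypotheses of Theorem \ref{t:Cp-tight} are met for the space $X_{\aleph_0}$, the metric space $Y$, and the ideal $\I=\F(X_{\aleph_0})$ of all finite subsets. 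Applying that theorem gives $t(H)=\F(X_{\aleph_0})\mbox{-}\Lin(X_{\aleph_0})=p\mbox{-}\Lin(X_{\aleph_0})$, which is the first equality.

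For the second equality it suffices to establish $p\mbox{-}\Lin(X_{\aleph_0})=\sup\{l(X^n_{\aleph_0}):n\in\omega\}$ for the Tychonoff space $X_{\aleph_0}$. Here I apply Theorem \ref{t:Cp-tight} once more, now with $Y=\IR$ (so that relative $\IR_p$-Tychonoffness reduces to plain Tychonoffness), obtaining $t\big(C_p(X_{\aleph_0})\big)=p\mbox{-}\Lin(X_{\aleph_0})$. Combining this with the classical Arhangelskii--Pytkeev formula $t\big(C_p(Z)\big)=\sup\{l(Z^n):n\in\omega\}$ (see \cite{Arhangel}) applied to $Z=X_{\aleph_0}$ yields the required identity, completing the proof.

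I expect the only genuinely delicate point to be the verification in the second paragraph, namely that the Baire-one extensions furnished by Lemma \ref{l:G-del-modif} and Proposition \ref{p:G-del-modif}(ii) always remain inside the prescribed subspace $H$; this is exactly where the hypothesis $B_1^{st}(X,Y)\subseteq H$ is used, and once it is in place both equalities follow from already-established results. As an alternative to quoting Theorem \ref{t:Cp-tight} for the inequality $p\mbox{-}\Lin(X_{\aleph_0})\le t(H)$, one may reprove it directly, mirroring Pytkeev's argument: given an open $\omega$-cover of $X_{\aleph_0}$, refine it to an $\omega$-cover by zero-sets of $X$; attach to each finite subset of a cover member a two-valued function in $B_1^{st}(X,Y)\subseteq H$ via Lemma \ref{l:G-del-modif}; observe that a suitable constant function lies in the closure of the resulting family but not in it; and extract, from countable tightness at that point, a subfamily of cardinality $\le t(H)$ whose associated cover members form an $\omega$-subcover of the required size.
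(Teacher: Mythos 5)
Your argument is correct and is essentially the paper's own proof: the paper likewise invokes Proposition \ref{p:G-delta-D-Tychonoff} to see that $H$ is a relatively $Y_p$-Tychonoff subspace of $C_p(X_{\aleph_0},Y)$, deduces $t(H)=p\mbox{-}\Lin(X_{\aleph_0})$ from Theorem \ref{t:Cp-tight}, and obtains the second equality by applying the tightness formula $t\big(C_p(X_{\aleph_0})\big)=p\mbox{-}\Lin(X_{\aleph_0})$ together with the Arhangel'skii--Pytkeev identity (cited there as Theorem 4.7.1 and Corollary 4.7.3 of \cite{mcoy}) to $C_p(X_{\aleph_0})$. Your explicit verification that the extensions from Lemma \ref{l:G-del-modif} land in $B_1^{st}(X,Y)\subseteq H$ is exactly the content of Proposition \ref{p:G-delta-D-Tychonoff}, so no genuinely different machinery is introduced.
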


\begin{proof}
By Proposition \ref{p:G-delta-D-Tychonoff}, $H$ is  a relatively $Y_p$-Tychonoff subspace of $C_p(X_{\aleph_0},Y)$. Then the first equality $t(H)=p\mbox{-}\Lin(X_{\aleph_0})$ follows from Theorem \ref{t:Cp-tight}. Now the second equality immediately follows from Theorem 4.7.1 and Corollary 4.7.3 of \cite{mcoy} applied to $C_p(X_{\aleph_0})$. \qed
\end{proof}

%Recall that a topological space $X$ is {\em $K$-analytic} if it is a continuous image of a Lindel\"{o}f \v{C}ech-complete space.

For compact spaces we have the following result.
\begin{theorem} \label{t:B1-scattered}
Let $Y$ be a metrizable space containing at least two points, $K$ be a $Y$-Tychonoff compact space, and let $H$ be a subspace of $B(K,Y)$ containing $B_1^{st}(K,Y)$. Then the following assertions are equivalent:
\begin{enumerate}
\item[{\rm (i)}] $H$ is a Fr\'{e}chet--Urysohn space;
\item[{\rm (ii)}] $H$ is a sequential space;
\item[{\rm (iii)}] $H$ has countable tightness;
\item[{\rm (iv)}] $K$ is scattered.
\end{enumerate}
\end{theorem}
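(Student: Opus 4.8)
The plan is to deduce everything from Theorem \ref{t:B1-topological-sequential} applied to the space $X=K$, reducing the whole statement to a single purely topological fact: for a compact $Y$-Tychonoff space $K$, the Baire modification $K_{\aleph_0}$ is Lindel\"of if and only if $K$ is scattered. Indeed, since $K$ is $Y$-Tychonoff, Theorem \ref{t:B1-topological-sequential} applies verbatim and gives at once that the present conditions (i), (ii) and (iii) are mutually equivalent, and that each of them is equivalent to the Lindel\"ofness of $K_{\aleph_0}$. Thus the only thing left to prove is the equivalence of ``$K_{\aleph_0}$ is Lindel\"of'' with the scatteredness of $K$.

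For the implication (iv)$\Rightarrow$(i) I would argue as follows. If $K$ is scattered, then, being compact, it is Lindel\"of, so condition (vi) of Theorem \ref{t:B1-topological-sequential} holds. Since $K$ is scattered, the ``moreover'' clause of that theorem applies and shows that (vi) is equivalent to (i); hence $H$ is Fr\'echet--Urysohn. (Equivalently, one may invoke directly Uspenski\u{\i}'s result, already used in the proof of Theorem \ref{t:B1-topological-sequential}, that a scattered space is Lindel\"of iff its Baire modification is Lindel\"of.)

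For the converse I would prove the contrapositive: if $K$ is \emph{not} scattered, then $K_{\aleph_0}$ is not Lindel\"of. The key input is the classical fact that a non-scattered compact Hausdorff space admits a continuous surjection $\phi\colon K\to[0,1]$. One obtains it from the perfect kernel $P$ of $K$ (the stabilization of the Cantor--Bendixson derivatives), which is a nonempty compact space with no isolated points; a standard Cantor-scheme construction using regularity of $P$ produces a continuous surjection $P\to 2^{\w}$, hence $P\to[0,1]$, which then extends over the normal space $K$ by the Tietze theorem without destroying surjectivity. Since the $\phi$-preimage of a zero-set is again a zero-set, the same map is continuous as $\phi\colon K_{\aleph_0}\to[0,1]_{\aleph_0}$ for the Baire topologies $\tau_b$, and it remains surjective. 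As $[0,1]$ is metrizable, every singleton is a zero-set, so $[0,1]_{\aleph_0}$ is discrete and uncountable, hence not Lindel\"of. Because a continuous image of a Lindel\"of space is Lindel\"of, $K_{\aleph_0}$ cannot be Lindel\"of. By the reduction of the first paragraph this means that (iii) fails, which closes the cycle of implications.

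The bookkeeping steps (that $H\subseteq B(K,Y)\subseteq C_p(K_{\aleph_0},Y)$ by Proposition \ref{p:B(X,Y)-C(X,Y)}, and the elementary verification that preimages of zero-sets under $\phi$ are zero-sets) are routine. The main obstacle is the reverse direction, specifically justifying the continuous surjection of a non-scattered compact space onto $[0,1]$ via the perfect kernel and the Cantor-scheme argument, together with checking that this surjection survives the passage to the $P$-modifications; everything else is a direct appeal to Theorem \ref{t:B1-topological-sequential}.
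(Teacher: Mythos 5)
Your proposal is correct and follows essentially the same route as the paper: the paper likewise derives the equivalence of (i)--(iii) with the Lindel\"ofness of $K_{\aleph_0}$ from Theorem \ref{t:B1-topological-sequential}, uses the classical continuous surjection $K\to[0,1]$ for non-scattered compact $K$ (cited from Semadeni, Theorem 8.5.4) to conclude that $K_{\aleph_0}$ is not Lindel\"of --- via the disjoint cover by continuum many zero-sets $T^{-1}(t)$, which is the same computation as your passage to the discrete space $[0,1]_{\aleph_0}$ --- and for (iv)$\Rightarrow$(i) invokes Lindel\"ofness of $K_{\aleph_0}$ for compact scattered $K$ (citing Levy--Rice, where you instead use the ``moreover'' clause of Theorem \ref{t:B1-topological-sequential}, resting on the same Uspenski\u{\i}-type fact). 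One small inaccuracy in your optional sketch of the cited classical fact: a Cantor scheme produces a continuous surjection onto $2^{\w}$ only from a \emph{closed subset} $Q$ of the perfect kernel $P$, not from $P$ itself (a connected $P$ admits no continuous surjection onto the disconnected space $2^{\w}$), so one must compose $Q\to 2^{\w}\to[0,1]$ and only then extend by Tietze --- which your extension step accommodates, so the proof is unaffected.
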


\begin{proof}
The implications (i)$\Rightarrow$(ii)$\Rightarrow$(iii) are clear.
\smallskip

(iii)$\Rightarrow$(iv) Suppose for a contradiction that $K$ is not scattered. Then, by Theorem  8.5.4 of \cite{sema},  there is a continuous surjective map $T:K\to \II=[0,1]$. Since $\II$ is a normal space, $K=\bigcup_{t\in\II} T^{-1}(t)$ is a continual disjoint union of zero-sets. Therefore the space $K_{\aleph_0}$ is not Lindel\"{o}f. Now Theorem \ref{t:B1-topological-sequential} implies that the tightness of $H$ is  uncountable, a contradiction.
\smallskip

(iv)$\Rightarrow$(i) Since $K$ is a compact scattered space, Theorem 5.7 of \cite{leri} states that $K_{\aleph_0}$ is Lindel\"{o}f and  Theorem \ref{t:B1-topological-sequential} applies. \qed
\end{proof}

\begin{theorem} \label{t:B1-k-space}
Let $Y$ be a non-compact metrizable space, $X$ be a $Y$-Tychonoff space, and let $H$ be a subspace of $B(X,Y)$ containing $B_1^{st}(X,Y)$. Then the following assertions are equivalent:
\begin{enumerate}
\item[{\rm (i)}] $H$ is a Fr\'{e}chet--Urysohn space;
\item[{\rm (ii)}] $H$ is a sequential space;
\item[{\rm (iii)}] $H$ is a  $k$-space;
\item[{\rm (iv)}] $H$ has countable tightness;
\item[{\rm (v)}] $X_{\aleph_0}$ satisfies the property $\gamma$.
\end{enumerate}
\end{theorem}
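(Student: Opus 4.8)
The plan is to deduce this statement entirely from the results already available for spaces of continuous functions, by regarding $H$ as a subspace of $C_p(X_{\aleph_0},Y)$ and then applying Theorems \ref{t:H-k-space} and \ref{t:B1-topological-sequential} to the $P$-space $X_{\aleph_0}$. The implications (i)$\Rightarrow$(ii)$\Rightarrow$(iii) are immediate, since every Fr\'{e}chet--Urysohn space is sequential and every sequential space is a $k$-space. Thus it suffices to establish, on the one hand, the equivalence of (i), (ii), (iii) and (v), and, on the other hand, the equivalence of (iv) with these.

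First I would assemble the structural facts. As a metrizable space $Y$ is perfectly normal, so Proposition \ref{p:G-del-modif}(ii) shows that $X_{\aleph_0}$ is $Y$-Tychonoff, and Proposition \ref{p:B(X,Y)-C(X,Y)} embeds $H$ as a subspace of $C_p(X_{\aleph_0},Y)$. The single use of non-compactness enters here: a metrizable space is compact if and only if every infinite subset has a cluster point, so the non-compact $Y$ contains an infinite closed discrete sequence $\mathcal{D}=\{g_n\}_{n\in\w}$ of distinct points. Next I would verify that $H$, viewed inside $C_p(X_{\aleph_0},Y)$, is a relatively $\mathcal{D}_p$-Tychonoff subspace of $C_p(X_{\aleph_0},Y)$ for this infinite $\mathcal{D}$. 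Indeed, given a closed $A\subseteq X_{\aleph_0}$, a finite $F\subseteq X_{\aleph_0}\SM A$, a point $y_0\in\mathcal{D}$, and any $f\in C_p(X_{\aleph_0},Y)$ with $f(F)\subseteq\mathcal{D}$, the set $\mathcal{D}':=f(F)\cup\{y_0\}$ is a \emph{finite} subset of $\mathcal{D}$; the construction underlying Proposition \ref{p:G-delta-D-Tychonoff} (which rests on Lemma \ref{l:G-del-modif}) applied to $\mathcal{D}'$ and to the datum $f{\restriction}_F\colon F\to\mathcal{D}'$ produces $\bar f\in B_1^{st}(X,Y)\subseteq H$ with $\bar f{\restriction}_F=f{\restriction}_F$ and $\bar f(A)\subseteq\{y_0\}$. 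Finally, the constant function $\mathbf{g}_0$ lies in $C_p(X,Y)\subseteq B_1^{st}(X,Y)\subseteq H$.

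With these hypotheses confirmed, Theorem \ref{t:H-k-space} applies verbatim to the $Y$-Tychonoff space $X_{\aleph_0}$, the closed discrete sequence $\mathcal{D}$, and the relatively $\mathcal{D}_p$-Tychonoff subspace $H$ of $C_p(X_{\aleph_0},Y)$, yielding the equivalence of (i), (ii), (iii) with the property $\gamma$ of $X_{\aleph_0}$, i.e. (v). Separately, since $Y$ is metrizable with at least two points (non-compactness forces $Y$ to be infinite), Theorem \ref{t:B1-topological-sequential} gives the equivalence of (i), (ii), countable tightness, and property $\gamma$ of $X_{\aleph_0}$, that is, of (i), (ii), (iv) and (v). Combining the two chains closes the cycle and proves that all five assertions are equivalent.

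The main obstacle is the middle verification: establishing that $H$ is relatively $\mathcal{D}_p$-Tychonoff for the full infinite discrete sequence $\mathcal{D}$, rather than merely for a fixed finite range. This is exactly where the structure of Baire-one functions is used — one needs the extension $\bar f$ to stay inside $B_1^{st}(X,Y)$, to agree with the prescribed finite data on $F$, and to be constant equal to $y_0$ off a suitable zero-set containing the finite preimages, which is precisely the content of Lemma \ref{l:G-del-modif}. Once this richness property is secured and the closed discrete sequence furnished by non-compactness is in place, the remainder is a direct appeal to the two cited theorems.
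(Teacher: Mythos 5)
Your proposal is correct and follows essentially the same route as the paper's proof, which likewise embeds $H$ into $C_p(X_{\aleph_0},Y)$ via Proposition \ref{p:B(X,Y)-C(X,Y)}, invokes Proposition \ref{p:G-del-modif} to make $X_{\aleph_0}$ a $Y$-Tychonoff space, secures the relative $\mathcal{D}_p$-Tychonoff property from Proposition \ref{p:G-delta-D-Tychonoff}, and then applies Theorems \ref{t:H-k-space} and \ref{t:B1-topological-sequential}. Your explicit reduction of the infinite closed discrete sequence $\mathcal{D}$ to the finite set $f(F)\cup\{y_0\}$ is exactly the tacit justification behind the paper's claim that Proposition \ref{p:G-delta-D-Tychonoff} (stated there for finite $\mathcal{D}$) applies to every countable, possibly infinite, $\mathcal{D}\subseteq Y$.
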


\begin{proof}
Observe that $H \subseteq B(X,Y) \subseteq C_p(X_{\aleph_0},Y)$. Since $Y$ is a metric space, Proposition \ref{p:G-del-modif} implies that the space $X_{\aleph_0}$ is $Y$-Tychonoff. By Proposition \ref{p:G-delta-D-Tychonoff},  for every countable (finite or not) subset $\mathcal{D}\subseteq Y$, $H\cap C_p(X_{\aleph_0},\mathcal{D})$ is a relatively $\mathcal{D}_p$-Tychonoff subspace of $C_p(X_{\aleph_0},\mathcal{D})$. Now Theorems \ref{t:H-k-space} and \ref{t:B1-topological-sequential} apply. \qed
\end{proof}

Since  the class of Tychonoff spaces coincides with the class of $\IR$-Tychonoff spaces, Theorem \ref{t:B1-k-space} implies the following strengthening of Pestryakov's Theorem \ref{t:Pest-k-space}.

\begin{corollary} \label{c:B1-k-space}
Let $X$ be a Tychonoff space, and let $H$ be a subspace of $B(X)$ containing $B_1^{st}(X)$. Then the following assertions are equivalent:
\begin{enumerate}
\item[{\rm (i)}] $H$ is a Fr\'{e}chet--Urysohn space;
\item[{\rm (ii)}] $H$ is a sequential space;
\item[{\rm (iii)}] $H$ is a  $k$-space;
\item[{\rm (iv)}] $H$ has countable tightness;
\item[{\rm (v)}] $X_{\aleph_0}$ satisfies the property $\gamma$.
\end{enumerate}
\end{corollary}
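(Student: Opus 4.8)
The plan is to obtain this statement as the special case $Y=\IR$ of Theorem~\ref{t:B1-k-space}, so that essentially no new argument is required beyond matching the hypotheses and the notation. First I would observe that $\IR$ is a non-compact metrizable space (containing, in particular, at least two points), so it is a legitimate choice of the target space $Y$ in Theorem~\ref{t:B1-k-space}.

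Next I would verify the standing hypothesis on $X$. Theorem~\ref{t:B1-k-space} requires $X$ to be $Y$-Tychonoff, i.e. $\IR$-Tychonoff. But, as already recorded in the excerpt (immediately after the definition of $Y$-Tychonoff spaces, and again through Theorem~3.1.7 of \cite{Eng}), a topological space is $\IR$-Tychonoff if and only if it is Tychonoff in the usual sense. Hence the assumption that $X$ is Tychonoff is exactly the assumption that $X$ is $\IR$-Tychonoff, so the hypothesis of the ambient theorem is met.

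Finally I would reconcile the notation: by the conventions fixed in the introduction, $B(X)=B(X,\IR)$ and $B_1^{st}(X)=B_1^{st}(X,\IR)$. Therefore any subspace $H$ of $B(X)$ containing $B_1^{st}(X)$ is precisely a subspace of $B(X,\IR)$ containing $B_1^{st}(X,\IR)$, which is exactly the class of spaces to which Theorem~\ref{t:B1-k-space} applies. Invoking that theorem with $Y=\IR$ then yields the equivalence of (i)--(v). There is no genuine obstacle here: the equivalences of the Fr\'{e}chet--Urysohn, sequential, $k$-space and countable-tightness properties with the property $\gamma$ of $X_{\aleph_0}$ are already packaged in Theorem~\ref{t:B1-k-space}, and the only point requiring care is confirming the two identifications above, after which the corollary is immediate. \qed
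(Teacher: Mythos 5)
Your proposal is correct and coincides with the paper's own derivation: the authors obtain this corollary by specializing Theorem~\ref{t:B1-k-space} to $Y=\IR$, citing precisely the fact that the class of Tychonoff spaces coincides with the class of $\IR$-Tychonoff spaces. Your additional checks (that $\IR$ is non-compact metrizable, and that $B(X)=B(X,\IR)$ and $B_1^{st}(X)=B_1^{st}(X,\IR)$ by the paper's conventions) are exactly the identifications implicitly used there, so nothing is missing.
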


If $X$ has countable pseudocharacter, then $X_{\aleph_0}$ satisfies the property $\gamma$ if and only if $X$ is countable because, by Corollary \ref{c:cell-countable}, $X_{\aleph_0}$ is discrete. Therefore, for subspaces of $B(X,Y)$, Theorem \ref{t:B1-k-space} and Corollary \ref{c:B1-topological-sigma-first} imply the following extension of Theorem \ref{t:Baire-class}.

\begin{corollary} \label{c:B1-metric-space}
Let $Y$ be a non-compact metrizable space, $X$ be a $Y$-Tychonoff space of countable pseudocharacter, and let $H$ be a subspace of $B(X,Y)$ containing $B_1^{st}(X,Y)$. Then the following assertions are equivalent:
\begin{enumerate}
\item[{\rm (i)}] $H$ is metrizable and $H=Y^X$;
\item[{\rm (ii)}] $H$ is a  $k$-space;
\item[{\rm (iii)}] $H$ has countable tightness;
\item[{\rm (iv)}] $H$ is a $\sigma$-space;
\item[{\rm (v)}] $H$ has countable $cs^\ast$-character;
\item[{\rm (vi)}] $X$ is countable.
\end{enumerate}
\end{corollary}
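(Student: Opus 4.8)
The plan is to derive the corollary purely by assembling the two omnibus equivalences already established, both of which funnel through the single condition (vi) that $X$ be countable. The only piece of genuine content to insert is the observation that, for a space of countable pseudocharacter, the property $\gamma$ of $X_{\aleph_0}$ collapses to countability of $X$; everything else is a direct citation of earlier results.

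First I would settle the chain (ii)$\Leftrightarrow$(iii)$\Leftrightarrow$(vi). Our hypotheses --- $Y$ non-compact metrizable, $X$ a $Y$-Tychonoff space, and $B_1^{st}(X,Y)\subseteq H\subseteq B(X,Y)$ --- are exactly those of Theorem \ref{t:B1-k-space}, which gives that $H$ is a $k$-space iff $H$ has countable tightness iff $X_{\aleph_0}$ has the property $\gamma$. To replace the last condition by (vi) I would invoke the standing assumption $\psi(X)=\aleph_0$: by Corollary \ref{c:cell-countable}(i) this forces $X_{\aleph_0}$ to be discrete, and a discrete space has the property $\gamma$ exactly when it is countable. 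The forward implication holds because $\gamma$ implies Lindel\"{o}fness (passing from an open cover to the open $\omega$-cover of its finite unions and extracting a countable $\gamma$-subcover via Proposition \ref{p:property-gamma}), and a discrete Lindel\"{o}f space is countable; the converse holds because any countable space satisfies $\gamma$ by Proposition \ref{p:property-gamma}(iii). Thus $X_{\aleph_0}$ has $\gamma$ iff $X$ is countable --- precisely the fact recorded in the discussion preceding the corollary --- yielding (ii)$\Leftrightarrow$(iii)$\Leftrightarrow$(vi).

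Second, for (i)$\Leftrightarrow$(iv)$\Leftrightarrow$(v)$\Leftrightarrow$(vi) I would cite Corollary \ref{c:B1-topological-sigma-first} verbatim: a non-compact metrizable $Y$ is in particular a metric space with at least two points, so with the same $X$ and $H$ that corollary asserts the equivalence of ``$H$ is metrizable and $H=Y^X$'', ``$H$ is a $\sigma$-space'', ``$H$ has countable $cs^\ast$-character'', and ``$X$ is countable''. Splicing the two chains at the common node (vi) then shows that (i)--(vi) are all equivalent. I do not expect a real obstacle: the substantive work lives in Theorem \ref{t:B1-k-space} and Corollary \ref{c:B1-topological-sigma-first}, and the countable-pseudocharacter hypothesis is exactly the device (through Corollary \ref{c:cell-countable}) that forces the $\gamma$-condition to coincide with countability, so once those inputs are in hand the corollary is bookkeeping.
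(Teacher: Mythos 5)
Your proposal is correct and takes essentially the same approach as the paper, which likewise obtains the corollary by splicing Theorem \ref{t:B1-k-space} with Corollary \ref{c:B1-topological-sigma-first} at the node (vi), using Corollary \ref{c:cell-countable} to see that countable pseudocharacter makes $X_{\aleph_0}$ discrete and hence that the property $\gamma$ for $X_{\aleph_0}$ is equivalent to countability of $X$. Your explicit check that a discrete space has the property $\gamma$ if and only if it is countable simply fills in a step the paper leaves implicit.
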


Now we consider bounded Baire functions. Let $X$ be a Tychonoff space and $E$ be a locally convex space. A map $f:X\to E$ is called {\em bounded} ({\em relatively compact}) if the image $f(X)$ is a bounded (respectively, relatively compact) subset of $E$. For every countable ordinal $\alpha$, we denote by $B^{b}_\alpha(X,E)$, $B^{rc}_\alpha(X,E)$, $B^{st,b}_\alpha(X,E)$ or $B^{st,rc}_\alpha(X,E)$ the family of all functions from $B_\alpha(X,E)$ or $B^{st}_\alpha(X,E)$ which are bounded or relatively compact, respectively. It is clear that
\[
B^{st,rc}_\alpha(X,E) \subseteq B^{st,b}_\alpha(X,E) \subseteq B^{st}_\alpha(X,E) \; \mbox{ and } \; B^{rc}_\alpha(X,E) \subseteq B^{b}_\alpha(X,E) \subseteq B_\alpha(X,E).
\]

For the special case when $Y$ is a metrizable locally convex space, the next result strengthens Corollary \ref{c:B1-topological-sigma-first} and Theorem \ref{t:B1-k-space}.
\begin{theorem} \label{t:Baire-bounded}
Let $E$ be a metrizable locally convex space, $X$ be a Tychonoff space, and let $H$ be a subspace of $B(X,E)$ containing $B_1^{st,rc}(X,E)$. Then:
\begin{enumerate}
\item[{\rm (A)}] The following assertions are equivalent:
\begin{enumerate}
\item[{\rm (i)}] $H$ is metrizable;
\item[{\rm (ii)}] $H$ is a $\sigma$-space;
\item[{\rm (iii)}] $H$ has countable $cs^\ast$-character;
%\item[{\rm (iii)}] $X_{\aleph_0}$ has countable cellularity;
\item[{\rm (iv)}] $X$ is  countable.
\end{enumerate}
\item[{\rm (B)}]
 The following assertions are equivalent:
\begin{enumerate}
\item[{\rm (i)}] $H$ is a Fr\'{e}chet--Urysohn space;
\item[{\rm (ii)}] $H$ is a sequential space;
\item[{\rm (iii)}] $H$ is a  $k$-space;
\item[{\rm (iv)}] $H$ has countable tightness;
\item[{\rm (v)}] $X_{\aleph_0}$ satisfies the property $\gamma$.
\end{enumerate}
\end{enumerate}
\end{theorem}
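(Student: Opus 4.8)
We assume throughout that $E$ is non-trivial, so that $E$ contains two distinct points and, being a metrizable locally convex space, is non-compact, perfectly normal and path-connected. The plan is to deduce both parts from the results already obtained for subspaces containing $B_1^{st}(X,E)$ — namely Theorems \ref{t:B1-topological-sigma}, \ref{t:B1-topological-cs}, \ref{t:H-k-space} and \ref{t:B1-topological-sequential}, via Proposition \ref{p:G-delta-D-Tychonoff} — and to check that they remain valid under the weaker hypothesis $H\supseteq B_1^{st,rc}(X,E)$. The single observation that drives everything is that \emph{every witness function constructed in those proofs has finite image}; since finite subsets of the metrizable space $E$ are compact, hence relatively compact, all such functions already lie in $B_1^{st,rc}(X,E)\subseteq H$. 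As a preliminary, path-connectedness of $E$ and Proposition \ref{p:Y-good=>good-I}(ii) show that $X$ is $E$-Tychonoff, and then Proposition \ref{p:G-del-modif}(ii) shows that $X_{\aleph_0}$ is $E$-Tychonoff; all subsequent applications are made with $X$ replaced by $X_{\aleph_0}$ and $Y$ by $E$.

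For part (A), the implications (i)$\Rightarrow$(ii) and (i)$\Rightarrow$(iii) are the familiar facts that a metrizable space is a $\sigma$-space and first countable. For (ii)$\Rightarrow$(iv) and (iii)$\Rightarrow$(iv) I would repeat the cellularity argument from the proofs of Theorems \ref{t:B1-topological-sigma} and \ref{t:B1-topological-cs}: if $c(X_{\aleph_0})$ were uncountable, Lemma \ref{l:G-del-modif} produces an uncountable family $\{f_i\}$ with two-valued range, $f_i(Z_i)=\{b\}$ and $f_i(X\SM Z_i)=\{a\}$, so each $f_i\in B_1^{st,rc}(X,E)\subseteq H$, and $K=\{f_i\}\cup\{\aaa\}$ is a compact subset of $H$ that is non-metrizable and of uncountable $cs^\ast$-character; this contradicts the $\sigma$-space property (compact subsets of $\sigma$-spaces are metrizable) or countable $cs^\ast$-character. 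Hence $c(X_{\aleph_0})=\aleph_0$, and Corollary \ref{c:cell-countable}(ii) gives that $X$ is countable. For (iv)$\Rightarrow$(i), countability of $X$ yields $\psi(X)=\aleph_0$, so $X_{\aleph_0}$ is discrete by Corollary \ref{c:cell-countable}(i); then $C_p(X_{\aleph_0},E)=E^{X}$ is a countable product of the metrizable $E$, hence metrizable, and $H$ is a subspace of it. Unlike in Corollary \ref{c:B1-topological-sigma-first} one cannot conclude $H=E^{X}$, because $B_1^{st,rc}(X,E)$ is in general a proper subspace of $E^{X}$; only the metrizability of $H$ is preserved.

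For part (B), the implications (i)$\Rightarrow$(ii)$\Rightarrow$(iii) and (i)$\Rightarrow$(iv) are immediate. The substance is to verify the hypotheses of Theorems \ref{t:H-k-space} and \ref{t:B1-topological-sequential} for $H$ regarded as a subspace of $C_p(X_{\aleph_0},E)$. Fixing a closed discrete sequence $\mathcal{D}=\{g_n\}_{n\in\w}$ in the non-compact $E$, the construction in Proposition \ref{p:G-delta-D-Tychonoff} yields extensions $\bar f$ whose range lies in the finite set $\{y_0\}\cup f(F)$, so that $\bar f\in B_1^{st,rc}(X,E)\subseteq H$; this shows that $H$ is a relatively $\mathcal{D}_p$-Tychonoff subspace of $C_p(X_{\aleph_0},E)$ and contains $\mathbf{g}_0$. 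Consequently the $k$-space$\Rightarrow\gamma$ direction goes through Lemmas \ref{l:Cp-k-space} and \ref{l:H-k-space-gamma} — whose witnesses again have two-valued range, e.g.\ $f(F)=\{g_0\}$, $f(X\SM U)=\{g_{l+1}\}$ — so Theorem \ref{t:H-k-space} delivers (i)$\Leftrightarrow$(ii)$\Leftrightarrow$(iii)$\Leftrightarrow$(v), while Theorem \ref{t:B1-topological-sequential} (whose proof of countable tightness $\Rightarrow$ Lindel\"{o}fness also uses only two-valued functions) supplies (iv)$\Leftrightarrow$(v). In short, the proof of Theorem \ref{t:B1-k-space} transfers verbatim with $B_1^{st}(X,E)$ replaced by $B_1^{st,rc}(X,E)$.

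The only delicate step — and the main obstacle I anticipate — is the bookkeeping that \emph{each} function appearing in Lemma \ref{l:G-del-modif}, Proposition \ref{p:G-delta-D-Tychonoff}, Lemma \ref{l:Cp-k-space} and the proof of Theorem \ref{t:B1-topological-sequential} has finite range, so that the reduction from $B_1^{st}(X,E)$ to the genuinely smaller class $B_1^{st,rc}(X,E)$ is free of charge. Once this verification is carried out, both relatively $\mathcal{D}_p$-Tychonoff properties used above hold already for $H\supseteq B_1^{st,rc}(X,E)$, and parts (A) and (B) follow formally from the earlier theorems.
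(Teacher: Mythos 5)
Your proposal is correct and is essentially the paper's own argument: the published proof of Theorem \ref{t:Baire-bounded} consists precisely of the observation that every function constructed in the proofs of Theorems \ref{t:B1-topological-sigma}, \ref{t:B1-topological-cs}, \ref{t:B1-topological-sequential} and Proposition \ref{p:G-delta-D-Tychonoff} has finite image and hence lies in $B_1^{st,rc}(X,E)$. Your additional bookkeeping (the explicit verification for each lemma, and the remark that for countable $X$ one only gets metrizability of $H$ rather than $H=E^X$) is accurate and merely spells out what the paper leaves implicit.
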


\begin{proof}
The proof is actually given in the proofs of Theorems \ref{t:B1-topological-sigma}, \ref{t:B1-topological-cs}, \ref{t:B1-topological-sequential} and Proposition \ref{p:G-delta-D-Tychonoff}. Indeed, all functions used there have only finite image and hence belong to $B_1^{st,rc}(X,E)$.\qed
\end{proof}

We finish this section with the following analogue of Proposition \ref{p:B1-countable}.
\begin{proposition} \label{p:B1-countable-bounded}
Let $E$ be a locally convex space and let $X$ be a countable Tychonoff space. Then a map $f:X\to E$ is bounded (relatively compact) if and only if it belongs to $B_1^{st,b}(X,E)$ (respectively, $B_1^{st,rc}(X,E)$).
\end{proposition}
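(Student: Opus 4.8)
The plan is to derive the statement as an immediate consequence of Proposition \ref{p:B1-countable}. Since $X$ is a countable Tychonoff space it is, in particular, countable and functionally Hausdorff (every Tychonoff space is functionally Hausdorff). Applying Proposition \ref{p:B1-countable} with $Y=E$ --- which is admissible because $E$, being a locally convex space, is a topological space --- we obtain the equality $B_1^{st}(X,E)=E^X$. Thus \emph{every} map $f\colon X\to E$ already lies in $B_1^{st}(X,E)$, regardless of any boundedness or compactness properties of $f$.

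Granting this equality, both equivalences reduce to unravelling the definitions of $B_1^{st,b}(X,E)$ and $B_1^{st,rc}(X,E)$. By definition these are the subfamilies of $B_1^{st}(X,E)$ consisting of the bounded maps and of the relatively compact maps, respectively. Hence, in the bounded case, if $f$ is bounded then $f\in E^X=B_1^{st}(X,E)$ and $f(X)$ is bounded, so $f\in B_1^{st,b}(X,E)$; conversely $f\in B_1^{st,b}(X,E)$ forces $f(X)$ to be bounded. The relatively compact case is the same argument word for word, with ``bounded'' replaced by ``relatively compact''.

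Consequently there is no real obstacle; the only thing to verify is that the hypotheses of Proposition \ref{p:B1-countable} are satisfied, as noted above. If a self-contained argument were preferred, one would enumerate $X=\{x_n\}_{n\in\w}$ and, for a bounded (respectively relatively compact) $f$, construct a stably convergent sequence $\{f_n\}_{n\in\w}\subseteq C(X,E)$ with $f_n(x_i)=f(x_i)$ for all $i\le n$ by separating the finite set $\{x_0,\dots,x_n\}$ via continuous scalar functions and taking each $f_n$ with values in the convex hull of $\{f(x_0),\dots,f(x_n)\}$; this hull is bounded (respectively compact), so the approximants inherit the relevant property. Such a refinement is, however, stronger than what the statement requires and is not needed here.
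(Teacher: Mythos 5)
Your proof is correct, and it takes a genuinely shorter route than the paper's. You reduce everything to Proposition \ref{p:B1-countable}: since a countable Tychonoff space is countable and functionally Hausdorff, that proposition (applied with $Y=E$, which is legitimate because it is stated for an arbitrary topological space $Y$) gives $B_1^{st}(X,E)=E^X$; and since $B_1^{st,b}(X,E)$ (resp. $B_1^{st,rc}(X,E)$) is by definition just the set of members of $B_1^{st}(X,E)$ with bounded (resp. relatively compact) image --- the definition imposes no boundedness on the approximating sequence, only on the limit --- the equivalence is immediate. The paper instead proves the necessity directly, essentially re-running the construction underlying Proposition \ref{p:B1-countable}: it disposes of finite $X$ trivially, writes $X=\bigcup_{n\in\w}X_n$ with $X_n$ finite and increasing, uses the fact that a countable Tychonoff space is zero-dimensional (Corollary 6.2.8 of \cite{Eng}) together with Proposition \ref{p:2-Y-I-Tychonoff} to extend the identity of $X_n$ to a continuous retraction $r_n:X\to X_n$, and checks that $f\circ r_n$ stably converges to $f$. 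What the paper's construction buys is self-containedness plus slightly more information: each approximant $f\circ r_n$ has finite image contained in $f(X)$, so the approximating sequence itself consists of relatively compact continuous maps --- extra strength which, as you correctly observe, the definitions do not require. Your closing sketch (approximants with values in the convex hull of $\{f(x_0),\dots,f(x_n)\}$, using local convexity in place of zero-dimensionality) is a viable variant of the same idea, but the citation-based reduction already settles the statement.
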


\begin{proof}
The assertion is trivial if $X$  is finite. So, we assume that $X$ is infinite. The sufficiency follows from the definition of $B_1^{st,b}(X,E)$ ($B_1^{st,rc}(X,E)$). To prove the necessity, let $f:X\to E$ be a bounded (relatively compact) map. Write $X$ as the union $X=\bigcup_{n\in\w}X_n$ of an increasing sequence $\{X_n\}_{n\in\w}$ of nonempty finite subsets $X_n$ of $X$. Since $X$ is countable and Tychonoff, it is zero-dimensional by Corollary 6.2.8 of \cite{Eng}. For every $n\in\w$, by Proposition \ref{p:2-Y-I-Tychonoff},  the identity map $s_n:F_n\to F_n$ can be extended to a continuous function  $r_n:X\to X_n$. It follows that the function sequence $\{f\circ r_n\}_{n\in\w}$ stably converges to $f$ and hence $f\in B_1^{st}(X,Y)$. Since $f(X)$ is a bounded (respectively, relatively compact) subset of $E$, we obtain $f\in B_1^{st,b}(X,E)$ (respectively, $f\in B_1^{st,rc}(X,E)$). \qed
\end{proof}

%%%%%%%%%%%%%%%%%%%%%%%%%%%%%%%%%%
%%%%%%%%%%%%%%%%%%%%%%%%%%%%%%%%%%
%%%%%%%%%%%%%%%%%%%%%%%%%%%%%%%%%%
%%%%%%%%%%%%%%%%%%%%%%%%%%%%%%%%%%
%%%%%%%%%%%%%%%%%%%%%%%%%%%%%%%%%%

\section{Normality of spaces of Baire one functions} \label{sec:normal}

%%%%%%%%%%%%%%%%%%%%%%%%%%%%%%%%%%
%%%%%%%%%%%%%%%%%%%%%%%%%%%%%%%%%%
%%%%%%%%%%%%%%%%%%%%%%%%%%%%%%%%%%
%%%%%%%%%%%%%%%%%%%%%%%%%%%%%%%%%%
%%%%%%%%%%%%%%%%%%%%%%%%%%%%%%%%%%

In \cite[3.1.H]{Eng}, Engelking showed that the space $\w^\mathfrak{c}$ contains a closed discrete subspace of cardinality $\mathfrak{c}$, where $\w$ is endowed with the discrete topology. The next proposition essentially generalizes this fact (just apply the proposition to $Y=\w$, $X=2^\w$ and $H=Y^X$).

\begin{proposition} \label{p:Eng-1}
Let $Y$ be a Tychonoff space containing a closed and discrete subspace $D=\{ y_n\}_{n\in\w}$, $X$  be an uncountable $Y$-$z$-Tychonoff metrizable compact  space, and let $H$ be a subspace of $Y^X$ containing $B^{st}_1(X,Y)$. Then $H$ contains a discrete and closed subspace $\mathcal{F}\subseteq B^{st}_1(X,Y)$ of cardinality $\mathfrak{c}$ such that $f(X)\subseteq D$ for every $f\in\mathcal{F}$ (in fact, $\mathcal{F}$ is closed and discrete in $Y^X$).
\end{proposition}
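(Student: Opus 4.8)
The goal is to produce $\mathfrak{c}$ many functions in $B^{st}_1(X,Y)$, each taking values in the closed discrete set $D=\{y_n\}_{n\in\w}$, that form a closed discrete subspace of $Y^X$. Since $X$ is an uncountable metrizable compact space, it has cardinality $\mathfrak{c}$ and, being uncountable Polish, contains a copy of the Cantor set; in particular I can fix a countable set $\{x_n : n\in\w\}$ of distinct points of $X$ together with a point $x_\infty$ that is a limit of a subsequence, or more simply fix any faithfully indexed convergent sequence. The real engine, however, is Lemma~\ref{l:Baire-1}: since $X$ is an uncountable metrizable (hence perfectly normal, so countable pseudocharacter) $Y$-Tychonoff space, for any \emph{disjoint} family $\{U_n\}_{n\in\w}$ of open sets with chosen points $x_n\in U_n$ and any prescribed values $g_0,g_1,\dots\in Y$, the function sending $x_n\mapsto g_n$ and everything else to a fixed value lies in $B^{st}_1(X,Y)$.

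\textbf{Constructing the family.} First I would fix a countable discrete (in $X$) family of disjoint open sets $\{U_n\}_{n\in\w}$ with points $x_n\in U_n$; such a family exists because $X$ is an infinite metrizable space, so it contains a nontrivial convergent sequence $x_n\to x_\ast$ with all $x_n$ distinct, and around each $x_n$ one can choose pairwise disjoint open neighborhoods $U_n$ avoiding $x_\ast$. For each $\varphi\in 2^\w$ (so there are $\mathfrak{c}$ many), apply Lemma~\ref{l:Baire-1} with the values $g_n := y_{\varphi(n)}$ (choosing, say, between $y_0$ and $y_1$) and the ``background'' value $g_\w := y_0$, obtaining a function $f_\varphi\in B^{st}_1(X,Y)$ with $f_\varphi(x_n)=y_{\varphi(n)}$ and $f_\varphi(X\setminus\{x_n:n\in\w\})=\{y_0\}$. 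Each such $f_\varphi$ has image contained in $D$. Set $\mathcal{F}:=\{f_\varphi : \varphi\in 2^\w\}\subseteq B^{st}_1(X,Y)\subseteq H$; distinct $\varphi$ give distinct functions (they differ at some $x_n$), so $|\mathcal{F}|=\mathfrak{c}$.

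\textbf{Discreteness and closedness in $Y^X$.} It remains to show $\mathcal{F}$ is discrete and closed in $Y^X$ (which gives discrete and closed in $H$ and in $B^{st}_1(X,Y)$). For discreteness, fix $\varphi$ and, using that $D$ is discrete in $Y$, pick an open $O$ in $Y$ with $O\cap D=\{y_{\varphi(0)}\}$; actually, to isolate $f_\varphi$ I would separate it from every other $f_\psi$ by controlling finitely many coordinates, which is the delicate point since two functions may agree on any finite initial segment. The trick is that the only way $\mathcal{F}$ could fail to be discrete or closed is if some net/point of $Y^X$ were a limit; but the evaluation coordinates $f\mapsto f(x_n)$ take values in the discrete set $D$, and any would-be limit function $h$ would have $h(x_n)\in D$ for all $n$, forcing $h=f_\psi$ for the unique $\psi$ with $\psi(n)$ determined by $h(x_n)$. \textbf{The main obstacle} is precisely this: proving $\mathcal{F}$ is \emph{closed} in $Y^X$, i.e. that no function outside $\mathcal{F}$ lies in its closure. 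I would argue that if $h\in\overline{\mathcal{F}}$, then for each $n$ the basic neighborhood $W[h;\{x_n\},O_n]$ (with $O_n\cap D$ a singleton, using discreteness of $D$) must contain some $f_\varphi$, pinning down $h(x_n)\in D$ and hence defining $\psi\in 2^\w$ by $h(x_n)=y_{\psi(n)}$; then a further neighborhood argument on finitely many coordinates $x_0,\dots,x_m$ simultaneously shows $h$ must agree with $f_\psi$ on those coordinates and, combined with the background value $y_0$ off the sequence, forces $h=f_\psi\in\mathcal{F}$. The same simultaneous-coordinate neighborhood, shrunk to isolate $f_\psi$ from all $f_\varphi$ with $\varphi\neq\psi$ differing already on $\{0,\dots,m\}$, yields discreteness; the residual functions $f_\varphi$ agreeing with $f_\psi$ on $\{0,\dots,m\}$ are eliminated by noting that closedness has already identified the unique candidate limit. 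This coordinatewise separation using the discreteness of $D$ is routine once set up, so the essential content is the extraction of the disjoint family and the application of Lemma~\ref{l:Baire-1}.
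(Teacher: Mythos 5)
There is a genuine gap, and it is fatal to the construction rather than a detail to be tightened. Your family $\mathcal{F}=\{f_\varphi:\varphi\in 2^\w\}$ cannot be discrete: the map $\varphi\mapsto f_\varphi$ is a homeomorphism of the Cantor set $2^\w$ onto $\mathcal{F}$ with the topology inherited from $Y^X$. Indeed, all members of $\mathcal{F}$ agree (with value $y_0$) off the countable set $\{x_n:n\in\w\}$, and at each $x_n$ they take values in the discrete doubleton $\{y_0,y_1\}$, so a basic pointwise neighborhood of $f_\psi$ constrains only finitely many coordinates $\psi(0),\dots,\psi(m)$ and therefore contains $f_\varphi$ for continuum many $\varphi$. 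Hence $\mathcal{F}$ is a compact \emph{perfect} set (so it is closed in $Y^X$, but has no isolated points whatsoever), and the step you flag as ``the delicate point'' --- isolating $f_\psi$ from the functions agreeing with it on $\{0,\dots,m\}$ --- is not delicate but impossible; appealing to closedness there does not help, since closedness and discreteness are independent and your set is closed yet nowhere discrete. Moreover, no re-choice of values rescues the idea: any family of functions with values in $D$ that equal $y_0$ off a fixed countable set lies in a subspace of $Y^X$ homeomorphic to a subspace of $D^\w\cong\w^\w$, which is separable metrizable, and a second countable space admits no uncountable discrete subspace. So the supports of the functions must spread over uncountably many points of $X$.

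That is exactly what the paper's proof does: it indexes the family by the points of $X$ itself (using $|X|=\mathfrak{c}$ for an uncountable compact metric space), setting $f_t(t):=y_0$ and $f_t(x):=y_n$ on the annulus $\tfrac{1}{n+1}<\rho(x,t)\le\tfrac{1}{n}$. Discreteness then comes for free from a single coordinate: $W[f_t;\{t\},\mathcal{O}_0]$ with $\mathcal{O}_0\cap D=\{y_0\}$ meets $\mathcal{F}$ only in $f_t$, because $f_{t'}(t)\in D\SM\{y_0\}$ for $t'\neq t$. Note also that these $f_t$ are not instances of Lemma \ref{l:Baire-1} (the level sets are annuli, not points); the paper verifies $f_t\in B_1^{st}(X,Y)$ directly, using that $X$ is $Y$-normal (via Proposition \ref{p:z-normal-Y-normal}, which is where the hypothesis $Y$-$z$-Tychonoff plus perfect normality of the metrizable $X$ enters) to build continuous approximations constant on the closed annuli. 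The genuinely hard part is closedness, and it is where compactness of $X$ is used in an essential way: a candidate limit $\chi\in\overline{\mathcal{F}}\SM\mathcal{F}$ is first shown to omit $y_0$, then (via a convergent sequence in the compact $X$) to have image inside some finite set $\{y_1,\dots,y_m\}$, and finally a finite $\tfrac{1}{2m}$-net $\{z_1,\dots,z_s\}$ yields a neighborhood $\bigcap_{i=1}^{s}W[\chi;\{z_i\},O_i]$ missing every $f_t$, since each $t$ lies within $\tfrac{1}{2m}$ of some $z_{i_t}$ and hence $f_t(z_{i_t})$ is either $y_0$ or some $y_{n_t}$ with $n_t>m$. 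None of this machinery has an analogue in your construction, because your family fails already at the discreteness stage.
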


\begin{proof}
First we note that the metrizable space $X$ is $Y$-normal by Proposition \ref{p:z-normal-Y-normal}.
Fix a metric $\rho$ on $X$ such that $\rho(x,y)\leq 1$ for all points $x,y\in X$. For every $t\in X$, similar to \cite[3.1.H]{Eng}, define a function $f_t:X\to D$ as follows:
\[
f_t(t):=y_0, \mbox{ and } \; f_t(x):= y_n \mbox{  if } \; \tfrac{1}{n+1}< \rho(x,t)\leq \tfrac{1}{n}.
\]
%\smallskip

{\em Claim 1.  $f_t\in B_1^{st}(X,Y)$ for every $t\in X$.} Indeed, fix $t\in X$. For every natural numbers $n,k$ such that $1\leq n\leq k$, set
\[
U^t_{n,k} := \left\{ x\in X: \tfrac{1}{n+1}+\tfrac{1}{4k^2}\leq \rho(x,t)\leq \tfrac{1}{n}\right\}
\]
and
\[
V^t_k := \left\{ x\in X: \rho(x,t)\leq \tfrac{1}{k+1}\right\}.
\]
It is easy to see that the family $\{ U^t_{n,k}\}_{n=1}^k \cup\{ V^t_k\}$ of closed subsets of $X$ is disjoint and $t\in V^t_k$. Since $X$ is a $Y$-normal space, for every $k\geq 1 $, there exists a continuous function $f_{t,k}:X\to Y$ such that
\[
f_{t,k}(x)=y_n \; \mbox{ if }\; x\in U^t_{n,k} \mbox{ for }\; 1\leq n\leq k, \; \mbox{ and }\; f_{t,k}(x)=y_0 \;\mbox{ if }\; x\in V^t_k.
\]
To prove the claim we shall show that $f_{t,k}$ stably converges to $f_{t}$. Indeed, $f_{t,k}(t)=y_0$ for every $k\geq 1$. Now let $x\not=t$ and choose $n\geq 1$ such that $\tfrac{1}{n+1}< \rho(x,t)\leq \tfrac{1}{n}$. Take $k_0\geq 1$ such that $\tfrac{1}{n+1}+\tfrac{1}{4k^2_0}< \rho(x,t)$. Then for every $k\geq\max\{ n,k_0\}$, we have $x\in U^t_{n,k}$ and hence $f_{t,k}(x)=y_n=f_t(x)$. Thus $f_t\in B_1^{st}(X,Y)$. Claim 1 is proved.

Let $\mathcal{F}:=\{ f_t:t\in X\}$. We shall show that $\mathcal{F}$ satisfies the conditions of the proposition. First we show that $\mathcal{F}$ is discrete in $Y^X$. To this end, consider two  distinct points $t,t'\in X$. Then $f_t(t)=y_0$ but $f_{t'}(t)\in D\SM \{ y_0\}$. If $\mathcal{O}_0$ is an open neighborhood of $y_0$ such that $\mathcal{O}_0\cap D=\{ y_0\}$, then $W[ f_t; \{t\},\mathcal{O}_0]\cap \mathcal{F} =\{ f_t\}$. Thus $\mathcal{F}$ is discrete.

To show that $\mathcal{F}$ is closed, we shall prove that $\mathcal{F}$ is closed  in $Y^X$. Suppose for a contradiction that $\mathcal{F}$ is not closed in $Y^X$. Fix a function $\chi\in \overline{\mathcal{F}}\SM \mathcal{F}$. Observe that $\chi(X)\subseteq D$ because $D$ is closed and, by construction, $f(X)\subseteq D$ for every $f\in \mathcal{F}$.
\smallskip

{\em Claim 2. $\chi(X)\subseteq D\SM\{ y_0\}$.} Indeed, assuming the converse we can find a point $t\in X$ such that $\chi(t)=y_0$. Since $\chi\not= f_t$, there is an $x\in X$ such that $\chi(x)\not=f_t(x)$. Choose an open neighborhood $U_{\chi(x)}$ of $\chi(x)$ in $Y$ such that $U_{\chi(x)}\cap D=\{\chi(x)\}$. Then
\[
\big( W\big[ \chi; \{ t\},  \mathcal{O}_0\big] \cap W\big[ \chi; \{ x\}, U_{\chi(x)}\big]\big) \cap \mathcal{F} =\emptyset
\]
that contradicts the inclusion $\chi\in \overline{\mathcal{F}}$. Thus $\chi(X)\subseteq D\SM\{ y_0\}$.
\smallskip

{\em Claim 3. There is an $m\geq 1$ such that $\chi(X)\subseteq \{ y_1,\dots,y_m\}$.} Indeed, suppose for a contradiction that there exist a sequence $\{ x_k\}_{k\geq 1}\subseteq X$ and an increasing sequence $\{ n_k\}_{k\geq 1}\subseteq\w$ such that $\chi(x_k)=y_{n_k}$. Since $X$ is compact, without loss of generality we can assume that $x_k \to x_0$. Note that $\chi(x_0)=y_{n_0}$ for some $n_0>0$. Choose an open neighborhood $\mathcal{O}_{n_0}$ of $y_{n_0}$ such that $\mathcal{O}_{n_0}\cap D=\{ y_{n_0}\}$. Since $\chi\in \overline{\mathcal{F}}$, the standard neighborhood $W[\chi; \{x_0\},\mathcal{O}_{n_0}]$ of $\chi$ contains only those $f_t\in \mathcal{F}$ for which $f_t(x_0)=y_{n_0}$. Denote by $T$ the set of all $t\in X$ such that
\[
\tfrac{1}{n_0+1}< \rho(x_0,t)\leq \tfrac{1}{n_0},
\]
so $t\in T$ if and only if $f_t(x_0)=y_{n_0}$. As the metric $\rho$ is continuous and $X$ is compact, there is an open neighborhood $V$ of $x_0$ such that
\begin{equation} \label{equ:Baire-1-normal-1}
\tfrac{1}{n_0+2}< \rho(x,t)\leq \min\left\{ \tfrac{1}{n_0-1},1\right\}
\end{equation}
for every $x\in V$ and each $t\in T$. But the definition of $f_t$ and (\ref{equ:Baire-1-normal-1}) imply that
\begin{equation} \label{equ:Baire-1-normal-2}
f_t(x_k)\in \big\{ y_{\max\{ n_0-1,1\}}, y_{n_0}, y_{n_0+1} \big\}
\end{equation}
for every $t\in T$ and all $k\in\w$ for which $x_k\in V$. Choose $k\in\w$ such that $k>n_0+1$ and $x_k\in V$, and take an open neighborhood $\mathcal{O}_{n_k}$ of $y_{n_k}$ such that $\mathcal{O}_{n_k}\cap D=\{ y_{n_k}\}$. Then (\ref{equ:Baire-1-normal-2}) implies
\[
\big( W\big[ \chi; \{x_0\}, \mathcal{O}_{n_0}\big] \cap W\big[ \chi; \{x_k\}, \mathcal{O}_{n_k}\big] \big)\cap \mathcal{F} =\emptyset
\]
which contradicts the inclusion $\chi\in \overline{\mathcal{F}}$. Claim 3 is proved.
\smallskip

Since $X$ is compact, choose a finite subset $\{z_1,\dots,z_s\}$ of $X$ such that for every $x\in X$  there is an $i_x\in\{ 1,\dots,s\}$ for which $\rho(x,z_{i_x})<\tfrac{1}{2m}$, where $m\in \w$  is defined in Claim 3. Choose open neighborhoods $O_1,\dots,O_s$ of $\chi(z_1),\dots,\chi(z_s)$, respectively, such that $O_i\cap D=\{ \chi(z_i)\}$ for every $i=1,\dots,s$. Consider the standard neighborhood
\[
W=\bigcap_{ i=1}^s W\big[\chi; \{z_i\}, O_i\big]
\]
of the function $\chi$. Now, for every $t\in X$, choose  $i_t\in\{ 1,\dots,s\}$ such that $\rho(t,z_{i_t})<\tfrac{1}{2m}$. Then either $t=z_{i_t}$ and hence $f_t(z_{i_t})=y_0$, or $t\not= z_{i_t}$ and hence $f_t(z_{i_t})=y_{n_t}$ for some $n_t >m$. So, by Claims 2 and 3 and the construction of $W$, it follows that $f_t\not\in W$ for every $t\in X$. Therefore $\chi\not\in \overline{\mathcal{F}}$ that contradicts the choice of the function $\chi$. Thus the family $\mathcal{F}$ is closed in $Y^X$.\qed
\end{proof}

%The proof of the next lemma is similar to the proof of non-normality of the Niemytzki plane \cite[Example~1.5.10]{Eng} and is given for the sake of completeness.

To show that a given space is not normal we shall use the following Jones' Lemma, see \cite[Lemma~3.5]{Hodel}.
\begin{lemma} \label{l:discrete-normal}
If $X$ is a normal space, then $2^{|D|}\leq 2^{d(X)}$ for every closed discrete $D\subseteq X$. In particular, if $X$ is normal and separable, then: {\em (1)} $X$ cannot have a closed discrete set of cardinality $\geq \mathfrak{c}$; {\em (2)} $2^\w <2^{\w_1}$ implies that $X$ cannot have a closed discrete set of cardinality $\w_1$.%If a Tychonoff space $X$ contains a closed and discrete subspace $D$ of cardinality $|D|\geq 2^{d(X)}$, then $X$ is not a normal space.
\end{lemma}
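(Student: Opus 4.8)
The plan is to prove this by the classical Jones' Lemma argument, whose heart is to separate subsets of the closed discrete set $D$ by disjoint open sets and then to encode these separations faithfully into a fixed dense subset of $X$. First I would fix a dense set $S\subseteq X$ with $|S|=d(X)$. The key preliminary observation is that, since $D$ is discrete as a subspace and closed in $X$, every subset $A\subseteq D$ is clopen in $D$ and hence closed in $X$; consequently both $A$ and its relative complement $D\SM A$ are disjoint closed subsets of the normal space $X$. Normality then provides, for each $A\subseteq D$, a pair of disjoint open sets $U_A\supseteq A$ and $V_A\supseteq D\SM A$.

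The main step is to show that the assignment $A\mapsto U_A\cap S$, from $\mathcal{P}(D)$ into $\mathcal{P}(S)$, is injective. Suppose $A\neq B$ are subsets of $D$; without loss of generality pick $x\in A\SM B$. Then $x\in A\subseteq U_A$ while $x\in D\SM B\subseteq V_B$, so $U_A\cap V_B$ is a nonempty open set. By density of $S$ there is a point $s\in S\cap U_A\cap V_B$. Since $U_B$ and $V_B$ are disjoint and $s\in V_B$, we get $s\notin U_B$; thus $s\in(U_A\cap S)\SM(U_B\cap S)$, witnessing $U_A\cap S\neq U_B\cap S$. Injectivity then yields $2^{|D|}=|\mathcal{P}(D)|\leq|\mathcal{P}(S)|=2^{d(X)}$, which is the asserted inequality.

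For the ``in particular'' clauses I would specialize to $X$ normal and separable, so $d(X)=\aleph_0$ and the inequality reads $2^{|D|}\leq 2^{\w}=\mathfrak{c}$ for every closed discrete $D$. For (1), a closed discrete $D$ with $|D|\geq\mathfrak{c}$ would give $2^{|D|}\geq 2^{\mathfrak{c}}$, and Cantor's theorem forces $2^{\mathfrak{c}}>\mathfrak{c}=2^{\w}$, contradicting $2^{|D|}\leq 2^{\w}$. For (2), under the hypothesis $2^{\w}<2^{\w_1}$ a closed discrete set of cardinality $\w_1$ would force $2^{\w_1}\leq 2^{\w}$, again a contradiction.

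The argument is entirely elementary, so I do not anticipate a genuine obstacle; the single point demanding care is the injectivity step, where the use of density must respect the asymmetry between $A$ and $B$. One must locate the separating point $s$ precisely inside $U_A\cap V_B$ (rather than naively comparing the traces $U_A\cap S$ and $U_B\cap S$), and remember that it is only after intersecting with the dense set $S$ that the distinctness of the separating neighborhoods becomes visible.
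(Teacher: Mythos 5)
Your proof is correct and is precisely the classical Jones' Lemma argument: separating each $A\subseteq D$ from $D\SM A$ by normality and encoding the separation injectively into the power set of a dense subset via $A\mapsto U_A\cap S$. The paper does not prove this lemma at all but cites it directly (as Lemma 3.5 of Hodel's Handbook chapter), and your argument coincides with the standard proof found there, so there is nothing to fault.
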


%\begin{proof}
%Suppose for a contradiction that $X$ is a normal space. Then for every subset $A$ of $D$ there are open subsets $U_A,V_A\subseteq X$ such that
%\[
%A\subseteq U_A, \;\; D\SM A \subseteq V_A \; \; \mbox{ and }\;\; U_A\cap V_A=\emptyset.
%\]
%Let $C$ be a dense subset of $X$ of cardinality $d(X)$. For every $A\subseteq D$, let $C_A:=C\cap U_A$. We shall show that $C_A\not= C_B$ if $A\not= B$, which will yield a contradiction because $C$ contains  $2^{d(X)}$ distinct subsets, but $D$ contains $2^{2^{d(X)}} (> 2^{d(X)})$ distinct subsets. Take then $A,B \subseteq D$ such that $A\not= B$; by the symmetry of assumptions we can assume that $A\SM B\not=\emptyset$. Since $A\SM B \subseteq U_A\cap V_B$, we have $U_A\cap V_B\not=\emptyset$, and this implies, due to the density of $C$ in $X$, that
%\[
%\emptyset \not= C\cap U_A\cap V_B \subseteq C_A \SM U_B \subseteq C_A\SM C_B,
%\]
%hence $C_A\not= C_B$. \qed
%\end{proof}

%{p:C(X,Y)-net-weight}

\begin{proposition} \label{p:compact-normal-B1}
Let $Y$ be a separable non-compact metric space, $X$ be a $Y$-$z$-Tychonoff metrizable compact space, and let $H$ be a subspace of $Y^X$ containing $B^{st}_1(X,Y)$. Then $H$ is a normal space if and only if $X$ is countable. In this case $H=Y^X$.
\end{proposition}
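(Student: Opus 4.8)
The plan is to prove the two implications separately, the forward one being the substantial direction. Throughout I use that $X$, being metrizable and compact, is Tychonoff (indeed functionally Hausdorff) with $w(X)=\aleph_0$, and that $X$ is $Y$-Tychonoff by Proposition~\ref{p:Y-z-Tychonoff-exa}(vi). For the converse, suppose $X$ is countable. Since $X$ is countable and functionally Hausdorff, Proposition~\ref{p:B1-countable} gives $B_1^{st}(X,Y)=Y^X$, and the inclusions $B_1^{st}(X,Y)\subseteq H\subseteq Y^X$ then force $H=Y^X$. As $X$ is countable and $Y$ is separable metrizable, $Y^X$ is a countable product of separable metrizable spaces, hence itself separable metrizable, and in particular normal. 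This also disposes of the ``in this case $H=Y^X$'' clause.

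For the forward implication I argue by contraposition: assuming $X$ uncountable, I will exhibit in $H$ a closed discrete set of cardinality $\mathfrak{c}$ together with a countable dense set, and then invoke Jones' Lemma. Since $Y$ is a non-compact metric space it is not limit-point compact, so it contains an infinite subset with no accumulation point; any countably infinite such subset is a closed discrete sequence $D=\{y_n\}_{n\in\w}$ in $Y$. Feeding this $D$ into Proposition~\ref{p:Eng-1} (whose hypotheses are exactly those at hand, with the uncountability of $X$ essential) produces a subspace $\mathcal{F}\subseteq B_1^{st}(X,Y)\subseteq H$ that is closed and discrete in $Y^X$, hence in $H$, and satisfies $|\mathcal{F}|=\mathfrak{c}$.

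The crux is that $H$ is separable. First, because $X$ is $Y$-Tychonoff, taking $A=\emptyset$ in the definition of $Y$-Tychonoffness shows that for any finite $F\subseteq X$ and any $f\colon F\to Y$ there is a continuous extension to $X$; hence $C(X,Y)$ meets every basic open set of $Y^X$, i.e.\ $C(X,Y)$ is dense in $Y^X$. Second, since $X$ is compact metrizable and $Y$-Tychonoff and $Y$ is separable metric, Corollary~\ref{c:density-C(X,Y)} yields $d\big(C_p(X,Y)\big)=w(X)=\aleph_0$; fix a countable $E_0\subseteq C(X,Y)$ dense in $C_p(X,Y)$. Then the closure of $E_0$ in $Y^X$ contains $C(X,Y)$, hence contains the $Y^X$-closure of $C(X,Y)$, which is all of $Y^X$; thus $E_0$ is dense in $Y^X$. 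Since $E_0\subseteq C(X,Y)\subseteq H$, the set $E_0$ is in particular dense in $H$, so $H$ is separable.

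Finally, $H$ is a separable space containing a closed discrete subset of cardinality $\mathfrak{c}$, so by Jones' Lemma (Lemma~\ref{l:discrete-normal}(1)) $H$ cannot be normal; this proves the contrapositive and completes the equivalence. The step requiring the most care is the separability of $H$: a subspace of the separable space $Y^X$ need not itself be separable, so it is essential that the countable dense set $E_0$ be located inside $C(X,Y)\subseteq H$, and that one verify both that $C(X,Y)$ is dense in $Y^X$ (which is where $Y$-Tychonoffness is used) and that $C_p(X,Y)$ is separable (which is where the compact-metrizable structure of $X$ and Corollary~\ref{c:density-C(X,Y)} are used).
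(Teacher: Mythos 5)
Your proof is correct and follows essentially the same route as the paper's: Proposition~\ref{p:Eng-1} to produce a closed discrete subset of cardinality $\mathfrak{c}$, separability of $H$ via Corollary~\ref{c:density-C(X,Y)}, Jones' Lemma~\ref{l:discrete-normal} for non-normality, and Proposition~\ref{p:B1-countable} for the countable case. Your explicit verification that $C(X,Y)$ is dense in $Y^X$ (so that the countable dense subset of $C_p(X,Y)$ witnesses separability of $H$) is a welcome elaboration of a step the paper leaves implicit.
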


\begin{proof}
Let $H$ be a normal space. If $X$ is uncountable, then, by Proposition \ref{p:Eng-1}, $H$ contains a closed and discrete subspace $\mathcal{F}$ of cardinality $\mathfrak{c}$.
Since $X$ is $Y$-Tychonoff (Proposition \ref{p:Y-z-Tychonoff-exa}),  Corollary \ref{c:density-C(X,Y)} implies that the space $C_p(X,Y)$ is  separable. Since $C_p(X,Y)$ is dense in $H$, the space $H$ is also separable. Therefore, by Lemma \ref{l:discrete-normal}, $H$ is not normal. This contradiction shows that $X$ must be countable. Conversely, if $X$ is countable, then, by Proposition \ref{p:B1-countable}, $H=Y^X$, and therefore $H$ is metrizable and hence normal. \qed
\end{proof}

Recall that a topological space $X$ is called {\em $k$-scattered} if every compact Hausdorff subspace of $X$ is scattered. It is proved in \cite[p.34]{CD} that a \v{C}ech-complete space $X$ is scattered if and only if it is $k$-scattered (for a short proof of this result see Theorem 8.7 in \cite{BG-Baire}).

\begin{proposition} \label{p:B1-normal}
Let $Y$ be a path-connected non-compact separable metric space, and let $X$ be a Tychonoff space. Assume that $H$ is a subspace of $B(X,Y)$ containing $B_1^{st}(X,Y)$. If $H$ is a normal space, then $X$ is $k$-scattered.
\end{proposition}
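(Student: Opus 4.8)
The plan is to prove the contrapositive: assuming that $X$ is \emph{not} $k$-scattered, I will produce inside $H$ a separable closed subspace carrying a closed discrete subset of cardinality $\mathfrak{c}$, and then invoke Jones' Lemma \ref{l:discrete-normal} to conclude that this subspace, and hence $H$, cannot be normal. First I fix a compact non-scattered subspace $K\subseteq X$. By Theorem 8.5.4 of \cite{sema} there is a continuous surjection $T:K\to\II$; since the Cantor set $C\cong 2^\w$ is a closed (hence zero-) subset of $\II$, its preimage $L:=T^{-1}(C)$ is a compact subspace of $X$ and $q:=T{\restriction}_L:L\to 2^\w$ is a continuous surjection onto the zero-dimensional compactum $2^\w$. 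Two structural facts will be used throughout: since $Y$ is path-connected, Proposition \ref{p:Y-good=>good-I}(ii) shows that $X$ is $Y_k$-Tychonoff, so finite-image continuous functions defined on compact subsets extend to $X$ while prescribing the value $y_0$ on a disjoint closed set; and by Lemma \ref{l:extention}(ii), $X$ has the $\IR_k$-extension property, which (applied to a real function realizing $C$ as a zero-set of $\II$ and pulled back through $T$) lets me surround $L$ by a zero-set $u^{-1}(0)$ of $X$ meeting $K$ exactly in $L$.

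Next I transport the family built in Proposition \ref{p:Eng-1}. Since $Y$ is a non-compact metric space it contains a closed discrete sequence $D=\{y_n\}_{n\in\w}$; on $2^\w$, with its metric $\rho$, I form for each $t\in 2^\w$ the function $\phi_t:2^\w\to D$ of Proposition \ref{p:Eng-1} together with its finite-image continuous approximants $\phi_{t,k}$ converging stably to $\phi_t$. Because $2^\w$ is zero-dimensional these approximants are genuinely finite-image and continuous, so each $\phi_{t,k}\circ q$ is a finite-image continuous function on $L$. Using $Y_k$-Tychonoffness I extend $\phi_{t,k}\circ q$ to a continuous $f_{t,k}:X\to Y$ equal to $y_0$ off the sublevel set $\{u<1/k\}$, and I let $\Psi_t$ be the stable limit of $\{f_{t,k}\}_k$. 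Then $\Psi_t\in B_1^{st}(X,Y)\subseteq H$, it equals $\phi_t\circ q$ on $L$, and it equals $y_0$ off $L$. Choosing by surjectivity of $q$ a point $\tilde t\in q^{-1}(t)$ for each $t$, one has $\Psi_t(\tilde t)=y_0\neq\Psi_{t'}(\tilde t)$ for $t\neq t'$, which gives discreteness; and repeating the closure argument of Claims 2--3 of Proposition \ref{p:Eng-1} with a finite $\tfrac1{2m}$-net of $2^\w$ lifted through $q$ shows that $\mathcal F:=\{\Psi_t:t\in 2^\w\}$ is a closed discrete subset of $H$ of cardinality $\mathfrak{c}$.

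To apply Jones' Lemma I still need a separable closed subspace of $H$ containing $\mathcal F$. Let $Q_0^{\downarrow}$ be the countable family of locally constant (hence finite-image continuous) functions on $2^\w$ with values in $D$ and domains of constancy taken from a fixed countable clopen base of $2^\w$; since each $\phi_t$ takes values in $D$, it lies in the closure of $Q_0^{\downarrow}$ in $Y^{2^\w}$. Extending each member of $Q_0^{\downarrow}$ exactly as above yields a countable set $Q_0\subseteq C_p(X,Y)\subseteq H$ whose members are $y_0$ off $L$, and the same pointwise bookkeeping gives $\mathcal F\subseteq\overline{Q_0}$. Thus $S:=\overline{Q_0}$ is a separable closed subspace of $H$ in which $\mathcal F$ is still closed and discrete. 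As a closed subspace of the normal space $H$ the space $S$ would be normal, yet a separable normal space contains no closed discrete set of cardinality $\mathfrak{c}$ by Lemma \ref{l:discrete-normal}; this contradiction shows that $H$ is not normal, as required.

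The step I expect to be the main obstacle is the extension carried out in the second and third paragraphs: arranging that the pulled-back finite-image functions $\phi_{t,k}\circ q$ (and the members of $Q_0$) extend from the compact set $L$ to all of $X$ with value $y_0$ off $L$ and with stable limit still in $B_1^{st}(X,Y)$. The difficulty is that $L$ is only a continuous preimage of the metric Cantor set and need not itself be metrizable nor, a priori, a zero-set of $X$, so all metric control lives downstairs on $2^\w$ and must be transported through $q$, while the off-$L$ value must be pinned to $y_0$ to make both the stable convergence and the relation $\mathcal F\subseteq\overline{Q_0}$ work. This is precisely what the $\IR_k$-extension property and the resulting zero-set neighbourhood base of $L$ provide; the technical heart is to verify, using Lemma \ref{l:G-del-modif}, that the stable limit behaves correctly on the possibly nonempty difference between the zero-set $u^{-1}(0)$ and $L$.
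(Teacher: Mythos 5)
Your overall strategy (fix a compact non-scattered $K\subseteq X$, map it onto a metrizable compactum, transport the closed discrete family of Proposition \ref{p:Eng-1} into $H$, and finish with Jones' Lemma \ref{l:discrete-normal} inside a separable closed subspace) is the same as the paper's, but your execution has a genuine gap at exactly the step you flagged, and Lemma \ref{l:G-del-modif} does not close it. Your whole argument needs $\Psi_t$ to equal $y_0$ \emph{exactly} off $L$: the closedness of $\mathcal F$ (a limit function factors through $q$ only at points of $L$, so its values off $L$ must be pinned independently) and the inclusion $\mathcal F\subseteq\overline{Q_0}$ both evaluate functions at points outside $L$. However, your continuous approximants $f_{t,k}$ are obtained from $Y_k$-Tychonoffness with data prescribed only on the compact set $L$ and on the closed set $\{u\geq 1/k\}$; on $u^{-1}(0)\setminus L$ --- nonempty in general, since $u^{-1}(0)$ meets $K$ exactly in $L$ but may protrude outside $K$ --- nothing constrains $f_{t,k}$, so the sequence need not converge there at all, let alone stably to $y_0$. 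Lemma \ref{l:G-del-modif} cannot repair this: it prescribes values only on a \emph{finite} set $A$ and only for functions with \emph{finite} image (whereas $\phi_t$ takes infinitely many values in $D$), and even within its scope its output takes the value $y_i$ on zero-sets $Z_i$ that may strictly exceed the intended fibers --- it never produces a function equal to $y_0$ precisely off a given compact set. So the membership $\Psi_t\in B_1^{st}(X,Y)$ with the stated form is unproven, and both the closedness of $\mathcal F$ and the separability reduction rest on it.

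The paper avoids this difficulty entirely by extending the \emph{map} once instead of extending each function: since $K$ is compact, hence $C^\ast$-embedded in the Tychonoff space $X$, the surjection $h:K\to\II$ extends to a continuous $g:X\to\II$ (the paper invokes the Tietze--Urysohn theorem), and the adjoint $g^\ast:Y^\II\to Y^X$, $g^\ast(f)=f\circ g$, is a homeomorphism onto a \emph{closed} subspace of $Y^X$ satisfying $g^\ast\big(B_1^{st}(\II,Y)\big)\subseteq B_1^{st}(X,Y)$, because stable convergence is preserved under composition with $g$. All the delicate pointwise bookkeeping then lives downstairs on $\II$, where Proposition \ref{p:Eng-1} applies ($\II$ is $Y$-$z$-Tychonoff by Proposition \ref{p:Y-z-Tychonoff-exa}, using path-connectedness of $Y$) and where $C_p(\II,Y)$ is separable by Corollary \ref{c:density-C(X,Y)}; the closure $Z$ of $g^\ast\big(B_1^{st}(\II,Y)\big)$ in $H$ is then a separable normal space with a closed discrete subset of cardinality $\mathfrak{c}$, contradicting Jones' Lemma. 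If you want to salvage your route, the missing idea is precisely this single global extension of the quotient map (of $T$, or of your $q$) rather than a function-by-function extension from $L$ pinned to $y_0$; your detour through the Cantor set is otherwise harmless, and even has the minor merit of letting Proposition \ref{p:Eng-1} be applied via zero-dimensionality of $2^\w$ instead of path-connectedness of $Y$ at that step.
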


\begin{proof}
We have to show that every compact subspace of $X$ is scattered. Assuming the converse we can find a non-scattered compact subspace $K$ of $X$. By Theorem 8.5.4 of \cite{sema}, there is a continuous surjective map $h:K\to \II=[0,1]$. The Tietze--Urysohn Theorem \cite[2.1.8]{Eng} implies that $h$ has a continuous (surjective) extension   $g:X\to \II$. Observe that the adjoint map $g^\ast:Y^\II \to Y^X$, $g^\ast(f):=f\circ g$, is a homeomorphism of $Y^\II$ onto a {\em closed} subspace of $Y^X$.
By Proposition \ref{p:Y-z-Tychonoff-exa}, $\II$ is $Y$-$z$-Tychonoff. Since $Y$ is not compact, it contains a closed and discrete countably infinite subset. Now we can apply Proposition \ref{p:Eng-1} to get that the space  $B_1^{st}(\II,Y)$ contains a subset $A$ of cardinality $\mathfrak{c}$ which is closed and discrete in $Y^\II$. Since $g$ is continuous we also have $g^\ast\big(B_1^{st}(\II,Y)\big) \subseteq B_1^{st}(X,Y)$. Therefore $g^\ast(A)$ is also a closed and discrete subset of $H$. Note that a closed subset of a normal space is a normal space. Therefore the space  $Z:=\overline{g^\ast\big(B_1^{st}(\II,Y)\big)}^{H}$ is normal. Since $B_1^{st}(\II,Y)$ is separable (because it contains a dense subspace $C_p(\II,Y)$ which is separable by Corollary \ref{c:density-C(X,Y)}), we obtain that also $Z$ is separable.  Therefore $g^\ast(A)$ is a closed and discrete subset of the separable space $Z$ and has cardinality $\mathfrak{c}$. Now Lemma \ref{l:discrete-normal} implies that $Z$ is not normal, a contradiction. \qed
\end{proof}

%{l:Baire-1}{c:density-C(X,Y)}
Recall (see \cite{leri}) that a Tychonoff space $X$ is called {\em functionally countable} if the set $f(X)$ is countable for each function $f\in C(X)$. In \cite{BG-Baire}, topological spaces with this property are called $\IR$-countable. In Theorem 4.1 of \cite{Choban}, Choban proved that a Tychonoff space $X$ is functionally countable if and only if  $f(X)$ is countable for each function $f\in B_1(X)$. Proposition 4.4 of \cite{Choban} states that if $X_{\aleph_0}$ is a Lindel\"{o}f space, then $X$ and $X_{\aleph_0}$ are functionally countable spaces.

Recall that a subset $A$ of a topological space $X$ is called a {\em $Z_{\sigma}$-set in $X$}  if $A=\bigcup \{Z_i: i\in \omega\}$, where $Z_i$ is a zero-set  of $X$ for each $i\in \omega$.
For topological spaces $X$ and $Y$, let $\AAA^0_1(X,Y)$ be the spaces of all functions $f:X\to Y$ such that $f^{-1}(U)$ is a $Z_{\sigma}$-set in $X$.
We need the following assertion.% in the next proposition proved by Pestryakov \cite{Pestryakov} for $B_1(X)$.
%For the case when $Y=\IR$ the next proposition was  proved by Pestryakov \cite{Pestryakov}.
%we give a  proof of this proposition because the thesis \cite{Pestryakov} is absolutely unprocurable).

\begin{proposition} \label{p:B1=B2}
Let $Y$ be a perfectly normal space containing at least two points, and let $X$ be a $Y$-Tychonoff space such that $B_1(X,Y)=\A_1^0(X,Y)$. If $X$ is functionally countable, then $B_1(X,Y)=B(X,Y)$. In particular, $B_1(X,Y)=B_2(X,Y)$.
\end{proposition}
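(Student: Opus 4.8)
The plan is to prove the stronger inclusion $B(X,Y)\subseteq\A_1^0(X,Y)$; since $\A_1^0(X,Y)=B_1(X,Y)\subseteq B(X,Y)$ by hypothesis, this yields $B_1(X,Y)=B(X,Y)$. I argue by transfinite induction on the Baire class. For the base case every $f\in C_p(X,Y)=B_0(X,Y)$ lies in $\A_1^0(X,Y)$: if $U\subseteq Y$ is open then, as $Y$ is perfectly normal, $U$ is a cozero-set of $Y$, so $f^{-1}(U)$ is a cozero-set of $X$ and hence a $Z_\sigma$-set. The level $\alpha=1$ is exactly the hypothesis, and one checks likewise that $B_1(X,Y)\subseteq\A_1^0(X,Y)$ holds automatically (preimages of zero-sets of $Y$ under continuous maps are zero-sets of $X$, and countable intersections of zero-sets are zero-sets), so the real content of the hypothesis is the reverse inclusion $\A_1^0(X,Y)\subseteq B_1(X,Y)$. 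For the inductive step, if $B_\beta(X,Y)=B_1(X,Y)$ for all $1\le\beta<\alpha$ then $B_\alpha(X,Y)$ consists of pointwise limits of sequences from $B_1(X,Y)$; thus the whole induction collapses to the single key assertion $B_2(X,Y)\subseteq\A_1^0(X,Y)$, which also gives the final ``in particular'' clause $B_1(X,Y)=B_2(X,Y)$.

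So fix $f=\lim_n f_n$ with $f_n\in B_1(X,Y)=\A_1^0(X,Y)$ and an open set $U\subseteq Y$; I must show $f^{-1}(U)\in Z_\sigma$. Write $U=\{g>0\}$ for a continuous $g\colon Y\to[0,1]$ (possible since $Y$ is perfectly normal) and put $h_n:=g\circ f_n\in B_1(X,\IR)$. Because $X$ is functionally countable, Theorem~4.1 of \cite{Choban} shows each image $h_n(X)$ is countable; set $S:=\bigcup_n h_n(X)$ and choose $T\subseteq(0,1)\SM S$ dense in $(0,1)$. For $t\in T$ we then have $A^t_n:=\{h_n\ge t\}=\{h_n>t\}=f_n^{-1}(\{g>t\})$, so both $A^t_n$ and $X\SM A^t_n=f_n^{-1}(\{g<t\})$ are $Z_\sigma$-sets (as $f_n\in\A_1^0$). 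Using $f_n(x)\to f(x)$ and the fact that $T$ avoids the attained values, one verifies the identity
\[
f^{-1}(U)=\{g\circ f>0\}=\bigcup_{t\in T,\,t>0}\ \bigcup_{N\in\w}\ \bigcap_{n\ge N}A^t_n ,
\]
whose two outer unions are countable. Hence it suffices to prove that each inner set $B_{t,N}:=\bigcap_{n\ge N}A^t_n$ is a $Z_\sigma$-set.

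The easy half is that the complement $X\SM B_{t,N}=\bigcup_{n\ge N}(X\SM A^t_n)$ is a countable union of $Z_\sigma$-sets, hence a $Z_\sigma$-set, so $B_{t,N}$ is a $G_\delta$-set described by countable data. To detect $B_{t,N}$ by a Baire-one function, fix distinct $a,b\in Y$ and a continuous $\chi\colon Y\to\IR$ with $\chi(a)=1,\chi(b)=0$. Each two-valued map sending $A^t_n$ to $a$ and its complement to $b$ lies in $\A_1^0(X,Y)$, so by hypothesis it belongs to $B_1(X,Y)$, and composing with $\chi$ gives $\chi_{A^t_n}\in B_1(X,\IR)$. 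Consequently the uniformly convergent series $q_{t,N}:=\sum_{n\ge N}2^{-(n-N+1)}\bigl(1-\chi_{A^t_n}\bigr)$ belongs to $B_1(X,\IR)$ and satisfies $B_{t,N}=q_{t,N}^{-1}(0)$; Theorem~4.1 of \cite{Choban} again forces $q_{t,N}(X)$ to be countable.

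The hard part will be the final collapse: passing from ``$B_{t,N}=q_{t,N}^{-1}(0)$ with $q_{t,N}\in B_1(X,\IR)$ of countable range'' to ``$B_{t,N}\in Z_\sigma$''. A priori $q_{t,N}^{-1}(0)=\bigcap_k\{q_{t,N}<1/k\}$ is only a $Z_{\sigma\delta}$-set, and the countable range of $q_{t,N}$ may accumulate at $0$, so a naive argument stalls exactly here; this is the step where functional countability and the hypothesis $B_1(X,Y)=\A_1^0(X,Y)$ must be used together rather than separately. The route I would pursue is to exploit that, $q_{t,N}$ having countable range, the membership pattern $x\mapsto(\chi_{A^t_n}(x))_{n\ge N}$ takes only countably many values, partitioning $X$ into countably many pieces one of which is $B_{t,N}$, and then to realise the two-valued $Y$-map selecting $B_{t,N}$ as a genuine pointwise (indeed stable) limit of continuous functions in the spirit of Lemma~\ref{l:G-del-modif}; since $B_1(X,Y)\subseteq\A_1^0(X,Y)$, this would return $B_{t,N}\in Z_\sigma$. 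Once each $B_{t,N}$ is a $Z_\sigma$-set, the displayed countable union gives $f^{-1}(U)\in Z_\sigma$, hence $f\in\A_1^0(X,Y)=B_1(X,Y)$, completing the induction. As a consistency check, in the principal application where $X$ is Polish, functional countability means countability, so Proposition~\ref{p:B1-countable} yields $B_1(X,Y)=Y^X=B(X,Y)$ directly and the delicate collapse becomes automatic.
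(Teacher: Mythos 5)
Your reductions up to the displayed formula for $f^{-1}(U)$ are correct (the identity itself, the fact that $A^t_n$ and $X\SM A^t_n$ are both $Z_\sigma$-sets because $t$ avoids the countable ranges, and the collapse of the transfinite induction to $B_2(X,Y)\subseteq\A_1^0(X,Y)$), but the proof has a genuine gap exactly where you flag it, and the route you sketch for closing it does not work. The set $B_{t,N}=\bigcap_{n\ge N}A^t_n$ is a countable intersection of $Z_\sigma$-sets, and the class of $Z_\sigma$-sets is not closed under countable intersections; nothing in your argument bridges $Z_{\sigma\delta}$ back down to $Z_\sigma$. Knowing that $q_{t,N}\in B_1(X,\IR)$ has countable range does not help: Choban's theorem controls the \emph{image} of $q_{t,N}$, not its \emph{fibers}, and the fiber $q_{t,N}^{-1}(0)$ of a real-valued Baire-one function is in general only the complement of a $Z_\sigma$-set (preimages of open sets under Baire-one real functions are $Z_\sigma$, so preimages of points are co-$Z_\sigma$), even when the range is countable. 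Your proposed appeal to Lemma \ref{l:G-del-modif} is circular: that lemma manufactures stable Baire-one functions starting from zero-set data, so to realise the two-valued selector of $B_{t,N}$ through it you would already need to know that $B_{t,N}$ is assembled from zero-sets in a $Z_\sigma$ fashion --- which is precisely what is to be proved. A diagonal trick inside your framework fails for the same reason: the diagonal $(h_n)_{n\ge N}:X\to\IR^\w$ is only Baire-one, and countability of its image merely rewrites $B_{t,N}$ as a countable union of fibers of Baire-one functions, reproducing the same obstruction.

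The paper's proof avoids countable intersections altogether by working with Baire \emph{sets} rather than with limits of function sequences. Perfect normality gives $U=g^{-1}(\IR\SM\{0\})$ with $g\in C(Y)$, and an induction on $\alpha<\w_1$ shows $g\circ f\in B(X)$, so $f^{-1}(U)$ is a Baire subset of $X$; it therefore suffices to show that every Baire subset $A$ of $X$ is a $Z_\sigma$-set. Since $A$ is generated by countably many zero-sets $Z_i=g_i^{-1}(0)$ with $g_i\in C(X)$, the \emph{continuous} diagonal $g=(g_i)_{i\in\w}:X\to\IR^\w$ satisfies $A=g^{-1}(H)$ for a Baire subset $H$ of $\IR^\w$. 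Functional countability is then applied to a single continuous real-valued function, obtained by composing $g$ with an embedding of the separable zero-dimensional metrizable space $g(X)$ into $2^\w\subseteq\IR$; this makes $g(X)$, and hence the relevant part of $H$, countable, and a countable subset of the metrizable space $\IR^\w$ is $Z_\sigma$ there, so $A=g^{-1}(H)$ is $Z_\sigma$ in $X$. The decisive idea your proposal is missing is that the generating data of a Baire set consists of genuinely continuous functions, so that functional countability applies directly and the only operation ever performed on $Z_\sigma$-sets is preimage under a continuous map --- never a countable intersection.
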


\begin{proof}
Let $f\in B(X,Y)$. Since $Y$ is perfectly normal, for every open subset $U \subseteq Y$ there is a continuous function $g:Y\to \IR$ such that $U= g^{-1}(\IR\SM \{0\})$. By induction on $\alpha\in\w_1$, one can check that the function $g\circ f:X\to \IR$ is also Baire. Hence $f^{-1}(U) = (g\circ f)^{-1} (\IR\SM \{0\})$ is a Baire subset of $X$. Therefore, by the equality $B_1(X,Y)=\A_1^0(X,Y)$, to prove the proposition it is sufficient to show that every Baire subset $A$ of $X$ is a $Z_{\sigma}$-set in $X$. So let $A$ be a Baire subset of $X$.
\smallskip

{\em Claim 1.  There are $g=(g_i)_{i\in \omega}\in C(X,\mathbb{R}^{\omega})$ and a Baire subset $H$ of $\mathbb{R}^{\omega}$ such  that $A=g^{-1}(H)$.} Indeed,
by the definition of  Baire sets of a topological space, the set $A$ is generated by a family $\{Z_i : i\in \omega\}$ of zero-sets of $X$, i.e. $A=\varphi [Z_0,..., Z_i,...]$ where $\varphi$ is an action in the hierarchy on the Baire sets. For every $i\in\w$, choose a function $g_i\in C(X)$ such that $Z_i$ is the zero-set of $g_i$ and consider the diagonal mapping $g=(g_i)_{i\in \omega}: X \rightarrow \mathbb{R}^{\w}$. For each $i\in \omega$, set $P_i :=\{x=(x_i)\in \mathbb{R}^{\w} : x_i=0 \}$. It is clear that  for each $i\in \omega$, $P_i$ is a zero-set of $\mathbb{R}^{\w}$ and  $g^{-1}(P_i)=Z_i$. Let $H=\varphi [P_1,..., P_i,...]$, so $H$ is a Baire subset of $\mathbb{R}^{\omega}$. Since the operation of taking preimage preserves unions, intersections and differences we obtain that $A=g^{-1}(H)$. The claim is proved.
\smallskip

As $X$ is functionally countable, all the sets $g_i(X)\subseteq \IR$ are countable. Therefore the product $\prod_{i\in\w} g_i(X)$ is a separable, zero-dimensional metrizable space, and hence so is its  subspace $g(X)$.  By Theorem 6.2.16 in \cite{Eng}, there is an embedding $p$ of $g(X)$ into $2^{\omega}$. We can assume that $2^{\omega}\subseteq \mathbb{R}$. As $X$ is functionally countable, the set $p\circ g(X)$ is countable. Since $p$ is a bijection, we obtain that $g(X)$ and hence also $H$ are countable. But every countable subset of $\mathbb{R}^{\omega}$ is a $Z_{\sigma}$-set in
$\mathbb{R}^{\omega}$. Hence $A=g^{-1}(H)$ is a $Z_{\sigma}$-set in $X$. \qed
\end{proof}

%A set of reals $X$ is a $\sigma$-set iff every subset of $X$ which is a relative $G_{\delta}$ is also a relative $F_{\sigma}$. In particular, if $X$ is a Sierpi$\acute{n}$ski set, then $X$ is a $\sigma$-set ( p.211 in \cite{kuva}). Thus, if there is an uncountable $\sigma$-set $X$ in $\mathbb{R}$ (for example in $CH$) then $B_1(X)=B_2(X)$.

%Now we prove the main result of this section which immediately implies Theorem \ref{t:B1-Polish-normal}.

Let $X$ and $Y$ be sets. For a function $f:X\to Y$ and a subset  $T\subseteq Y$, we set
\[
\sigma(f,T) := \big\{ h\in \sigma(f): h(x)\in T \mbox{ for every } x\in X \mbox{ such that } h(x)\not= f(x)\big\}.
\]
In the next theorem we shall use the following theorem (although this result was proved in \cite{Gabr-B1} for the case when $Y$ is an abelian metrizable group containing an infinite uniformly discrete subset, exactly the same proof works also for non-compact metrizable spaces).
\begin{theorem}[\cite{Gabr-B1}] \label{t:Baire-class-normal}
Let $Y$ be a non-compact metrizable space. Choose a discrete and closed sequence $T=\{ g_n\}_{n\in\w}$ in $Y$. Assume that  $X$ is a set and let $H$ be a subspace of $Y^X$ containing pointwise limits of sequences from $\sigma(\mathbf{g}_1, T)\cup \sigma(\mathbf{g}_0,T)$. Then $H$ is a normal space if and only if $X$ is countable.
\end{theorem}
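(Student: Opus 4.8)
The sufficiency is routine. If $X$ is countable, then $Y^X$ is a countable product of the metrizable space $Y$, hence metrizable, so the subspace $H$ is metrizable and \emph{a fortiori} normal.

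For the necessity I argue contrapositively: assuming $X$ is uncountable I will show that $H$ is not normal, reproducing inside $H$ the mechanism behind the non-normality of $\w^{\w_1}$ recalled above. First fix an injective family $\{x_\alpha:\alpha<\w_1\}$ in $X$ and restrict attention to functions which, off a countable subset of $\{x_\alpha:\alpha<\w_1\}$, are constantly $g_0$ or constantly $g_1$ (so that all supports stay countable and nothing depends on the actual size of $X$); identify $T=\{g_n\}_{n\in\w}$ with $\w$, with $g_0,g_1$ as two distinguished base values. Every function equal to $g_0$ (resp. $g_1$) off a finite set and taking values in $T$ there belongs to $\sigma(\mathbf{g}_0,T)$ (resp. $\sigma(\mathbf{g}_1,T)$), so every pointwise limit of a sequence of such functions lies in $H$. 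Since $T$ is closed and discrete, a direct analysis of pointwise convergence shows that these limits are exactly the $T$-valued functions agreeing with $g_0$ off a countable set, together with those agreeing with $g_1$ off a countable set; thus $H$ contains the union $\Sigma_0\cup\Sigma_1$ of the two ``$\Sigma$-products'' over the base values $g_0$ and $g_1$, and it is among these functions that I will find the witnesses to non-normality.

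The plan is then to exhibit disjoint sets $A\subseteq\Sigma_0$ and $B\subseteq\Sigma_1$, each closed in $H$, that cannot be separated by disjoint open subsets of $H$. The two features that must be balanced are membership in $H$ and closedness, and here the non-compactness of $Y$ is essential: because the sequence $\{g_n\}$ has no convergent subsequence, families whose values run through $T$ along the $x_\alpha$-coordinates acquire no spurious limit points, which is exactly what forces $A$ and $B$ to be closed (this is the point of Engelking's construction in \cite[3.1.H]{Eng}). The non-separability will then follow from a transfinite, pressing-down (Fodor-type) diagonalization on $\w_1$: from putative disjoint open neighbourhoods of $A$ and $B$ one reads off a regressive map on an uncountable set of indices and extracts a member of $A$ and a member of $B$ lying in a common basic neighbourhood, a contradiction. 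I expect this last step to be the main obstacle. One cannot shortcut it by embedding $\w^{\w_1}$ as a closed subspace, since every member of $\Sigma_0\cup\Sigma_1$ has countable support and hence there are at most $\mathfrak{c}<2^{\w_1}=|\w^{\w_1}|$ of them; no such embedding exists. The two delicate points are therefore (i) verifying that $A$ and $B$ are genuinely closed in $H$ and not merely sequentially closed — $\Sigma_0$ is sequentially closed but dense in the ambient product, so a bona fide net argument is needed — and (ii) carrying out the Fodor-type argument that defeats every attempted separation.
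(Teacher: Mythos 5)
Your sufficiency direction is fine, and so is your identification of what $H$ must contain: pointwise limits of sequences from $\sigma(\mathbf{g}_0,T)$ (resp.\ $\sigma(\mathbf{g}_1,T)$) are exactly the $T$-valued functions equal to $g_0$ (resp.\ $g_1$) off a countable set. But the necessity direction as written is a program, not a proof, and the program's engine is the wrong one. You never define the sets $A$ and $B$, never verify their closedness (which you yourself flag as delicate), and you defer the entire separation-defeating step to an unexecuted ``Fodor-type'' pressing-down diagonalization on $\w_1$ --- precisely the step where the content of the theorem lies. The proof in \cite{Gabr-B1} (on which the paper relies) shows that no stationary-set combinatorics is needed. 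There one takes $D$ to be the family of functions in $\sigma(\mathbf{g}_0,T)$ taking each value $g_n$, $n\geq 1$, at most once, $E$ the analogous family inside $\sigma(\mathbf{g}_1,T)$, and sets $A:=\cl_H(D)$, $B:=\cl_H(E)$. Closedness is then free by definition, and your worry (i) about nets versus sequences dissolves: what actually needs checking is that every member of $A$ (resp.\ $B$) is still $T$-valued and takes each $g_n$ with $n\neq 0$ (resp.\ $n\neq 1$) at most once, and this follows from elementary one- and two-point basic neighborhoods $W[f;\{y,z\},\cdot\,]$, since $T$ is closed and discrete. The uncountability of $X$ is spent at exactly one place: it forces $A\cap B=\emptyset$, because a member of $A$ equals $g_0$ off a countable set while a member of $B$ equals $g_1$ off a countable set, and no function can do both when $X$ is uncountable.

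Separation is then defeated by a plain $\omega$-recursion, not a transfinite one. Given disjoint open $\mathcal{U}\supseteq A$ and $\mathcal{V}\supseteq B$, one fixes points $z_0,z_1,\dots$ and alternately chooses functions $f_n\in D$ (equal to $g_k$ at $z_k$ for $k\leq m(n-1)$ and to $g_0$ elsewhere) together with finite sets $F_n$ such that $W[f_n;F_n,\cdot\,]\cap H\subseteq \mathcal{U}$, enlarging $F_n$ to swallow ever more of the $z_k$; the companion functions $h_n$ ($g_k$ at $z_k$ for $k\leq m(n)$, $g_1$ elsewhere) lie in $E$ and converge pointwise to some $h\in B$, which admits a basic neighborhood $W[h;R,\cdot\,]\cap H\subseteq \mathcal{V}$ with $R$ finite. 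One then writes down a single function $t\in\sigma(\mathbf{g}_1,T)\subseteq H$ agreeing with $h$ on $R$ and with $f_{a+1}$ on $F_{a+1}$ for a suitable $a$, whence $t\in \mathcal{U}\cap\mathcal{V}$, a contradiction. Because basic open sets of $Y^X$ constrain only finitely many coordinates, this countable interleaving suffices; your proposed regressive-map extraction on an uncountable index set is not merely missing but is aimed at a harder problem than the one actually posed. As it stands, your write-up establishes only the trivial implication and the inclusion $\Sigma_0\cup\Sigma_1\subseteq H$; the two steps you label (i) and (ii) \emph{are} the proof, and neither is carried out.
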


Recall that a topological space $X$ is called {\em $K$-analytic} if $X$ is a continuous image of a Lindel\"of \v Cech-complete space.

\begin{theorem} \label{t:B1-normal}
Let $Y$ be a path-connected non-compact separable metric space, and let $X$ be a Tychonoff $K$-analytic space  of countable pseudocharacter. If $B_1(X,Y)=\A_1^0(X,Y)$, then the following assertions are equivalent:
\begin{enumerate}
\item[{\rm (i)}] $X$ is countable, so $B_1(X,Y)=Y^X$ is separable and metrizable;
\item[{\rm (ii)}] $B_1(X,Y)$ is Lindel\"{o}f;
\item[{\rm (iii)}] $B_1(X,Y)$ is normal.
\end{enumerate}
\end{theorem}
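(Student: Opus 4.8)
The plan is to prove the cycle (i)$\Rightarrow$(ii)$\Rightarrow$(iii)$\Rightarrow$(i), with all the weight on the last implication. The two forward implications are soft. For (i)$\Rightarrow$(ii), if $X$ is countable then Proposition \ref{p:B1-countable} gives $B_1(X,Y)=Y^X$, a countable product of the separable metric space $Y$, hence separable metrizable and in particular Lindel\"of. For (ii)$\Rightarrow$(iii), note that $B_1(X,Y)$ is a subspace of the Tychonoff power $Y^X$ and is therefore regular, and a regular Lindel\"of space is normal, so normality is immediate.

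The substance is (iii)$\Rightarrow$(i). First I would feed the normality of $B_1(X,Y)$ into Proposition \ref{p:B1-normal} (whose hypotheses hold since $Y$ is path-connected, non-compact and separable metric, and $B_1^{st}(X,Y)\subseteq B_1(X,Y)$) to conclude that $X$ is $k$-scattered. The key step, and what I expect to be the main obstacle, is to upgrade this to the statement that \emph{$X$ is functionally countable}. Here the $K$-analyticity of $X$ is essential, and one cannot hope to do better: $X$ need not be scattered (the rationals $\mathbb{Q}$ are $\sigma$-compact, $k$-scattered, yet dense-in-itself), so the correct target is functional countability rather than scatteredness. I would argue as follows. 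Given $f\in C(X)$, the image $f(X)$ is a $K$-analytic, hence analytic, subspace of $\IR$; if it were uncountable it would contain a Cantor set $P$. Writing $X$ as a continuous image of a Lindel\"of \v{C}ech-complete space and using a compact-covering property of such maps onto compact metric spaces, one locates a compact subspace $L\subseteq X$ with $f(L)=P$; then $L$ maps continuously onto a perfect set and so is non-scattered, contradicting $k$-scatteredness. (Equivalently, this is the assertion that a $k$-scattered $K$-analytic space is functionally countable, which may also be quoted from the literature on functionally countable spaces, cf.\ \cite{leri,CD}.) Thus $f(X)$ is countable for every $f\in C(X)$, i.e.\ $X$ is functionally countable.

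With functional countability in hand, the hypothesis $B_1(X,Y)=\A_1^0(X,Y)$ finally enters: Proposition \ref{p:B1=B2} applies (because $Y$ is perfectly normal and $X$ is $Y$-Tychonoff, the latter since $X$ is Tychonoff and $Y$ is path-connected, see Proposition \ref{p:Y-good=>good-I}) and yields $B_1(X,Y)=B(X,Y)$. It then remains to invoke Theorem \ref{t:Baire-class-normal}. Fix a closed discrete sequence $T=\{g_n\}_{n\in\w}$ in the non-compact metric space $Y$. Since $X$ may be assumed infinite (otherwise (i) is trivial) and has countable pseudocharacter, Lemma \ref{l:Baire-1} shows that $\sigma(\mathbf{g}_1,T)\cup\sigma(\mathbf{g}_0,T)\subseteq B_1^{st}(X,Y)$, so every pointwise limit of a sequence drawn from these sets lies in $B_2(X,Y)\subseteq B(X,Y)=B_1(X,Y)$. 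Hence $H:=B_1(X,Y)$ satisfies the hypotheses of Theorem \ref{t:Baire-class-normal}, and its normality forces $X$ to be countable. Finally, countability of $X$ returns us to (i) via Proposition \ref{p:B1-countable}, giving $B_1(X,Y)=Y^X$ separable and metrizable, which closes the cycle.
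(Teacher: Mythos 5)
Your proposal is correct and follows essentially the same route as the paper's proof: the same cycle (i)$\Rightarrow$(ii)$\Rightarrow$(iii)$\Rightarrow$(i) with the two soft implications handled identically, and (iii)$\Rightarrow$(i) obtained by feeding normality into Proposition \ref{p:B1-normal} to get $k$-scatteredness, upgrading to functional countability, applying Proposition \ref{p:B1=B2} (with $Y$-Tychonoffness from Proposition \ref{p:Y-good=>good-I}), and concluding via Theorem \ref{t:Baire-class-normal} together with Lemma \ref{l:Baire-1}. The only divergence is that where you sketch a direct argument (Cantor sets in analytic images plus a compact-covering step) that a $k$-scattered $K$-analytic space is functionally countable, the paper simply cites this as Theorem 9.3 of \cite{BG-Baire}, so your sketch is an inline proof of a quoted result rather than a different approach.
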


\begin{proof}
(i)$\Rightarrow$(ii) If $X$ is countable, then, by Proposition \ref{p:B1-countable}, $B_1(X,Y)=Y^X$. Thus $B_1(X,Y)$ is separable and metrizable and hence Lindel\"{o}f. The implication (ii)$\Rightarrow$(iii) is clear.

(iii)$\Rightarrow$(i) Assume that $B_1(X,Y)$ is normal. Then, by Proposition \ref{p:B1-normal}, $X$ is $k$-scattered. By Theorem 9.3 of \cite{BG-Baire}, the space $X$ is functionally countable. Now Proposition \ref{p:B1=B2} implies that $B_1(X,Y)=B_2(X,Y)$.  Therefore the space $B_2(X,Y)$ is also normal.
By Proposition \ref{p:Y-good=>good-I}, the space $X$ is $Y$-Tychonoff. Finally, applying Theorem \ref{t:Baire-class-normal} and Lemma \ref{l:Baire-1}, we obtain that $X$ is countable. \qed
\end{proof}

To prove the next theorem we shall use the following result which is proved in \cite{K15,K16,KM}.

\begin{proposition} \label{p:Baire-classes-relation}
If $Y$ is a separable metrizable space and $X$ is $Y$-dimensional, then $B_1(X,Y)=\A_1^0(X,Y)$.
\end{proposition}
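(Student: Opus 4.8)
The plan is to prove the two inclusions $B_1(X,Y)\subseteq\A_1^0(X,Y)$ and $\A_1^0(X,Y)\subseteq B_1(X,Y)$ separately; fix a metric $d$ on $Y$. For $B_1(X,Y)\subseteq\A_1^0(X,Y)$ I would take $f=\lim_n f_n$ with $f_n\in C(X,Y)$ and an open $U\subseteq Y$ (the cases $U=\emptyset$ and $U=Y$ being trivial). Put $\rho(y):=d(y,Y\SM U)$, so $\rho\in C(Y)$ and $U=\{\rho>0\}$. Then $h:=\rho\circ f=\lim_n(\rho\circ f_n)$ is a real-valued Baire one function and $f^{-1}(U)=\{x:h(x)>0\}$. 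The elementary identity
\[
\{h>0\}=\bigcup_{m\ge1}\bigcup_{N\in\w}\bigcap_{n\ge N}\big\{x:\rho(f_n(x))\ge \tfrac1m\big\}
\]
exhibits $f^{-1}(U)$ as a countable union of countable intersections of zero-sets; since a countable intersection of zero-sets is a zero-set, $f^{-1}(U)$ is a $Z_\sigma$-set. This direction uses only that $Y$ is metric.

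The harder inclusion $\A_1^0(X,Y)\subseteq B_1(X,Y)$ is where separability of $Y$ and the $Y$-dimensionality of $X$ enter. Given $f\in\A_1^0(X,Y)$ and $n\ge1$, separability provides a countable open cover $\{V_k\}_{k}$ of $Y$ with $\diam V_k<1/n$; by hypothesis each $S_k:=f^{-1}(V_k)$ is a $Z_\sigma$-set and $\bigcup_k S_k=X$. I would then \emph{disjointify}: writing each $S_k$ as a countable union of zero-sets, enumerating all of these as $(W_t)_t$ (remembering the index $k_t$ of the $S_k$ from which $W_t$ came) and setting $R_t:=W_t\SM\bigcup_{s<t}W_s$, each $R_t$ is the intersection of a zero-set with finitely many cozero-sets, hence a $Z_\sigma$-set; thus the sets $P_k:=\bigcup_{k_t=k}R_t$ are pairwise disjoint $Z_\sigma$-sets with $P_k\subseteq S_k$ and $\bigcup_k P_k=X$. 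Choosing $y_k\in V_k$ and letting $g_n(x):=y_k$ for $x\in P_k$ gives a function with $d(g_n(x),f(x))<1/n$ for all $x$, so $g_n\to f$ uniformly.

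Next I would show $g_n\in B_1^{st}(X,Y)$. Write $P_k=\bigcup_i Z_{k,i}$ with $Z_{k,i}$ zero-sets, increasing in $i$. For each $m$ the finitely many zero-sets $\{Z_{k,m}:k\le m\}$ are pairwise disjoint, hence completely separated, so the locally constant map $\psi_m\equiv y_k$ on $Z_{k,m}$ is continuous on the closed set $D_m:=\bigcup_{k\le m}Z_{k,m}$; since $X$ is $Y$-dimensional, $\psi_m$ extends to some $h_m\in C(X,Y)$. For a fixed $x\in P_{k_0}$ one has $x\in Z_{k_0,i_0}\subseteq Z_{k_0,m}\subseteq D_m$, and therefore $h_m(x)=y_{k_0}=g_n(x)$ for all large $m$; thus $h_m\to g_n$ stably and $g_n\in B_1^{st}(X,Y)\subseteq B_1(X,Y)$.

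It remains to pass from the uniform limit $g_n\to f$ of Baire one functions to the conclusion $f\in B_1(X,Y)$, and this is the step I expect to be the main obstacle. For a metric range the class $B_1(X,Y)$ is closed under uniform convergence, but—unlike the real-valued case, where one simply sums a uniformly convergent telescoping series of Baire one functions—here the approximants must be kept \emph{$Y$-valued}, so a mere isometric embedding of $Y$ into a Banach space does not suffice. I would instead arrange the partitions $\{P_k\}$ at successive scales to be nested (the common refinement of two $Z_\sigma$-partitions is again a $Z_\sigma$-partition, because $Z_\sigma$ is closed under finite intersections) and build a single sequence of continuous $Y$-valued functions converging to $f$, extending from the finest captured scale downwards across the compatible zero-set exhaustions and repeatedly invoking the $Y$-dimensionality of $X$ to patch. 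Pointwise convergence is then forced, since for each fixed $x$ the finest scale at which $x$ is captured within a growing ``budget'' tends to infinity, while on the $n$-th scale the assigned value lies within $1/n$ of $f(x)$. Verifying that these iterated extensions can be carried out coherently, so that the resulting functions are genuinely continuous and converge pointwise to $f$, is the technical heart of the argument.
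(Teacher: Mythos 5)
First, a point of comparison: the paper offers no internal proof of this proposition --- it is imported verbatim from \cite{K15,K16,KM} --- so your attempt can only be measured against the arguments of Karlova and Karlova--Mykhaylyuk. Measured that way, most of what you write is correct and is essentially the standard skeleton. The inclusion $B_1(X,Y)\subseteq\A_1^0(X,Y)$ via $\rho=d(\cdot\,,Y\SM U)$ and your identity for $\{h>0\}$ is fine (and, as you say, uses only metrizability of $Y$). For the converse, the $1/n$-mesh countable covers, the disjointification of the $Z_\sigma$-sets $S_k$ into a $Z_\sigma$-partition $\{P_k\}$, the step functions $g_n$ with $d(g_n,f)<1/n$, and the verification that each $g_n\in B_1^{st}(X,Y)$ (disjoint zero-sets $Z_{k,m}$ make the locally constant map continuous on the closed set $D_m$, which $Y$-dimensionality then extends; the extensions converge stably) are all sound.

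The genuine gap is exactly where you locate it, and it is not routine bookkeeping. You arrive at $g_n\in B_1(X,Y)$ converging \emph{uniformly} to $f$, but no usable closure theorem is available: as you yourself observe, uniform-limit closure of $B_1(X,Y)$ for an arbitrary metrizable $Y$ cannot be obtained softly (a Banach-space embedding does not keep the approximants $Y$-valued). Moreover, the patching you sketch breaks at a concrete point: with nested partitions, the stage-$m$ assignment ``value of the finest scale captured so far'' is in general \emph{discontinuous} on $D_m$ --- a point of a fine zero-set $Z^{n+1}_{k',m}$ lying inside a coarse one $Z^{n}_{k,m}$ may be a limit of points of $Z^{n}_{k,m}\SM Z^{n+1}_{k',m}$, so the fibers of the assignment are not relatively open in $D_m$, and $Y$-dimensionality cannot even be invoked, since it extends only \emph{continuous} maps on closed sets. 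Making the iterated extensions coherent requires shrinking and separating the zero-set exhaustions across scales and an honest induction; this is precisely the content of the cited results (the ``adhesive space'' technique of \cite{K16} and the stable-class machinery of \cite{KM} exist to perform exactly this patching). So, as written, your proposal proves that $\A_1^0(X,Y)$ lies in the uniform closure of $B_1^{st}(X,Y)$, correctly reduces the proposition to the remaining lemma, but does not prove that lemma --- and that lemma is the heart of the result.
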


Recall that a metrizable space $X$ is called an {\em absolute retract} if $X$ is a retract of any metrizable space containing $X$ as a closed subspace.

\begin{theorem} \label{t:B1-retract-normal}
Let $Y$ be a path-connected non-compact Polish absolute retract, and let $X$ be a normal $K$-analytic space  of countable pseudocharacter. Then $B_1(X,Y)$ is a normal space if and only if $X$ is countable. In this case $B_1(X,Y)=Y^X$ is a separable metrizable space.
\end{theorem}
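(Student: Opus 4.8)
The plan is to deduce this from Theorem~\ref{t:B1-normal}, whose only hypothesis not immediately available here is the identity $B_1(X,Y)=\A_1^0(X,Y)$; I would supply that via Proposition~\ref{p:Baire-classes-relation}, which requires $X$ to be $Y$-dimensional. The sufficiency direction is immediate: if $X$ is countable, then $X$ is a countable Tychonoff, hence functionally Hausdorff, space, so Proposition~\ref{p:B1-countable} gives $B_1(X,Y)=Y^X$; since $Y$ is Polish and $X$ is countable, $Y^X$ is a countable power of a separable completely metrizable space, hence itself separable metrizable and in particular normal. This also yields the final clause of the statement.

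Thus the whole proof hinges on the following claim, which I regard as the main step: \emph{every normal space is $Y$-dimensional whenever $Y$ is a Polish absolute retract.} To prove it, I would first embed $Y$ isometrically as a closed subset of a separable Banach space $E$ (via the Arens--Eells or Kuratowski embedding; the image is closed because $Y$, being Polish, is complete). Since $Y$ is an absolute retract and is closed in the metrizable space $E$, there is a retraction $r\colon E\to Y$. The decisive point is that $E$ is an absolute extensor for normal spaces: if $\dim E<\infty$ then $E=\IR^n$, and otherwise $E$ is homeomorphic to $\IR^\w$ by the Anderson--Kadec theorem; in either case a map into $E\cong\prod_n\IR$ amounts to a sequence of real-valued coordinate maps, each of which extends off a closed set over a normal space by the Tietze--Urysohn theorem, and reassembling these gives the extension. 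Hence, given a closed $A\subseteq X$ and a continuous $f\colon A\to Y\subseteq E$, I extend $f$ to some $\tilde f\colon X\to E$ and set $\bar f:=r\circ\tilde f$; then $\bar f\colon X\to Y$ is continuous and $\bar f{\restriction}_A=f$ because $r$ fixes $Y$ pointwise. As $X$ is $T_1$, this shows $X$ is $Y$-dimensional.

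With the claim in hand the argument assembles quickly. The space $X$ is normal, hence Tychonoff and $T_1$, and by the claim it is $Y$-dimensional; since $Y$ is in particular separable metrizable, Proposition~\ref{p:Baire-classes-relation} yields $B_1(X,Y)=\A_1^0(X,Y)$. Now $Y$ is a path-connected non-compact separable metric space and $X$ is a Tychonoff $K$-analytic space of countable pseudocharacter, so all hypotheses of Theorem~\ref{t:B1-normal} are met. Assuming $B_1(X,Y)$ is normal, the implication (iii)$\Rightarrow$(i) of that theorem forces $X$ to be countable, and then $B_1(X,Y)=Y^X$ is separable metrizable exactly as in the sufficiency direction.

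The main obstacle is precisely the claim that a \emph{non-compact} Polish absolute retract is an absolute extensor for normal (not merely metrizable) domains: the standard Dugundji-type extension theorems cover only metrizable domains, and a non-compact target cannot be handled by embedding into a Tychonoff cube. The non-compactness is circumvented by realizing $Y$ as a retract of a separable Banach space $E$ and exploiting the homeomorphism $E\cong\IR^\w$, where coordinatewise Tietze extension is available. Everything else is routine bookkeeping around the already-established Theorem~\ref{t:B1-normal} and Propositions~\ref{p:Baire-classes-relation} and~\ref{p:B1-countable}.
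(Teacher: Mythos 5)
Your proposal is correct and follows essentially the same route as the paper: the paper's proof consists precisely of noting that $X$ is $Y$-dimensional, then invoking Proposition~\ref{p:Baire-classes-relation} to get $B_1(X,Y)=\A_1^0(X,Y)$ and applying Theorem~\ref{t:B1-normal}, exactly as you do. The only difference is that the paper obtains the $Y$-dimensionality of the normal space $X$ by citing Theorem 16.1(d) of Section II in \cite{Hu}, whereas you supply a correct self-contained proof of that fact (closed isometric embedding of the Polish space $Y$ into a separable Banach space, a retraction onto $Y$, the Anderson--Kadec homeomorphism with $\IR^\w$, and coordinatewise Tietze--Urysohn extension over the normal domain).
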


\begin{proof}
By Theorem 16.1(d) of Section II in \cite{Hu}, the space $X$ is $Y$-dimensional. Therefore, by Proposition \ref{p:Baire-classes-relation}, $B_1(X,Y)=\A_1^0(X,Y)$. Now Theorem \ref{t:B1-normal} applies. \qed
\end{proof}

Theorem \ref{t:B1-retract-normal} immediately implies

\begin{corollary} \label{c:B1-Polish-normal}
Let $X$ be a Lindel\"of \v{C}ech-complete space of countable pseudocharacter (for example, $X$ is a Polish space). Then $B_1(X)$ is normal if and only if $X$ is countable. In this case $B_1(X)=\IR^X$ is a separable metric space.
\end{corollary}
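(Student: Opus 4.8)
The plan is to obtain Corollary \ref{c:B1-Polish-normal} as a direct specialization of Theorem \ref{t:B1-retract-normal} to the target space $Y=\IR$. First I would check that $\IR$ satisfies all the hypotheses imposed on $Y$ there: the real line is separable and completely metrizable, hence Polish; it is connected and convex, so path-connected; it is unbounded, hence non-compact; and, being a convex subset of a normed space, it is an absolute retract. Thus $Y=\IR$ is a path-connected non-compact Polish absolute retract, and by definition $B_1(X,\IR)=B_1(X)$.

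Next I would verify that a Lindel\"of \v{C}ech-complete space $X$ of countable pseudocharacter meets the requirements placed on the domain in Theorem \ref{t:B1-retract-normal}, namely that $X$ is a normal $K$-analytic space of countable pseudocharacter. Countable pseudocharacter is assumed outright. For $K$-analyticity, recall that by the definition adopted in the paper a space is $K$-analytic precisely when it is a continuous image of a Lindel\"of \v{C}ech-complete space; since $X$ itself is such a space, the identity map $\id_X$ witnesses that $X$ is $K$-analytic. For normality, note that a \v{C}ech-complete space is Tychonoff and in particular regular, and every regular Lindel\"of space is normal; hence $X$ is normal.

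With both sets of hypotheses verified, Theorem \ref{t:B1-retract-normal} applies verbatim and yields that $B_1(X)$ is normal if and only if $X$ is countable, and that in the countable case $B_1(X)=\IR^X$ is a separable metrizable space. Finally, to justify the parenthetical example, I would observe that every Polish space is Lindel\"of \v{C}ech-complete of countable pseudocharacter: complete metrizability gives \v{C}ech-completeness, second countability (equivalently, separability of a metric space) gives the Lindel\"of property, and first countability gives countable pseudocharacter, so the Polish case is subsumed. I do not expect any genuine obstacle here, since the statement is a corollary; the only points requiring a word of care are the two standard verifications that a Lindel\"of \v{C}ech-complete space is both $K$-analytic (immediate from the definition of $K$-analytic used in the excerpt) and normal (regular plus Lindel\"of).
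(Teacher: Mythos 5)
Your proposal is correct and matches the paper's own route exactly: the paper derives this corollary as an immediate specialization of Theorem \ref{t:B1-retract-normal} to $Y=\IR$, leaving implicit precisely the routine verifications you spell out (that $\IR$ is a path-connected non-compact Polish absolute retract, and that a Lindel\"of \v{C}ech-complete space of countable pseudocharacter is normal and $K$-analytic via the identity map). Your added details, including the observation that regular plus Lindel\"of yields normality and that Polish spaces are subsumed, are all accurate.
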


%%%%%%%%%%%%%%%%%%%%%%%%%%%%%%
%%%%%%%%%%%%%%%%%%%%%%%%%%%%%%
%%%%%%%%%%%%%%%%%%%%%%%%%%%%%%
%%%%%%%%%%%%%%%%%%%%%%%%%%%%%%

\section{Acknowledgement}

The second author expresses sincere thanks to Evgeny Pytkeev   for the opportunity to get acquainted with  the absolutely inaccessible PhD thesis of Pestryakov \cite{Pestryakov} and for  his kind attention to this work.

%%%%%%%%%%%%%%%%%%%%%%%%%%%%%%
%%%%%%%%%%%%%%%%%%%%%%%%%%%%%%
%%%%%%%%%%%%%%%%%%%%%%%%%%%%%%
%%%%%%%%%%%%%%%%%%%%%%%%%%%%%%

%\section{Appendix}

%%%%%%%%%%%%%%%%%%%%%%%%%%%%%%
%%%%%%%%%%%%%%%%%%%%%%%%%%%%%%
%%%%%%%%%%%%%%%%%%%%%%%%%%%%%%
%%%%%%%%%%%%%%%%%%%%%%%%%%%%%%


\begin{thebibliography}{10}


\bibitem{Arhangel}
A.V. Arhangel'skii, \emph{Topological function spaces}, Math. Appl. \textbf{78}, Kluwer Academic Publishers, Dordrecht, 1992.

\bibitem{Baire}
R. Baire, Sur les fonctions de variables r\'{e}elles, Ann. Mat. Pura Appl., ser. 3 (3) (1899), 1--123.

\bibitem{Banakh-Survey}
T. Banakh, \emph{Fans and their applications in General Topology}, Functional Analysis and Topological Algebra,  arXiv:1602.04857.

\bibitem{BG-Baire}
T. Banakh, S. Gabriyelyan, \emph{ Baire category properties of some Baire type function spaces}, preprint.

\bibitem{BZ}
T.~Banakh, L.~Zdomskyy, \emph{The topological structure of (homogeneous) spaces and groups with countable $cs^\ast$-character}, Applied General Topology \textbf{5} (2004), 25--48.

\bibitem{BFT}
J. Bourgain, D. Fremlin, M. Talagrand,  Pointwise compact sets of Baire-measurable functions,  Amer. J. Math. \textbf{100} (1978), 845--886.

\bibitem{Choban}
M.M. Choban, Functionally countable spaces  and Baire functions, Serdica Math. J. {\bf 23} (1997), 233--242.

\bibitem{CD}
M.M.~Choban, N.K.~Dodon, {\em Theory of $\mathcal P$-scattered spaces} (in Russian), \c Stiin\c ta, Chi\c sin\u au, 1979.

\bibitem{CL1}
\'A. Cs\'asz\'ar, M.~Laczkovich, \emph{ Discrete and equal convergence}, Studia Sci. Math. Hungar.  \textbf{10}:3-4 (1975), 463--472.

\bibitem{Eng}
R.~Engelking, \emph{ General Topology}, Heldermann Verlag, Berlin,
1989.


%\bibitem{kuva}
%K. Kunen, J.E. Vaughan, \emph{Handbook of set-theoretic topology}, 1984.





\bibitem{Gabr-C2}
S. Gabriyelyan, \emph{Topological properties of function spaces $\CC(X,\mathbf{2})$ over zero-dimensional metric spaces $X$},  Topology Appl. \textbf{209} (2016), 335--346.


\bibitem{Gabr-B1}
S. Gabriyelyan, \emph{Topological properties of spaces of Baire functions}, J. Math. Anal. Appl. \textbf{478} (2019), 1085--1099.

\bibitem{GK-GMS1}
 S. Gabriyelyan, J. K\c akol,  \emph{On $\Pp$-spaces and related concepts}, Topology Appl. \textbf{191} (2015), 178--198.

\bibitem{GN}
J. Gerlits, Zs. Nagy, \emph{Some properties of $C(X)$. I}, Topology Appl. \textbf{14} (1982),  151--161.

\bibitem{Gerlits}
J. Gerlits, \emph{Some properties of $C(X)$. II}, Topology Appl. \textbf{15} (1983), 255--262.


\bibitem{GiJ}
L.~Gillman, M.~Jerison, \emph{Rings of continuous functions}, Van Nostrand, New York, 1960.

\bibitem{Godefroy}
G. Godefroy,  Compacts de Rosenthal, Pacific J. Math. \textbf{91} (1980), 293--306.

\bibitem{gruenhage}
G. Gruenhage, Generalized metric spaces, \emph{Handbook of Set-theoretic Topology}, ed. by K. Kunen and J.E. Vaughan, North-Holland, New York, 1984, 423--501.

\bibitem{GTZ}
G. Gruenhage, B. Tsaban, L. Zdomskyy, Sequential properties of function spaces with the compact-open topology,  Topology Appl. \textbf{158} (2011), 387--391.

\bibitem{Hodel}
R. Hodel, Cardinal Functions I, \emph{Handbook of Set-theoretic Topology}, ed. by K. Kunen and J.E. Vaughan, North-Holland, New York, 1984, 1--61.

\bibitem{Hu}
S.-T. Hu, {\em Theory of Retracts}, Detroit, Wayne State Univ. Press, 1965.

\bibitem{Jayne}
J.E. Jayne,  Space of Baire functions. I, Ann. Inst. Fourier (Grenoble) \textbf{24} (1974),  47--76.

\bibitem{Jordan}
F. Jordan, \emph{There are no hereditary productive $\gamma$-spaces}, Topology and its Applications, \textbf{155} (2008), 1786--1791.

\bibitem{K15}
O.~Karlova, {\em Functionally $\sigma$-discrete mappings and a generalization of Banach's theorem}, Topology Appl. \textbf{189} (2015), 92--106.

\bibitem{K16}
O.~Karlova, {\em   Application of adhesive spaces to Baire classification of maps of one variable}, Proc. Int. Geom. Cent. \textbf{9}:3-4 (2016),  17--36.

\bibitem{KM}
O. Karlova, V.~Mykhaylyuk, {\em On stable Baire classes}, Acta Math. Hungar. \textbf{150} (2016), 36--48.

%\bibitem{Karlova}
%O. Karlova, \emph{On Baire classification of mappings with values in connected spaces}, European Journal of Mathematics, \textbf{2} (2016), 526--538.

\bibitem{leri}
R. Levy, N.D. Rice, \emph{Normal $P$-spaces and the $G_{\delta}$-topology}, Col. Math., 1981, {\bf 44}:2, 227--240.


\bibitem{mcoy}
R.~McCoy, I.~Ntantu, \emph{Topological Properties of Spaces of Continuous Functions}, Lecture Notes in Math.  {1315}, 1988.

\bibitem{Mer-Sta}
S. Mercourakis, E. Stamati,  Compactness in the first Baire class and Baire-1 operators, Serdica Math. J. \textbf{28} (2002), 1--36.

%\bibitem{Mich}
%E.~Michael, $\aleph_0$-spaces, J. Math. Mech. \textbf{15} (1966), 983--1002.

\bibitem{Osipov-18}
A.V. Osipov, Selection Principles in Function Spaces with the Compact-Open Topology, Filomat \textbf{32:15} (2018), 5403--5413.

\bibitem{os}
A.V. Osipov, \emph{On generalization of theorems of Pestriakov}, arxiv:1903.06124.



\bibitem{Pestryakov}
A.V. Pestryakov, \emph{Baire functions and spaces of Baire functions},   Candidates Dissertation in Physics and Mathematics, Ural State of University, (1987) (in Russian).

\bibitem{Pol-1974}
R. Pol, Normality in function spaces, Fund. Math. {84} (1974), 145--155.

\bibitem{PolSmen}
R. Pol, F. Smentek, \emph{Note on reflexivity of some spaces of integer-valued functions}, J. Math. Anal. Appl. \textbf{395} (2012), 251--257.


\bibitem{Pytkeev-Cp}
E.G. Pytkeev, Sequentiality of spaces of continuous functions. (Russian) Uspekhi Mat. Nauk \textbf{37} (1982), 197--198.

\bibitem{Pytkeev-Ck}
E.G. Pytkeev, On a property of Fr\'{e}chet--Urysohn spaces of continuous functions,  Trudy Mat. Inst. Steklov. \textbf{193} (1992), 156--161; translation in Proc. Steklov Inst. Math. \textbf{193} (1993),  173--178.

\bibitem{pyt}
E.G. Pytkeev, Spaces of continuous and Baire functions with weak topologies, Doctor Dissertation, IMM UrB RAS, (1993) (in Russian)

\bibitem{Rosenthal-compact}
H.P. Rosenthal,  Pointwise compact subsets of the first Baire class, Amer. J. Math. \textbf{99} (1977), 362--378.

\bibitem{Schmets}
J. Schmets, \emph{Spaces of vector-valued continuous functions}, LNM 1003, Springer, Berlin, 1983.

\bibitem{sema}
Z. Semadeni, \emph{Banach spaces of continuous functions},  Monografie Matematyczne \textbf{55} PWN-Polish Scientific Publishers, Warszawa, 1971.

\bibitem{Tkachuk-Book-2010}
V. Tkachuk, \emph{A $C_p$-theory problem book, Topological and Function Spaces}, Springer,  New York, 2010.

\bibitem{Todorcevic}
S. Todor\v{c}evi\'{c}, Compact subsets of the first Baire class, J. Amer. Math. Soc. \textbf{12} (1999), 1179--1212.

\end{thebibliography}
\end{document}